\spnewtheorem{algorithm}{Algorithm}{\bf}{\rm}
\begin{document}

\title{Implementation techniques for the SCFO experimental optimization framework
}


\author{Gene A. Bunin, Gr\'egory Fran\c cois, Dominique Bonvin 
}


\institute{Laboratoire d'Automatique, Ecole Polytechnique F\' ed\' erale de Lausanne, Lausanne, CH-1015 \\
              \email{gene.a.bunin@ccapprox.info}           
}

\date{Submitted: \today}

\maketitle

\begin{abstract}
The material presented in this document is intended as a comprehensive, implementation-oriented supplement to the experimental optimization framework presented in \cite{Bunin2013SIAM}. The issues of physical degradation, unknown Lipschitz constants, measurement/estimation noise, gradient estimation, sufficient excitation, and the handling of soft constraints and/or a numerical cost function are all addressed, and a robust, implementable version of the sufficient conditions for feasible-side global convergence is proposed.   
\keywords{experimental optimization \and SCFO \and estimation of Lipschitz constants \and gradient estimation \and optimization of degrading processes \and sufficient excitation for optimization \and optimization of noisy functions}
\end{abstract}

\section{Overview}
\label{sec:overview}

Following the conventions that were already established in \cite{Bunin2013SIAM}, let us define an \emph{experimental optimization} problem as

\vspace{-2mm}
\begin{equation}\label{eq:mainprob}
\begin{array}{rll}
\mathop {{\rm{minimize}}}\limits_{\bf{u}} & \phi_p ({\bf{u}}) & \\
{\rm{subject}}\hspace{1mm}{\rm{to}} & g_{p,j}({\bf{u}}) \leq 0, & j = 1,...,n_{g_p} \\
 & g_{j}({\bf{u}}) \leq 0, & j = 1,...,n_{g} \\
& {\bf{u}}^L \preceq {\bf u} \preceq {\bf u}^U, &
\end{array}
\end{equation}

\noindent with the subscript $p$ used to denote \emph{experimental functions} that can only be evaluated for a given set of decision variables, ${\bf u} \in \mathbb{R}^{n_u}$, by carrying out a (presumably expensive) experiment. Apart from the standard task of finding a set ${\bf u}^*$ that is a local minimum of (\ref{eq:mainprob}), experimental optimization problems come with the additional requirement that the constraints be satisfied \emph{for all} experimental iterates ${\bf u}_0,{\bf u}_1, {\bf u}_2, ...$ that are tested in the optimization process.

In the companion work of \cite{Bunin2013SIAM}, a theoretical framework -- referred to hereafter as the SCFO (``sufficient conditions for feasibility and optimality'') framework -- was proposed, and it was proven that enforcing the SCFO

\vspace{-2mm}
\begin{equation}\label{eq:SCFO1}
g_{p,j}({\bf u}_k) + \displaystyle \mathop {\sum} \limits_{i=1}^{n_u} \kappa_{p,ji} | u_{k+1,i} - u_{k,i} | \leq 0, \;\; \forall j = 1,...,n_{g_p},
\end{equation}

\vspace{-2mm}
\begin{equation}\label{eq:SCFO2}
g_{j}({\bf u}_{k+1}) \leq 0, \;\; \forall j = 1,...,n_g,
\end{equation}

\vspace{-2mm}
\begin{equation}\label{eq:SCFO3}
{\bf u}^L \preceq {\bf u}_{k+1} \preceq {\bf u}^U,
\end{equation}

\vspace{-2mm}
\begin{equation}\label{eq:SCFO4}
\nabla g_{p,j} ({\bf u}_k)^T ({\bf u}_{k+1} - {\bf u}_k) \leq -\delta_{g_p,j}, \;\; \forall j: g_{p,j} ({\bf u}_k) \geq -\epsilon_{p,j},
\end{equation}

\vspace{-2mm}
\begin{equation}\label{eq:SCFO5}
\nabla g_{j} ({\bf u}_k)^T ({\bf u}_{k+1} - {\bf u}_k) \leq -\delta_{g,j}, \;\; \forall j: g_{j} ({\bf u}_k) \geq -\epsilon_{j},
\end{equation}

\vspace{-2mm}
\begin{equation}\label{eq:SCFO6}
\nabla \phi_{p} ({\bf u}_k)^T ({\bf u}_{k+1} - {\bf u}_k) \leq -\delta_\phi,
\end{equation}

\vspace{-2mm}
\begin{equation}\label{eq:SCFO7}
\begin{array}{l}
\nabla \phi_p ({\bf u}_k)^T ({\bf u}_{k+1} - {\bf u}_k) + \\
\displaystyle \frac{1}{2} \mathop {\sum} \limits_{i_1=1}^{n_u} \mathop {\sum} \limits_{i_2=1}^{n_u} M_{\phi,i_1 i_2} | (u_{k+1,i_1} - u_{k,i_1})(u_{k+1,i_2} - u_{k,i_2}) | \leq 0
\end{array}
\end{equation}

\noindent for the future experimental iterate ${\bf u}_{k+1}$ at every experimental iteration $k$ would generate a chain of iterates that were all feasible and converged to a point ${\bf u}_\infty$ in a finite number of experiments, with ${\bf u}_\infty \rightarrow {\bf u}^*$ in the Fritz-John-error sense as the positive projection parameters ${\boldsymbol \epsilon}_p, {\boldsymbol \epsilon}, {\boldsymbol \delta}_{g_p}, {\boldsymbol \delta}_g, \delta_\phi \downarrow {\bf 0}$. Here, the constants $\kappa_{p,ji}$ and $M_{\phi,i_1 i_2}$ are used to denote Lipschitz constants for the experimental constraints and the Lipschitz constants for the cost function derivatives, respectively, with

\vspace{-4mm}
\begin{equation}\label{eq:lipcon}
- \kappa_{p,ji} < \frac{\partial g_{p,j}}{\partial u_i} \Big |_{\bf u} < \kappa_{p,ji}, \;\; \forall {\bf u} \in \mathcal{I},
\end{equation}

\vspace{-4mm}
\begin{equation}\label{eq:lipcon2}
-M_{\phi,i_1 i_2} < \frac{\partial^2 \phi_p}{\partial u_{i_2} \partial u_{i_1} } \Big |_{\bf u} < M_{\phi,i_1 i_2}, \;\; \forall {\bf u} \in \mathcal{I},
\end{equation}

\noindent and with the experimental space $\mathcal{I}$ defined as

\vspace{-4mm}
\begin{equation}\label{eq:Idef}
\mathcal{I} = \{ {\bf u} : {\bf u}^L \preceq {\bf u} \preceq {\bf u}^U \}.
\end{equation} 

\noindent The two assumptions necessary to prove the guarantee of feasible-side global convergence are that:

\begin{enumerate}[]
\item {\bf{A1}}: The functions $\phi_p$, $g_p$, and $g$ are twice continuously differentiable ($C^2$) on an open set containing $\mathcal{I}$.
\item {\bf{A2}}: The initial experimental iterate, ${\bf{u}}_0$, is strictly feasible with respect to the experimental constraints ($g_{p,j} ({\bf{u}}_0) < 0, \; \forall j = 1,...,n_{g_p}$), feasible with respect to the numerical constraints ($g_{j} ({\bf{u}}_0) \leq 0, \; \forall j = 1,...,n_{g}$), and lies in the experimental space (${\bf u}_0 \in \mathcal{I}$).
\end{enumerate}

The actual scheme proposed to apply the SCFO (\ref{eq:SCFO1})-(\ref{eq:SCFO7}) is a project-and-filter approach, where one first projects some suggested solution ${\bf u}_{k+1}^*$, obtained by whatever means, onto the local descent space of the cost and $\epsilon$-active constraints:

\vspace{-3mm}
\begin{equation}\label{eq:proj}
\begin{array}{rl}
\bar {\bf u}_{k+1}^* := {\rm arg} \mathop {\rm minimize}\limits_{{\bf u}} & \| {\bf u} - {\bf u}_{k+1}^* \|_2^2 \vspace{1mm}  \\
{\rm{subject}}\;{\rm{to}} & \nabla g_{p,j} ({\bf u}_k)^T ({\bf u} - {\bf u}_k) \leq -\delta_{g_p,j} \vspace{1mm} \\
& \forall j : g_{p,j}({\bf u}_k) \geq -\epsilon_{p,j} \vspace{1mm} \\
 & \nabla g_{j} ({\bf u}_k)^T ({\bf u} - {\bf u}_k) \leq -\delta_{g,j} \vspace{1mm} \\
& \forall j : g_{j}({\bf u}_k) \geq -\epsilon_{j} \vspace{1mm} \\
 & \nabla \phi_{p} ({\bf u}_k)^T ({\bf u} - {\bf u}_k) \leq -\delta_{\phi} \vspace{1mm} \\
 & {\bf u}^L \preceq {\bf u} \preceq {\bf u}^U,
\end{array}
\end{equation}

\noindent and then filters this solution by applying a numerical line search to determine the filter gain value $K_k \in [0,1]$, with the next iterate defined as

\vspace{-3mm}
\begin{equation}\label{eq:inputfilter}
{\bf u}_{k+1} := {\bf u}_k + K_k \left( \bar {\bf u}_{k+1}^* - {\bf u}_{k} \right),
\end{equation}

\noindent where the goal is to maximize $K_k$ while satisfying the conditions

\vspace{-3mm}
\begin{equation}\label{eq:SCFO1i}
g_{p,j}({\bf u}_k) + K_k \displaystyle \mathop {\sum} \limits_{i=1}^{n_u} \kappa_{p,ji}|\bar u_{k+1,i}^* - u_{k,i}| \leq 0, \;\; \forall j = 1,...,n_{g_p},
\end{equation}

\vspace{-3mm}
\begin{equation}\label{eq:SCFO2i}
g_{j}({\bf u}_k + K_k ( \bar {\bf u}_{k+1}^* - {\bf u}_{k} )) \leq 0, \;\; \forall j = 1,...,n_g,
\end{equation}

\vspace{-2mm}
\begin{equation}\label{eq:SCFO7i}
\begin{array}{l}
\nabla \phi_p ({\bf u}_k)^T (\bar {\bf u}_{k+1}^* - {\bf u}_k) + \vspace{1mm} \\
\hspace{5mm} \displaystyle \frac{K_k}{2} \mathop {\sum} \limits_{i_1=1}^{n_u} \mathop {\sum} \limits_{i_2=1}^{n_u} M_{\phi,i_1 i_2} \Bigg | \begin{array}{l} (\bar u_{k+1,i_1}^* - u_{k,i_1}) \\ 
\hspace{5mm} (\bar u_{k+1,i_2}^* - u_{k,i_2}) \end{array} \Bigg | \leq 0.
\end{array}
\end{equation}

\noindent In numerical implementation, one may start with an initial choice of projection parameters that correspond to the expected sizes of the different function ranges and then proceed to decrease these values once the projection (\ref{eq:proj}) becomes infeasible. If this infeasibility persists for very small values of the projection parameters, then the user has a theoretical guarantee that the Fritz John error at the current experimental iterate is very small and that the iterate is therefore close to ${\bf u}^*$ in the Fritz-John-error sense, believed to be sufficient for most practical applications.

While the theoretical SCFO framework is very powerful, it is, as presented in \cite{Bunin2013SIAM}, only so \emph{in theory} and as such must be modified for real applications where different implementation issues may arise and make the nominal formulation of the SCFO (\ref{eq:SCFO1})-(\ref{eq:SCFO7}) either difficult or impossible to satisfy. Here, we list the key issues that we have encountered or expect to encounter:

\begin{itemize}
\item \emph{Physical degradation}: When the experimental optimization problem involves a physical system, there is a high likelihood that the system at hand will not function identically on the day that it is first used and on some day a year or two later. This essentially means that the underlying experimental functions $\phi_p$ and $g_{p,j}$ will evolve and change over time, thereby changing the optimization problem.
\item \emph{Unknown Lipschitz constants}: The Lipschitz constants $\kappa_{p,ji}$ and $M_{\phi,i_1 i_2}$ will rarely be known in practice. It follows that methods that either bypass using these constants or can reliably estimate their values are required.
\item \emph{Noise}: The function values $\phi_p ({\bf u}_k)$ and $g_{p,j} ({\bf u}_k)$ will often be corrupted by noise or estimation error in many experimental settings, thereby corrupting Condition (\ref{eq:SCFO1}), which relies on the exact value of $g_{p,j} ({\bf u}_k)$, and making the verification of cost decrease, $\phi_p ({\bf u}_{k+1}) - \phi_p ({\bf u}_{k}) < 0$, difficult.
\item \emph{Gradient estimation}: Conditions (\ref{eq:SCFO4}), (\ref{eq:SCFO6}), and (\ref{eq:SCFO7}) all rely on using gradients of experimental functions, which will be unavailable. In practice, one must work with estimates instead.
\item \emph{Sufficient excitation}: As a key requirement for estimating the gradient of an experimental function at ${\bf u}_k$ is being able to perturb the decision variables ${\bf u}$ locally around ${\bf u}_k$, there needs to be some guarantee of this being possible without violating the safety constraints.
\item \emph{Conservatism and slow convergence}: The SCFO are sufficient for feasible-side global convergence and, not surprisingly, conservative by necessity so as to enforce the guarantees that they bring. In some applications, it may be desired to trade these guarantees for faster convergence, which may involve relaxing certain Lipschitz constants or allowing certain experimental constraints to be violated temporarily.
\end{itemize}

The goal of this document is to address all of these issues and to show how the SCFO may be modified to accomodate each of these challenges. The remainder of this work will treat these issues one-by-one as follows:

\begin{itemize}
\item Section \ref{sec:degrade}: Physical Degradation,
\item Section \ref{sec:lip}: Estimation of Lipschitz Constants and \newline Potential Relaxations in the Lipschitz Bounds,
\item Section \ref{sec:bounds}: Measurement/Estimation Noise in the \newline Experimental Function Values,
\item Section \ref{sec:gradest}: Working with Bounded Gradient Estimates,
\item Section \ref{sec:suffexc}: Preserving Feasibility While Allowing for \newline Sufficient Excitation,
\item Section \ref{sec:softcon}: Incorporating Soft Constraints,
\item Section \ref{sec:numcost}: Incorporating a Numerical Cost Function.
\end{itemize}

\noindent Because all of these issues are expected to arise concurrently in real application, we have decided that the presentation not be modular -- i.e., that each section should build on the results of its predecessor. While the individual steps to modify the SCFO are, for the most part, simple and intuitive, their final accumulation yields some rather convoluted formulations, which may appear incomprehensible unless the sections are read and understood in the order that they are presented. 

The final two sections then seek to consolidate the preceding derivations by

\begin{itemize}
\item providing the full implementable SCFO procedure in Section \ref{sec:complete}, and
\item proving the theoretical safety properties of the procedure in Section \ref{sec:gcproof}.
\end{itemize}

\noindent We confess that we have chosen, given the complexity of the full modified SCFO, not to derive a modified proof of feasible-side global convergence in the current version of the document and have limited the theoretical results to feasible-side iterates only. A future version will seek to prove global convergence as well -- for the time being, we have been unable to find a satisfactory set of assumptions that allow us to prove this result.

\section{Physical Degradation}
\label{sec:degrade}

It is almost inevitable that any experimental optimization problem involving some sort of physical system will be subject to degradation effects. Here, we will use the term ``degradation'' -- also known in both the literature and industry as ``process drift'' \cite{Divoky1995,Wold2008}  -- in a fairly general sense to denote changes that occur in the system over time. While such changes can often be unwanted, such as damage due to repeated loading in a structure \cite{Lee1998} or the growth of oxide scales in a solid oxide fuel cell \cite{Brylewski2001}, they need not be. For example, in steady-state optimization problems where there are both fast and slow time-scale effects but where the influence of the fast-scale effects is greater, one may successfully optimize at a higher frequency while treating the minor effects that appear gradually on the slower time scale as a sort of ``degradation'' \cite{Bunin2012d}.

To express this phenomenon in mathematical terms, we may consider the experimental functions\footnote{While one could generalize and let the numerical constraints $g_j$ be subject to degradation as well, in our experience these are usually simple mathematical relationships that remain the same all throughout operation.} as being functions not only of the decision variables ${\bf u}$ but also of the time. Denoting this time by $\tau$, we restate the experimental optimization problem (\ref{eq:mainprob}) as

\vspace{-2mm}
\begin{equation}\label{eq:mainprobdeg}
\begin{array}{rll}
\mathop {{\rm{minimize}}}\limits_{\bf{u}} & \phi_p ({\bf{u}},\tau) & \\
{\rm{subject}}\hspace{1mm}{\rm{to}} & g_{p,j}({\bf{u}},\tau) \leq 0, & j = 1,...,n_{g_p} \\
 & g_{j}({\bf{u}}) \leq 0, & j = 1,...,n_{g} \\
& {\bf{u}}^L \preceq {\bf u} \preceq {\bf u}^U. &
\end{array}
\end{equation}

\noindent This is basically tantamount to assuming that there remains a deterministic functional relationship between the decision variables and the cost and constraints for the ``degraded'' problem at a given instant in time $\tau$, which seems reasonable as degradation is generally a deterministic process.

Note that (\ref{eq:mainprobdeg}) is equivalent in form to the nominal version (\ref{eq:mainprob}), save that the variable $\tau$ is not a decision variable but one that grows freely of any influence. We may, however, take advantage of this similarity to rederive the SCFO for (\ref{eq:mainprobdeg}) in a manner identical to how they were derived for (\ref{eq:mainprob}).

\subsection{Feasibility in the Presence of Degradation}

Denote by $\tau_k$ the time of the experiment at ${\bf u}_k$. To guarantee feasibility at the experimental iteration $k+1$ for a given experimental constraint $g_{p,j}$, it is necessary to guarantee

\vspace{-2mm}
\begin{equation}\label{eq:feastriv}
g_{p,j} ({\bf u}_{k+1},\tau_{k+1}) \leq 0.
\end{equation}

\noindent As in the nominal case, this condition is intractable since the function $g_{p,j}$ is unknown. In the same manner as before \cite{Bunin2013SIAM}, one may avoid this issue by exploiting the Lipschitz bound\footnote{Throughout the document, we will call upon Lipschitz bounds several times without giving their derivations. The interested reader is referred to \cite{Bunin:Lip} to see how they are derived.}

\vspace{-2mm}
\begin{equation}\label{eq:feas2}
\begin{array}{l}
g_{p,j} ({\bf u}_{k+1},\tau_{k+1}) \leq g_{p,j} ({\bf u}_{k},\tau_{k}) \\
\hspace{10mm}\displaystyle  + \kappa_{p,j\tau} \left( \tau_{k+1} - \tau_k \right) + \sum_{i=1}^{n_u} \kappa_{p,ji} | u_{k+1,i} - u_{k,i} |,
\end{array}
\end{equation}

\noindent where $\kappa_{p,j\tau}$ may be seen as the extension of the Lipschitz constant to the degradation effects over time, and is implicitly defined as

\vspace{-2mm}
\begin{equation}\label{eq:lipdeg}
- \kappa_{p,j\tau} \leq \frac{\partial g_{p,j}}{\partial \tau} \Big |_{({\bf u},\tau)} \leq \kappa_{p,j\tau}, \;\; \forall ({\bf u},\tau) \in \mathcal{I}_\tau,
\end{equation}

\noindent with $\mathcal{I}_\tau$ the \emph{temporal experimental space} defined as 

\vspace{-2mm}
\begin{equation}\label{eq:tempexpspace}
\mathcal{I}_\tau = \mathcal{I} \times \{ \tau : \tau_0 \leq \tau \leq \overline \tau \},
\end{equation}

\noindent i.e., as the Cartesian product of the standard experimental space with time. To maintain the boundedness of $\mathcal{I}_\tau$, we will suppose an upper bound on the time considered, i.e., that $\overline \tau < \infty$. Naturally, the definitions of the other Lipschitz constants also need to be modified for (\ref{eq:feas2}) to be valid:

\vspace{-2mm}
\begin{equation}\label{eq:lipcondeg}
- \kappa_{p,ji} < \frac{\partial g_{p,j}}{\partial u_i} \Big |_{({\bf u},\tau)} < \kappa_{p,ji}, \;\; \forall ({\bf u},\tau) \in \mathcal{I}_\tau.
\end{equation}

The Lipschitz constants for the cost function derivatives will also need to be modified accordingly:

\vspace{-2mm}
\begin{equation}\label{eq:lipcondeg2}
-M_{\phi,i_1 i_2} < \frac{\partial^2 \phi_p}{\partial u_{i_2} \partial u_{i_1} } \Big |_{({\bf u},\tau)} < M_{\phi,i_1 i_2}, \;\; \forall ({\bf u},\tau) \in \mathcal{I}_\tau.
\end{equation}

\noindent Note that we will not require the Lipschitz constants for the degradation in (\ref{eq:lipdeg}) to be strict like the others, as this will allow us to return to the no-degradation case easily by setting $\kappa_{p,j\tau} := 0$.

Prior to advancing the analysis further, let us reflect on the meaning of the Lipschitz constant defined in (\ref{eq:lipdeg}). Basically, this is a bound on the ``slope'' of the drift or degradation that can occur in the experimental function values at any time and for any choice of decision variables. This value will, of course, never be known exactly, but reasonable guesses should be possible in many of the engineering contexts where degradation would occur, especially for those applications where some sort of effort has already been put into modeling the nature of the degradation. As degradation effects also tend to be relatively slow \cite{Divoky1995}, it is expected that $\kappa_{p,j\tau}$ be relatively small for most applications.

Clearly, the version of (\ref{eq:SCFO1}) that holds in the degradation case may be derived by simply forcing the upper bound of (\ref{eq:feas2}) to be nonpositive:

\vspace{-2mm}
\begin{equation}\label{eq:SCFO1deg}
\begin{array}{l}
g_{p,j} ({\bf u}_{k},\tau_{k}) + \kappa_{p,j\tau} \left( \tau_{k+1} - \tau_k \right) \\
\hspace{30mm}\displaystyle   + \sum_{i=1}^{n_u} \kappa_{p,ji} | u_{k+1,i} - u_{k,i} | \leq 0,
\end{array}
\end{equation}

\noindent as this implies (\ref{eq:feastriv}), with strict inequality holding whenever $g_{p,j} ({\bf u}_{k},\tau_{k}) < 0$.

Note, however, that while one could always preserve feasibility for the nominal case by choosing ${\bf u}_{k+1}$ to be sufficiently close to ${\bf u}_k$, this is no longer the case here -- (\ref{eq:SCFO1deg}) is clearly impossible to satisfy for any choice of ${\bf u}_{k+1}$ if $-g_{p,j} ({\bf u}_{k},\tau_{k}) < \kappa_{p,j\tau} \left( \tau_{k+1} - \tau_k \right)$. This motivates the need for a sort of backtracking, or choosing a reference point other than ${\bf u}_k$ to use in the SCFO.

\subsection{Choice of Reference Point}

Consider modifying (\ref{eq:SCFO1deg}) by using the reference iteration ${k^*}$ instead of $k$:

\vspace{-2mm}
\begin{equation}\label{eq:SCFO1degref}
\begin{array}{l}
g_{p,j} ({\bf u}_{k^*},\tau_{k^*}) + \kappa_{p,j\tau} \left( \tau_{k+1} - \tau_{k^*} \right) \\
\hspace{30mm}\displaystyle   + \sum_{i=1}^{n_u} \kappa_{p,ji} | u_{k+1,i} - u_{k^*,i} | \leq 0,
\end{array}
\end{equation}

\noindent with the experimental iteration $k^* \in [0,k]$ chosen as

\vspace{-2mm}
\begin{equation}\label{eq:kstar}
\begin{array}{rl}
k^* := {\rm arg} \mathop {\rm maximize}\limits_{\bar k \in [0,k]} & \bar k  \\
{\rm{subject}}\;{\rm{to}} & g_{p,j} ({\bf u}_{\bar k},\tau_{\bar k}) \\
& \hspace{5mm}+ \kappa_{p,j\tau} \left( \tau_{k+1} - \tau_{\bar k} \right) \leq 0 \\
& \forall j = 1,...,n_{g_p},
\end{array}
\end{equation}

\noindent i.e., as the most recent experimental iteration for which feasibility for the experiment at $k+1$ may still be guaranteed.

The motivation for choosing the most recent iteration comes from the innate properties of the SCFO -- since the cost is lowered monotonically from one experiment to the next, the most recent experiment should, in the nominal case, be the most optimal one and thus the best to use as a reference for further improvement. The motivation for wanting to satisfy the constraint $g_{p,j} ({\bf u}_{\bar k},\tau_{\bar k}) + \kappa_{p,j\tau} \left( \tau_{k+1} - \tau_{\bar k} \right) \leq 0$ should be clear -- it is impossible to guarantee that $g_{p,j}({\bf u}_{k+1},\tau_{k+1}) \leq 0$ if this is not the case.

However, there is still no guarantee that this constraint will be satisfied for any choice of $k^*$, and for problems where degradation is particularly fast it may occur that one has no choice but to go ahead with the next experiment without a guarantee that the experimental iterate will be feasible. If this is preferable to shutting down the physical system out of safety concerns, then one may attempt to choose $k^*$ as

\vspace{-2mm}
\begin{equation}\label{eq:kstar2}
k^* := {\rm arg} \mathop {\rm minimize}\limits_{\bar k \in [0,k]} \;\; \mathop {\max} \limits_{j = 1,...,n_{g_p}} \left[ \begin{array}{l} g_{p,j} ({\bf u}_{\bar k},\tau_{\bar k}) \\ + \kappa_{p,j\tau} \left( \tau_{k+1} - \tau_{\bar k} \right) \end{array} \right] ,
\end{equation}

\noindent by essentially choosing the point that has the lowest worst-case constraint value. Because (\ref{eq:kstar2}) is sensitive to scaling, it is recommended that the different constraint functions be scaled beforehand. Following this step, the most sensible action, from the point of view of safety, would be to set ${\bf u}_{k+1} := {\bf u}_{k^*}$, as one cannot guarantee feasibility at $({\bf u}_{k+1}, \tau_{k+1})$ and changing the decision variables could potentially make things even worse. If, then, the observed value $-g_{p,j} ({\bf u}_{k^*},\tau_{k+1})$ is greater than $\kappa_{p,j\tau} \left( \tau_{k+2} - \tau_{k+1} \right)$, it will be possible for the SCFO to once more guarantee feasibility at $k+2$.

Yet another alternative for choosing $k^*$ is to define, based on \emph{a priori} knowledge, some ``safe point'' and use this point as ${\bf u}_k^*$ when (\ref{eq:kstar}) fails and when (\ref{eq:kstar2}) is deemed too risky.

Not surprisingly, some additional assumptions on the degradation, or on the existence of a safe point, will be necessary to make any rigorous theoretical claims regarding feasibility. These will be addressed in Section \ref{sec:gcproof}. As a final remark regarding the use of the reference ${\bf u}_{k^*}$, note that Conditions (\ref{eq:SCFO4})-(\ref{eq:SCFO7}) must be modified accordingly:

\vspace{-2mm}
\begin{equation}\label{eq:SCFO4ref}
\begin{array}{l}
\nabla g_{p,j} ({\bf u}_{k^*})^T ({\bf u}_{k+1} - {\bf u}_{k^*}) \leq -\delta_{g_p,j} \vspace{1mm} \\
\hspace{30mm} \forall j: g_{p,j} ({\bf u}_{k^*}) \geq -\epsilon_{p,j},
\end{array}
\end{equation}

\vspace{-2mm}
\begin{equation}\label{eq:SCFO5ref}
\nabla g_{j} ({\bf u}_{k^*})^T ({\bf u}_{k+1} - {\bf u}_{k^*}) \leq -\delta_{g,j}, \;\; \forall j: g_{j} ({\bf u}_{k^*}) \geq -\epsilon_{j},
\end{equation}

\vspace{-2mm}
\begin{equation}\label{eq:SCFO6ref}
\nabla \phi_{p} ({\bf u}_{k^*})^T ({\bf u}_{k+1} - {\bf u}_{k^*}) \leq -\delta_\phi,
\end{equation}

\vspace{-2mm}
\begin{equation}\label{eq:SCFO7ref}
\begin{array}{l}
\nabla \phi_p ({\bf u}_{k^*})^T ({\bf u}_{k+1} - {\bf u}_{k^*}) + \vspace{1mm} \\
\hspace{5mm} \displaystyle \frac{1}{2} \mathop {\sum} \limits_{i_1=1}^{n_u} \mathop {\sum} \limits_{i_2=1}^{n_u} M_{\phi,i_1 i_2} \Bigg | \begin{array}{l} (u_{k+1,i_1} - u_{{k^*},i_1}) \\ \hspace{5mm} (u_{k+1,i_2} - u_{{k^*},i_2}) \end{array} \Bigg |  \leq 0.
\end{array}
\end{equation}

\subsection{Projecting in the Presence of Degradation}

We now turn to modifying (\ref{eq:SCFO4ref}) and (\ref{eq:SCFO6ref}) to account for degradation. Following the same philosophy as before -- i.e., treating $\tau$ as an additional, albeit not controlled, variable -- we propose:

\vspace{-2mm}
\begin{equation}\label{eq:SCFO4deg}
\begin{array}{l}
\nabla g_{p,j} ({\bf u}_{k^*},\tau_{k+1})^T  \left[ \begin{array}{c} {\bf u}_{k+1} - {\bf u}_{k^*} \\ 0 \end{array} \right] \leq -\delta_{g_p,j}, \vspace{1mm} \\
\hspace{5mm} \forall j: g_{p,j} ({\bf u}_{k^*},\tau_{k^*}) + \kappa_{p,j \tau} (\tau_{k+1} - \tau_{k^*}) \geq -\epsilon_{p,j},
\end{array}
\end{equation}

\vspace{-2mm}
\begin{equation}\label{eq:SCFO6deg}
\nabla \phi_{p} ({\bf u}_{k^*},\tau_{k+1})^T  \left[ \begin{array}{c} {\bf u}_{k+1} - {\bf u}_{k^*} \\ 0 \end{array} \right] \leq -\delta_{\phi},
\end{equation}

\noindent where the degrees of freedom, ${\bf u}_{k+1}$, are used to both decrease the cost and stay away from the $\epsilon$-active constraints locally for the functions in their one-step ahead degraded state.

A couple of remarks are necessary:

\begin{itemize}
\item The gradients $\nabla g_{p,j} ({\bf u}_{k^*},\tau_{k+1})$ and $\nabla \phi_{p} ({\bf u}_{k^*},\tau_{k+1})$ are unknown and must be estimated based on the available data. Our current algorithm of choice for carrying out this task is that of \cite{Bunin2013a}. The incorporation of gradient estimates into the SCFO is discussed fully in Section \ref{sec:gradest}.
\item We choose to use $\tau_{k+1}$ instead of $\tau_{k^*}$ in the projection so as to enforce local descent for the functions as they are expected to be at the future iteration $k+1$. For the same reasons, we would prefer to use the most up-to-date $\epsilon$-activity condition $g_{p,j} ({\bf u}_{k^*},\tau_{k+1}) \geq -\epsilon_{p,j}$. However, since $g_{p,j} ({\bf u}_{k^*},\tau_{k+1})$ cannot be measured, we work instead with its upper Lipschitz bound $g_{p,j} ({\bf u}_{k^*},\tau_{k^*}) + \kappa_{p,j \tau} (\tau_{k+1} - \tau_{k^*})$.
\end{itemize}

\subsection{Monotonic Cost Improvement with Degradation}

Let us modify Condition (\ref{eq:SCFO7ref}), recalling first the Lipschitz bound

\vspace{-2mm}
\begin{equation}\label{eq:bound2Udeg}
\begin{array}{l}
\phi_p({\bf u}_{k+1},\tau_{k+1}) - \phi_p({\bf u}_{k^*},\tau_{k+1}) \leq \vspace{1mm} \\
\hspace{5mm} \nabla \phi_p({\bf u}_{k^*},\tau_{k+1})^T \left[ \begin{array}{c} {\bf u}_{k+1} - {\bf u}_{k^*} \\ 0 \end{array} \right]  \vspace{1mm} \\
\hspace{5mm} +\displaystyle \frac{1}{2} \sum_{i_1=1}^{n_u} \sum_{i_2=1}^{n_u} M_{\phi,i_1 i_2} \Bigg | \begin{array}{l} (u_{k+1,i_1} - u_{{k^*},i_1}) \\ \hspace{5mm} (u_{k+1,i_2} - u_{{k^*},i_2}) \end{array} \Bigg |,
\end{array}
\end{equation}

\noindent where we ultimately aim to achieve a decrease between $\phi_p({\bf u}_{k^*},\tau_{k+1})$ and $\phi_p({\bf u}_{k+1},\tau_{k+1})$, i.e., to change ${\bf u}$ in a manner that leads to an improvement in the cost value at the future instance $\tau_{k+1}$.

To force this decrease, it is then sufficient to enforce

\vspace{-2mm}
\begin{equation}\label{eq:costdecdeg}
\begin{array}{l}
 \nabla \phi_p({\bf u}_{k^*},\tau_{k+1})^T \left[ \begin{array}{c} {\bf u}_{k+1} - {\bf u}_{k^*} \\ 0 \end{array} \right]  \vspace{1mm} \\
\hspace{5mm} +\displaystyle \frac{1}{2} \sum_{i_1=1}^{n_u} \sum_{i_2=1}^{n_u} M_{\phi,i_1 i_2} \Bigg | \begin{array}{l} (u_{k+1,i_1} - u_{{k^*},i_1}) \\ \hspace{5mm} (u_{k+1,i_2} - u_{{k^*},i_2}) \end{array} \Bigg | \leq 0,
\end{array}
\end{equation}
 
\noindent as this would imply $\phi_p({\bf u}_{k+1},\tau_{k+1}) - \phi_p({\bf u}_{k^*},\tau_{k+1}) < 0$ when ${\bf u}_{k+1} \neq {\bf u}_{k^*}$. Note that this is nearly identical to the degradation-free case, with the only difference being in the gradient.

\subsection{The Project-and-Filter Approach in the Presence of Degradation}

The project-and-filter approach may now be modified accordingly. First, the conditions (\ref{eq:SCFO4deg}) and (\ref{eq:SCFO6deg}) are used in place of (\ref{eq:SCFO4}) and (\ref{eq:SCFO6}) in the projection, with (\ref{eq:SCFO5ref}) also replacing (\ref{eq:SCFO5}) to account for the choice of reference:

\vspace{-2mm}
\begin{equation}\label{eq:projdeg}
\begin{array}{rl}
\bar {\bf u}_{k+1}^* := \;\;\;\;\;\;\;\;\; & \vspace{1mm}\\
 {\rm arg} \mathop {\rm minimize}\limits_{{\bf u}} & \| {\bf u} - {\bf u}_{k+1}^* \|_2^2  \vspace{1mm} \\
{\rm{subject}}\;{\rm{to}} & \nabla g_{p,j} ({\bf u}_{k^*},\tau_{k+1})^T \left[ \hspace{-1mm} \begin{array}{c} {\bf u} - {\bf u}_{k^*} \\ 0 \end{array} \hspace{-1mm} \right] \leq -\delta_{g_p,j} \vspace{1mm} \\
& \forall j: \begin{array}{l} g_{p,j} ({\bf u}_{k^*},\tau_{k^*}) \\ + \kappa_{p,j\tau} (\tau_{k+1} - \tau_{k^*} ) \end{array} \geq -\epsilon_{p,j} \vspace{1mm} \\
 & \nabla g_{j} ({\bf u}_{k^*})^T ({\bf u} - {\bf u}_{k^*}) \leq -\delta_{g,j} \vspace{1mm} \\
& \forall j : g_{j}({\bf u}_{k^*}) \geq -\epsilon_{j} \vspace{1mm} \\
 & \nabla \phi_{p} ({\bf u}_{k^*},\tau_{k+1})^T  \left[ \begin{array}{c} {\bf u} - {\bf u}_{k^*} \\ 0 \end{array} \right] \leq -\delta_{\phi} \vspace{1mm} \\
 & {\bf u}^L \preceq {\bf u} \preceq {\bf u}^U.
\end{array}
\end{equation}

\noindent Noting that the future iterate is now defined as

\vspace{-2mm}
\begin{equation}\label{eq:inputfilterref}
{\bf u}_{k+1} := {\bf u}_{k^*} + K_k \left( \bar {\bf u}_{k+1}^* - {\bf u}_{k^*} \right),
\end{equation}

\noindent substitution of this law into (\ref{eq:SCFO1degref}), (\ref{eq:SCFO2}), and (\ref{eq:costdecdeg}) then yields the following constraints on $K_k \in [0,1]$:

\vspace{-2mm}
\begin{equation}\label{eq:SCFO1ideg}
\begin{array}{l}
g_{p,j} ({\bf u}_{k^*},\tau_{k^*}) + \kappa_{p,j\tau} \left( \tau_{k+1} - \tau_{k^*} \right) \\
\displaystyle   + K_k \sum_{i=1}^{n_u} \kappa_{p,ji} | \bar u_{k+1,i}^* - u_{k^*,i} | \leq 0, \;\; \forall j = 1,...,n_{g_p} \;,
\end{array}
\end{equation}

\vspace{-2mm}
\begin{equation}\label{eq:SCFO2ideg}
g_{j}({\bf u}_{k^*} + K_k ( \bar {\bf u}_{k+1}^* - {\bf u}_{k^*} )) \leq 0, \;\; \forall j = 1,...,n_g,
\end{equation}

\vspace{-2mm}
\begin{equation}\label{eq:SCFO7ideg}
\begin{array}{l}
 \nabla \phi_p({\bf u}_{k^*},\tau_{k+1})^T  \left[ \begin{array}{c} \bar {\bf u}_{k+1}^* - {\bf u}_{k^*} \\ 0 \end{array} \right]   \vspace{1mm} \\
\hspace{5mm} +\displaystyle \frac{K_k}{2} \sum_{i_1=1}^{n_u} \sum_{i_2=1}^{n_u} M_{\phi,i_1 i_2} \Bigg | \begin{array}{l} (\bar u_{k+1,i_1}^* - u_{{k^*},i_1}) \\ \hspace{5mm} ( \bar u_{k+1,i_2}^* - u_{{k^*},i_2}) \end{array} \Bigg |  \leq 0.
\end{array}
\end{equation}

\subsection{Example}

Consider the constructed example used in \cite{Bunin2013SIAM}:

\vspace{-2mm}
\begin{equation}\label{eq:exprob}
\begin{array}{rl}
\mathop {{\rm{minimize}}}\limits_{u_1,u_2} & \phi_{p}({\bf{u}}) := (u_1-0.5)^2 + (u_2-0.4)^2 \\
{\rm{subject}}\hspace{1mm}{\rm{to}} & g_{p,1}({\bf{u}}) := -6u^2_1 - 3.5u_1 + u_2 -0.6 \le 0 \vspace{1mm} \\
 & g_{p,2}({\bf{u}}) := 2u^2_1 + 0.5u_1 + u_2 -0.75 \le 0 \vspace{1mm} \\
 & g_{1}({\bf{u}}) := -u^2_1 - (u_2-0.15)^2 +0.01 \le 0 \vspace{1mm} \\
& -0.5 \leq u_1 \leq 0.5 \\
& 0 \leq u_2 \leq 0.8,
\end{array}
\end{equation}

\noindent for which we reproduce the solution obtained by the nominal SCFO methodology in Fig. \ref{fig:ex0}, referring the reader to \cite{Bunin2013SIAM} for all details regarding choice of initial experiment, Lipschitz constants, etc.

\begin{figure*}
\begin{center}
\includegraphics[width=16cm]{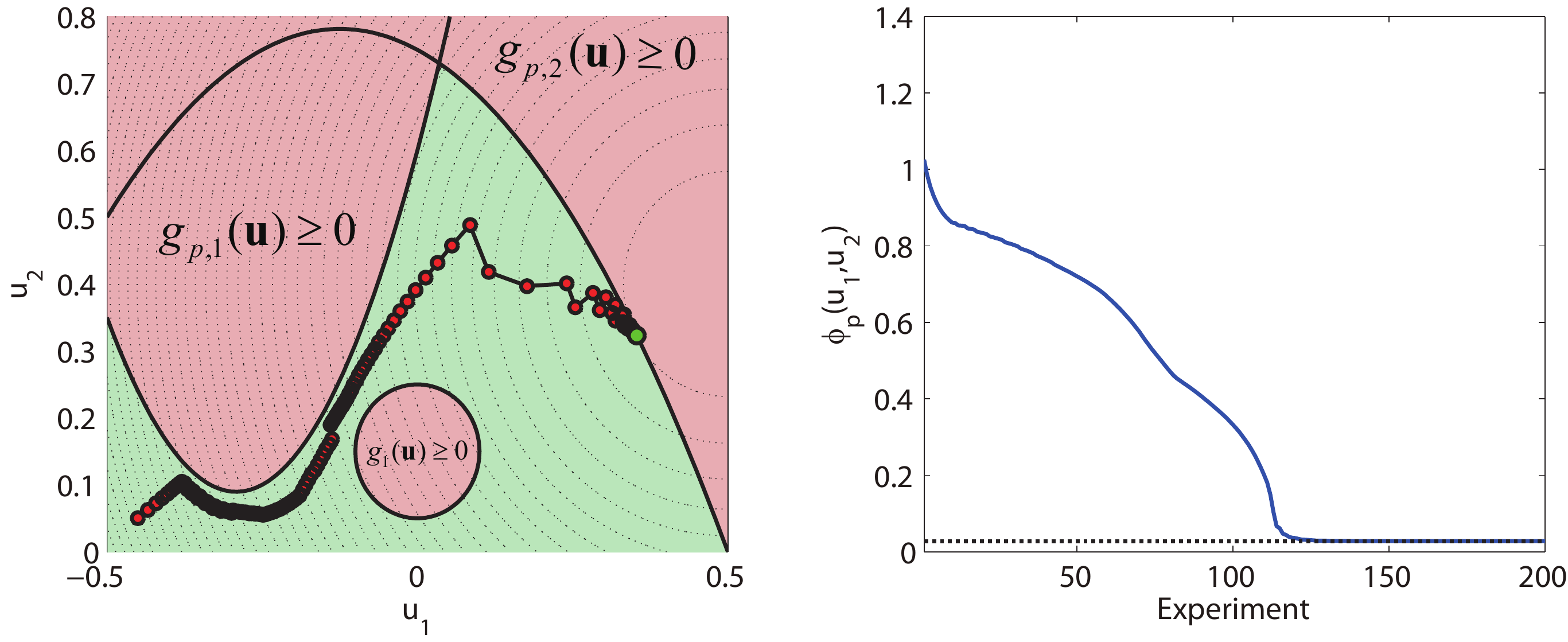}
\caption{Chain of experiments (red points) generated by applying the nominal SCFO methodology to Problem (\ref{eq:exprob}). The green point denotes the only local minimum. The dotted lines on the left plot denote the contours of the cost function, while the constant dotted line on the right denotes the cost value at the minimum.}
\label{fig:ex0}
\end{center}
\end{figure*}

To study the effects of degradation, let us suppose, for simplicity, that $\tau = k$ and that the experiments are carried out at a constant frequency. A version of (\ref{eq:exprob}) that degrades with time is constructed:

\vspace{-2mm}
\begin{equation}\label{eq:exdeg}
\begin{array}{rl}
\mathop {\rm minimize}\limits_{u_1,u_2} & (u_1 - 0.5)^2 + (u_2 - 0.4 - \frac{\tau}{500})^2 \vspace{1mm} \\
{\rm{subject}}\;{\rm{to}} & -6u_1^2 - (3.5+\frac{\tau}{500})u_1 + u_2 -0.6 \leq 0 \vspace{1mm} \\
&  2u_1^2 + 0.5u_1 + u_2 - 0.75 \pm \frac{\tau}{500} \leq 0 \vspace{1mm} \\
&  -u_1^2 - (u_2-0.15)^2 + 0.01 \leq 0 \vspace{1mm} \\
& -0.5 \leq u_1 \leq 0.5 \vspace{1mm} \\
& 0 \leq u_2 \leq 0.8,
\end{array}
\end{equation}

\noindent with $\pm$ denoting two different scenarios -- one where the feasible space with respect to $g_{p,2}$ shrinks due to degradation ($+$), and one where it expands ($-$). With the time considered limited to $\overline \tau = 200$ (i.e., to 200 experimental iterations), the Lipschitz constants over the resulting $\mathcal{I}_{\tau}$ are appropriately chosen as

\vspace{-2mm}
\begin{equation}\label{eq:exlipdeg}
\begin{array}{lll}
\kappa_{p,11} = 10, & \kappa_{p,12} = 2, & \kappa_{p,1\tau} = \frac{1}{1000}, \vspace{1mm} \\
\kappa_{p,21} = 3, & \kappa_{p,22} = 2, & \kappa_{p,2\tau} = \frac{1}{500}, \vspace{1mm} \\
M_{\phi,11} = 3, & M_{\phi,12} = 1, &  \vspace{1mm} \\
M_{\phi,21} = 1, & M_{\phi,22} = 3. &  
\end{array}
\end{equation}

Using these constants and the modifications described in (\ref{eq:projdeg})-(\ref{eq:SCFO7ideg}), with $k^*$ chosen via (\ref{eq:kstar}) and, if this is not possible, via (\ref{eq:kstar2}), the results obtained for the ($+$) and ($-$) scenarios are illustrated geometrically in Figs. \ref{fig:degA} and \ref{fig:degB}, with the corresponding cost function values given in Fig. \ref{fig:degcost}. Though it may not be obvious from the figures, we note that the scheme is able to satisfy the constraints at all iterations for this problem, sometimes doing so at the cost of optimality. In the ($+$) case, one sees that the scheme often does not have enough time to track the changing optimum due to the continuing need to return to the shrinking feasible region -- this becomes particularly pertinent after Experiment 150.

While this approximate satisfaction of the SCFO is largely satisfactory -- certainly for the ($-$) case -- it should again be emphasized that \emph{no guarantees are possible in the general degradation case without additional assumptions on the nature of the degradation}. That we cannot track the changing optimum perfectly should be evident, as being able to do so would be tantamount to solving Problem (\ref{eq:exdeg}), which changes at every experiment, in only a single experimental iteration. That we cannot guarantee feasibility throughout should be evident as well -- if we ran the optimization for longer (to, say, $k = 1000$), then the feasible space of the problem would simply shrink and vanish.

\begin{figure*}
\begin{center}
\includegraphics[width=16cm]{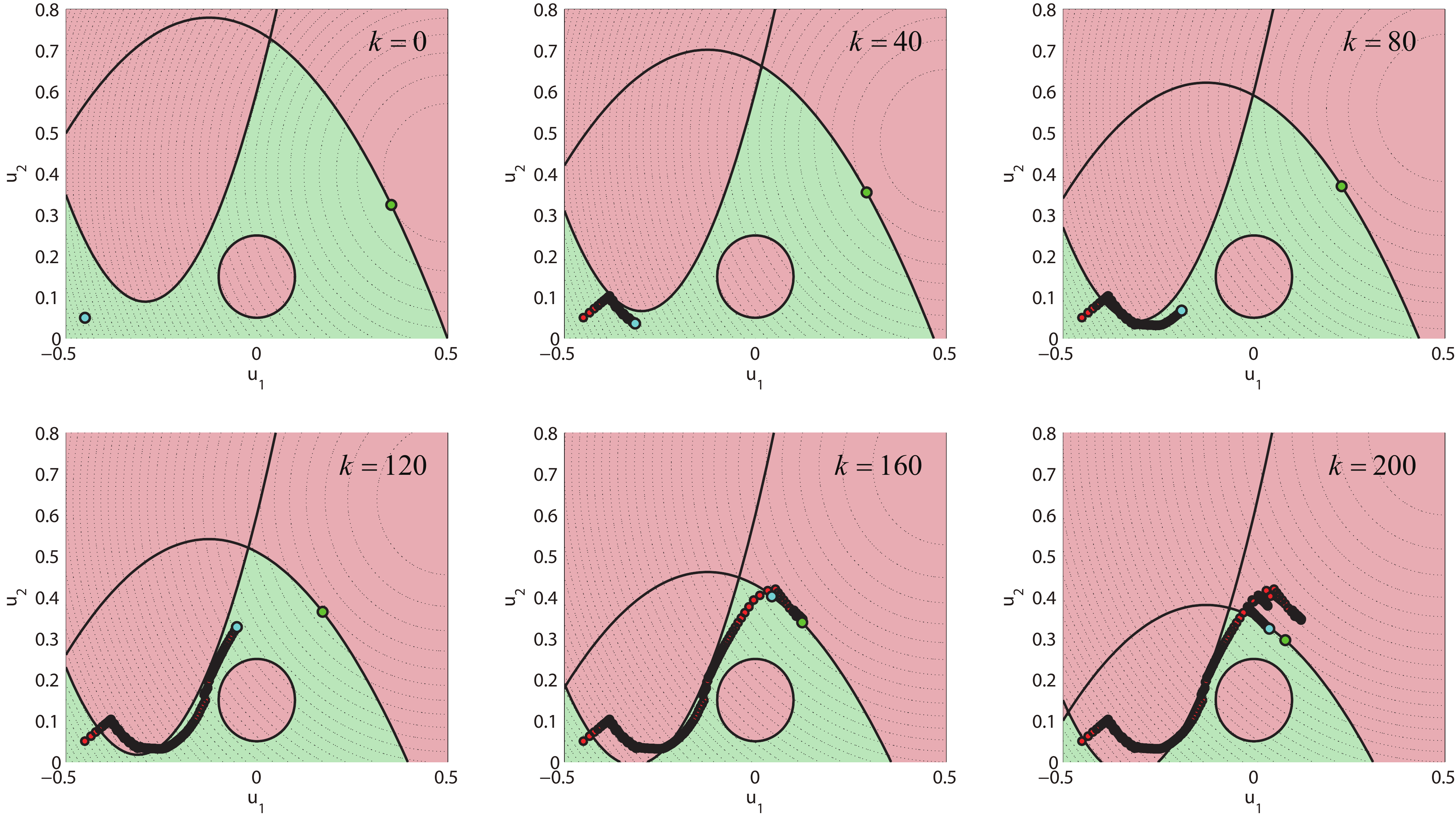}
\caption{Chain of experiments generated by applying the modified SCFO methodology to Problem (\ref{eq:exdeg}) for the ($+$) scenario. The blue point is used to highlight the current experimental iterate.}
\label{fig:degA}
\end{center}
\end{figure*}

\begin{figure*}
\begin{center}
\includegraphics[width=16cm]{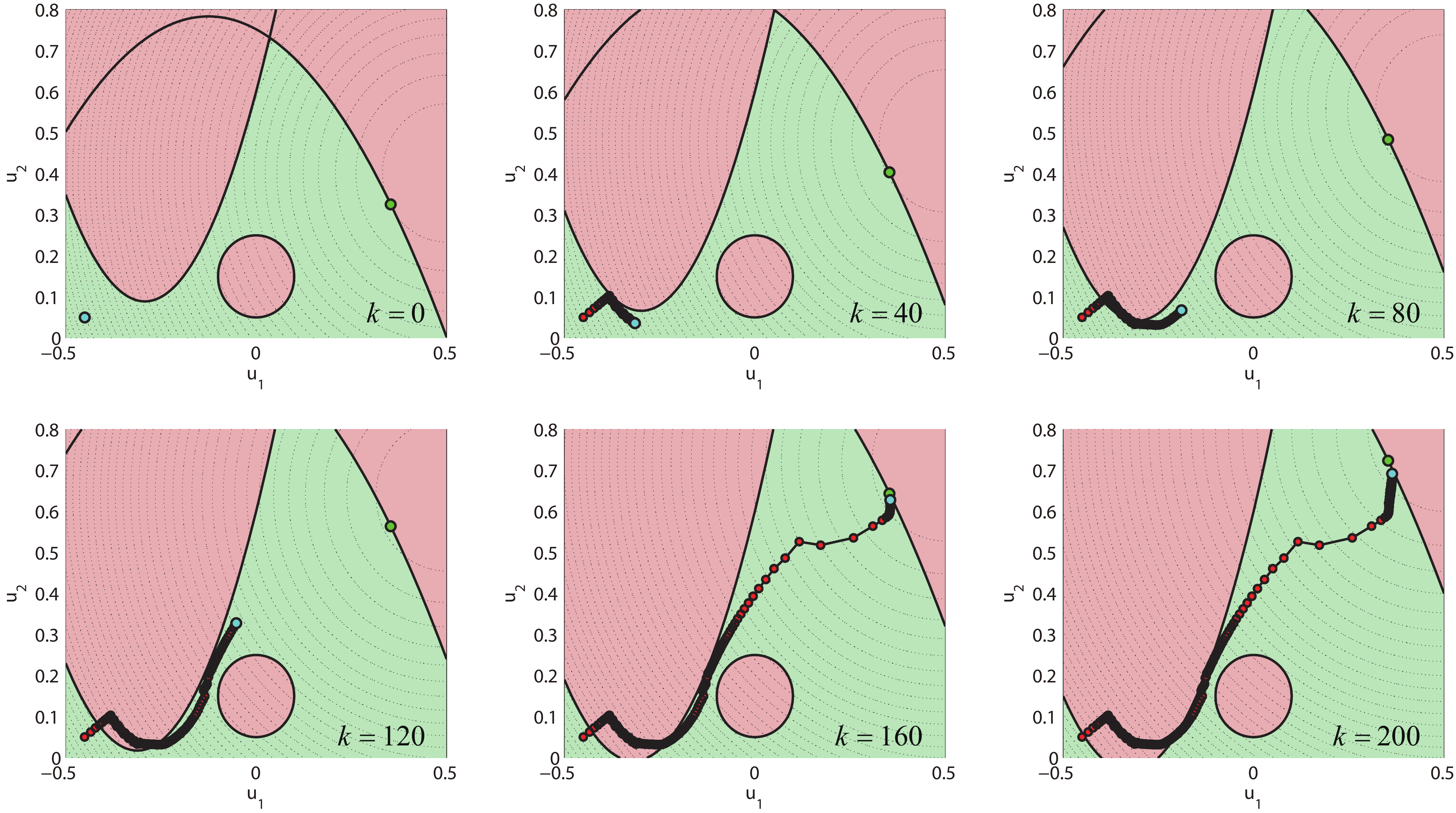}
\caption{Chain of experiments generated by applying the modified SCFO methodology to Problem (\ref{eq:exdeg}) for the ($-$) scenario.}
\label{fig:degB}
\end{center}
\end{figure*} 

\begin{figure}
\begin{center}
\includegraphics[width=8cm]{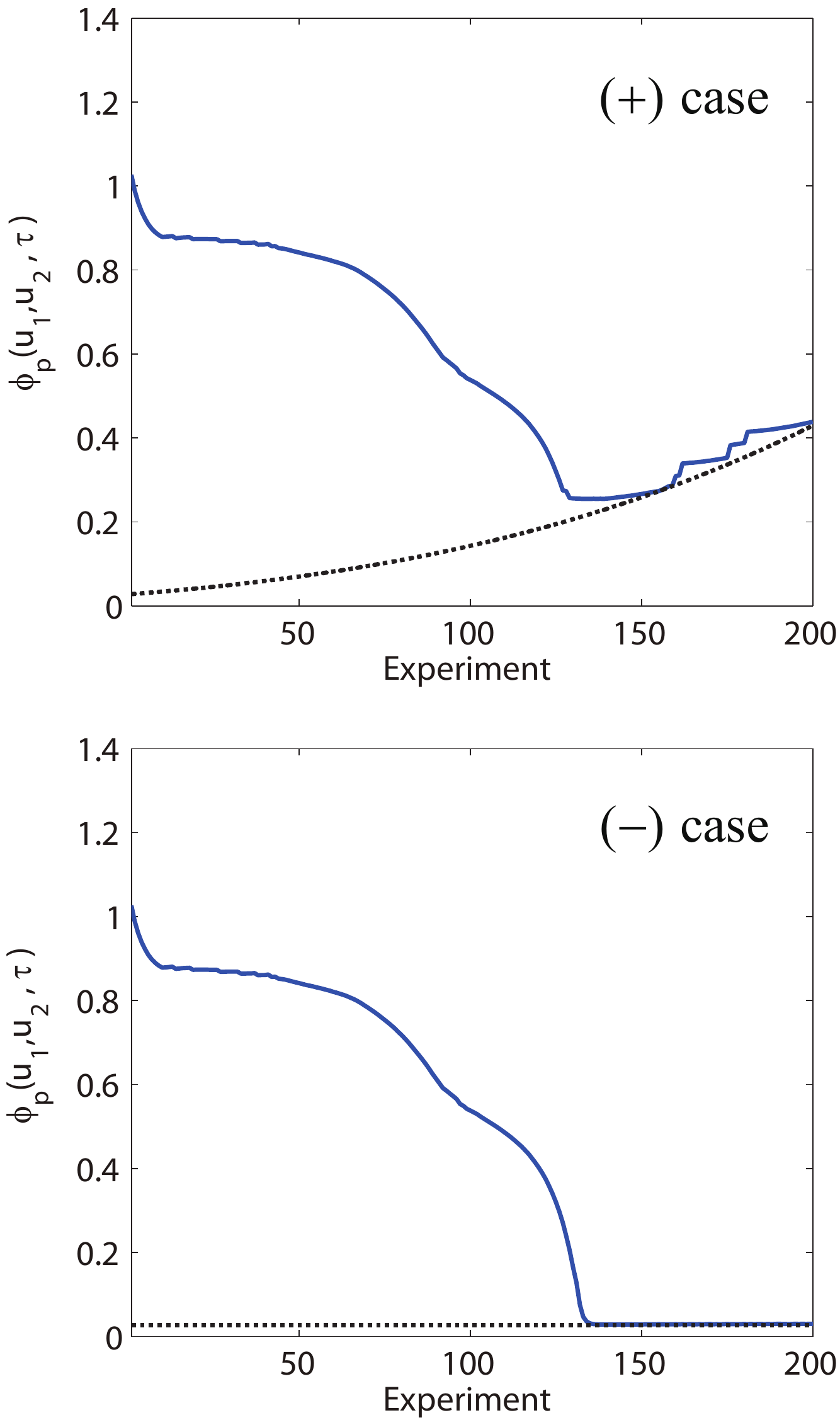}
\caption{Cost function values obtained by the modified SCFO methodology for Problem (\ref{eq:exdeg}).}
\label{fig:degcost}
\end{center}
\end{figure} 

\section{Estimating and Relaxing the Lipschitz Bounds}
\label{sec:lip}

The Lipschitz bounds and the constants that define them play a fundamental role in the SCFO, and as such are a double-edged sword. While it is precisely because of these bounds that it is possible for the SCFO to ensure feasible-side convergence and safety throughout the experimental optimization, one must nevertheless do some work to obtain proper estimates of the Lipschitz constants, which cannot be known with certainty in many practical applications. Traditionally, the problem of Lipschitz constant estimation has been largely restricted to the domain of global optimization \cite{Strongin1973,Hansen1992,Wood1996}, where these constants are typically used to build lower bounds in branch-and-bound algorithms \cite{Horst1995}. Some of the philosophy in these methods will be employed here -- notably via the Lipschitz consistency check of Section \ref{sec:consist}. However, we will largely tackle the estimation problem from scratch in a more engineering-based, holistic manner -- combining the physical meaning of the Lipschitz constant that is largely nonexistent in the mathematical context with additional mathematical ``tricks'' wherever possible so as to shed some light on the possible ways to provide the SCFO the estimates it needs. In addition to looking for ways to provide valid estimates, much of our discussion will focus on how to make these valid estimates as non-conservative as possible -- clearly, one could almost always guarantee validity by setting the constants to some enormous number, but this would lead to very slow convergence.

\subsection{Lower and Upper Lipschitz Constants}
\label{sec:twolip}

Let us start by seemingly doubling the complexity of the estimation task and distinguishing between lower and upper Lipschitz constants as follows:

\vspace{-4mm}
\begin{equation}\label{eq:lipcondegLU}
\underline \kappa_{p,ji} < \frac{\partial g_{p,j}}{\partial u_i} \Big |_{({\bf u},\tau)} < \overline \kappa_{p,ji}, \;\; \forall ({\bf u},\tau) \in \mathcal{I}_\tau,
\end{equation}

\vspace{-4mm}
\begin{equation}\label{eq:lipcondeg2LU}
\underline M_{\phi,i_1 i_2} < \frac{\partial^2 \phi_p}{\partial u_{i_2} \partial u_{i_1} } \Big |_{({\bf u},\tau)} < \overline M_{\phi,i_1 i_2}, \;\; \forall ({\bf u},\tau) \in \mathcal{I}_\tau,
\end{equation}

\vspace{-4mm}
\begin{equation}\label{eq:lipdegLU}
\underline \kappa_{p,j\tau} \leq \frac{\partial g_{p,j}}{\partial \tau} \Big |_{({\bf u},\tau)} \leq \overline \kappa_{p,j\tau}, \;\; \forall ({\bf u},\tau) \in \mathcal{I}_\tau,
\end{equation}

\noindent which, despite doubling the number of constants that require estimates, nevertheless both allows more flexibility and relaxes the original bounds -- note that one could always remove this flexibility by setting, e.g., $-\underline \kappa_{p,ji} := \overline \kappa_{p,ji} := \kappa_{p,ji}$, as had been done previously.

An additional motivation for working with both lower and upper constants is that it often allows us to exploit engineering/physical knowledge with regard to the sign of the derivative. As should already be apparent, the physical world is full of well-documented proportional or inversely proportional relationships for which the derivatives must have positive and negative signs, respectively. Consider, as some simple examples, the following:

\begin{itemize}
\item An athlete interested in controlling their weight knows that they cannot start eating more of a certain food and lose weight by doing so. The derivative between the quantity of food consumed and the athlete's weight is thus usually positive. On the contrary, it is very unlikely that the same athlete will gain weight by increasing their daily running time. Here, the derivative is negative.
\item In most power systems, increasing the amount of fuel fed while keeping everything else constant should increase the power produced. The derivative between power and fuel is thus positive.
\item In a jacketed reactor, it will often be the case -- though not always, depending on reaction -- that increasing (resp., decreasing) the temperature of the jacket will increase (resp., decrease) the temperature of the reactor. The derivative between the two temperatures is usually positive. 
\end{itemize}

\noindent When such relationships are present, one may translate them into appropriate Lipschitz constants by setting either the lower or upper constant equal to 0, or to a value abritrarily close to 0 if strict inequality is needed. For the Lipschitz constants corresponding to the derivatives, $M$, such physical links are harder to make, as they relate to the second derivatives and to the curvature of the function in question. However, even here we have encountered examples where \emph{a priori} knowledge regarding the sign of $M$ may be used -- for a simple solid oxide fuel cell stack \cite{Marchetti2009,Bunin2012d}, the relationship between the current and the power is almost always concave, which implies negative curvature and a negative $M$ ($\overline M \approx 0$). Outside of engineering applications, convex and concave relationships have been noted in fields like R\& D management \cite{Katz1982,Kuemmerle1998} and investment \cite{Teo2009}, among others.

The algorithmic usefulness of differentiating between lower and upper constants manifests itself by allowing for the SCFO framework to identify directions in which certain functions simply cannot increase, which cannot be done with the bounds employed so far in (\ref{eq:feas2}) and (\ref{eq:bound2Udeg}). Following the substitution $k \rightarrow k^*$ in (\ref{eq:feas2}), we use the results of \cite{Bunin:Lip} to formulate the relaxed bounds

\vspace{-2mm}
\begin{equation}\label{eq:feas2LU}
\hspace{-1mm}\begin{array}{l}
g_{p,j} ({\bf u}_{k+1},\tau_{k+1}) \leq g_{p,j} ({\bf u}_{k^*},\tau_{k^*}) \vspace{1mm} \\
\displaystyle  + \overline \kappa_{p,j\tau} \left( \tau_{k+1} - \tau_{k^*} \right) + \sum_{i=1}^{n_u} \mathop {\max} \left[ \begin{array}{l} \underline \kappa_{p,ji} ( u_{k+1,i} - u_{{k^*},i} ), \\ \overline \kappa_{p,ji} ( u_{k+1,i} - u_{{k^*},i} ) \end{array} \right],
\end{array}
\end{equation}

\vspace{-2mm}
\begin{equation}\label{eq:bound2UdegLU}
\begin{array}{l}
\phi_p({\bf u}_{k+1},\tau_{k+1}) - \phi_p({\bf u}_{k^*},\tau_{k+1}) \leq \vspace{1mm} \\
 \nabla \phi_p({\bf u}_{k^*},\tau_{k+1})^T \left[ \begin{array}{c} {\bf u}_{k+1} - {\bf u}_{k^*} \\ 0 \end{array} \right]  \vspace{1mm} \\
+\displaystyle \frac{1}{2} \sum_{i_1=1}^{n_u} \sum_{i_2=1}^{n_u} \mathop {\max} \left[ \begin{array}{l} \underline M_{\phi,i_1 i_2} (u_{k+1,i_1} - u_{{k^*},i_1}) \\
\hspace{20mm} (u_{k+1,i_2} - u_{{k^*},i_2}), \\ \overline M_{\phi,i_1 i_2} (u_{k+1,i_1} - u_{{k^*},i_1}) \\
\hspace{20mm}(u_{k+1,i_2} - u_{{k^*},i_2}) \end{array} \right].
\end{array}
\end{equation}

\noindent Clearly, we have opened the door for the possibility of the worst-case evolutions being \emph{negative}, as this is allowed by the maximum operator when the lower and upper constants do not have opposite signs.

Forcing the right-hand sides of (\ref{eq:feas2LU}) and (\ref{eq:bound2UdegLU}) to be negative then leads to the modified SCFO:

\vspace{-2mm}
\begin{equation}\label{eq:SCFO1LU}
\begin{array}{l}
g_{p,j} ({\bf u}_{k^*},\tau_{k^*}) + \overline \kappa_{p,j\tau} \left( \tau_{k+1} - \tau_{k^*} \right) \vspace{1mm} \\
\hspace{20mm}\displaystyle    + \sum_{i=1}^{n_u} \mathop {\max} \left[ \begin{array}{l} \underline \kappa_{p,ji} ( u_{k+1,i} - u_{{k^*},i} ), \\ \overline \kappa_{p,ji} ( u_{k+1,i} - u_{{k^*},i} ) \end{array} \right] \leq 0,
\end{array}
\end{equation}

\vspace{-2mm}
\begin{equation}\label{eq:SCFO7LU}
\begin{array}{l}
 \nabla \phi_p({\bf u}_{k^*},\tau_{k+1})^T \left[ \begin{array}{c} {\bf u}_{k+1} - {\bf u}_{k^*} \\ 0 \end{array} \right]  \vspace{1mm} \\
+\displaystyle \frac{1}{2} \sum_{i_1=1}^{n_u} \sum_{i_2=1}^{n_u} \mathop {\max} \left[ \begin{array}{l} \underline M_{\phi,i_1 i_2} (u_{k+1,i_1} - u_{{k^*},i_1}) \\
\hspace{15mm} (u_{k+1,i_2} - u_{{k^*},i_2}), \\ \overline M_{\phi,i_1 i_2} (u_{k+1,i_1} - u_{{k^*},i_1}) \\
\hspace{15mm}(u_{k+1,i_2} - u_{{k^*},i_2}) \end{array} \right] \leq 0.
\end{array}
\end{equation}

Slight modifications to (\ref{eq:kstar}) and (\ref{eq:kstar2}) are also in order, and simply consist in the substitution $\kappa_{p,j\tau} \rightarrow \overline \kappa_{p,j\tau}$:

\vspace{-2mm}
\begin{equation}\label{eq:kstarLU}
\begin{array}{rl}
k^* := {\rm arg} \mathop {\rm maximize}\limits_{\bar k \in [0,k]} & \bar k  \\
{\rm{subject}}\;{\rm{to}} & g_{p,j} ({\bf u}_{\bar k},\tau_{\bar k}) \\
& + \overline \kappa_{p,j\tau} \left( \tau_{k+1} - \tau_{\bar k} \right) \leq 0 \\
& \forall j = 1,...,n_{g_p},
\end{array}
\end{equation}

\vspace{-2mm}
\begin{equation}\label{eq:kstar2LU}
k^* := {\rm arg} \mathop {\rm minimize}\limits_{\bar k \in [0,k]} \;\; \mathop {\max} \limits_{j = 1,...,n_{g_p}} \left[ \begin{array}{l} g_{p,j} ({\bf u}_{\bar k},\tau_{\bar k}) \\ + \overline \kappa_{p,j\tau} \left( \tau_{k+1} - \tau_{\bar k} \right) \end{array} \right].
\end{equation}
 
\begin{figure*}
\begin{center}
\includegraphics[width=16cm]{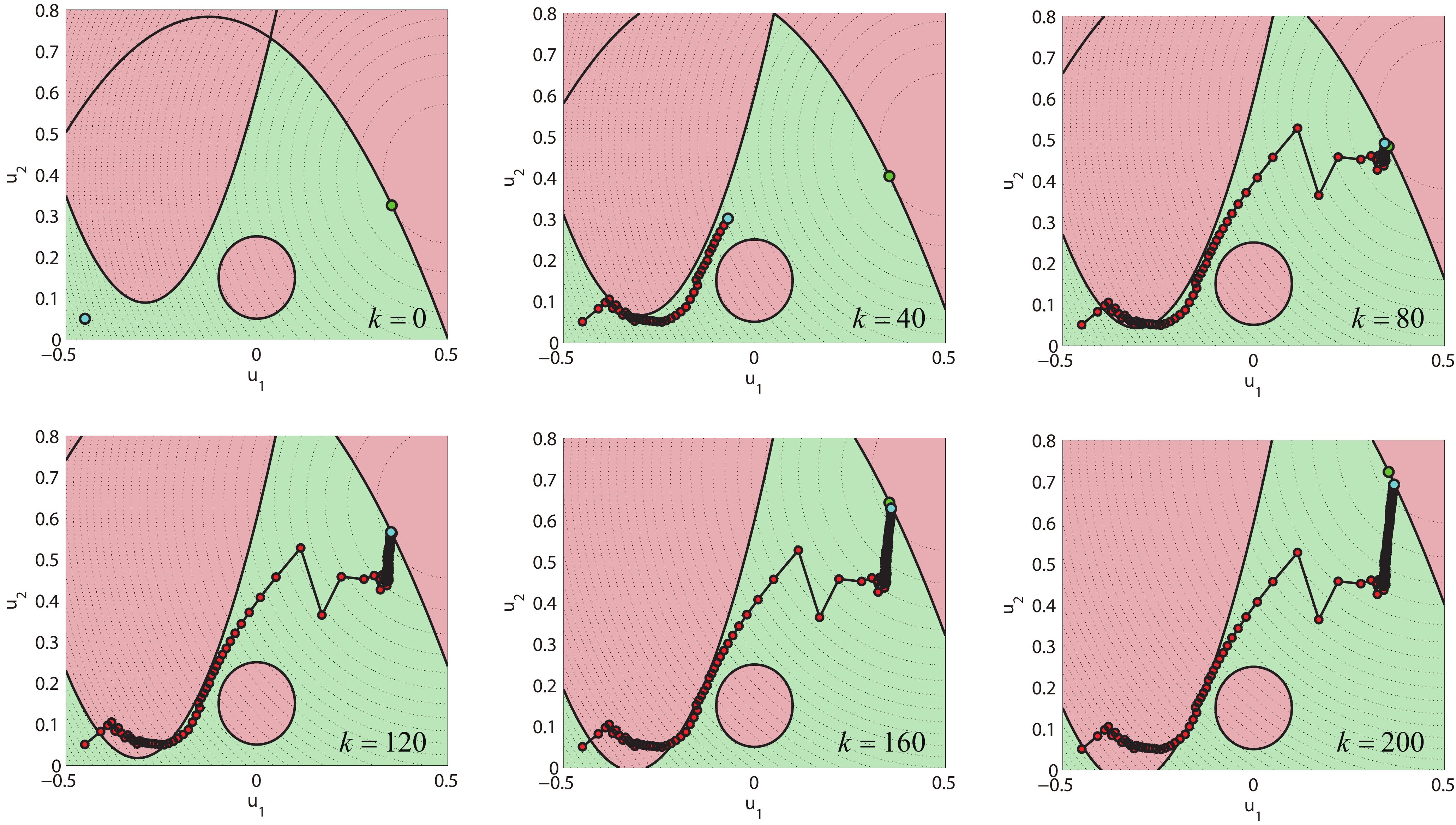}
\caption{Chain of experiments generated by applying the modified SCFO methodology to Problem (\ref{eq:exdeg}) for the ($-$) scenario with differentiation between lower and upper Lipschitz constants as given in (\ref{eq:exlipdegLU}).}
\label{fig:degC}
\end{center}
\end{figure*} 

The project-and-filter approach may benefit from the following (small) modification in the Boolean condition on $g_{p,j}$:

\vspace{-2mm}
\begin{equation}\label{eq:projdegLU}
\begin{array}{rl}
\bar {\bf u}_{k+1}^* := \;\;\;\;\;\;\;\;\; & \vspace{1mm}\\
 {\rm arg} \mathop {\rm minimize}\limits_{{\bf u}} & \| {\bf u} - {\bf u}_{k+1}^* \|_2^2  \vspace{1mm} \\
{\rm{subject}}\;{\rm{to}} & \nabla g_{p,j} ({\bf u}_{k^*},\tau_{k+1})^T \left[ \hspace{-1mm} \begin{array}{c} {\bf u} - {\bf u}_{k^*} \\ 0 \end{array} \hspace{-1mm} \right] \leq -\delta_{g_p,j} \vspace{1mm} \\
& \forall j: \begin{array}{l} g_{p,j} ({\bf u}_{k^*},\tau_{k^*}) \\ + \overline \kappa_{p,j\tau} (\tau_{k+1} - \tau_{k^*} ) \end{array} \geq -\epsilon_{p,j} \vspace{1mm} \\
 & \nabla g_{j} ({\bf u}_{k^*})^T ({\bf u} - {\bf u}_{k^*}) \leq -\delta_{g,j} \vspace{1mm} \\
& \forall j : g_{j}({\bf u}_{k^*}) \geq -\epsilon_{j} \vspace{1mm} \\
 & \nabla \phi_{p} ({\bf u}_{k^*},\tau_{k+1})^T  \left[ \begin{array}{c} {\bf u} - {\bf u}_{k^*} \\ 0 \end{array} \right] \leq -\delta_{\phi} \vspace{1mm} \\
 & {\bf u}^L \preceq {\bf u} \preceq {\bf u}^U,
\end{array}
\end{equation}

\noindent with the constraints (\ref{eq:SCFO1ideg}) and (\ref{eq:SCFO7ideg}) in the line search modified as

\vspace{-2mm}
\begin{equation}\label{eq:SCFO1idegLU}
\begin{array}{l}
g_{p,j} ({\bf u}_{k^*},\tau_{k^*}) + \overline \kappa_{p,j\tau} \left( \tau_{k+1} - \tau_{k^*} \right) \vspace{1mm} \\
\hspace{5mm} \displaystyle + K_k \sum_{i=1}^{n_u} \mathop {\max} \left[ \hspace{-.5mm} \begin{array}{l} \underline \kappa_{p,ji} ( \bar u_{k+1,i}^* - u_{{k^*},i} ), \\ \overline \kappa_{p,ji} ( \bar u_{k+1,i}^* - u_{{k^*},i} ) \end{array} \right] \leq 0, \vspace{1mm} \\
\hspace{45mm} \; \forall j = 1,...,n_{g_p},
\end{array}
\end{equation}

\vspace{-2mm}
\begin{equation}\label{eq:SCFO7idegLU}
\begin{array}{l}
 \nabla \phi_p({\bf u}_{k^*},\tau_{k+1})^T \left[ \begin{array}{c} \bar {\bf u}_{k+1}^* - {\bf u}_{k^*} \\ 0 \end{array} \right]  \vspace{1mm} \\
\displaystyle + \frac{K_k}{2} \sum_{i_1=1}^{n_u} \sum_{i_2=1}^{n_u} \mathop {\max} \left[ \begin{array}{l} \underline M_{\phi,i_1 i_2} (\bar u_{k+1,i_1}^* - u_{{k^*},i_1}) \\
\hspace{12mm} (\bar u_{k+1,i_2}^* - u_{{k^*},i_2}), \\ \overline M_{\phi,i_1 i_2} (\bar u_{k+1,i_1}^* - u_{{k^*},i_1}) \\
\hspace{12mm}(\bar u_{k+1,i_2}^* - u_{{k^*},i_2}) \end{array} \right]   \leq 0.
\end{array}
\end{equation}

To illustrate the effect of using both lower and upper Lipschitz constants, let us consider the ($-$) case of Problem (\ref{eq:exdeg}) and relax the choice of Lipschitz constants for this problem to

\vspace{-2mm}
\begin{equation}\label{eq:exlipdegLU}
\begin{array}{lll}
\underline \kappa_{p,11} = -10, & \underline \kappa_{p,12} = 0, & \underline \kappa_{p,1\tau} = -\frac{1}{1000}, \vspace{1mm} \\
\overline \kappa_{p,11} = 3, & \overline \kappa_{p,12} = 2, & \overline \kappa_{p,1\tau} = \frac{1}{1000}, \vspace{1mm} \\
\underline \kappa_{p,21} = -2, & \underline \kappa_{p,22} = 0, & \underline \kappa_{p,2\tau} = -\frac{1}{500}, \vspace{1mm} \\
\overline \kappa_{p,21} = 3, & \overline \kappa_{p,22} = 2, & \overline \kappa_{p,2\tau} = -\frac{1}{500}, \vspace{1mm} \\
\underline M_{\phi,11} = 1, & \underline M_{\phi,12} = -1, & \vspace{1mm} \\
\overline M_{\phi,11} = 3, & \overline M_{\phi,12} = 1, & \vspace{1mm} \\
\underline M_{\phi,21} = -1, & \underline M_{\phi,22} = 1, & \vspace{1mm} \\ 
\overline M_{\phi,21} = 1, & \overline M_{\phi,22} = 3. &
\end{array}
\end{equation}

Applying the modified project-and-filter approach, we obtain the results given in Figs. \ref{fig:degC} and \ref{fig:degCcost}. Comparing these to those of Figs. \ref{fig:degB} and \ref{fig:degcost}, we note that there is a significant reduction in the number of experiments needed to reach the neighborhood of the optimum, which this is mainly due to relaxing the Lipschitz constant $\kappa_{p,11} = 10$ to the lower and upper constants $\underline \kappa_{p,11} = -10$ and $\overline \kappa_{p,11} = 3$, as the latter is what appears in the maximum term as the algorithm increases $u_1$. Indeed, we note that this {\small $\sim$}3-fold reduction leads to a similar reduction in the number of experiments.

\begin{figure}
\begin{center}
\includegraphics[width=8cm]{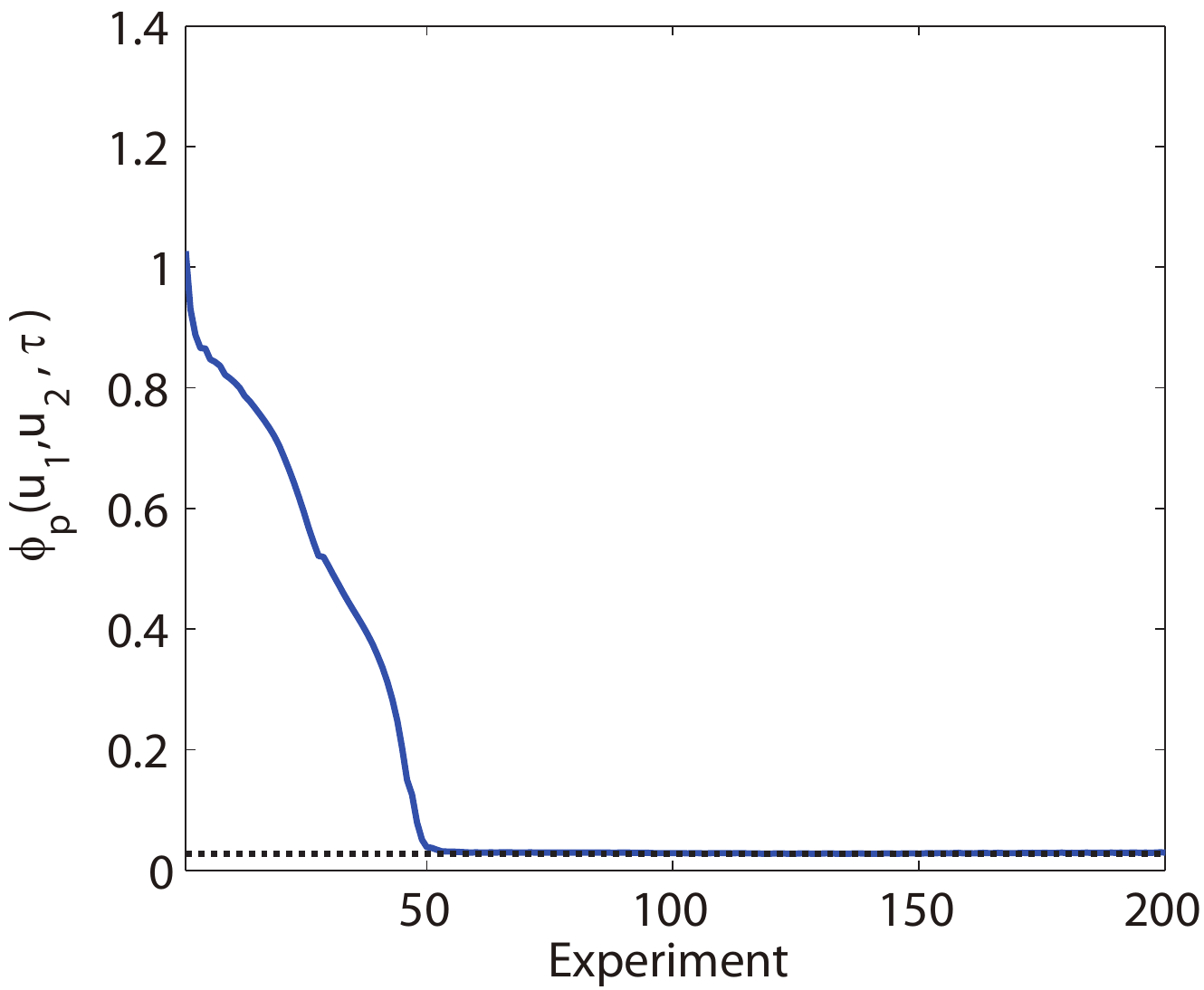}
\caption{Cost function values obtained by the modified SCFO methodology for Problem (\ref{eq:exdeg}) for the ($-$) scenario with differentiation between lower and upper Lipschitz constants as given in (\ref{eq:exlipdegLU}).}
\label{fig:degCcost}
\end{center}
\end{figure}

\subsection{Concavity Relaxations of the SCFO}
\label{sec:concave}

As mentioned previously, there may be applications where the relationships between certain decision variables and experimental functions are \emph{concave} in nature. When this is the case, the Lipschitz bounds on the experimental constraint functions may be made tighter, which in turn will make the SCFO less conservative. So as to maintain a fair degree of generality, we choose to work with \emph{partial concavity}, where the function in question only needs to be concave in certain variables \cite{Metelskii1996}.

\begin{definition}[Partial concavity of constraint function]
\label{def:ccv}
Let the decision variables ${\bf u}$ be partitioned into subvectors ${\bf v}$ and ${\bf z}$, so that $g_{p,j}({\bf u},\tau) = g_{p,j}({\bf v},{\bf z},\tau)$. The function $g_{p,j}$ will be said to be \emph{globally partially concave} in ${\bf z}$ if it is concave everywhere on $\mathcal{I}_\tau$ for ${\bf v}$ and $\tau$ fixed. Likewise, $g_{p,j}$ will be said to be globally partially concave in both ${\bf z}$ and $\tau$ if it is concave everywhere on $\mathcal{I}_\tau$ for ${\bf v}$ fixed.
\end{definition}

\begin{lemma}[Concave tightening of upper Lipschitz bound]
\label{lem:concave}
Let $I_{c,j}$ denote the set of indices defining the variables ${\bf z}$, with respect to which the experimental constraint function $g_{p,j}$ is globally partially concave. Furthermore, let $\eta_{c,j}$ denote a Boolean indicator that is equal to 1 if $g_{p,j}$ is globally partially concave in both ${\bf z}$ and $\tau$, and is equal to 0 otherwise. It follows that the following tightening of (\ref{eq:feas2LU}) is valid:

\vspace{-2mm}
\begin{equation}\label{eq:feas2LUccv}
\begin{array}{l}
g_{p,j} ({\bf u}_{k+1},\tau_{k+1}) \leq g_{p,j} ({\bf u}_{k^*},\tau_{k^*}) \vspace{1mm} \\
\hspace{24mm} \displaystyle +\eta_{c,j} \frac{\partial g_{p,j}}{\partial \tau} \Big |_{({\bf u}_{k^*},\tau_{k^*})} ( \tau_{k+1} - \tau_{{k^*}} ) \vspace{1mm} \\
\hspace{24mm} \displaystyle + (1-\eta_{c,j}) \overline \kappa_{p,j\tau} \left( \tau_{k+1} - \tau_{k^*} \right) \vspace{1mm}\\
\hspace{24mm}\displaystyle   + \sum_{i \in I_{c,j}} \frac{\partial g_{p,j}}{\partial u_i} \Big |_{({\bf u}_{k^*},\tau_{k^*})} ( u_{k+1,i} - u_{{k^*},i} ) \vspace{1mm} \\
\hspace{24mm} + \displaystyle \sum_{i \not \in I_{c,j}} \mathop {\max} \left[ \begin{array}{l} \underline \kappa_{p,ji} ( u_{k+1,i} - u_{{k^*},i} ), \vspace{1mm}\\ \overline \kappa_{p,ji} ( u_{k+1,i} - u_{{k^*},i} ) \end{array} \right],
\end{array}
\end{equation}

\noindent where $i \not \in I_{c,j}$ implicitly denotes $\{ 1,...,n_u \} \setminus I_{c,j}$.

\end{lemma}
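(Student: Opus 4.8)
The plan is to establish the bound term-by-term by splitting the total change $g_{p,j}(\mathbf{u}_{k+1},\tau_{k+1}) - g_{p,j}(\mathbf{u}_{k^*},\tau_{k^*})$ into a sum of one-coordinate-at-a-time increments and bounding each increment using either the mean-value theorem (for the generic directions, recovering the $\max$ term already present in~(\ref{eq:feas2LU})) or a first-order concavity inequality (for the directions in $I_{c,j}$ and, when $\eta_{c,j}=1$, for $\tau$). Concretely, I would first pick an ordering of the coordinates so that all indices $i \in I_{c,j}$ come first, then $\tau$ (if $\eta_{c,j}=1$), then the remaining coordinates; walk from $(\mathbf{u}_{k^*},\tau_{k^*})$ to $(\mathbf{u}_{k+1},\tau_{k+1})$ along the axis-aligned path that changes one coordinate at a time in this order; and write the total change as the telescoping sum of the increments along each leg of the path.

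For the legs corresponding to $i \in I_{c,j}$ (and, if $\eta_{c,j}=1$, the $\tau$ leg), I would invoke the partial concavity from Definition~\ref{def:ccv}: because $g_{p,j}$ is concave in ${\bf z}$ (resp.\ in ${\bf z}$ and $\tau$) with the other variables held fixed, a function of one real variable obtained by restricting $g_{p,j}$ to such a leg is concave, hence lies below its tangent line at the left endpoint. The subtlety is that the tangent should be taken at $({\bf u}_{k^*},\tau_{k^*})$ rather than at the (moving) left endpoint of the current leg; this is exactly where the ordering matters. By putting all the concave directions first, the relevant partial derivatives $\frac{\partial g_{p,j}}{\partial u_i}$ and $\frac{\partial g_{p,j}}{\partial \tau}$ are evaluated at points that still have the concave coordinates at their $k^*$ values for the coordinates not yet traversed — but they do \emph{not} in general reduce to $({\bf u}_{k^*},\tau_{k^*})$ once earlier concave legs have moved. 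The clean way around this is to instead use the \emph{multivariate} concavity inequality directly: restricting $g_{p,j}$ to the affine subspace spanned by the concave directions (with ${\bf v}$ fixed at $({\bf u}_{k^*})_{\bf v}$, and $\tau$ fixed or included according to $\eta_{c,j}$) gives a concave function on a convex subset of $\mathcal{I}_\tau$, so
\begin{equation}\label{eq:ccvstep}
g_{p,j}(\mathbf{z}_{k+1},\tau_{k+1};{\bf v}_{k^*}) \leq g_{p,j}(\mathbf{z}_{k^*},\tau_{k^*};{\bf v}_{k^*}) + \nabla_{{\bf z},\tau}\, g_{p,j}\big|_{(\mathbf{u}_{k^*},\tau_{k^*})}^{T} \begin{bmatrix} \mathbf{z}_{k+1}-\mathbf{z}_{k^*} \\ \tau_{k+1}-\tau_{k^*} \end{bmatrix},
\end{equation}
with the $\tau$-component of the gradient present only when $\eta_{c,j}=1$. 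This handles all the concave directions in one stroke and produces precisely the $\sum_{i\in I_{c,j}}\frac{\partial g_{p,j}}{\partial u_i}\big|_{(\mathbf{u}_{k^*},\tau_{k^*})}(u_{k+1,i}-u_{k^*,i})$ term plus the $\eta_{c,j}\frac{\partial g_{p,j}}{\partial\tau}\big|_{(\mathbf{u}_{k^*},\tau_{k^*})}(\tau_{k+1}-\tau_{k^*})$ term.

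It then remains to bound the change from $(\mathbf{z}_{k+1},\tau',{\bf v}_{k^*})$ — where $\tau'=\tau_{k^*}$ if $\eta_{c,j}=0$ and $\tau'=\tau_{k+1}$ if $\eta_{c,j}=1$ — to $(\mathbf{u}_{k+1},\tau_{k+1})$, i.e.\ the change along the ${\bf v}$-coordinates together with the $\tau$-coordinate in the $\eta_{c,j}=0$ case. For this I would simply reuse the Lipschitz argument already underpinning~(\ref{eq:feas2LU}): each non-concave coordinate leg is bounded by $\max[\underline\kappa_{p,ji}(u_{k+1,i}-u_{k^*,i}),\,\overline\kappa_{p,ji}(u_{k+1,i}-u_{k^*,i})]$ by the mean-value theorem and~(\ref{eq:lipcondegLU}), and the $\tau$ leg (when $\eta_{c,j}=0$) is bounded by $\overline\kappa_{p,j\tau}(\tau_{k+1}-\tau_{k^*})$ via~(\ref{eq:lipdegLU}), using $\tau_{k+1}\geq\tau_{k^*}$. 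Adding~(\ref{eq:ccvstep}) to these bounds and collecting terms gives exactly~(\ref{eq:feas2LUccv}). The main obstacle, as noted, is the bookkeeping of base points: one must be careful that the concavity inequality is applied on a genuinely convex slice of $\mathcal{I}_\tau$ and that the gradient is anchored at $(\mathbf{u}_{k^*},\tau_{k^*})$; using the multivariate inequality~(\ref{eq:ccvstep}) rather than a leg-by-leg telescoping sidesteps this cleanly, at the cost of needing the convexity of the relevant slice of $\mathcal{I}_\tau$, which holds because $\mathcal{I}_\tau$ is a box.
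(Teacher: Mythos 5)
Your proposal is correct and follows essentially the same route as the paper: decompose the change into a block over the concave variables (and $\tau$ when $\eta_{c,j}=1$) with ${\bf v}$ held at ${\bf v}_{k^*}$, bound that block by the multivariate first-order concavity inequality anchored at $({\bf u}_{k^*},\tau_{k^*})$, and bound the remaining ${\bf v}$- and (when $\eta_{c,j}=0$) $\tau$-changes by the Lipschitz bounds. The only cosmetic difference is the order in which the $\tau$ leg is traversed in the $\eta_{c,j}=0$ case, which is immaterial since the Lipschitz constants are global over $\mathcal{I}_\tau$.
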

\begin{proof}
The constraint function value at the next experimental iterate may be written as

\vspace{-2mm}
\begin{equation}\label{eq:ccvproof1}
\begin{array}{l}
g_{p,j} ({\bf v}_{k+1},{\bf z}_{k+1},\tau_{k+1}) = g_{p,j} ({\bf v}_{k^*},{\bf z}_{k^*},\tau_{k^*})  \vspace{1mm}\\
\hspace{10mm}+ g_{p,j} ({\bf v}_{k^*},{\bf z}_{k+1},\tau_{k+1}) - g_{p,j} ({\bf v}_{k^*},{\bf z}_{k^*},\tau_{k^*}) \vspace{1mm} \\
\hspace{10mm}+ g_{p,j} ({\bf v}_{k+1},{\bf z}_{k+1},\tau_{k+1}) - g_{p,j} ({\bf v}_{k^*},{\bf z}_{k+1},\tau_{k+1}).
\end{array}
\end{equation}

\noindent We will proceed to obtain the desired result by bounding the two addends, $g_{p,j} ({\bf v}_{k^*},{\bf z}_{k+1},\tau_{k+1}) - g_{p,j} ({\bf v}_{k^*},{\bf z}_{k^*},\tau_{k^*})$ and $g_{p,j} ({\bf v}_{k+1},{\bf z}_{k+1},\tau_{k+1}) - g_{p,j} ({\bf v}_{k^*},{\bf z}_{k+1},\tau_{k+1})$, separately.

First, suppose that $\eta_{c,j} = 1$. If this is so, then we have, by the first-order condition of concavity:

\vspace{-2mm}
\begin{equation}\label{eq:ccvproof2}
\begin{array}{l}
g_{p,j} ({\bf v}_{k^*},{\bf z}_{k+1},\tau_{k+1}) - g_{p,j} ({\bf v}_{k^*},{\bf z}_{k^*},\tau_{k^*}) \vspace{1mm}\\
\displaystyle \hspace{20mm}\leq \frac{\partial g_{p,j}}{\partial \tau} \Big |_{({\bf u}_{k^*},\tau_{k^*})} ( \tau_{k+1} - \tau_{{k^*}} ) \vspace{1mm} \\
\displaystyle \hspace{23mm} \displaystyle   + \sum_{i \in I_{c,j}} \frac{\partial g_{p,j}}{\partial u_i} \Big |_{({\bf u}_{k^*},\tau_{k^*})} ( u_{k+1,i} - u_{{k^*},i} ).
\end{array}
\end{equation}

In the case that $\eta_{c,j} = 0$, consider the decomposition

\vspace{-2mm}
\begin{equation}\label{eq:ccvproof3}
\begin{array}{l}
g_{p,j} ({\bf v}_{k^*},{\bf z}_{k+1},\tau_{k+1}) - g_{p,j} ({\bf v}_{k^*},{\bf z}_{k^*},\tau_{k^*}) = \vspace{1mm}  \\
\displaystyle \hspace{10mm}g_{p,j} ({\bf v}_{k^*},{\bf z}_{k+1},\tau_{k^*}) - g_{p,j} ({\bf v}_{k^*},{\bf z}_{k^*},\tau_{k^*}) \vspace{1mm} \\
\displaystyle \hspace{10mm}+g_{p,j} ({\bf v}_{k^*},{\bf z}_{k+1},\tau_{k+1}) - g_{p,j} ({\bf v}_{k^*},{\bf z}_{k+1},\tau_{k^*}),
\end{array}
\end{equation}

\noindent where the first addend may be bounded, again, by exploiting concavity:

\vspace{-2mm}
\begin{equation}\label{eq:ccvproof4}
\begin{array}{l}
 g_{p,j} ({\bf v}_{k^*},{\bf z}_{k+1},\tau_{k^*}) - g_{p,j} ({\bf v}_{k^*},{\bf z}_{k^*},\tau_{k^*}) \vspace{1mm} \\
\hspace{20mm}\displaystyle   \leq \sum_{i \in I_{c,j}} \frac{\partial g_{p,j}}{\partial u_i} \Big |_{({\bf u}_{k^*},\tau_{k^*})} ( u_{k+1,i} - u_{{k^*},i} ),
\end{array}
\end{equation}

\noindent while the second may be bounded  by using the Lipschitz bound

\vspace{-2mm}
\begin{equation}\label{eq:ccvproof5}
\begin{array}{l}
g_{p,j} ({\bf v}_{k^*},{\bf z}_{k+1},\tau_{k+1}) - g_{p,j} ({\bf v}_{k^*},{\bf z}_{k+1},\tau_{k^*}) \vspace{1mm} \\
\hspace{40mm}\displaystyle   \leq \overline \kappa_{p,j\tau} \left( \tau_{k+1} - \tau_{k^*} \right).
\end{array}
\end{equation}

Adding (\ref{eq:ccvproof4}) and (\ref{eq:ccvproof5}) thus yields

\vspace{-2mm}
\begin{equation}\label{eq:ccvproof6}
\begin{array}{l}
g_{p,j} ({\bf v}_{k^*},{\bf z}_{k+1},\tau_{k+1}) - g_{p,j} ({\bf v}_{k^*},{\bf z}_{k^*},\tau_{k^*}) \vspace{1mm}\\
\displaystyle \hspace{20mm}\leq \overline \kappa_{p,j\tau} \left( \tau_{k+1} - \tau_{k^*} \right) \vspace{1mm} \\
\displaystyle \hspace{23mm} \displaystyle   + \sum_{i \in I_{c,j}} \frac{\partial g_{p,j}}{\partial u_i} \Big |_{({\bf u}_{k^*},\tau_{k^*})} ( u_{k+1,i} - u_{{k^*},i} )
\end{array}
\end{equation}

\noindent for the case when $\eta_{c,j} = 0$. 

To account for both $\eta_{c,j} = 1$ and $\eta_{c,j} = 0$ with a single bound, it is clear that we can simply combine (\ref{eq:ccvproof2}) and (\ref{eq:ccvproof6}) as

\vspace{-2mm}
\begin{equation}\label{eq:ccvproof7}
\begin{array}{l}
g_{p,j} ({\bf v}_{k^*},{\bf z}_{k+1},\tau_{k+1}) - g_{p,j} ({\bf v}_{k^*},{\bf z}_{k^*},\tau_{k^*}) \vspace{1mm}\\
\displaystyle \hspace{20mm}\leq \eta_{c,j} \frac{\partial g_{p,j}}{\partial \tau} \Big |_{({\bf u}_{k^*},\tau_{k^*})} ( \tau_{k+1} - \tau_{{k^*}} ) \vspace{1mm} \\
\displaystyle \hspace{23mm}+ (1-\eta_{c,j}) \overline \kappa_{p,j\tau} \left( \tau_{k+1} - \tau_{k^*} \right) \vspace{1mm} \\
\displaystyle \hspace{23mm} \displaystyle   + \sum_{i \in I_{c,j}} \frac{\partial g_{p,j}}{\partial u_i} \Big |_{({\bf u}_{k^*},\tau_{k^*})} ( u_{k+1,i} - u_{{k^*},i} ).
\end{array}
\end{equation}

To bound the second addend in (\ref{eq:ccvproof1}), we revert to the Lipschitz bound

\vspace{-2mm}
\begin{equation}\label{eq:ccvproof8}
\begin{array}{l}
g_{p,j} ({\bf v}_{k+1},{\bf z}_{k+1},\tau_{k+1}) - g_{p,j} ({\bf v}_{k^*},{\bf z}_{k+1},\tau_{k+1}) \vspace{1mm}\\
\hspace{22mm}\leq \displaystyle \sum_{i \not \in I_{c,j}} \mathop {\max} \left[ \begin{array}{l} \underline \kappa_{p,ji} ( u_{k+1,i} - u_{{k^*},i} ), \vspace{1mm}\\ \overline \kappa_{p,ji} ( u_{k+1,i} - u_{{k^*},i} ) \end{array} \right].
\end{array}
\end{equation}

Applying (\ref{eq:ccvproof7}) and (\ref{eq:ccvproof8}) to (\ref{eq:ccvproof1}), or simply adding (\ref{eq:ccvproof7}) to (\ref{eq:ccvproof8}) and rearranging, then yields (\ref{eq:feas2LUccv}). \qed

\end{proof}

\begin{figure*}
\begin{center}
\includegraphics[width=16cm]{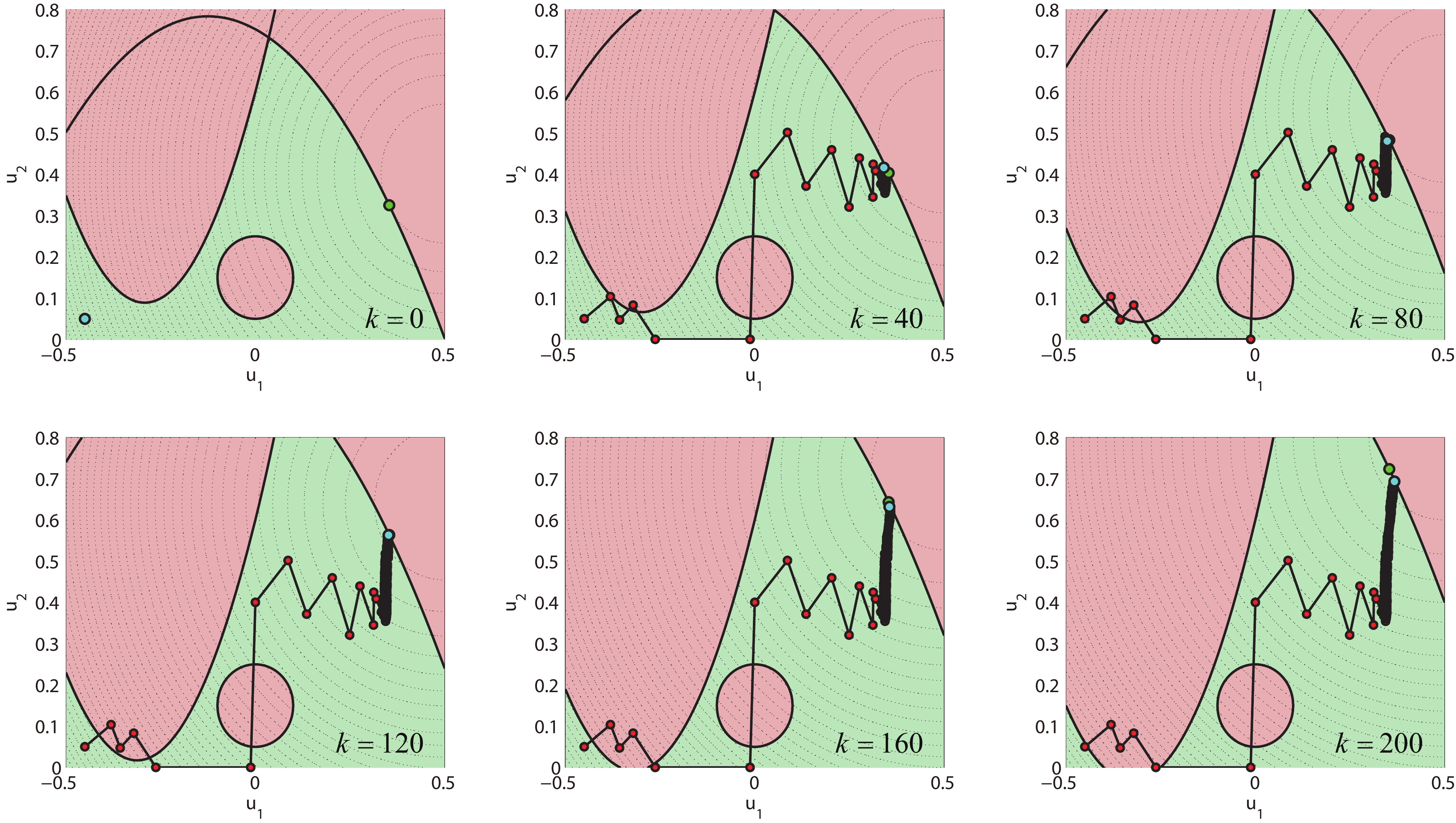}
\caption{Chain of experiments generated by applying the modified SCFO methodology to Problem (\ref{eq:exdeg}) for the ($-$) scenario with concavity relaxations included.}
\label{fig:degD}
\end{center}
\end{figure*} 

Because

\vspace{-2mm}
\begin{equation}\label{eq:ccvtight}
\begin{array}{l}
\displaystyle \frac{\partial g_{p,j}}{\partial u_i} \Big |_{({\bf u}_{k^*},\tau_{k^*})} ( u_{k+1,i} - u_{{k^*},i} ) \\
\hspace{20mm} \leq \mathop {\max}  \left[ \begin{array}{l} \underline \kappa_{p,ji} ( u_{k+1,i} - u_{{k^*},i} ), \vspace{1mm}\\ \overline \kappa_{p,ji} ( u_{k+1,i} - u_{{k^*},i} ) \end{array} \right]
\end{array}
\end{equation}

\noindent and

\vspace{-2mm}
\begin{equation}\label{eq:ccvtight2}
\frac{\partial g_{p,j}}{\partial \tau} \Big |_{({\bf u}_{k^*},\tau_{k^*})} ( \tau_{k+1} - \tau_{{k^*}} ) \leq \overline \kappa_{p,j\tau} \left( \tau_{k+1} - \tau_{k^*} \right),
\end{equation}

\noindent one sees that (\ref{eq:feas2LUccv}) is indeed a tighter bound than (\ref{eq:feas2LU}).

The modified SCFO (\ref{eq:SCFO1LU}) can thus be relaxed by forcing the right-hand side of (\ref{eq:feas2LUccv}) to be nonpositive:

\vspace{-2mm}
\begin{equation}\label{eq:SCFO1ccv}
\begin{array}{l}
\displaystyle g_{p,j} ({\bf u}_{k^*},\tau_{k^*})  +\eta_{c,j} \frac{\partial g_{p,j}}{\partial \tau} \Big |_{({\bf u}_{k^*},\tau_{k^*})} ( \tau_{k+1} - \tau_{{k^*}} ) \vspace{1mm} \\
 \displaystyle + (1-\eta_{c,j}) \overline \kappa_{p,j\tau} \left( \tau_{k+1} - \tau_{k^*} \right) \vspace{1mm}\\
\displaystyle   + \sum_{i \in I_{c,j}} \frac{\partial g_{p,j}}{\partial u_i} \Big |_{({\bf u}_{k^*},\tau_{k^*})} ( u_{k+1,i} - u_{{k^*},i} ) \vspace{1mm} \\
+ \displaystyle \sum_{i \not \in I_{c,j}} \mathop {\max} \left[ \begin{array}{l} \underline \kappa_{p,ji} ( u_{k+1,i} - u_{{k^*},i} ), \vspace{1mm}\\ \overline \kappa_{p,ji} ( u_{k+1,i} - u_{{k^*},i} ) \end{array} \right] \leq 0.
\end{array}
\end{equation}

In applying the project-and-filter approach, we may, using the same arguments as above, relax the Boolean condition in the projection:

\vspace{-2mm}
\begin{equation}\label{eq:projdegLUccv}
\begin{array}{rl}
\bar {\bf u}_{k+1}^* := \;\;\;\;\;\;\;\;\; & \vspace{1mm}\\
 {\rm arg} \mathop {\rm minimize}\limits_{{\bf u}} & \| {\bf u} - {\bf u}_{k+1}^* \|_2^2  \vspace{1mm} \\
{\rm{subject}}\;{\rm{to}} & \nabla g_{p,j} ({\bf u}_{k^*},\tau_{k+1})^T \left[ \hspace{-1mm} \begin{array}{c} {\bf u} - {\bf u}_{k^*} \\ 0 \end{array} \hspace{-1mm} \right] \leq -\delta_{g_p,j} \vspace{1mm} \\
& \hspace{-5mm} \forall j: \begin{array}{l} g_{p,j} ({\bf u}_{k^*},\tau_{k^*}) \vspace{1mm} \\ \displaystyle + \eta_{c,j} \frac{\partial g_{p,j}}{\partial \tau} \Big |_{({\bf u}_{k^*},\tau_{k^*})} ( \tau_{k+1} - \tau_{{k^*}} ) \vspace{1mm}  \\ + (1 - \eta_{c,j}) \overline \kappa_{p,j\tau} (\tau_{k+1} - \tau_{k^*} ) \geq -\epsilon_{p,j} \end{array} \vspace{1mm} \\
 & \nabla g_{j} ({\bf u}_{k^*})^T ({\bf u} - {\bf u}_{k^*}) \leq -\delta_{g,j} \vspace{1mm} \\
& \forall j : g_{j}({\bf u}_{k^*}) \geq -\epsilon_{j} \vspace{1mm} \\
 & \nabla \phi_{p} ({\bf u}_{k^*},\tau_{k+1})^T  \left[ \begin{array}{c} {\bf u} - {\bf u}_{k^*} \\ 0 \end{array} \right] \leq -\delta_{\phi} \vspace{1mm} \\
 & {\bf u}^L \preceq {\bf u} \preceq {\bf u}^U,
\end{array}
\end{equation}

The feasibility condition (\ref{eq:SCFO1idegLU}) is then be modified as

\vspace{-2mm}
\begin{equation}\label{eq:SCFO1idegLUccv}
\begin{array}{l}
\displaystyle g_{p,j} ({\bf u}_{k^*},\tau_{k^*}) +\eta_{c,j} \frac{\partial g_{p,j}}{\partial \tau} \Big |_{({\bf u}_{k^*},\tau_{k^*})} ( \tau_{k+1} - \tau_{{k^*}} ) \vspace{1mm} \\
 \displaystyle + (1-\eta_{c,j}) \overline \kappa_{p,j\tau} \left( \tau_{k+1} - \tau_{k^*} \right) \vspace{1mm}\\
\displaystyle   + K_k \sum_{i \in I_{c,j}} \frac{\partial g_{p,j}}{\partial u_i} \Big |_{({\bf u}_{k^*},\tau_{k^*})} ( \bar u_{k+1,i}^* - u_{{k^*},i} ) \vspace{1mm} \\
+ K_k \displaystyle \sum_{i \not \in I_{c,j}} \mathop {\max} \left[ \begin{array}{l} \underline \kappa_{p,ji} ( \bar u_{k+1,i}^* - u_{{k^*},i} ), \vspace{1mm}\\ \overline \kappa_{p,ji} ( \bar u_{k+1,i}^* - u_{{k^*},i} ) \end{array} \right] \leq 0, \vspace{1mm} \\
\hspace{0mm} \forall j = 1,...,n_{g_p}.
\end{array}
\end{equation}

Further modifications to (\ref{eq:kstarLU}) and (\ref{eq:kstar2LU}) are also necessary to account for the possible relaxations:

\vspace{-2mm}
\begin{equation}\label{eq:kstarLUccv}
\begin{array}{rl}
k^* := {\rm arg} \mathop {\rm maximize}\limits_{\bar k \in [0,k]} & \bar k \vspace{1mm} \\
{\rm{subject}}\;{\rm{to}} & g_{p,j} ({\bf u}_{\bar k},\tau_{\bar k}) \vspace{1mm} \\
& \displaystyle + \eta_{c,j} \frac{\partial g_{p,j}}{\partial \tau} \Big |_{({\bf u}_{\bar k},\tau_{\bar k})} ( \tau_{k+1} - \tau_{{\bar k}} ) \vspace{1mm} \\
&  + (1- \eta_{c,j}) \overline \kappa_{p,j\tau} \left( \tau_{k+1} - \tau_{\bar k} \right) \leq 0, \vspace{1mm} \\
& \forall j = 1,...,n_{g_p},
\end{array}
\end{equation}

\vspace{-2mm}
\begin{equation}\label{eq:kstar2LUccv}
\begin{array}{l}
k^* := \\
{\rm arg} \mathop {\rm minimize}\limits_{\bar k \in [0,k]} \hspace{-1mm} \mathop {\max} \limits_{j = 1,...,n_{g_p}} \hspace{-1mm} \left[ \hspace{-1mm} \begin{array}{l} g_{p,j} ({\bf u}_{\bar k},\tau_{\bar k}) \vspace{1mm} \\
\displaystyle + \eta_{c,j} \frac{\partial g_{p,j}}{\partial \tau} \Big |_{({\bf u}_{\bar k},\tau_{\bar k})} ( \tau_{k+1} - \tau_{{\bar k}} ) \vspace{1mm} \\
+ (1-  \eta_{c,j}) \overline \kappa_{p,j\tau} \left( \tau_{k+1} - \tau_{\bar k} \right) \end{array} \hspace{-1mm} \right].
\end{array}
\end{equation}

To demonstrate the potential benefits of these relaxations, let us again consider the ($-$) case of Problem (\ref{eq:exdeg}) and apply the modified SCFO. For $g_{p,2}$, it is clear that we can assume global partial concavity -- in fact, linearity -- in both $u_2$ and $\tau$, with $I_{c,2} := \{ 2 \}$ and $\eta_{c,2} := 1$. For $g_{p,1}$, there is actually a choice due to the bilinear term $\tau u_1$, which is not concave. More specifically, we can either set $I_{c,1} := \{ 1,2 \}, \eta_{c,1} := 0$ or $I_{c,1} := \{ 2 \}, \eta_{c,1} := 1$, as partial concavity in $\tau$ and $u_1$ is mutually exclusive. Since the decision variable $u_1$ has a much greater influence on the function value than $\tau$, we choose the former as this is expected to lead to more significant relaxations in the SCFO. The results are given in Figs. \ref{fig:degD} and \ref{fig:degDcost}, and show that the number of experiments needed to arrive in the neighborhood of the optimum is reduced drastically (compare with Fig. \ref{fig:degC}), as the relaxations in the feasibility condition allow much bigger steps and thus faster progress.

\begin{figure}
\begin{center}
\includegraphics[width=8cm]{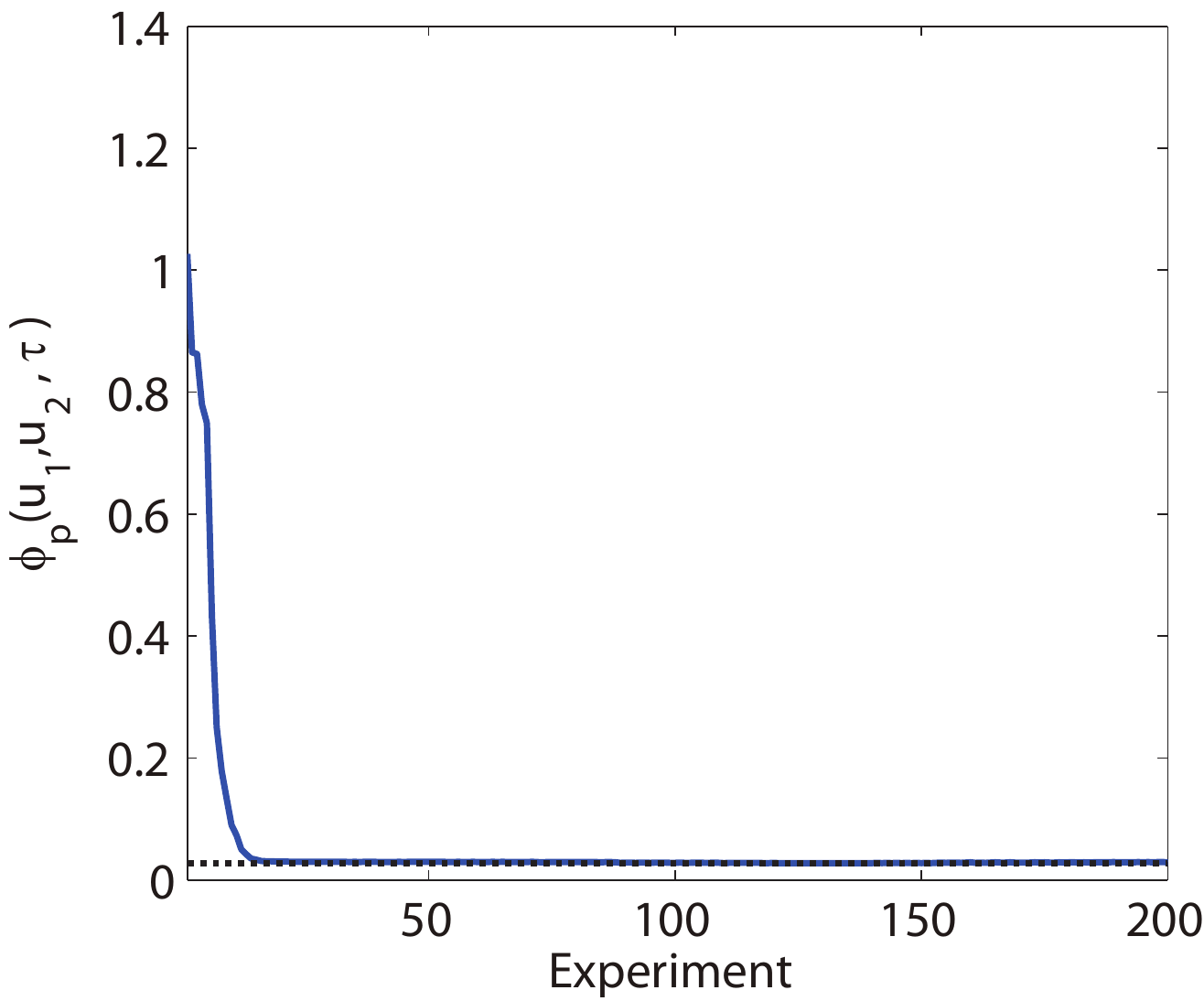}
\caption{Cost function values obtained by the modified SCFO methodology for Problem (\ref{eq:exdeg}) for the ($-$) scenario with concavity relaxations included.}
\label{fig:degDcost}
\end{center}
\end{figure} 

While impressive, such relaxations are of course of limited use in practice, as in many experimental problems one may not know about the existence of concave relationships even when they are present. A natural case where one might try to apply these relaxations would be when a sufficiently accurate surrogate model of the experimental function $g_{p,j}$ is available and exhibits obvious concave behavior in certain variables (e.g., the power-current relationship in the fuel cell system \cite{Marchetti2009}). When such a model is not available, an alternative may be to regress the available experimental data with concave models as a means of ``discovering'' or validating the concavity assumption \cite{Boyd2008,Ubhaya2009,Bunin2013a}. However, as falsely assuming concavity could lead to constraint violations due to unjustified relaxations in the Lipschitz bounds, one should always remain careful when making this assumption in practice.

\subsection{Using All Available Data}

So far we have only considered stating the feasibility condition (\ref{eq:SCFO1ccv}) with respect to the future experimental iteration ${k+1}$ and the reference iteration ${k^*}$. However, as the bound (\ref{eq:feas2LUccv}) applies everywhere on $\mathcal{I}_\tau$, it follows that any other iteration from 0 to $k$ may be used in place of $k^*$ without endangering the validity of the bound. Because every one of the $k+1$ bounds is valid, it follows that their minimum is valid as well, and so we may tighten (\ref{eq:feas2LUccv}) further:

\vspace{-2mm}
\begin{equation}\label{eq:feas2LUccvall}
\begin{array}{l}
g_{p,j} ({\bf u}_{k+1},\tau_{k+1}) \leq  \vspace{1mm} \\
\hspace{0mm} \mathop {\min} \limits_{\bar k = 0,...,k} \left[ \begin{array}{l} g_{p,j} ({\bf u}_{\bar k},\tau_{\bar k}) \displaystyle +\eta_{c,j} \frac{\partial g_{p,j}}{\partial \tau} \Big |_{({\bf u}_{\bar k},\tau_{\bar k})} ( \tau_{k+1} - \tau_{{\bar k}} ) \vspace{1mm} \\
 \displaystyle + (1-\eta_{c,j}) \overline \kappa_{p,j\tau} \left( \tau_{k+1} - \tau_{\bar k} \right) \vspace{1mm}\\
\displaystyle   + \sum_{i \in I_{c,j}} \frac{\partial g_{p,j}}{\partial u_i} \Big |_{({\bf u}_{\bar k},\tau_{\bar k})} ( u_{k+1,i} - u_{{\bar k},i} ) \vspace{1mm} \\
 + \displaystyle \sum_{i \not \in I_{c,j}} \mathop {\max} \left[ \begin{array}{l} \underline \kappa_{p,ji} ( u_{k+1,i} - u_{\bar k,i} ), \vspace{1mm}\\ \overline \kappa_{p,ji} ( u_{k+1,i} - u_{\bar k,i} ) \end{array} \right] \end{array} \right].
\end{array}
\end{equation}

Forcing the right-hand side to be negative then leads to the relaxation of Condition (\ref{eq:SCFO1ccv}):

\vspace{-2mm}
\begin{equation}\label{eq:SCFO1all}
\begin{array}{l}
\mathop {\min} \limits_{\bar k = 0,...,k} \left[ \hspace{-1mm} \begin{array}{l} g_{p,j} ({\bf u}_{\bar k},\tau_{\bar k}) \vspace{1mm} \\
\displaystyle +\eta_{c,j} \frac{\partial g_{p,j}}{\partial \tau} \Big |_{({\bf u}_{\bar k},\tau_{\bar k})} ( \tau_{k+1} - \tau_{{\bar k}} ) \vspace{1mm} \\
 \displaystyle + (1-\eta_{c,j}) \overline \kappa_{p,j\tau} \left( \tau_{k+1} - \tau_{\bar k} \right) \vspace{1mm}\\
\displaystyle   + \sum_{i \in I_{c,j}} \frac{\partial g_{p,j}}{\partial u_i} \Big |_{({\bf u}_{\bar k},\tau_{\bar k})} ( u_{k+1,i} - u_{{\bar k},i} ) \vspace{1mm} \\
 + \displaystyle \sum_{i \not \in I_{c,j}} \mathop {\max} \left[ \begin{array}{l} \underline \kappa_{p,ji} ( u_{k+1,i} - u_{\bar k,i} ), \vspace{1mm}\\ \overline \kappa_{p,ji} ( u_{k+1,i} - u_{\bar k,i} ) \end{array} \right] \end{array} \right] \leq 0
\end{array},
\end{equation}

\noindent with the corresponding modification in the project-and-filter approach:

\vspace{-2mm}
\begin{equation}\label{eq:SCFO1idegLUccvall}
\hspace{-4mm} \begin{array}{l}
\mathop {\min} \limits_{\bar k = 0,...,k} \left[ \hspace{-1mm} \begin{array}{l} g_{p,j} ({\bf u}_{\bar k},\tau_{\bar k}) \vspace{1mm} \\
 \displaystyle +\eta_{c,j} \frac{\partial g_{p,j}}{\partial \tau} \Big |_{({\bf u}_{\bar k},\tau_{\bar k})} ( \tau_{k+1} - \tau_{{\bar k}} ) \vspace{1mm} \\
 \displaystyle + (1-\eta_{c,j}) \overline \kappa_{p,j\tau} \left( \tau_{k+1} - \tau_{\bar k} \right) \vspace{1mm}\\
\displaystyle   + \sum_{i \in I_{c,j}} \frac{\partial g_{p,j}}{\partial u_i} \Big |_{({\bf u}_{\bar k},\tau_{\bar k})} ( u_{k^*,i}  +\\
\hspace{17mm}  K_k (\bar u_{k+1,i}^* - u_{k^*,i} ) - u_{{\bar k},i} ) \vspace{1mm} \\
 + \displaystyle \sum_{i \not \in I_{c,j}} \mathop {\max} \left[ \begin{array}{l} \underline \kappa_{p,ji} ( u_{k^*,i} + \\ \hspace{2mm} K_k (\bar u_{k+1,i}^* - u_{k^*,i} ) - u_{\bar k,i} ), \vspace{1mm}\\ \overline \kappa_{p,ji} ( u_{k^*,i} + \\ \hspace{2mm} K_k (\bar u_{k+1,i}^* - u_{k^*,i} ) - u_{\bar k,i} ) \end{array} \right] \end{array} \hspace{-1mm} \right]  \leq 0.
\end{array}
\end{equation}

In general, one would expect the relaxation to be useful when an algorithm revisits parts of the experimental space where it has been before, as the Lipschitz bounds essentially construct small feasibility-guaranteeing regions around each ${\bf u}_{\bar k}$ \cite{Bunin:12c}. We list some scenarios where an algorithm might visit the same region twice:

\begin{itemize}
\item Errors in the gradients (Section \ref{sec:gradest}) lead to haphazard behavior until better gradient estimates are obtained.
\item The algorithm converges in a zig-zag manner.
\item After the algorithm converges to the optimum of (\ref{eq:mainprob}), the cost function changes -- due to, e.g., a change in market demand -- but the feasible region remains the same. As such, the algorithm must now navigate to a new optimum, possibly revisiting regions of the experimental space where it has been already.
\item Additional experiments have been carried out prior to ${\bf u}_0$, and may be included in the data set.
\end{itemize}

We illustrate the potential usefulness of this relaxation by considering an example of the third point. Let us imagine an experimental optimization where the problem to be solved is originally (\ref{eq:exprob}) -- we will, for simplicity, make degradation effects negligible -- but which then changes to

\vspace{-2mm}
\begin{equation}\label{eq:exprobmod}
\begin{array}{rl}
\mathop {{\rm{minimize}}}\limits_{u_1,u_2} & (u_1+0.25)^2 + (u_2-0.6)^2 \\
{\rm{subject}}\hspace{1mm}{\rm{to}} & -6u^2_1 - 3.5u_1 + u_2 -0.6 \le 0 \vspace{1mm} \\
 & 2u^2_1 + 0.5u_1 + u_2 -0.75 \le 0 \vspace{1mm} \\
 & -u^2_1 - (u_2-0.15)^2 +0.01 \le 0 \vspace{1mm} \\
& -0.5 \leq u_1 \leq 0.5 \\
& 0 \leq u_2 \leq 0.8
\end{array}
\end{equation}

\noindent for $k > 50$ (i.e., the cost function changes after 50 experimental iterations). We will assume that the new cost function has not been evaluated during the first 51 experiments, but that the constraint functions, which remain the same, have and may be used in the relaxation (\ref{eq:SCFO1idegLUccvall}).

The results for the case when this relaxation is not employed are given first (Fig. \ref{fig:extwocost1}), and show that the algorithm successfully converges to the first optimum and then to the second. However, because the initial point for the second problem is very close to an experimental constraint, the initial steps during the second phase are forced to be small so as to not violate $g_{p,2}({\bf u}) \leq 0$. We now compare this to the case where the relaxation is used (Fig. \ref{fig:extwocost2}) and see that, while there is no change in convergence behavior with respect to the first optimum, the second phase is radically different, the algorithm being allowed to take a very big step away from the constraint and to the neighborhood of one of the previous experimental iterates. The result is that the optimum is approached much quicker for the second cost function when this relaxation is used.

\begin{figure*}
\begin{center}
\includegraphics[width=16cm]{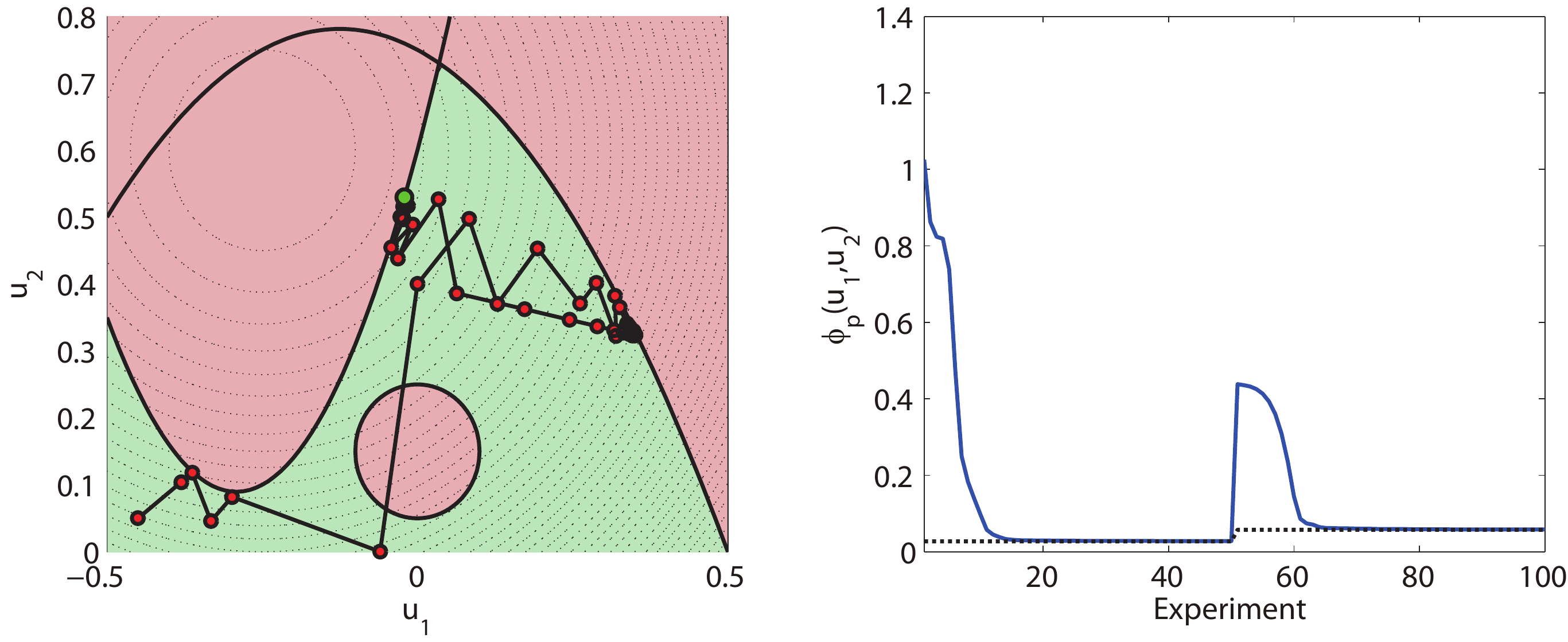}
\caption{Solution generated by the modified SCFO methodology for the switching cost Problem (\ref{eq:exprob})/(\ref{eq:exprobmod}) when the relaxed condition (\ref{eq:SCFO1idegLUccvall}) is not used. The cost contours and minimum given in the left-hand-side plot are only those of the second cost function, while the minimum cost value in the right-hand-side plot switches accordingly after the 50$^{\rm th}$ experiment.}
\label{fig:extwocost1}
\end{center}
\end{figure*}

\begin{figure*}
\begin{center}
\includegraphics[width=16cm]{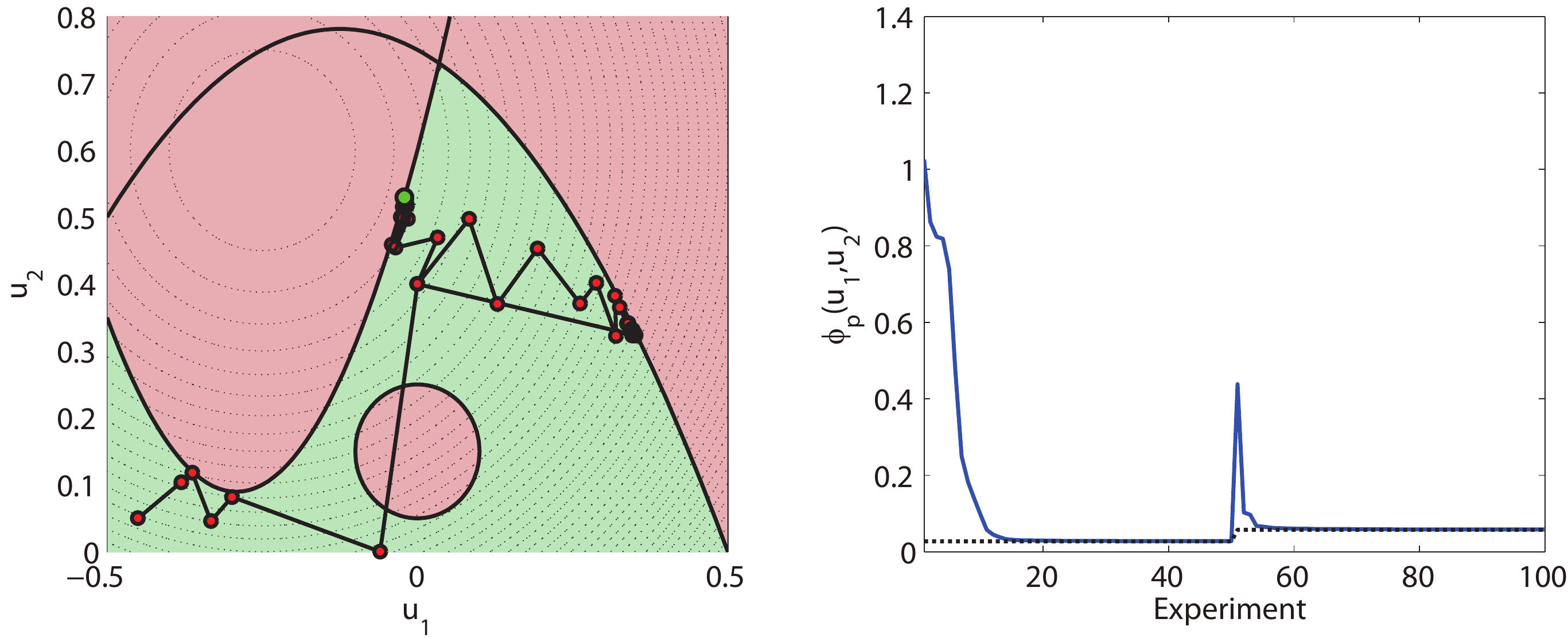}
\caption{Solution generated by the modified SCFO methodology for the switching cost Problem (\ref{eq:exprob})/(\ref{eq:exprobmod}) when the relaxed condition (\ref{eq:SCFO1idegLUccvall}) is used.}
\label{fig:extwocost2}
\end{center}
\end{figure*}

\subsection{Additional Safeguards to Guarantee Cost Decrease}
\label{sec:necess}

The Lipschitz constants on the cost function derivatives, $M_\phi$, are expected to be more difficult to estimate than the Lipschitz constants on the function $\kappa$, as the former correspond to lower and upper bounds on the second, rather than first, derivatives. As such, it may occur that the choice of $M_\phi$ is not sufficiently conservative to guarantee (\ref{eq:SCFO7LU}), which could then lead to the scheme not converging due to a lack of cost decrease from one experiment to the next. We present here one additional condition that may act as a support to (\ref{eq:SCFO7idegLU}) and possibly help enforce convergence when (\ref{eq:SCFO7idegLU}) does not due to a poor choice of Lipschitz constants or due to errors in the cost function gradient.

We start by defining additional Lipschitz constants for the cost function:

\vspace{-2mm}
\begin{equation}\label{eq:lipcost}
\underline \kappa_{\phi,i} \leq \frac{\partial \phi_{p}}{\partial u_i} \Big |_{({\bf u},\tau)} \leq \overline \kappa_{\phi,i}, \;\; \forall ({\bf u},\tau) \in \mathcal{I}_\tau,
\end{equation}

\vspace{-2mm}
\begin{equation}\label{eq:lipcostdeg}
\underline \kappa_{\phi,\tau} \leq \frac{\partial \phi_{p}}{\partial \tau} \Big |_{({\bf u},\tau)} \leq \overline \kappa_{\phi,\tau}, \;\; \forall ({\bf u},\tau) \in \mathcal{I}_\tau,
\end{equation}

\noindent noting that we do not require these to be strict. Since the SCFO aim to decrease the cost function value between $({\bf u}_{k^*},\tau_{k+1})$ and $({\bf u}_{k+1},\tau_{k+1})$, it follows that we should cross out of consideration all ${\bf u}_{k+1}$ where it is guaranteed that $\phi_p ({\bf u}_{k+1},\tau_{k+1}) \geq \phi_p ({\bf u}_{k^*},\tau_{k+1})$. Since the value of $\phi_p ({\bf u}_{k+1},\tau_{k+1})$ cannot be known without testing the experimental point ${\bf u}_{k+1}$, one may proceed by working with its \emph{lower bound}

\vspace{-2mm}
\begin{equation}\label{eq:costlipbound}
\begin{array}{l}
\phi_{p} ({\bf u}_{k+1},\tau_{k+1}) \geq \phi_{p} ({\bf u}_{\bar k},\tau_{\bar k}) \\
\hspace{3mm}\displaystyle  + \underline \kappa_{\phi,\tau} \left( \tau_{k+1} - \tau_{\bar k} \right) + \sum_{i=1}^{n_u} \mathop {\min} \left[ \begin{array}{l} \underline \kappa_{\phi,i} ( u_{k+1,i} - u_{\bar k,i} ), \\ \overline \kappa_{\phi,i} ( u_{k+1,i} - u_{\bar k,i} ) \end{array} \right],
\end{array}
\end{equation}

\noindent which, however, may be tightened further by supposing the cost function to be globally partially \emph{convex} in certain variables, in a manner analogous to what was done for the constraints in Section \ref{sec:concave}. For the sake of completeness, we state here the analogous definition and lemma.

\begin{definition}[Partial convexity of cost function]
\label{def:cvx}
Let the decision variables ${\bf u}$ be partitioned into subvectors ${\bf v}$ and ${\bf z}$, so that $\phi_{p}({\bf u},\tau) = \phi_{p}({\bf v},{\bf z},\tau)$. The function $\phi_{p}$ will be said to be globally \emph{partially convex} in ${\bf z}$ if it is convex everywhere on $\mathcal{I}_\tau$ for ${\bf v}$ and $\tau$ fixed. Likewise, $\phi_{p}$ will be said to be globally partially convex in both ${\bf z}$ and $\tau$ if it is convex everywhere on $\mathcal{I}_\tau$ for ${\bf v}$ fixed.
\end{definition}

\begin{lemma}[Convex tightening of lower Lipschitz bound]
Let $I_{v,\phi}$ denote the set of indices defining the variables ${\bf z}$, with respect to which the experimental cost function $\phi_{p}$ is globally partially convex. Furthermore, let $\eta_{v,\phi}$ denote a Boolean indicator that is equal to 1 if $\phi_{p}$ is globally partially convex in both ${\bf z}$ and $\tau$, and is equal to 0 otherwise. It follows that the following tightening of (\ref{eq:costlipbound}) is valid for any $\bar k = 0,...,k$:

\vspace{-2mm}
\begin{equation}\label{eq:costlipboundccv}
\begin{array}{l}
\phi_{p} ({\bf u}_{k+1},\tau_{k+1}) \geq \phi_{p} ({\bf u}_{\bar k},\tau_{\bar k}) \vspace{1mm} \\
\hspace{24mm} \displaystyle +\eta_{v,\phi} \frac{\partial \phi_{p}}{\partial \tau} \Big |_{({\bf u}_{\bar k},\tau_{\bar k})} ( \tau_{k+1} - \tau_{{\bar k}} ) \vspace{1mm} \\
\hspace{24mm} \displaystyle + (1-\eta_{v,\phi}) \underline \kappa_{\phi,\tau} \left( \tau_{k+1} - \tau_{\bar k} \right) \vspace{1mm}\\
\hspace{24mm}\displaystyle   + \sum_{i \in I_{v,\phi}} \frac{\partial \phi_{p}}{\partial u_i} \Big |_{({\bf u}_{\bar k},\tau_{\bar k})} ( u_{k+1,i} - u_{{\bar k},i} ) \vspace{1mm} \\
\hspace{24mm} + \displaystyle \sum_{i \not \in I_{v,\phi}} \mathop {\min} \left[ \begin{array}{l} \underline \kappa_{\phi,i} ( u_{k+1,i} - u_{{\bar k},i} ), \vspace{1mm}\\ \overline \kappa_{\phi,i} ( u_{k+1,i} - u_{{\bar k},i} ) \end{array} \right].
\end{array}
\end{equation}

\noindent where $i \not \in I_{v,\phi}$ implicitly denotes $\{ 1,...,n_u \} \setminus I_{v,\phi}$.

\end{lemma}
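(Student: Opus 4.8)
The plan is to mirror the proof of Lemma~\ref{lem:concave} almost verbatim, reversing every inequality, replacing each maximum by a minimum, and replacing each upper Lipschitz constant by its lower counterpart. First I would write the cost value at the future iterate as the telescoping identity
\begin{equation*}
\begin{array}{l}
\phi_{p} ({\bf v}_{k+1},{\bf z}_{k+1},\tau_{k+1}) = \phi_{p} ({\bf v}_{\bar k},{\bf z}_{\bar k},\tau_{\bar k}) \\
\hspace{4mm} + \big[ \phi_{p} ({\bf v}_{\bar k},{\bf z}_{k+1},\tau_{k+1}) - \phi_{p} ({\bf v}_{\bar k},{\bf z}_{\bar k},\tau_{\bar k}) \big] \\
\hspace{4mm} + \big[ \phi_{p} ({\bf v}_{k+1},{\bf z}_{k+1},\tau_{k+1}) - \phi_{p} ({\bf v}_{\bar k},{\bf z}_{k+1},\tau_{k+1}) \big],
\end{array}
\end{equation*}
exactly as in (\ref{eq:ccvproof1}), and then lower-bound the two bracketed addends separately.

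For the first addend I would distinguish the two values of the Boolean $\eta_{v,\phi}$. If $\eta_{v,\phi}=1$, then $\phi_{p}$ is convex in $({\bf z},\tau)$ with ${\bf v}$ fixed, so the first-order convexity inequality at $({\bf u}_{\bar k},\tau_{\bar k})$ gives the lower bound $\frac{\partial \phi_{p}}{\partial \tau}\big|_{({\bf u}_{\bar k},\tau_{\bar k})}(\tau_{k+1}-\tau_{\bar k}) + \sum_{i\in I_{v,\phi}} \frac{\partial \phi_{p}}{\partial u_i}\big|_{({\bf u}_{\bar k},\tau_{\bar k})}(u_{k+1,i}-u_{\bar k,i})$. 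If $\eta_{v,\phi}=0$, I would insert the intermediate point $({\bf v}_{\bar k},{\bf z}_{k+1},\tau_{\bar k})$: the change in ${\bf z}$ at fixed $\tau_{\bar k}$ is lower-bounded by $\sum_{i\in I_{v,\phi}} \frac{\partial \phi_{p}}{\partial u_i}\big|_{({\bf u}_{\bar k},\tau_{\bar k})}(u_{k+1,i}-u_{\bar k,i})$ via partial convexity in ${\bf z}$, and the subsequent change in $\tau$ is lower-bounded by $\underline \kappa_{\phi,\tau}(\tau_{k+1}-\tau_{\bar k})$ via the lower Lipschitz bound associated with (\ref{eq:lipcostdeg}); the two estimates are then merged into the single $\eta_{v,\phi}$-weighted expression exactly as (\ref{eq:ccvproof7}) does for the constraint. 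For the second addend only the ${\bf v}$-components change, so the componentwise lower Lipschitz bound yields $\sum_{i\not\in I_{v,\phi}} \min[\underline \kappa_{\phi,i}(u_{k+1,i}-u_{\bar k,i}),\,\overline \kappa_{\phi,i}(u_{k+1,i}-u_{\bar k,i})]$ -- a minimum where the concave proof had the maximum of (\ref{eq:ccvproof8}), since a lower bound on a signed increment keeps the more negative of the two candidate slope contributions. Adding the bound on the first addend to the bound on the second and rearranging produces (\ref{eq:costlipboundccv}); $\bar k\in\{0,\dots,k\}$ having been arbitrary, the bound holds for every such $\bar k$, and the choice $I_{v,\phi}=\emptyset$, $\eta_{v,\phi}=0$, $\underline \kappa_{\phi,i}=\overline \kappa_{\phi,i}$ recovers (\ref{eq:costlipbound}).

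I do not expect a genuine obstacle, as the structure is entirely dictated by Lemma~\ref{lem:concave} and the work is bookkeeping. The two things needing care are: invoking the partial-convexity inequality only on slices where the complementary variables (${\bf v}$, and also $\tau$ when $\eta_{v,\phi}=0$) are held fixed, as Definition~\ref{def:cvx} requires; and keeping the sign flip consistent throughout, so that every ``max'' of the concave derivation becomes a ``min'' and every upper Lipschitz constant becomes its lower counterpart. It is also worth noting that this lemma will be used with the opposite orientation to Lemma~\ref{lem:concave}: here the lower bound on $\phi_{p}({\bf u}_{k+1},\tau_{k+1})$ serves to \emph{exclude} candidate iterates for which it already exceeds $\phi_{p}({\bf u}_{k^*},\tau_{k+1})$, whereas the concave lemma's upper bound was used to \emph{enforce} constraint satisfaction.
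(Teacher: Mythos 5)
Your proposal is correct and matches the paper's approach: the paper's own proof of this lemma is literally the one-line remark that it is ``identical to that of Lemma~\ref{lem:concave}, save that many things are reversed,'' and you have carried out exactly that mirroring — same telescoping decomposition as (\ref{eq:ccvproof1}), first-order convexity in place of concavity for the lower bound on the first addend, the intermediate point $({\bf v}_{\bar k},{\bf z}_{k+1},\tau_{\bar k})$ with the lower Lipschitz constant $\underline\kappa_{\phi,\tau}$ when $\eta_{v,\phi}=0$, and the componentwise minimum in place of the maximum for the second addend. Nothing further is needed.
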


\begin{proof}
The proof is identical to that of Lemma \ref{lem:concave}, save that many things are reversed to account for lower bounding instead of upper bounding and for convexity instead of concavity. We do not consider it necessary to repeat the steps here and leave this to the reader. \qed
\end{proof}

It follows from (\ref{eq:costlipboundccv}) that any choice of ${\bf u}_{k+1}$ for which

\vspace{-4mm}
\begin{equation}\label{eq:costhigh}
\begin{array}{l}
\displaystyle \phi_{p} ({\bf u}_{\bar k},\tau_{\bar k})  +\eta_{v,\phi} \frac{\partial \phi_{p}}{\partial \tau} \Big |_{({\bf u}_{\bar k},\tau_{\bar k})} ( \tau_{k+1} - \tau_{{\bar k}} ) \vspace{1mm} \\
\displaystyle + (1-\eta_{v,\phi}) \underline \kappa_{\phi,\tau} \left( \tau_{k+1} - \tau_{\bar k} \right) \vspace{1mm}\\
\displaystyle   + \sum_{i \in I_{v,\phi}} \frac{\partial \phi_{p}}{\partial u_i} \Big |_{({\bf u}_{\bar k},\tau_{\bar k})} ( u_{k+1,i} - u_{{\bar k},i} ) \vspace{1mm} \\
+ \displaystyle \sum_{i \not \in I_{v,\phi}} \mathop {\min} \left[ \begin{array}{l} \underline \kappa_{\phi,i} ( u_{k+1,i} - u_{{\bar k},i} ), \vspace{1mm}\\ \overline \kappa_{\phi,i} ( u_{k+1,i} - u_{{\bar k},i} ) \end{array} \right] \geq \phi_p ({\bf u}_{k^*},\tau_{k+1})
\end{array}
\end{equation}

\noindent is invalid, as this guarantees that $\phi_p ({\bf u}_{k+1},\tau_{k+1}) \geq \phi_p ({\bf u}_{k^*},\tau_{k+1})$.

Likewise, as the actual value of $\phi_p ({\bf u}_{k^*},\tau_{k+1})$ is not available, we must instead work with its upper bound. Denoting by $I_{c,\phi}$ and $\eta_{c,\phi}$ the cost \emph{concavity} indicators, we may employ a bound analogous to (\ref{eq:feas2LUccv}):

\vspace{-4mm}
\begin{equation}\label{eq:feas2LUccvcost}
\begin{array}{l}
\phi_{p} ({\bf u}_{k^*},\tau_{k+1}) \leq \phi_{p} ({\bf u}_{\tilde k},\tau_{\tilde k}) \vspace{1mm} \\
\hspace{24mm} \displaystyle +\eta_{c,\phi} \frac{\partial \phi_{p}}{\partial \tau} \Big |_{({\bf u}_{\tilde k},\tau_{\tilde k})} ( \tau_{k+1} - \tau_{{\tilde k}} ) \vspace{1mm} \\
\hspace{24mm} \displaystyle + (1-\eta_{c,\phi}) \overline \kappa_{\phi,\tau} \left( \tau_{k+1} - \tau_{\tilde k} \right) \vspace{1mm}\\
\hspace{24mm}\displaystyle   + \sum_{i \in I_{c,\phi}} \frac{\partial \phi_{p}}{\partial u_i} \Big |_{({\bf u}_{\tilde k},\tau_{\tilde k})} ( u_{k^*,i} - u_{{\tilde k},i} ) \vspace{1mm} \\
\hspace{24mm} + \displaystyle \sum_{i \not \in I_{c,\phi}} \mathop {\max} \left[ \begin{array}{l} \underline \kappa_{\phi,i} ( u_{k^*,i} - u_{{\tilde k},i} ), \vspace{1mm}\\ \overline \kappa_{\phi,i} ( u_{k^*,i} - u_{{\tilde k},i} ) \end{array} \right],
\end{array}
\end{equation}

\noindent which is valid for any $\tilde k = 0,...,k$.

This bound may then be used in an implementable version of (\ref{eq:costhigh}):

\vspace{-4mm}
\begin{equation}\label{eq:costhigh2}
\begin{array}{l}
\displaystyle \phi_{p} ({\bf u}_{\bar k},\tau_{\bar k})  +\eta_{v,\phi} \frac{\partial \phi_{p}}{\partial \tau} \Big |_{({\bf u}_{\bar k},\tau_{\bar k})} ( \tau_{k+1} - \tau_{{\bar k}} ) \vspace{1mm} \\
\displaystyle + (1-\eta_{v,\phi}) \underline \kappa_{\phi,\tau} \left( \tau_{k+1} - \tau_{\bar k} \right) \vspace{1mm}\\
\displaystyle   + \sum_{i \in I_{v,\phi}} \frac{\partial \phi_{p}}{\partial u_i} \Big |_{({\bf u}_{\bar k},\tau_{\bar k})} ( u_{k+1,i} - u_{{\bar k},i} ) \vspace{1mm} \\
+ \displaystyle \sum_{i \not \in I_{v,\phi}} \mathop {\min} \left[ \begin{array}{l} \underline \kappa_{\phi,i} ( u_{k+1,i} - u_{{\bar k},i} ), \vspace{1mm}\\ \overline \kappa_{\phi,i} ( u_{k+1,i} - u_{{\bar k},i} ) \end{array} \right] \\
\hspace{22mm}\geq \phi_{p} ({\bf u}_{\tilde k},\tau_{\tilde k}) \vspace{1mm} \\
\hspace{24mm} \displaystyle +\eta_{c,\phi} \frac{\partial \phi_{p}}{\partial \tau} \Big |_{({\bf u}_{\tilde k},\tau_{\tilde k})} ( \tau_{k+1} - \tau_{{\tilde k}} ) \vspace{1mm} \\
\hspace{24mm} \displaystyle + (1-\eta_{c,\phi}) \overline \kappa_{\phi,\tau} \left( \tau_{k+1} - \tau_{\tilde k} \right) \vspace{1mm}\\
\hspace{24mm}\displaystyle   + \sum_{i \in I_{c,\phi}} \frac{\partial \phi_{p}}{\partial u_i} \Big |_{({\bf u}_{\tilde k},\tau_{\tilde k})} ( u_{k^*,i} - u_{{\tilde k},i} ) \vspace{1mm} \\
\hspace{24mm} + \displaystyle \sum_{i \not \in I_{c,\phi}} \mathop {\max} \left[ \begin{array}{l} \underline \kappa_{\phi,i} ( u_{k^*,i} - u_{{\tilde k},i} ), \vspace{1mm}\\ \overline \kappa_{\phi,i} ( u_{k^*,i} - u_{{\tilde k},i} ) \end{array} \right],
\end{array}
\end{equation}

\noindent as any ${\bf u}_{k+1}$ that satisfies (\ref{eq:costhigh2}) must also satisfy (\ref{eq:costhigh}), and thus may be considered as invalid.

As in the previous subsection, we may take $\bar k$ to be any of the past experimental iterations. Since satisfying (\ref{eq:costhigh2}) with respect to any single iteration is sufficient to fathom a given ${\bf u}_{k+1}$ from consideration, one may take the maximum of the left-hand side of (\ref{eq:costhigh2}) over all experimental iterations:

\vspace{-2mm}
\begin{equation}\label{eq:costhighmax0}
\begin{array}{l}
\mathop {\max} \limits_{\bar k = 0,...,k}\left[ \begin{array}{l}
\displaystyle \phi_{p} ({\bf u}_{\bar k},\tau_{\bar k})  +\eta_{v,\phi} \frac{\partial \phi_{p}}{\partial \tau} \Big |_{({\bf u}_{\bar k},\tau_{\bar k})} ( \tau_{k+1} - \tau_{{\bar k}} ) \vspace{1mm} \\
\displaystyle + (1-\eta_{v,\phi}) \underline \kappa_{\phi,\tau} \left( \tau_{k+1} - \tau_{\bar k} \right) \vspace{1mm}\\
\displaystyle   + \sum_{i \in I_{v,\phi}} \frac{\partial \phi_{p}}{\partial u_i} \Big |_{({\bf u}_{\bar k},\tau_{\bar k})} ( u_{k+1,i} - u_{{\bar k},i} ) \vspace{1mm} \\
+ \displaystyle \sum_{i \not \in I_{v,\phi}} \mathop {\min} \left[ \begin{array}{l} \underline \kappa_{\phi,i} ( u_{k+1,i} - u_{{\bar k},i} ), \vspace{1mm}\\ \overline \kappa_{\phi,i} ( u_{k+1,i} - u_{{\bar k},i} ) \end{array} \right] 
\end{array} \right] \vspace{1mm}\\
\hspace{22mm}\geq \phi_{p} ({\bf u}_{\tilde k},\tau_{\tilde k}) \vspace{1mm} \\
\hspace{24mm} \displaystyle +\eta_{c,\phi} \frac{\partial \phi_{p}}{\partial \tau} \Big |_{({\bf u}_{\tilde k},\tau_{\tilde k})} ( \tau_{k+1} - \tau_{{\tilde k}} ) \vspace{1mm} \\
\hspace{24mm} \displaystyle + (1-\eta_{c,\phi}) \overline \kappa_{\phi,\tau} \left( \tau_{k+1} - \tau_{\tilde k} \right) \vspace{1mm}\\
\hspace{24mm}\displaystyle   + \sum_{i \in I_{c,\phi}} \frac{\partial \phi_{p}}{\partial u_i} \Big |_{({\bf u}_{\tilde k},\tau_{\tilde k})} ( u_{k^*,i} - u_{{\tilde k},i} ) \vspace{1mm} \\
\hspace{24mm} + \displaystyle \sum_{i \not \in I_{c,\phi}} \mathop {\max} \left[ \begin{array}{l} \underline \kappa_{\phi,i} ( u_{k^*,i} - u_{{\tilde k},i} ), \vspace{1mm}\\ \overline \kappa_{\phi,i} ( u_{k^*,i} - u_{{\tilde k},i} ) \end{array} \right].
\end{array}
\end{equation}

The same may be done for the right-hand side, save that one chooses the $\tilde k$ that \emph{minimizes} the upper bound:

\vspace{-2mm}
\begin{equation}\label{eq:costhighmax1}
\begin{array}{l}
\mathop {\max} \limits_{\bar k = 0,...,k}\left[ \begin{array}{l}
\displaystyle \phi_{p} ({\bf u}_{\bar k},\tau_{\bar k})  + \eta_{v,\phi} \frac{\partial \phi_{p}}{\partial \tau} \Big |_{({\bf u}_{\bar k},\tau_{\bar k})} ( \tau_{k+1} - \tau_{{\bar k}} ) \vspace{1mm} \\
\displaystyle + (1-\eta_{v,\phi}) \underline \kappa_{\phi,\tau} \left( \tau_{k+1} - \tau_{\bar k} \right) \vspace{1mm}\\
\displaystyle   + \sum_{i \in I_{v,\phi}} \frac{\partial \phi_{p}}{\partial u_i} \Big |_{({\bf u}_{\bar k},\tau_{\bar k})} ( u_{k+1,i} - u_{{\bar k},i} ) \vspace{1mm} \\
+ \displaystyle \sum_{i \not \in I_{v,\phi}} \mathop {\min} \left[ \begin{array}{l} \underline \kappa_{\phi,i} ( u_{k+1,i} - u_{{\bar k},i} ), \vspace{1mm}\\ \overline \kappa_{\phi,i} ( u_{k+1,i} - u_{{\bar k},i} ) \end{array} \right] 
\end{array} \right] \vspace{1mm}\\
\hspace{0mm}\displaystyle \geq \mathop {\min}_{\tilde k = 0,...,k} \left[ \begin{array}{l} \displaystyle \phi_{p} ({\bf u}_{\tilde k},\tau_{\tilde k})  +\eta_{c,\phi} \frac{\partial \phi_{p}}{\partial \tau} \Big |_{({\bf u}_{\tilde k},\tau_{\tilde k})} ( \tau_{k+1} - \tau_{{\tilde k}} ) \vspace{1mm} \\
\hspace{0mm} \displaystyle + (1-\eta_{c,\phi}) \overline \kappa_{\phi,\tau} \left( \tau_{k+1} - \tau_{\tilde k} \right) \vspace{1mm}\\
\hspace{0mm}\displaystyle   + \sum_{i \in I_{c,\phi}} \frac{\partial \phi_{p}}{\partial u_i} \Big |_{({\bf u}_{\tilde k},\tau_{\tilde k})} ( u_{k^*,i} - u_{{\tilde k},i} ) \vspace{1mm} \\
\hspace{0mm} + \displaystyle \sum_{i \not \in I_{c,\phi}} \mathop {\max} \left[ \begin{array}{l} \underline \kappa_{\phi,i} ( u_{k^*,i} - u_{{\tilde k},i} ), \vspace{1mm}\\ \overline \kappa_{\phi,i} ( u_{k^*,i} - u_{{\tilde k},i} ) \end{array} \right] \end{array} \right].
\end{array}
\end{equation}

Since $\phi_p ({\bf u}_{k+1},\tau_{k+1}) \geq \phi_p ({\bf u}_{k^*},\tau_{k+1})$ is guaranteed for any ${\bf u}_{k+1}$ that satisfies (\ref{eq:costhighmax1}), we obtain a necessary condition for cost decrease by reversing the inequality sign:

\vspace{-2mm}
\begin{equation}\label{eq:costhighmax}
\begin{array}{l}
\mathop {\max} \limits_{\bar k = 0,...,k}\left[ \begin{array}{l}
\displaystyle \phi_{p} ({\bf u}_{\bar k},\tau_{\bar k})  +\eta_{v,\phi} \frac{\partial \phi_{p}}{\partial \tau} \Big |_{({\bf u}_{\bar k},\tau_{\bar k})} ( \tau_{k+1} - \tau_{{\bar k}} ) \vspace{1mm} \\
\displaystyle + (1-\eta_{v,\phi}) \underline \kappa_{\phi,\tau} \left( \tau_{k+1} - \tau_{\bar k} \right) \vspace{1mm}\\
\displaystyle   + \sum_{i \in I_{v,\phi}} \frac{\partial \phi_{p}}{\partial u_i} \Big |_{({\bf u}_{\bar k},\tau_{\bar k})} ( u_{k+1,i} - u_{{\bar k},i} ) \vspace{1mm} \\
+ \displaystyle \sum_{i \not \in I_{v,\phi}} \mathop {\min} \left[ \begin{array}{l} \underline \kappa_{\phi,i} ( u_{k+1,i} - u_{{\bar k},i} ), \vspace{1mm}\\ \overline \kappa_{\phi,i} ( u_{k+1,i} - u_{{\bar k},i} ) \end{array} \right] 
\end{array} \right] \vspace{1mm}\\
\hspace{0mm}\displaystyle \leq \mathop {\min}_{\tilde k = 0,...,k} \left[ \begin{array}{l} \displaystyle \phi_{p} ({\bf u}_{\tilde k},\tau_{\tilde k})  +\eta_{c,\phi} \frac{\partial \phi_{p}}{\partial \tau} \Big |_{({\bf u}_{\tilde k},\tau_{\tilde k})} ( \tau_{k+1} - \tau_{{\tilde k}} ) \vspace{1mm} \\
\hspace{0mm} \displaystyle + (1-\eta_{c,\phi}) \overline \kappa_{\phi,\tau} \left( \tau_{k+1} - \tau_{\tilde k} \right) \vspace{1mm}\\
\hspace{0mm}\displaystyle   + \sum_{i \in I_{c,\phi}} \frac{\partial \phi_{p}}{\partial u_i} \Big |_{({\bf u}_{\tilde k},\tau_{\tilde k})} ( u_{k^*,i} - u_{{\tilde k},i} ) \vspace{1mm} \\
\hspace{0mm} + \displaystyle \sum_{i \not \in I_{c,\phi}} \mathop {\max} \left[ \begin{array}{l} \underline \kappa_{\phi,i} ( u_{k^*,i} - u_{{\tilde k},i} ), \vspace{1mm}\\ \overline \kappa_{\phi,i} ( u_{k^*,i} - u_{{\tilde k},i} ) \end{array} \right] \end{array} \right].
\end{array}
\end{equation}

Substituting the filtering law into (\ref{eq:costhighmax}) then leads to the following condition on $K_k$ in the project-and-filter approach:

\vspace{-2mm}
\begin{equation}\label{eq:costhighmaxPF}
\begin{array}{l}
\mathop {\max} \limits_{\bar k = 0,...,k}\left[ \begin{array}{l}
\displaystyle \phi_{p} ({\bf u}_{\bar k},\tau_{\bar k})  +\eta_{v,\phi} \frac{\partial \phi_{p}}{\partial \tau} \Big |_{({\bf u}_{\bar k},\tau_{\bar k})} ( \tau_{k+1} - \tau_{{\bar k}} ) \vspace{1mm} \\
\displaystyle + (1-\eta_{v,\phi}) \underline \kappa_{\phi,\tau} \left( \tau_{k+1} - \tau_{\bar k} \right) \vspace{1mm}\\
\displaystyle   + \sum_{i \in I_{v,\phi}} \frac{\partial \phi_{p}}{\partial u_i} \Big |_{({\bf u}_{\bar k},\tau_{\bar k})} ( u_{k^*,i} + \\ \hspace{13mm}  K_k(\bar u_{k+1,i}^* - u_{k^*,i}) - u_{{\bar k},i} ) \vspace{1mm} \\
+ \displaystyle \sum_{i \not \in I_{v,\phi}} \mathop {\min} \left[ \begin{array}{l} \underline \kappa_{\phi,i} ( u_{k^*,i} +\\ \hspace{3mm} K_k(\bar u_{k+1,i}^* - u_{k^*,i}) - u_{{\bar k},i} ), \vspace{1mm}\\ \overline \kappa_{\phi,i} ( u_{k^*,i} +\\ \hspace{3mm} K_k(\bar u_{k+1,i}^* - u_{k^*,i}) - u_{{\bar k},i} ) \end{array} \right] 
\end{array} \right] \vspace{2mm}\\
\hspace{0mm}\displaystyle \leq \mathop {\min}_{\tilde k = 0,...,k} \left[ \begin{array}{l} \displaystyle \phi_{p} ({\bf u}_{\tilde k},\tau_{\tilde k})  +\eta_{c,\phi} \frac{\partial \phi_{p}}{\partial \tau} \Big |_{({\bf u}_{\tilde k},\tau_{\tilde k})} ( \tau_{k+1} - \tau_{{\tilde k}} ) \vspace{1mm} \\
\hspace{0mm} \displaystyle + (1-\eta_{c,\phi}) \overline \kappa_{\phi,\tau} \left( \tau_{k+1} - \tau_{\tilde k} \right) \vspace{1mm}\\
\hspace{0mm}\displaystyle   + \sum_{i \in I_{c,\phi}} \frac{\partial \phi_{p}}{\partial u_i} \Big |_{({\bf u}_{\tilde k},\tau_{\tilde k})} ( u_{k^*,i} - u_{{\tilde k},i} ) \vspace{1mm} \\
\hspace{0mm} + \displaystyle \sum_{i \not \in I_{c,\phi}} \mathop {\max} \left[ \begin{array}{l} \underline \kappa_{\phi,i} ( u_{k^*,i} - u_{{\tilde k},i} ), \vspace{1mm}\\ \overline \kappa_{\phi,i} ( u_{k^*,i} - u_{{\tilde k},i} ) \end{array} \right] \end{array} \right].
\end{array}
\end{equation}

Since we are now supposing the very practical possibility of the scheme failing to satisfy the sufficient conditions for cost decrease -- due to, for example, an improper choice of Lipschitz constants -- it makes sense to modify the way in which we choose the reference point as well, since the last experimental iterate for which feasibility may be guaranteed is no longer guaranteed to be the best as the cost is not guaranteed to decrease from experiment to experiment. What we propose is to take as a reference the latest experimental iterate for which feasibility may be guaranteed and which \emph{cannot be proven to have a cost function value greater to that at another provably feasible point}.

The issue that arises here is that it is not sufficient to simply compare the cost function values for the different experiments, since $\phi_p ({\bf u}_{k-1},\tau_{k-1}) < \phi_p ({\bf u}_{k},\tau_{k})$ need not imply $\phi_p ({\bf u}_{k-1},\tau_{k}) < \phi_p ({\bf u}_{k},\tau_{k})$ due to degradation effects. As such, one could not say that ${\bf u}_{k-1}$ is a better reference point than ${\bf u}_{k}$ by simply comparing their measured values at different time instants. A robust approach would be to use an upper bound on $\phi_p ({\bf u}_{k-1},\tau_{k})$ in the comparison instead. As one example, consider the bound

\vspace{-4mm}
\begin{equation}\label{eq:costupperbound}
\phi_p ({\bf u}_{k-1},\tau_{k}) \leq \phi_p ({\bf u}_{k-1},\tau_{k-1}) + \overline \kappa_{\phi,\tau} (\tau_{k}-\tau_{k-1}). 
\end{equation}

\noindent Clearly, if the right-hand side is inferior to $\phi_p ({\bf u}_{k},\tau_{k})$, then this implies $\phi_p ({\bf u}_{k-1},\tau_{k}) < \phi_p ({\bf u}_{k},\tau_{k})$ and we can safely say that ${\bf u}_{k-1}$ is a better reference than ${\bf u}_k$ because it has a lower cost function value at time instant $\tau_k$.

The above example is special in that we do not need a lower or upper bound on $\phi_p ({\bf u}_{k},\tau_{k})$, as this value is current and does not require accounting for degradation. In general, however, we may want to be able to compare the \emph{current} cost function values for any two past experimental iterates. For example, in choosing a more appropriate reference between ${\bf u}_{k-1}$ and ${\bf u}_{k-2}$, we will give our preference to ${\bf u}_{k-1}$, since it comes later, provided that it cannot be proven to have a cost function value greater than that at ${\bf u}_{k-2}$, i.e., provided that we cannot prove $\phi_p ({\bf u}_{k-1},\tau_{k}) > \phi_p ({\bf u}_{k-2},\tau_{k})$.

To prove this would entail using \emph{two} bounds and showing that the lower bound on $\phi_p ({\bf u}_{k-1},\tau_{k})$ is strictly greater than the upper bound on $\phi_p ({\bf u}_{k-2},\tau_{k})$. For a general $\bar k$, such bounds are obtained as

\vspace{-4mm}
\begin{equation}\label{eq:costupperbound2}
\begin{array}{l}
\displaystyle \phi_p ({\bf u}_{\bar k},\tau_{k}) \leq \phi_p ({\bf u}_{\bar k},\tau_{\bar k}) + \eta_{c,\phi} \frac{\partial \phi_{p}}{\partial \tau} \Big |_{({\bf u}_{\bar k},\tau_{\bar k})} ( \tau_{k} - \tau_{{\bar k}} ) \vspace{1mm} \\
\hspace{35mm}+ (1- \eta_{c,\phi}) \overline \kappa_{\phi,\tau} \left( \tau_{k} - \tau_{\bar k} \right),
\end{array}
\end{equation}

\vspace{-4mm}
\begin{equation}\label{eq:costlowerbound2}
\begin{array}{l}
\displaystyle \phi_p ({\bf u}_{\bar k},\tau_{k}) \geq \phi_p ({\bf u}_{\bar k},\tau_{\bar k}) + \eta_{v,\phi} \frac{\partial \phi_{p}}{\partial \tau} \Big |_{({\bf u}_{\bar k},\tau_{\bar k})} ( \tau_{k} - \tau_{{\bar k}} ) \vspace{1mm} \\
\hspace{35mm}+ (1-  \eta_{v,\phi}) \underline \kappa_{\phi,\tau} \left( \tau_{k} - \tau_{\bar k} \right).
\end{array}
\end{equation}

Using these results, we proceed to replace (\ref{eq:kstarLUccv}) by

\vspace{-4mm}
\begin{equation}\label{eq:kstarLUccvcost}
\begin{array}{rl}
k^* := \;\;\;\;\;\;\;\;\;\;\;\;\;\;& \vspace{1mm} \\
{\rm arg} \mathop {\rm maximize}\limits_{\bar k \in [0,k]} & \bar k \vspace{1mm} \\
{\rm{subject}}\;{\rm{to}} & g_{p,j} ({\bf u}_{\bar k},\tau_{\bar k}) \vspace{1mm} \\
& \displaystyle + \eta_{c,j} \frac{\partial g_{p,j}}{\partial \tau} \Big |_{({\bf u}_{\bar k},\tau_{\bar k})} ( \tau_{k+1} - \tau_{{\bar k}} ) \vspace{1mm} \\
&  + (1- \eta_{c,j}) \overline \kappa_{p,j\tau} \left( \tau_{k+1} - \tau_{\bar k} \right) \leq 0, \vspace{1mm} \\
& \forall j = 1,...,n_{g_p}
\end{array}
\end{equation}

$$
\begin{array}{rl}
& \displaystyle \phi_p ({\bf u}_{\bar k},\tau_{\bar k}) + \eta_{v,\phi} \frac{\partial \phi_{p}}{\partial \tau} \Big |_{({\bf u}_{\bar k},\tau_{\bar k})} ( \tau_{k} - \tau_{{\bar k}} ) \vspace{1mm} \\
&+ (1-  \eta_{v,\phi}) \underline \kappa_{\phi,\tau} \left( \tau_{k} - \tau_{\bar k} \right) \leq \vspace{1mm} \\
& \mathop {\min} \limits_{\tilde k \in {\bf k}_f} \left[ \begin{array}{l}  \phi_p ({\bf u}_{\tilde k},\tau_{\tilde k}) + \vspace{1mm} \\
\displaystyle \eta_{c,\phi} \frac{\partial \phi_{p}}{\partial \tau} \Big |_{({\bf u}_{\tilde k},\tau_{\tilde k})} ( \tau_{k} - \tau_{{\tilde k}} ) \vspace{1mm} \\
+ (1-  \eta_{c,\phi}) \overline \kappa_{\phi,\tau} \left( \tau_{k} - \tau_{\tilde k} \right) \end{array} \right],
\end{array}
$$

\begin{equation}\label{eq:kfeas}
{\bf k}_f = \left\{ \bar k : \begin{array}{l} g_{p,j} ({\bf u}_{\bar k},\tau_{\bar k}) \vspace{1mm} \\
 + \displaystyle \eta_{c,j} \frac{\partial g_{p,j}}{\partial \tau} \Big |_{({\bf u}_{\bar k},\tau_{\bar k})} ( \tau_{k+1} - \tau_{{\bar k}} ) \vspace{1mm} \\ + (1-  \eta_{c,j}) \overline \kappa_{p,j\tau} \left( \tau_{k+1} - \tau_{\bar k} \right)  \leq 0, \vspace{1mm} \\ \forall j = 1,...,n_{g_p} \end{array} \right\},
\end{equation}

\noindent which chooses the most recent point while guaranteeing that feasibility may be maintained for this point and that the corresponding cost function value at time instant $\tau_k$ will not be greater than the value for any other points for which feasibility is guaranteed. In the case where this problem has no solution, a possible implementation, apart from simply terminating the procedure, is to fall back on (\ref{eq:kstar2LUccv}) in hopes of finding a feasible point.

To illustrate the potential usefulness of (\ref{eq:costhighmaxPF}), let us consider the following modified version of (\ref{eq:exprob}):

\vspace{-2mm}
\begin{equation}\label{eq:exprobunc}
\begin{array}{rl}
\mathop {{\rm{minimize}}}\limits_{u_1,u_2} & \phi_{p}({\bf{u}}) := (u_1-0.2)^2 + (u_2-0.4)^2 \\
{\rm{subject}}\hspace{1mm}{\rm{to}} & g_{p,1}({\bf{u}}) := -6u^2_1 - 3.5u_1 + u_2 -0.6 \le 0 \vspace{1mm} \\
 & g_{p,2}({\bf{u}}) := 2u^2_1 + 0.5u_1 + u_2 -0.75 \le 0 \vspace{1mm} \\
 & g_{1}({\bf{u}}) := -u^2_1 - (u_2-0.15)^2 +0.01 \le 0 \vspace{1mm} \\
& -0.5 \leq u_1 \leq 0.5 \\
& 0 \leq u_2 \leq 0.8,
\end{array}
\end{equation}

\noindent where we again, for simplicity, ignore degradation effects. Because this problem has an unconstrained optimum, it follows that the Lipschitz constants on the cost derivatives will play a more crucial role -- when an optimum is constrained, errors in the sufficient condition (\ref{eq:SCFO7idegLU}) for cost decrease are often compensated for by the conservatism introduced by the feasibility-guaranteeing conditions, which lower $K_k$ sufficiently even when the cost decrease condition does not.

We will suppose that an improper choice of $M_\phi$ has been made, with

\vspace{-2mm}
\begin{equation}\label{eq:exlipunc}
\begin{array}{ll}
\underline M_{\phi,11} = 0.1, & \underline M_{\phi,12} = -2, \\
\overline M_{\phi,11} = 0.5, & \overline M_{\phi,12} = -1.5,\\
\underline M_{\phi,21} = -2, & \underline M_{\phi,22} = 0.1, \\ 
\overline M_{\phi,21} = -1.5, & \overline M_{\phi,22} = 0.5,
\end{array}
\end{equation}

\noindent which clearly does not satisfy (\ref{eq:lipcondeg2LU}) for the $\phi_p$ given in (\ref{eq:exprobunc}). Employing (\ref{eq:kstarLUccvcost}) to select $k^*$ but not employing (\ref{eq:costhighmaxPF}), we see that the scheme fails to converge to the optimum (Fig. \ref{fig:exbadlip}). What essentially happens here is that the scheme finds a good reference at approximately ${\bf u} = (0.15,0.4)$ and then proceeds to take overly big steps in the positive $u_1$ direction. As no sort of feedback is applied to tell the algorithm that smaller steps are needed, it simply continues to take the same large steps due to the improper choice of $M_\phi$ and then going back to the best point as its reference.

\begin{figure*}
\begin{center}
\includegraphics[width=16cm]{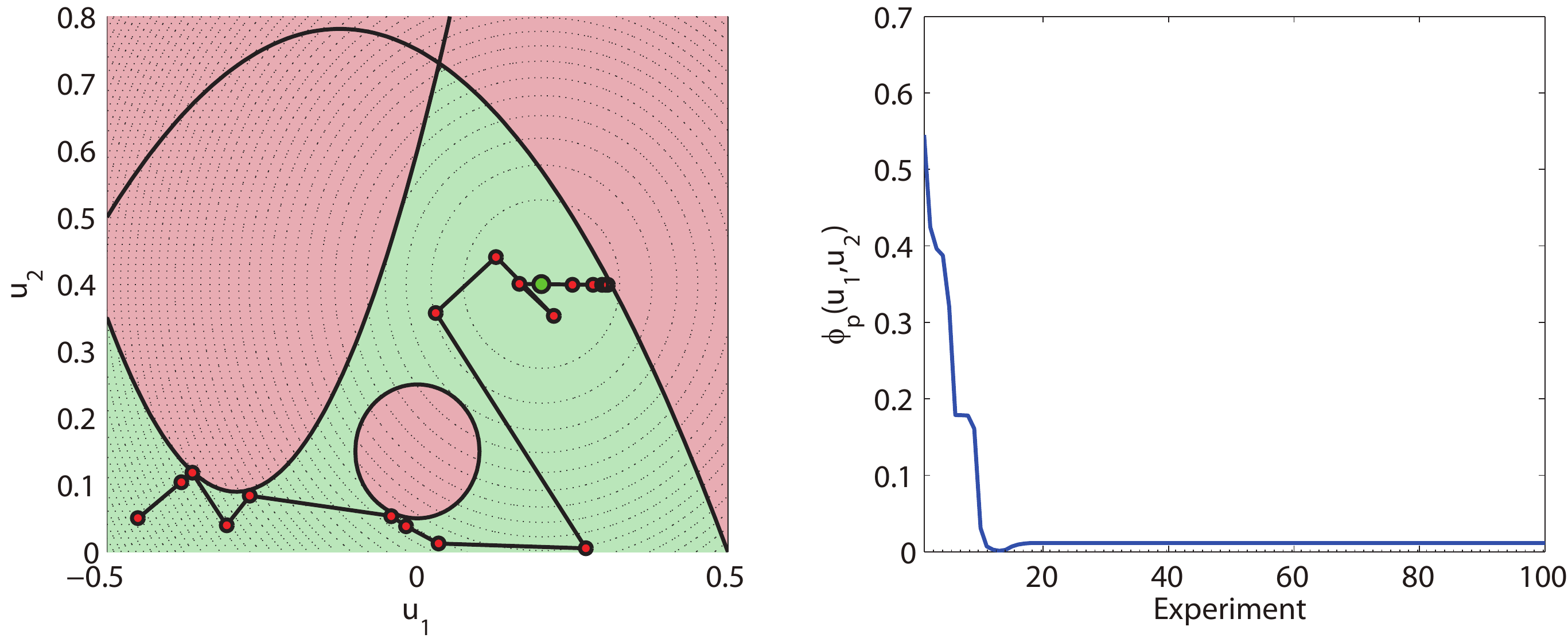}
\caption{Solution generated by the modified SCFO methodology for the unconstrained-optimum problem (\ref{eq:exprobunc}) when the necessary condition (\ref{eq:costhighmaxPF}) is not enforced and the Lipschitz constants $M_\phi$ are improperly chosen.}
\label{fig:exbadlip}
\end{center}
\end{figure*}

By contrast, enforcing (\ref{eq:costhighmaxPF}) with the Lipschitz constants

\vspace{-2mm}
\begin{equation}\label{eq:exlip2unc}
\begin{array}{ll}
\underline \kappa_{\phi,1} = -1.4, & \underline \kappa_{\phi,2} = -0.8, \\
\overline \kappa_{\phi,1} = 0.6, & \overline \kappa_{\phi,2} = 0.8
\end{array}
\end{equation}

\noindent and the convexity relaxations defined by $I_{v,\phi} = \{ 1,2 \}$ actually allows convergence to the optimum despite the poor choice of $M_\phi$ (Fig. \ref{fig:exbadlip2}). The reason is simple: although the algorithm plans to take an overly large step, it does not because doing so would lead do an experimental iterate for which the cost is guaranteed to be higher than that at the reference. As such, the steps are gradually shortened as the experimental iterates are driven to the optimum.

\begin{figure*}
\begin{center}
\includegraphics[width=16cm]{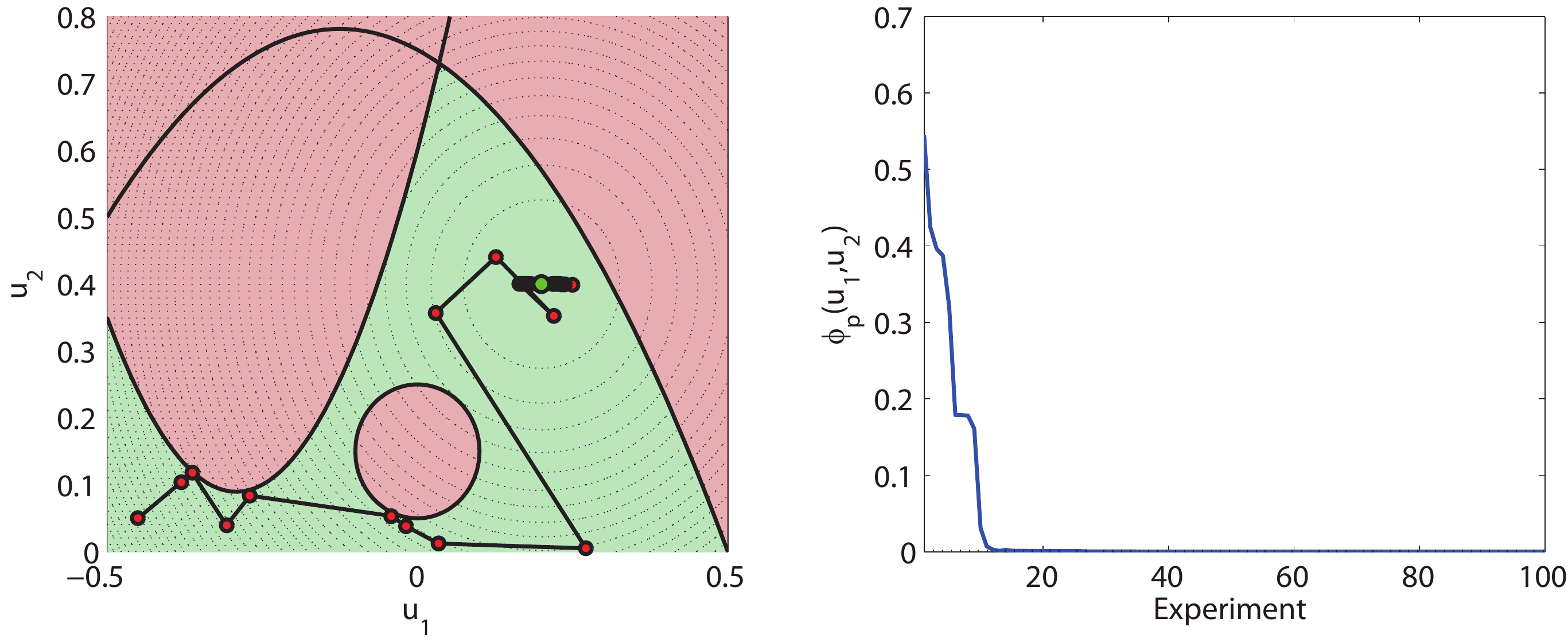}
\caption{Solution generated by the modified SCFO methodology for the unconstrained-optimum problem (\ref{eq:exprobunc}) when the necessary condition (\ref{eq:costhighmaxPF}) is enforced.}
\label{fig:exbadlip2}
\end{center}
\end{figure*}

We finish by reflecting on the implementability of this technique. Clearly, the price to pay for employing it comes via the need to provide \emph{even more} Lipschitz constants -- those for the cost function -- which may be seen as undesirable. However, as the constants $\kappa_\phi$ are bounds on the \emph{first-order} derivatives, they are expected to be much easier to obtain. Additionally, choosing very conservative values for $\kappa_\phi$ would not be detrimental to the performance of the scheme and would only reduce the usefulness of (\ref{eq:costhighmaxPF}). As before, we give the standard warnings associated with concavity and convexity relaxations -- i.e., that one should only use these when they are well justified and the concave/convex relationships are well documented. While it might even be possible to prove that the necessary condition for cost decrease (\ref{eq:costhighmaxPF}) becomes \emph{sufficient} asymptotically as more and more regions are explored and fathomed from consideration until only the cost descent regions are left, we do not attempt to do so here and only propose (\ref{eq:costhighmaxPF}) as a useful supplement to (\ref{eq:SCFO7idegLU}).

\subsection{Methods of Estimating Lipschitz Constants}

Here, we discuss the possible means of estimating the Lipschitz constants for the experimental functions ($\kappa$) and the cost function derivatives ($M_\phi$). Three approaches are proposed and may be classified as being based on:

\begin{itemize}
\item physical laws,
\item conservative model-based estimates,
\item consistency checks.
\end{itemize}

\noindent We note that these three are not mutually exclusive and may often be complementary.

\subsubsection{The Physical Meaning of a Lipschitz Constant}

As already illustrated with some examples in Section \ref{sec:twolip}, the sign of the Lipschitz constants $\kappa$ may be known with certainty in some applications due to underlying physical principles or simply due to engineering experience. As each Lipschitz constant is essentially a lower or upper bound on a given derivative, or sensitivity, of an experimental function, such knowledge allows either the lower or upper constant to be set to 0.

In other cases one may be able to limit not only the sign but also the magnitude --  consider, for example, a heat exchanger where a hot element comes into contact with a colder one, thereby heating it. In many such set-ups, one may raise or lower the temperature of the heating element to raise or lower the temperature of the heated element, but usually by no more than the change in temperature in the heating element -- i.e., one cannot raise the temperature of the heating element by 5\degree C and expect the temperature of the heated element to go up by \emph{more than} 5\degree C. If we now imagine the temperature of the heated element to be an experimental function and the temperature of the heating element to be a decision variable, then it is clear that the upper Lipschitz constant that relates the two cannot exceed 1. 

Finally, if a certain decision variable is known to have a negligible effect on the function, its corresponding Lipschitz constants may be set to very small negative and positive values (or to 0, if strict inequality is not needed). 

For the cost function derivative Lipschitz constants $M_\phi$ the physical significance may be less obvious, and there will likely be less \emph{a priori} knowledge to exploit. As already mentioned, the curvature of the cost function with respect to certain decision variables may be known for certain applications where the function's convexity or concavity is well documented. If this is so, then the signs of the different $M_\phi$ may be restricted. Furthermore, if the function is known to be linear in certain variables, then all of the second derivatives that include this variable are 0, and so the corresponding Lipschitz constants may be set to very small negative and positive values.

\subsubsection{Model-Based Estimation}

In many engineering applications, one will often have a model of the process or system being optimized, which, though not a perfect description, will nevertheless capture the basic trends and behaviors. Such models will often be parameterized by some finite set of parameters ${\boldsymbol \theta}$ and will approximate the true experimental functions as

\vspace{-2mm}
\begin{equation}\label{eq:parmodel}
\phi_{\hat p} ({\bf u},\tau, \theta) \approx \phi_{p} ({\bf u},\tau),
\end{equation}

\noindent with $\hat p$ used to denote the model (we use the cost function as the example).

As the model is a known numerical function, one may compute its Lipschitz constants directly. In the general case, we may suppose the parameters $\theta$ to belong to an uncertain set $\Theta$, and may compute the model constants as

\vspace{-2mm}
\begin{equation}\label{eq:parmodellip}
\begin{array}{l}
\displaystyle \underline \kappa_{\hat \phi,i} = \mathop {\min} \limits_{\footnotesize \begin{array}{c} {\bf u},\tau \in \mathcal{I}_\tau \\ \theta \in \Theta \end{array}} \; \frac{\partial \phi_{\hat p}}{\partial u_i} \Big |_{({\bf u},\tau,\theta)}, \vspace{1mm} \\
\displaystyle \overline \kappa_{\hat \phi,i} = \mathop {\max} \limits_{\footnotesize \begin{array}{c} {\bf u},\tau \in \mathcal{I}_\tau \\ \theta \in \Theta \end{array}} \; \frac{\partial \phi_{\hat p}}{\partial u_i} \Big |_{({\bf u},\tau,\theta)},
\end{array}
\end{equation}

\vspace{-2mm}
\begin{equation}\label{eq:parmodellip2}
\begin{array}{l}
\displaystyle \underline \kappa_{\hat \phi,\tau} = \hspace{-.3mm} \mathop {\min} \limits_{\footnotesize \begin{array}{c} {\bf u},\tau \in \mathcal{I}_\tau \\ \theta \in \Theta \end{array}} \frac{\partial \phi_{\hat p}}{\partial \tau} \Big |_{({\bf u},\tau,\theta)}, \vspace{1mm} \\
\displaystyle  \overline \kappa_{\hat \phi,\tau} = \mathop {\max} \limits_{\footnotesize \begin{array}{c} {\bf u},\tau \in \mathcal{I}_\tau \\ \theta \in \Theta \end{array}}  \frac{\partial \phi_{\hat p}}{\partial \tau} \Big |_{({\bf u},\tau,\theta)},
\end{array}
\end{equation}

\noindent with the $M_{\hat \phi}$ values computed in an analogous manner. Note that such computations may not be easy -- if the model used is highly nonlinear/nonconvex in ${\bf u}$ and $\tau$, robust global optimization techniques may be required. However, this is likely to be a secondary concern as all the relevant computations may be done offline prior to experimental optimization.

To account for potential model inaccuracies, one may add an additional layer of safety to the estimates by setting the actual Lipschitz constant estimates as

\vspace{-2mm}
\begin{equation}\label{eq:conservest}
\begin{array}{l}
\underline \kappa_{\phi,i} := \underline \kappa_{\hat \phi,i} - 0.5(\overline \kappa_{\hat \phi,i} - \underline \kappa_{\hat \phi,i} ), \vspace{1mm} \\
\overline \kappa_{\phi,i} := \overline \kappa_{\hat \phi,i} + 0.5(\overline \kappa_{\hat \phi,i} - \underline \kappa_{\hat \phi,i} ), \vspace{1mm} \\
\underline \kappa_{\phi,\tau} := \underline \kappa_{\hat \phi,\tau} - 0.5(\overline \kappa_{\hat \phi,\tau} - \underline \kappa_{\hat \phi,\tau} ), \vspace{1mm} \\
\overline \kappa_{\phi,\tau} := \overline \kappa_{\hat \phi,\tau} + 0.5(\overline \kappa_{\hat \phi,\tau} - \underline \kappa_{\hat \phi,\tau} ),
\end{array}
\end{equation}

\noindent with analogous settings for $M_{\phi}$.

Note, as well, that the model need not be available \emph{a priori} and could also be constructed from a set of initial experiments, as is standardly done in the response-surface methodology \cite{Myers2009}. Such models are typically linear or quadratic and are easier to work with, although they may not capture all of the involved characteristics of the process or system in consideration. This data-driven approach is particularly well-suited for experimental functions that are very difficult to model and for which the underlying physics are not well documented or well explained.

\subsubsection{Consistency Checks}
\label{sec:consist}

We will consider a given choice of Lipschitz constants \emph{consistent} if they satisfy the Lipschitz bounds for all pairs of available experimental iterates. In particular, letting $k_1$ and $k_2$ denote any two iterations from the interval $[0,k]$, we know that for a consistent choice of Lipschitz constants the following relations must hold:

\vspace{-2mm}
\begin{equation}\label{eq:lipcheck1U}
\begin{array}{l}
g_{p,j} ({\bf u}_{k_2},\tau_{k_2}) \leq g_{p,j} ({\bf u}_{k_1},\tau_{k_1}) \\
\hspace{20mm} \displaystyle + \mathop {\max} \left[ \begin{array}{l} \underline \kappa_{p,j\tau} \left( \tau_{k_2} - \tau_{k_1} \right) \\ 
\overline \kappa_{p,j\tau} \left( \tau_{k_2} - \tau_{k_1} \right) \end{array} \right]  \\
\hspace{20mm}\displaystyle + \sum_{i=1}^{n_u} \mathop {\max} \left[ \begin{array}{l} \underline \kappa_{p,ji} ( u_{k_2,i} - u_{k_1,i} ), \\ \overline \kappa_{p,ji} ( u_{k_2,i} - u_{{k_1},i} ) \end{array} \right],
\end{array}
\end{equation}

\vspace{-2mm}
\begin{equation}\label{eq:lipcheck1L}
\begin{array}{l}
g_{p,j} ({\bf u}_{k_2},\tau_{k_2}) \geq g_{p,j} ({\bf u}_{k_1},\tau_{k_1}) \\
\hspace{20mm} \displaystyle + \mathop {\min} \left[ \begin{array}{l} \underline \kappa_{p,j\tau} \left( \tau_{k_2} - \tau_{k_1} \right) \\ 
\overline \kappa_{p,j\tau} \left( \tau_{k_2} - \tau_{k_1} \right) \end{array} \right]  \\
\hspace{20mm}\displaystyle + \sum_{i=1}^{n_u} \mathop {\min} \left[ \begin{array}{l} \underline \kappa_{p,ji} ( u_{k_2,i} - u_{k_1,i} ), \\ \overline \kappa_{p,ji} ( u_{k_2,i} - u_{{k_1},i} ) \end{array} \right],
\end{array}
\end{equation}

\vspace{-2mm}
\begin{equation}\label{eq:lipcheck1costU}
\begin{array}{l}
\phi_{p} ({\bf u}_{k_2},\tau_{k_2}) \leq \phi_{p} ({\bf u}_{k_1},\tau_{k_1}) \\
\hspace{20mm} \displaystyle + \mathop {\max} \left[ \begin{array}{l} \underline \kappa_{\phi,\tau} \left( \tau_{k_2} - \tau_{k_1} \right) \\ 
\overline \kappa_{\phi,\tau} \left( \tau_{k_2} - \tau_{k_1} \right) \end{array} \right]  \\
\hspace{20mm}\displaystyle + \sum_{i=1}^{n_u} \mathop {\max} \left[ \begin{array}{l} \underline \kappa_{\phi,i} ( u_{k_2,i} - u_{k_1,i} ), \\ \overline \kappa_{\phi,i} ( u_{k_2,i} - u_{{k_1},i} ) \end{array} \right],
\end{array}
\end{equation}

\vspace{-2mm}
\begin{equation}\label{eq:lipcheck1costL}
\begin{array}{l}
\phi_{p} ({\bf u}_{k_2},\tau_{k_2}) \geq \phi_{p} ({\bf u}_{k_1},\tau_{k_1}) \\
\hspace{20mm} \displaystyle + \mathop {\min} \left[ \begin{array}{l} \underline \kappa_{\phi,\tau} \left( \tau_{k_2} - \tau_{k_1} \right) \\ 
\overline \kappa_{\phi,\tau} \left( \tau_{k_2} - \tau_{k_1} \right) \end{array} \right]  \\
\hspace{20mm}\displaystyle + \sum_{i=1}^{n_u} \mathop {\min} \left[ \begin{array}{l} \underline \kappa_{\phi,i} ( u_{k_2,i} - u_{k_1,i} ), \\ \overline \kappa_{\phi,i} ( u_{k_2,i} - u_{{k_1},i} ) \end{array} \right],
\end{array}
\end{equation}

\vspace{-2mm}
\begin{equation}\label{eq:lipcheck2U}
\begin{array}{l}
 \phi_{p} ({\bf u}_{k_2},\tau_{k_2}) \leq \\
\phi_{p} ({\bf u}_{k_1},\tau_{k_1}) + \nabla \phi_p({\bf u}_{k_1},\tau_{k_1})^T \left[ \hspace{-.5mm} \begin{array}{c} {\bf u}_{k_2} - {\bf u}_{k_1} \\ 0 \end{array} \hspace{-.5mm} \right]  \vspace{1mm} \\
+ \mathop {\max} \left[  \begin{array}{l} \underline \kappa_{\phi,\tau} (\tau_{k_2} - \tau_{k_1}), \\ \overline \kappa_{\phi,\tau} (\tau_{k_2} - \tau_{k_1})  \end{array} \right] \vspace{1mm} \\
+\displaystyle \frac{1}{2} \sum_{i_1=1}^{n_u} \sum_{i_2=1}^{n_u} \mathop {\max} \left[ \begin{array}{l} \underline M_{\phi,i_1 i_2} (u_{k_2,i_1} - u_{k_1,i_1}) \\
\hspace{15mm} (u_{k_2,i_2} - u_{{k_1},i_2}), \\ \overline M_{\phi,i_1 i_2} (u_{k_2,i_1} - u_{{k_1},i_1}) \\
\hspace{15mm}(u_{k_2,i_2} - u_{{k_1},i_2}) \end{array} \right],
\end{array}
\end{equation}

\vspace{-2mm}
\begin{equation}\label{eq:lipcheck2L}
\begin{array}{l}
 \phi_{p} ({\bf u}_{k_2},\tau_{k_2}) \geq \\
\phi_{p} ({\bf u}_{k_1},\tau_{k_1}) + \nabla \phi_p({\bf u}_{k_1},\tau_{k_1})^T \left[ \hspace{-.5mm} \begin{array}{c} {\bf u}_{k_2} - {\bf u}_{k_1} \\ 0 \end{array} \hspace{-.5mm} \right]  \vspace{1mm} \\
+ \mathop {\min} \left[  \begin{array}{l} \underline \kappa_{\phi,\tau} (\tau_{k_2} - \tau_{k_1}), \\ \overline \kappa_{\phi,\tau} (\tau_{k_2} - \tau_{k_1})  \end{array} \right] \vspace{1mm} \\
+\displaystyle \frac{1}{2} \sum_{i_1=1}^{n_u} \sum_{i_2=1}^{n_u} \mathop {\min} \left[ \begin{array}{l} \underline M_{\phi,i_1 i_2} (u_{k_2,i_1} - u_{k_1,i_1}) \\
\hspace{15mm} (u_{k_2,i_2} - u_{{k_1},i_2}), \\ \overline M_{\phi,i_1 i_2} (u_{k_2,i_1} - u_{{k_1},i_1}) \\
\hspace{15mm}(u_{k_2,i_2} - u_{{k_1},i_2}) \end{array} \right],
\end{array}
\end{equation}

\noindent where the last two bounds may be derived by considering the decomposition

\vspace{-2mm}
\begin{equation}\label{eq:consder}
\begin{array}{l}
\phi_{p} ({\bf u}_{k_2},\tau_{k_2}) - \phi_{p} ({\bf u}_{k_1},\tau_{k_1}) = \vspace{1mm} \\
\hspace{10mm} \phi_{p} ({\bf u}_{k_2},\tau_{k_2}) - \phi_{p} ({\bf u}_{k_2},\tau_{k_1}) \vspace{1mm} \\
\hspace{10mm} + \phi_{p} ({\bf u}_{k_2},\tau_{k_1}) - \phi_{p} ({\bf u}_{k_1},\tau_{k_1}),
\end{array}
\end{equation}

\noindent with the linear Lipschitz bound then applied to the first addend, $\phi_{p} ({\bf u}_{k_2},\tau_{k_2}) - \phi_{p} ({\bf u}_{k_2},\tau_{k_1})$, and the quadratic Lipschitz bound applied to the second, $\phi_{p} ({\bf u}_{k_2},\tau_{k_1}) - \phi_{p} ({\bf u}_{k_1},\tau_{k_1})$.

It then follows that if there exist $k_1$ and $k_2$ such that at least one of the relations above does not hold, the Lipschitz constants are not consistent with the data and as such should be modified. One possible algorithm to carry out this operation is now provided.
\newline
\newline
{\bf Algorithm 1 -- Lipschitz Consistency Check}
\begin{enumerate}
\item Set $a := 1$.
\item Check the validity of (\ref{eq:lipcheck1costU}) and (\ref{eq:lipcheck1costL}) for every combination $( k_1, k_2 ) \in \{ 0,...,k\} \times \{ 0,...,k\}$. If these inequalities are satisfied for every $( k_1, k_2 )$, then terminate. Otherwise, proceed to Step 3.
\item If $a \leq 5$, then for each $i = 1,...,n_u$ set

\vspace{-2mm}
$$
\begin{array}{rcl}
\underline \kappa_{\phi,i} & := & 2^{-{\rm sign}\;\underline \kappa_{\phi,i}}\underline \kappa_{\phi,i}, \vspace{1mm} \\
\overline \kappa_{\phi,i} & := & 2^{{\rm sign}\;\overline \kappa_{\phi,i}}\overline \kappa_{\phi,i}, \vspace{1mm} \\
\underline \kappa_{\phi,\tau} & := & 2^{-{\rm sign}\;\underline \kappa_{\phi,\tau}}\underline \kappa_{\phi,\tau}, \vspace{1mm} \\
\overline \kappa_{\phi,\tau} & := & 2^{{\rm sign}\;\overline \kappa_{\phi,\tau}}\overline \kappa_{\phi,\tau}.
\end{array}
$$

If $5 < a \leq 10$, set

\vspace{-2mm}
$$
\begin{array}{rcl}
\underline \kappa_{\phi,i} & := & -2 \mathop {\max} \left[ | \underline \kappa_{\phi,i} |, | \overline \kappa_{\phi,i} | \right], \vspace{1mm} \\ 
\overline \kappa_{\phi,i} & := & 2 \mathop {\max} \left[ | \underline \kappa_{\phi,i} |, | \overline \kappa_{\phi,i} | \right], \vspace{1mm} \\ 
\underline \kappa_{\phi,\tau} & := & -2 \mathop {\max} \left[ | \underline \kappa_{\phi,\tau} |, | \overline \kappa_{\phi,\tau} | \right], \vspace{1mm} \\
\overline \kappa_{\phi,\tau} & := & 2 \mathop {\max} \left[ | \underline \kappa_{\phi,\tau} |, | \overline \kappa_{\phi,\tau} | \right].
\end{array}
$$

If $10 < a$, set

\vspace{-2mm}
$$
\begin{array}{rcl}
\underline \kappa_{\phi,i} & := & 2^{a-10} \underline \kappa_{\phi,i}, \vspace{1mm} \\
\overline \kappa_{\phi,i} & := & 2^{a-10} \overline \kappa_{\phi,i}, \vspace{1mm} \\
\underline \kappa_{\phi,\tau} & := & 2^{a-10} \underline \kappa_{\phi,\tau}, \vspace{1mm} \\
\overline \kappa_{\phi,\tau} & := & 2^{a-10} \overline \kappa_{\phi,\tau}.
\end{array}
$$

\item Set $a := a+1$ and return to Step 2.

\end{enumerate}

Identical algorithms may be used to verify (\ref{eq:lipcheck1U})-(\ref{eq:lipcheck1L}) for $\kappa_{p,ji}$ (for each $j = 1,...,n_{g_p}$) and to verify (\ref{eq:lipcheck2U})-(\ref{eq:lipcheck2L}) for $M_{\phi,i_1 i_2}$, with the latter using the values of $\underline \kappa_{\phi,\tau}$ and $\overline \kappa_{\phi,\tau}$ that have been fixed by Algorithm 1.

Clearly, the essence of the algorithm is to make the Lipschitz constant estimates more conservative until the corresponding Lipschitz bounds are consistent with the data. In the version proposed above, this is done in three phases. The first ($a \leq 5$) allows all constants to retain their signs while making their values lower or higher, as appropriate. As mentioned earlier, information regarding the sign of a derivative can be very useful and so it is for this reason that we attempt to retain it here. If this is not sufficient due to an error in the sign for some constant, the second phase ($5 < a \leq 10$) makes the lower and upper bounds equal in magnitude as this magnitude continues to be increased. If after 10 augmentations the new constants are still not consistent, which would very likely be due to scaling errors -- the derivative assumed to vary on a magnitude smaller than the one on which it actually does -- the final phase ($ 10 < a$) proceeds to augment the estimates at a much quicker (geometric) rate until they become consistent.

While this algorithm is not perfect and comes with several fundamental weaknesses, such as augmenting all the constants for a given function via a single factor of 2, it nevertheless provides a basic and useful tool for the pure knowledge-free case, until other methods of estimating the Lipschitz constants become possible. In the case where the available estimates are already reasonably close to valid, we would expect Algorithm 1 to only refine these estimates slightly without needing to proceed to the second phase.

\subsection{Exploiting Locality}

We have, up to now, only employed global definitions of the Lipschitz constants and the concavity/convexity properties of the experimental functions. However, as the SCFO only depend on these being valid on a given subregion of the experimental space, it follows that one can relax the resulting bounds so that they only hold over the subregion in question.

Let us start by defining the \emph{local temporal experimental space with respect to $\bar k$}, $\mathcal{I}_{\tau}^{\bar k}$, as

\vspace{-2mm}
\begin{equation}\label{eq:locspace}
\mathcal{I}_{\tau}^{\bar k} = {\rm bbox} \left( {\bf u}_{\bar k}, {\bf u}_{k^*}, \bar {\bf u}_{k+1}^* \right) \times \left\{ \tau : \begin{array}{l} \mathop {\min} \left[ \tau_{\bar k}, \tau_{k^*} \right] \\ \hspace{5mm} \leq \tau \leq \tau_{k+1} \end{array} \right\},
\end{equation}

\noindent i.e., as the Cartesian product of the (minimal) bounding box of the points ${\bf u}_{\bar k}$, ${\bf u}_{k^*}$, and $\bar {\bf u}_{k+1}^*$ with the shortest time interval that includes $\tau_{\bar k}$, $\tau_{k^*}$, and $\tau_{k+1}$. Clearly, $\mathcal{I}_{\tau}^{\bar k} \subseteq \mathcal{I}_{\tau}$. Furthermore, $({\bf u}_{k+1},\tau_{k+1}) \in \mathcal{I}_\tau^{\bar k}$ follows from the convexity of $\mathcal{I}_\tau^{\bar k}$ and the fact that $({\bf u}_{k^*},\tau_{k+1})$ and $(\bar {\bf u}_{k+1}^*,\tau_{k+1})$, the convex combination of which defines $({\bf u}_{k+1},\tau_{k+1})$, are both in  $\mathcal{I}_\tau^{\bar k}$.

The motivation for defining $\mathcal{I}_\tau^{\bar k}$ this way stems from the fact that the upper and lower bounds on the evolution of an experimental function between any two points in $\mathcal{I}_\tau^{\bar k}$ only need to be written using the Lipschitz constants and convexity/concavity properties relevant to $\mathcal{I}_\tau^{\bar k}$. Since these bounds will generally involve the past experimental iterate ${\bf u}_{\bar k}$, the reference iterate ${\bf u}_{k^*}$, the target iterate $\bar {\bf u}_{k+1}^*$, and the actual future iterate ${\bf u}_{k+1}$, together with their relevant time values $\tau$, the definition of  $\mathcal{I}_\tau^{\bar k}$ essentially considers the smallest box space over which such bounds can be applied\footnote{One could be even more minimalist and use something other than a box (e.g., a convex hull), which in some cases would simplify down to a line \cite{Bunin:Lip}. However, to discuss such possibilities rigorously would require additional development that we believe would complicate the message without adding anything extremely useful from the implementation perspective.}. 

The local Lipschitz constants with respect to $\bar k$ are then defined as

\vspace{-2mm}
\begin{equation}\label{eq:lipcondegLUloc}
\hspace{-2mm}\begin{array}{l}
\displaystyle \underline \kappa_{p,ji} \leq \underline \kappa_{p,ji}^{\bar k} < \frac{\partial g_{p,j}}{\partial u_i} \Big |_{({\bf u},\tau)} < \overline \kappa_{p,ji}^{\bar k} \leq \overline \kappa_{p,ji}, \\
\hspace{50mm} \forall ({\bf u},\tau) \in \mathcal{I}_\tau^{\bar k},
\end{array}
\end{equation}

\vspace{-2mm}
\begin{equation}\label{eq:lipcondeg2LUloc}
\hspace{-2mm}\begin{array}{l}
\displaystyle \underline M_{\phi,i_1 i_2} \hspace{-.1mm} \leq \hspace{-.1mm} \underline M_{\phi,i_1 i_2}^{\bar k} \hspace{-.1mm} < \hspace{-.1mm} \frac{\partial^2 \phi_p}{\partial u_{i_2} \partial u_{i_1} } \Big |_{({\bf u},\tau)} \hspace{-.1mm} < \hspace{-.1mm} \overline M_{\phi,i_1 i_2}^{\bar k} \hspace{-.1mm} \leq \hspace{-.1mm} \overline M_{\phi,i_1 i_2}, \\
\hspace{50mm} \forall ({\bf u},\tau) \in \mathcal{I}_\tau^{\bar k},
\end{array}
\end{equation}

\vspace{-2mm}
\begin{equation}\label{eq:lipdegLUloc}
\hspace{-2mm}\begin{array}{l}
\displaystyle \underline \kappa_{p,j\tau} \leq \underline \kappa_{p,j\tau}^{\bar k} \leq \frac{\partial g_{p,j}}{\partial \tau} \Big |_{({\bf u},\tau)} \leq \overline \kappa_{p,j\tau}^{\bar k} \leq \overline \kappa_{p,j\tau}, \\
 \hspace{50mm} \forall ({\bf u},\tau) \in \mathcal{I}_\tau^{\bar k},
\end{array}
\end{equation}

\vspace{-2mm}
\begin{equation}\label{eq:lipcostloc}
\hspace{-2mm}\underline \kappa_{\phi,i} \leq \underline \kappa_{\phi,i}^{\bar k} \leq \frac{\partial \phi_{p}}{\partial u_i} \Big |_{({\bf u},\tau)} \leq \overline \kappa_{\phi,i}^{\bar k} \leq \overline \kappa_{\phi,i}, \;\; \forall ({\bf u},\tau) \in \mathcal{I}_\tau^{\bar k},
\end{equation}

\vspace{-2mm}
\begin{equation}\label{eq:lipcostdegloc}
\hspace{-2mm}\underline \kappa_{\phi,\tau} \leq \underline \kappa_{\phi,\tau}^{\bar k} \leq \frac{\partial \phi_{p}}{\partial \tau} \Big |_{({\bf u},\tau)} \leq \overline \kappa_{\phi,\tau}^{\bar k} \leq \overline \kappa_{\phi,\tau}, \;\; \forall ({\bf u},\tau) \in \mathcal{I}_\tau^{\bar k}.
\end{equation}

Extending this locality to the concavity/convexity assumptions, let us now state the local versions of Definitions \ref{def:ccv} and \ref{def:cvx}.

\begin{definition}[Local partial concavity]
\label{def:ccvloc}
Let the decision variables ${\bf u}$ be partitioned into subvectors ${\bf v}$ and ${\bf z}$, so that $g_{p,j}({\bf u},\tau) = g_{p,j}({\bf v},{\bf z},\tau)$. The function $g_{p,j}$ will be said to be \emph{locally partially concave with respect to $\bar k$} in ${\bf z}$ if it is concave everywhere on $\mathcal{I}_\tau^{\bar k}$ for ${\bf v}$ and $\tau$ fixed. Likewise, $g_{p,j}$ will be said to be locally partially concave with respect to $\bar k$ in both ${\bf z}$ and $\tau$ if it is concave everywhere on $\mathcal{I}_\tau^{\bar k}$ for ${\bf v}$ fixed.
\end{definition}

\begin{definition}[Local partial convexity]
\label{def:cvxloc}
Let the decision variables ${\bf u}$ be partitioned into subvectors ${\bf v}$ and ${\bf z}$, so that $\phi_{p}({\bf u},\tau) = \phi_{p}({\bf v},{\bf z},\tau)$. The function $\phi_{p}$ will be said to be \emph{locally partially convex with respect to $\bar k$} in ${\bf z}$ if it is convex everywhere on $\mathcal{I}_\tau^{\bar k}$ for ${\bf v}$ and $\tau$ fixed. Likewise, $\phi_{p}$ will be said to be locally partially convex with respect to $\bar k$ in both ${\bf z}$ and $\tau$ if it is convex everywhere on $\mathcal{I}_\tau^{\bar k}$ for ${\bf v}$ fixed.
\end{definition}

Accordingly, let $I_{c,j}^{\bar k}$ denote the indices of the variables ${\bf u}$ in which $g_{p,j}$ is locally partially concave with respect to $\bar k$, and let $\eta_{c,j}^{\bar k}$ denote a Boolean indicator that is equal to 1 if $g_{p,j}$ is locally partially concave with respect to $\bar k$ in both the variables indexed by $I_{c,j}^{\bar k}$ and in $\tau$, and is equal to 0 otherwise. Using analogous extensions for convexity, let $I_{v,\phi}^{\bar k}$ denote the indices of the variables ${\bf u}$ in which $\phi_{p}$ is locally partially convex with respect to $\bar k$, and let $\eta_{v,\phi}^{\bar k}$ denote a Boolean indicator that is equal to 1 if $\phi_p$ is locally partially convex with respect to $\bar k$ in both the variables indexed by $I_{v,\phi}^{\bar k}$ and in $\tau$, and is equal to 0 otherwise. While the language here may appear complicated and verbose, we note that these definitions are just trivial localizations of what has already been presented in the previous sections.

In some cases -- in particular, those related only to degradation -- it will also be of interest to take a local space defined by only two iterations, $\bar k_1, \bar k_2 \in [0,k+1]$. Denoting this space by $\mathcal{I}_{\tau}^{\bar k_1, \bar k_2}$ and assuming, without loss of generality, that $\bar k_1 \leq \bar k_2$, we define it as

\vspace{-2mm}
\begin{equation}\label{eq:locspace2}
\mathcal{I}_{\tau}^{\bar k_1,\bar k_2} = {\rm bbox} \left( {\bf u}_{\bar k_1}, {\bf u}_{\bar k_2} \right) \times \left\{ \tau : \tau_{\bar k_1} \leq \tau \leq \tau_{\bar k_2} \right\}.
\end{equation}

\noindent As this will be particularly relevant with respect to degradation, we define the appropriate degradation-related Lipschitz constants as

\vspace{-2mm}
\begin{equation}\label{eq:lipdegLUloc2}
\begin{array}{l}
\displaystyle \underline \kappa_{p,j\tau} \leq \underline \kappa_{p,j\tau}^{\bar k_1, \bar k_2} \leq \frac{\partial g_{p,j}}{\partial \tau} \Big |_{({\bf u},\tau)} \leq \overline \kappa_{p,j\tau}^{\bar k_1, \bar k_2} \leq \overline \kappa_{p,j\tau}, \\
\hspace{45mm} \forall ({\bf u},\tau) \in \mathcal{I}_\tau^{\bar k_1, \bar k_2},
\end{array}
\end{equation}

\vspace{-2mm}
\begin{equation}\label{eq:lipcostdegloc2}
\begin{array}{l}
\displaystyle \underline \kappa_{\phi,\tau} \leq \underline \kappa_{\phi,\tau}^{\bar k_1, \bar k_2} \leq \frac{\partial \phi_{p}}{\partial \tau} \Big |_{({\bf u},\tau)} \leq \overline \kappa_{\phi,\tau}^{\bar k_1 ,\bar k_2} \leq \overline \kappa_{\phi,\tau}, \\
\hspace{45mm} \forall ({\bf u},\tau) \in \mathcal{I}_\tau^{\bar k_1,\bar k_2}.
\end{array}
\end{equation}

We are now equipped with all of the necessary notation to state the local versions of the SCFO. Although one could go through and rederive all of the previous results with the local constants and local assumptions, such a task would be rather space-consuming and redundant, as it would not introduce anything insightful or unexpected -- put otherwise, we find it sufficient to let the reader accept on trust that all of the following bounds and inequalities, which are obtained by simply replacing global constants and assumptions by the local ones, will hold.

Considering first Condition (\ref{eq:SCFO1idegLUccvall}), we state its local version:

\vspace{-2mm}
\begin{equation}\label{eq:SCFO1idegLUccvallloc}
\hspace{-6mm}\begin{array}{l}
\mathop {\min} \limits_{\bar k = 0,...,k} \left[ \hspace{-1mm} \begin{array}{l} g_{p,j} ({\bf u}_{\bar k},\tau_{\bar k}) \vspace{1mm} \\
\displaystyle +\eta_{c,j}^{\bar k} \frac{\partial g_{p,j}}{\partial \tau} \Big |_{({\bf u}_{\bar k},\tau_{\bar k})} ( \tau_{k+1} - \tau_{{\bar k}} ) \vspace{1mm} \\
 \displaystyle + (1-\eta_{c,j}^{\bar k}) \overline \kappa_{p,j\tau}^{\bar k} \left( \tau_{k+1} - \tau_{\bar k} \right) \vspace{1mm}\\
\displaystyle   + \sum_{i \in I_{c,j}^{\bar k}} \frac{\partial g_{p,j}}{\partial u_i} \Big |_{({\bf u}_{\bar k},\tau_{\bar k})} ( u_{k^*,i} + \\
\hspace{15mm}  K_k (\bar u_{k+1,i}^* - u_{k^*,i} ) - u_{{\bar k},i} ) \vspace{1mm} \\
 + \displaystyle \sum_{i \not \in I_{c,j}^{\bar k}} \mathop {\max} \left[ \begin{array}{l} \underline \kappa_{p,ji}^{\bar k} ( u_{k^*,i} + \\ \hspace{2mm} K_k (\bar u_{k+1,i}^* - u_{k^*,i} ) - u_{\bar k,i} ), \vspace{1mm}\\ \overline \kappa_{p,ji}^{\bar k} ( u_{k^*,i} + \\ \hspace{2mm} K_k (\bar u_{k+1,i}^* - u_{k^*,i} ) - u_{\bar k,i} ) \end{array} \right] \end{array} \hspace{-1mm} \right] \leq 0.
\end{array}
\end{equation}

With respect to the cost descent conditions, we have the following local version of (\ref{eq:SCFO7idegLU}):

\vspace{-2mm}
\begin{equation}\label{eq:SCFO7idegLUloc}
\begin{array}{l}
 \nabla \phi_p({\bf u}_{k^*},\tau_{k+1})^T  \left[ \begin{array}{c} \bar {\bf u}_{k+1}^* - {\bf u}_{k^*} \\ 0 \end{array} \right]  \vspace{1mm} \\
\displaystyle + \frac{K_k}{2} \sum_{i_1=1}^{n_u} \sum_{i_2=1}^{n_u} \mathop {\max} \left[ \begin{array}{l} \underline M_{\phi,i_1 i_2}^{k^*} (\bar u_{k+1,i_1}^* - u_{{k^*},i_1}) \\
\hspace{11mm} (\bar u_{k+1,i_2}^* - u_{{k^*},i_2}), \\ \overline M_{\phi,i_1 i_2}^{k^*} (\bar u_{k+1,i_1}^* - u_{{k^*},i_1}) \\
\hspace{11mm}(\bar u_{k+1,i_2}^* - u_{{k^*},i_2}) \end{array} \right]  \leq 0.
\end{array}
\end{equation}

For the supplementary condition to help enforce cost decrease, given by (\ref{eq:costhighmaxPF}), the local version is as follows:

\vspace{-2mm}
\begin{equation}\label{eq:costhighmaxPFloc}
\begin{array}{l}
\mathop {\max} \limits_{\bar k = 0,...,k}\left[ \begin{array}{l}
\displaystyle \phi_{p} ({\bf u}_{\bar k},\tau_{\bar k}) \vspace{1mm} \\
\displaystyle  +\eta_{v,\phi}^{\bar k} \frac{\partial \phi_{p}}{\partial \tau} \Big |_{({\bf u}_{\bar k},\tau_{\bar k})} ( \tau_{k+1} - \tau_{{\bar k}} ) \vspace{1mm} \\
\displaystyle + (1-\eta_{v,\phi}^{\bar k}) \underline \kappa_{\phi,\tau}^{\bar k} \left( \tau_{k+1} - \tau_{\bar k} \right) \vspace{1mm}\\
\displaystyle   + \sum_{i \in I_{v,\phi}^{\bar k}} \frac{\partial \phi_{p}}{\partial u_i} \Big |_{({\bf u}_{\bar k},\tau_{\bar k})} ( u_{k^*,i} + \\ \hspace{15mm}  K_k(\bar u_{k+1,i}^* - u_{k^*,i}) - u_{{\bar k},i} ) \vspace{1mm} \\
+ \displaystyle \sum_{i \not \in I_{v,\phi}^{\bar k}} \mathop {\min} \left[ \begin{array}{l} \underline \kappa_{\phi,i}^{\bar k} ( u_{k^*,i} +\\ \hspace{2mm} K_k(\bar u_{k+1,i}^* - u_{k^*,i}) - u_{{\bar k},i} ), \vspace{1mm}\\ \overline \kappa_{\phi,i}^{\bar k} ( u_{k^*,i} +\\ \hspace{2mm} K_k(\bar u_{k+1,i}^* - u_{k^*,i}) - u_{{\bar k},i} ) \end{array} \right] 
\end{array} \right]
\end{array}
\end{equation}

$$
\begin{array}{l}
\hspace{8mm}\displaystyle \leq \mathop {\min}_{\tilde k = 0,...,k} \left[ \begin{array}{l} \displaystyle \phi_{p} ({\bf u}_{\tilde k},\tau_{\tilde k}) \vspace{1mm} \\
\displaystyle  +\eta_{c,\phi}^{\tilde k} \frac{\partial \phi_{p}}{\partial \tau} \Big |_{({\bf u}_{\tilde k},\tau_{\tilde k})} ( \tau_{k+1} - \tau_{{\tilde k}} ) \vspace{1mm} \\
\hspace{0mm} \displaystyle + (1-\eta_{c,\phi}^{\tilde k}) \overline \kappa_{\phi,\tau}^{\tilde k} \left( \tau_{k+1} - \tau_{\tilde k} \right) \vspace{1mm}\\
\hspace{0mm}\displaystyle   + \sum_{i \in I_{c,\phi}^{\tilde k}} \frac{\partial \phi_{p}}{\partial u_i} \Big |_{({\bf u}_{\tilde k},\tau_{\tilde k})} ( u_{k^*,i} - u_{{\tilde k},i} ) \vspace{1mm} \\
\hspace{0mm} + \displaystyle \sum_{i \not \in I_{c,\phi}^{\tilde k}} \mathop {\max} \left[ \begin{array}{l} \underline \kappa_{\phi,i}^{\tilde k} ( u_{k^*,i} - u_{{\tilde k},i} ), \vspace{1mm}\\ \overline \kappa_{\phi,i}^{\tilde k} ( u_{k^*,i} - u_{{\tilde k},i} ) \end{array} \right] \end{array} \right].
\end{array}
$$

For the choice of reference point, given in (\ref{eq:kstarLUccvcost}) and (\ref{eq:kstar2LUccv}), note that one cannot make relaxations over $\mathcal{I}_{\tau}^{\bar k}$ since this space is defined, in part, by $k^*$, which here would be undetermined. However, we may instead work with $\mathcal{I}_{\tau}^{\bar k_1, \bar k_2}$, as defined in (\ref{eq:locspace2}), to refine (\ref{eq:kstarLUccvcost}) and (\ref{eq:kstar2LUccv}) as

\vspace{-2mm}
\begin{equation}\label{eq:kstarLUccvcostloc}
\begin{array}{rl}
k^* := \;\;\;\;\;\;\;\;\;\;\;\;\;\;& \vspace{1mm} \\
{\rm arg} \mathop {\rm maximize}\limits_{\bar k \in [0,k]} & \bar k \vspace{1mm} \\
{\rm{subject}}\;{\rm{to}} & g_{p,j} ({\bf u}_{\bar k},\tau_{\bar k}) \vspace{1mm} \\
& \displaystyle + \eta_{c,j} \frac{\partial g_{p,j}}{\partial \tau} \Big |_{({\bf u}_{\bar k},\tau_{\bar k})} ( \tau_{k+1} - \tau_{{\bar k}} ) \vspace{1mm} \\
&  + (1-  \eta_{c,j}) \overline \kappa_{p,j\tau} \left( \tau_{k+1} - \tau_{\bar k} \right) \leq 0, \vspace{1mm} \\
& \forall j = 1,...,n_{g_p} \vspace{1mm} \\
& \displaystyle \phi_p ({\bf u}_{\bar k},\tau_{\bar k}) + \eta_{v,\phi}^{\bar k, k} \frac{\partial \phi_{p}}{\partial \tau} \Big |_{({\bf u}_{\bar k},\tau_{\bar k})} ( \tau_{k} - \tau_{{\bar k}} ) \vspace{1mm} \\
&+ (1-  \eta_{v,\phi}^{\bar k, k}) \underline \kappa_{\phi,\tau}^{\bar k, k} \left( \tau_{k} - \tau_{\bar k} \right) \leq \vspace{1mm} \\
& \mathop {\min} \limits_{\tilde k \in {\bf k}_f} \left[ \begin{array}{l}  \phi_p ({\bf u}_{\tilde k},\tau_{\tilde k}) + \vspace{1mm} \\
\displaystyle \eta_{c,\phi}^{\tilde k, k} \frac{\partial \phi_{p}}{\partial \tau} \Big |_{({\bf u}_{\tilde k},\tau_{\tilde k})} ( \tau_{k} - \tau_{{\tilde k}} ) \vspace{1mm} \\
+ (1-  \eta_{c,\phi}^{\tilde k, k}) \overline \kappa_{\phi,\tau}^{\tilde k, k} \left( \tau_{k} - \tau_{\tilde k} \right) \end{array} \right],
\end{array}
\end{equation}

\vspace{-2mm}
\begin{equation}\label{eq:kfeas2}
{\bf k}_f = \left\{ \bar k : \begin{array}{l} g_{p,j} ({\bf u}_{\bar k},\tau_{\bar k}) \vspace{1mm} \\  + \displaystyle  \eta_{c,j} \frac{\partial g_{p,j}}{\partial \tau} \Big |_{({\bf u}_{\bar k},\tau_{\bar k})} ( \tau_{k+1} - \tau_{{\bar k}} ) \vspace{1mm} \\ + (1-  \eta_{c,j}) \overline \kappa_{p,j\tau} \left( \tau_{k+1} - \tau_{\bar k} \right)  \leq 0, \vspace{1mm} \\ \forall j = 1,...,n_{g_p} \end{array} \right\},
\end{equation}

\vspace{-2mm}
\begin{equation}\label{eq:kstar2LUccvloc}
\begin{array}{l}
k^*  := \vspace{1mm} \\
\displaystyle {\rm arg} \mathop {\rm minimize}\limits_{\bar k \in [0,k]}  \mathop {\max} \limits_{j = 1,...,n_{g_p}}  \left[  \begin{array}{l} g_{p,j} ({\bf u}_{\bar k},\tau_{\bar k}) \vspace{1mm} \\
\displaystyle + \eta_{c,j} \frac{\partial g_{p,j}}{\partial \tau} \Big |_{({\bf u}_{\bar k},\tau_{\bar k})} \\
\hspace{15mm}( \tau_{k+1} - \tau_{{\bar k}} ) \vspace{1mm} \\
+ (1-  \eta_{c,j}) \overline \kappa_{p,j\tau} \vspace{1mm} \\
\hspace{15mm}\left( \tau_{k+1} - \tau_{\bar k} \right) \end{array}  \right].
\end{array}
\end{equation}

At this point, it may be fair to ask if these refinements are actually of practical interest, as very specific knowledge is not assumed to be available for the experimental functions, and both the local Lipschitz constants and the local structural assumptions are examples of specific knowledge as they characterize how the function behaves in different portions of the experimental space. At the time of writing, it is not clear if one could routinely and reliably obtain these local relaxations for most experimental optimization problems. However, it is possible to envision a scenario where the function is particularly steep in one region of the experimental space, where it has large Lipschitz constants, and somewhat flat in another, where its Lipschitz constants are small. Likewise, one could also imagine a function that is very nonlinear in some regions (i.e., has large $M$ values) and close to linear otherwise (i.e., has $M$ values close to 0). If a model of the function is available and can predict such trends approximately, then one may obtain the local constants for the model. Returning to the example given in (\ref{eq:parmodellip}) and (\ref{eq:parmodellip2}), the appropriate local analogues would be

\vspace{-2mm}
\begin{equation}\label{eq:parmodelliploc}
\begin{array}{l}
\displaystyle \underline \kappa_{\hat \phi,i}^{\bar k} = \mathop {\min} \limits_{\footnotesize \begin{array}{c} {\bf u},\tau \in \mathcal{I}_\tau^{\bar k} \\ \theta \in \Theta \end{array}} \; \frac{\partial \phi_{\hat p}}{\partial u_i} \Big |_{({\bf u},\tau,\theta)},\\
\displaystyle \overline \kappa_{\hat \phi,i}^{\bar k} = \mathop {\max} \limits_{\footnotesize \begin{array}{c} {\bf u},\tau \in \mathcal{I}_\tau^{\bar k} \\ \theta \in \Theta \end{array}} \; \frac{\partial \phi_{\hat p}}{\partial u_i} \Big |_{({\bf u},\tau,\theta)},
\end{array}
\end{equation}

\vspace{-2mm}
\begin{equation}\label{eq:parmodellip2loc}
\begin{array}{l}
\displaystyle \underline \kappa_{\hat \phi,\tau}^{\bar k} = \hspace{-.3mm} \mathop {\min} \limits_{\footnotesize \begin{array}{c} {\bf u},\tau \in \mathcal{I}_\tau^{\bar k} \\ \theta \in \Theta \end{array}} \frac{\partial \phi_{\hat p}}{\partial \tau} \Big |_{({\bf u},\tau,\theta)},\\
\displaystyle \overline \kappa_{\hat \phi,\tau}^{\bar k} = \mathop {\max} \limits_{\footnotesize \begin{array}{c} {\bf u},\tau \in \mathcal{I}_\tau^{\bar k} \\ \theta \in \Theta \end{array}}  \frac{\partial \phi_{\hat p}}{\partial \tau} \Big |_{({\bf u},\tau,\theta)},
\end{array}
\end{equation}

\noindent followed by the local analogue to (\ref{eq:conservest})

\vspace{-2mm}
\begin{equation}\label{eq:conservestloc}
\begin{array}{l}
\underline \kappa_{\phi,i}^{\bar k} := \underline \kappa_{\hat \phi,i}^{\bar k} - 0.5(\overline \kappa_{\hat \phi,i}^{\bar k} - \underline \kappa_{\hat \phi,i}^{\bar k} ), \vspace{1mm} \\
\overline \kappa_{\phi,i}^{\bar k} := \overline \kappa_{\hat \phi,i}^{\bar k} + 0.5(\overline \kappa_{\hat \phi,i}^{\bar k} - \underline \kappa_{\hat \phi,i}^{\bar k} ), \vspace{1mm} \\
\underline \kappa_{\phi,\tau}^{\bar k} := \underline \kappa_{\hat \phi,\tau}^{\bar k} - 0.5(\overline \kappa_{\hat \phi,\tau}^{\bar k} - \underline \kappa_{\hat \phi,\tau}^{\bar k} ), \vspace{1mm} \\
\overline \kappa_{\phi,\tau}^{\bar k} := \overline \kappa_{\hat \phi,\tau}^{\bar k} + 0.5(\overline \kappa_{\hat \phi,\tau}^{\bar k} - \underline \kappa_{\hat \phi,\tau}^{\bar k} ).
\end{array}
\end{equation}

Such relaxations could be particularly useful when a certain portion of the experimental space possesses large Lipschitz constants -- thus requiring small steps of the SCFO, and thereby slow progress -- but is not actually the part of the experimental space that is explored during optimization.

To illustrate what a difference locality can make, we again consider the ($-$) case of the problem with degradation (\ref{eq:exdeg}). To ease comparison, we remove the concavity assumptions on the constraints and set $I_{c,1}^{\bar k} := I_{c,2}^{\bar k} := \varnothing$, since not doing this would lead to very fast progress even without the locality relaxations. The Lipschitz constants are chosen correctly, as in (\ref{eq:exlipdegLU}), and the same convexity assumptions as before are made on the cost, with $I_{v,\phi}^{\bar k} := \{1,2 \}$ for any $\bar k$. $\eta_{c,1}^{\bar k} := 0$, $\eta_{c,2}^{\bar k} := 1$, and $ \eta_{v,\phi}^{\bar k} := 1$ are used for all $\bar k$. The upper Lipschitz constant on the cost with respect to degradation is taken as $\overline \kappa_{\phi,\tau} = 2/500$. The results for this scenario are given in Fig. \ref{fig:degE}, and are essentially comparable to what was already obtained in Fig. \ref{fig:degC}, the only difference between the two being certain concavity/convexity assumptions with respect to degradation, which contribute little in this case.

\begin{figure*}
\begin{center}
\includegraphics[width=16cm]{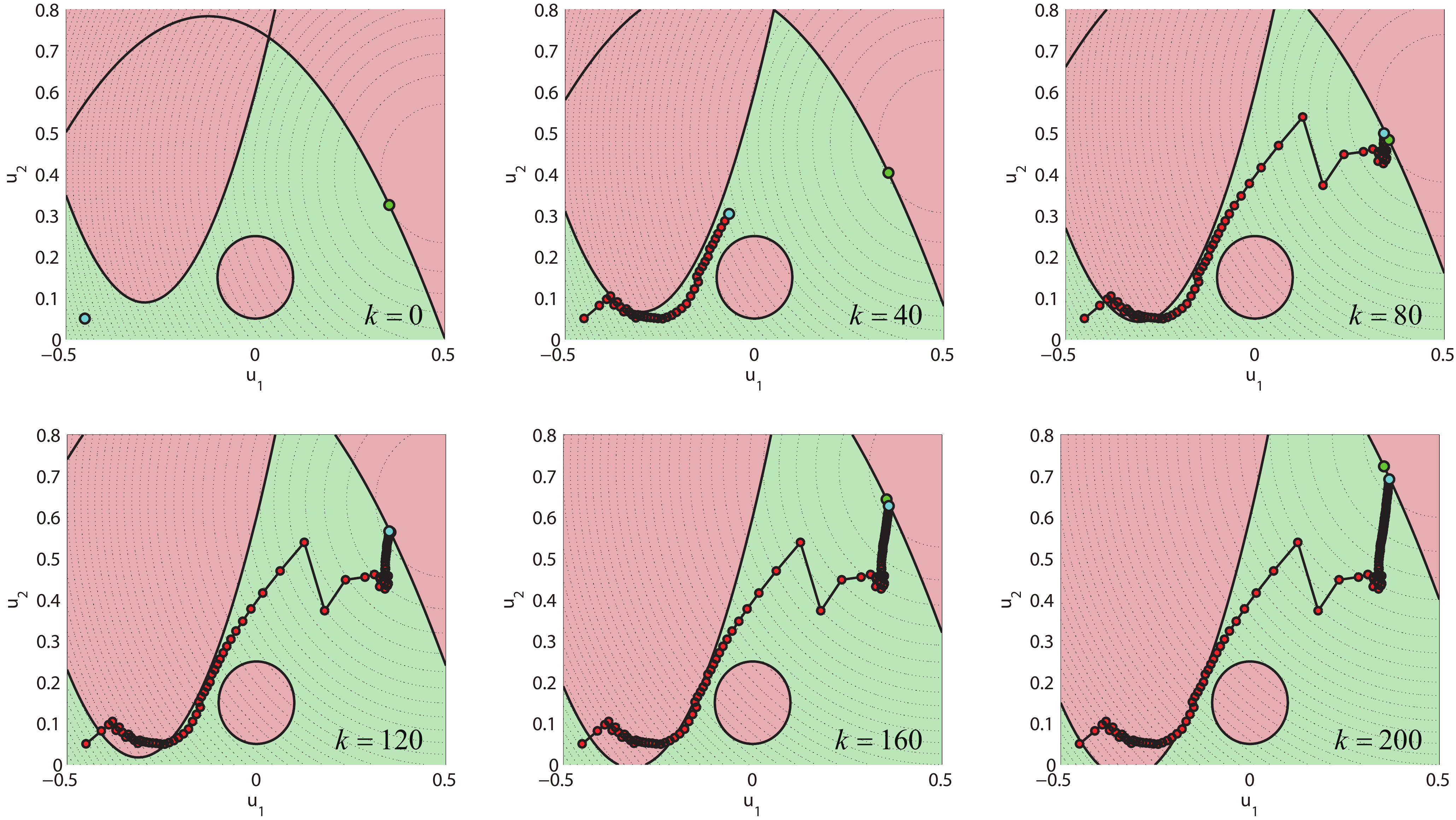}
\caption{Chain of experiments generated by applying the modified SCFO methodology to Problem (\ref{eq:exdeg}) for the ($-$) scenario without local relaxations included.}
\label{fig:degE}
\end{center}
\end{figure*} 

Consider now applying local relaxations to the relevant Lipschitz constants for the two experimental constraints, for which we will, for simplicity, minimize and maximize the derivatives of the true functions over the relevant local spaces (as mentioned, in practice one would likely perform such minimizations/maximizations for a model):

\vspace{-2mm}
\begin{equation}\label{eq:liploc1}
\underline \kappa_{p,11}^{\bar k} := {\rm arg} \mathop {\rm minimize} \limits_{u_1,u_2,\tau \in \mathcal{I}_{\tau}^{\bar k}} \;\;  -12u_1-3.5-\frac{\tau}{500} - 10^{-3},
\end{equation}

\vspace{-2mm}
\begin{equation}\label{eq:liploc2}
\overline \kappa_{p,11}^{\bar k} := {\rm arg} \mathop {\rm maximize} \limits_{u_1,u_2,\tau \in \mathcal{I}_{\tau}^{\bar k}} \;\; -12u_1-3.5-\frac{\tau}{500} + 10^{-3},
\end{equation}

\vspace{-2mm}
\begin{equation}\label{eq:liploc4}
\underline \kappa_{p,21}^{\bar k} := {\rm arg} \mathop {\rm minimize} \limits_{u_1,u_2,\tau \in \mathcal{I}_{\tau}^{\bar k}} \;\; 4u_1+0.5 - 10^{-3},
\end{equation}

\vspace{-2mm}
\begin{equation}\label{eq:liploc5}
\overline \kappa_{p,21}^{\bar k} := {\rm arg} \mathop {\rm maximize} \limits_{u_1,u_2,\tau \in \mathcal{I}_{\tau}^{\bar k}} \;\; 4u_1+0.5 + 10^{-3},
\end{equation}

\noindent with $10^{-3}$ added or subtracted to enforce strict inequality. While other relaxations are also possible, we do not make them here as their effect on performance is likely to be negligible.

The results for the implementation using these local versions are given in Fig. \ref{fig:degF}, with the cost values corresponding to the two versions given in Fig. \ref{fig:degEFcost}. One notes a drastic improvement in convergence speed.

\begin{figure*}
\begin{center}
\includegraphics[width=16cm]{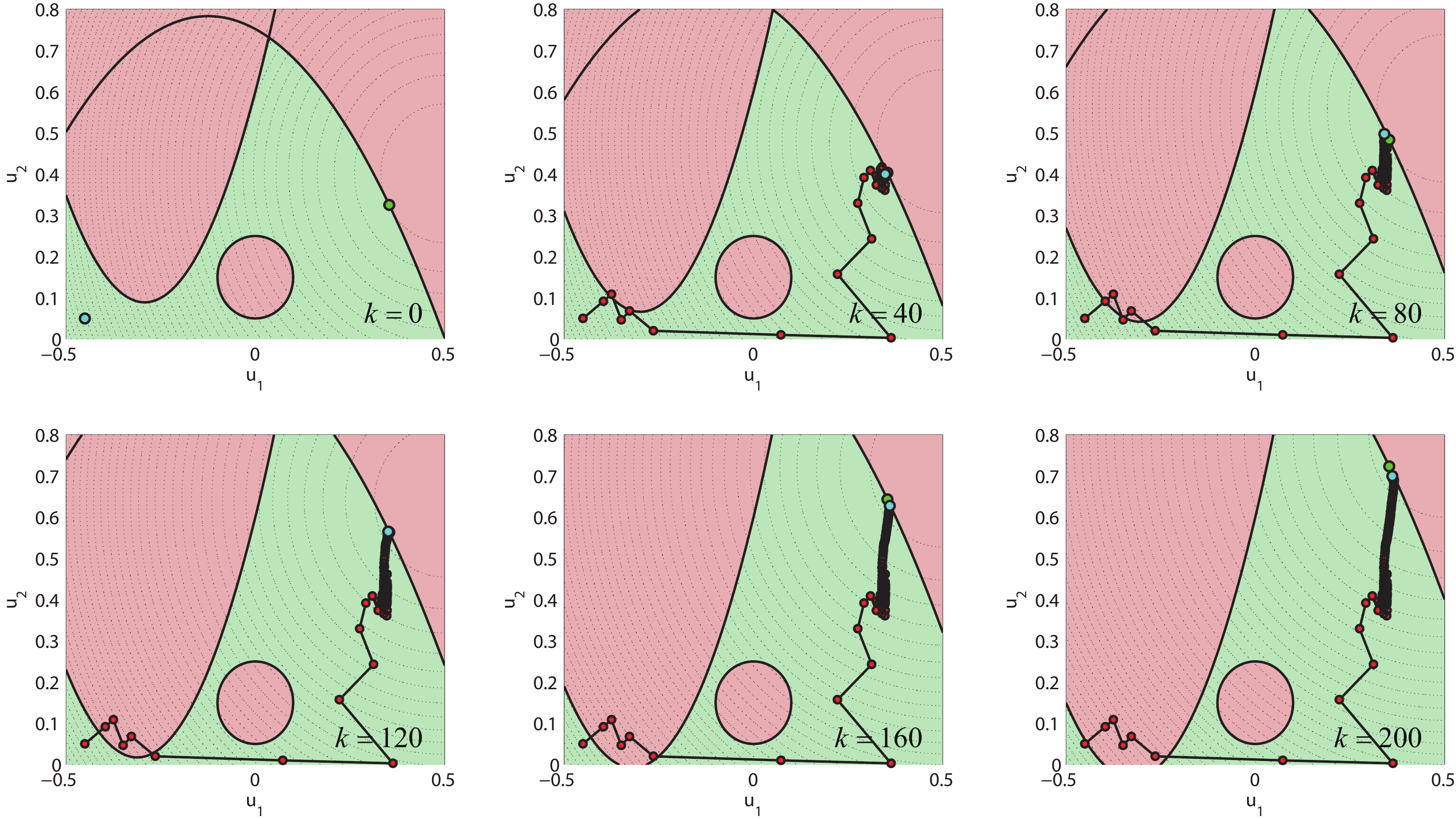}
\caption{Chain of experiments generated by applying the modified SCFO methodology to Problem (\ref{eq:exdeg}) for the ($-$) scenario with local relaxations of certain Lipschitz constants included.}
\label{fig:degF}
\end{center}
\end{figure*}

\section{Lower and Upper Bounds on the Cost and Constraint Values}
\label{sec:bounds}

In most experimental settings, what one observes after applying a given ${\bf u}_{\bar k}$ at time $\tau_{\bar k}$ are not the exact experimental function values $\phi_p({\bf u}_{\bar k},\tau_{\bar k})$ and $g_{p.j}({\bf u}_{\bar k},\tau_{\bar k})$ but rather some noise-corrupted or estimated $\hat \phi_p({\bf u}_{\bar k},\tau_{\bar k})$ and $\hat g_{p,j}({\bf u}_{\bar k},\tau_{\bar k})$. The error in the obtained measurement or estimate is commonly expressed with the aid of the additive noise model (see, e.g., \cite{Hotelling1941,Myers2009,Marchetti2010,More2011,Larson2012}):

\vspace{-2mm}
\begin{equation}\label{eq:costnoise}
\hat \phi_p({\bf u}_{\bar k},\tau_{\bar k}) = \phi_p({\bf u}_{\bar k},\tau_{\bar k}) + w_{\phi,{\bar k}},
\end{equation}

\vspace{-2mm}
\begin{equation}\label{eq:connoise}
\hat g_{p,j}({\bf u}_{\bar k},\tau_{\bar k}) = g_{p,j}({\bf u}_{\bar k},\tau_{\bar k}) + w_{j,{\bar k}},
\end{equation}

\noindent where the noise components, $w_{\phi}$ and $w_j$, will be assumed to be independent.

In the general case where $w_{\phi}$ and $w_j$ may change sign, it should be clear that applying the SCFO with the estimated values $\hat \phi_p({\bf u}_{\bar k},\tau_{\bar k})$ and $\hat g_{p,j}({\bf u}_{\bar k},\tau_{\bar k})$ does not lead to a robust implementation, as this can falsely under or overpredict the true function values. To make these conditions robust against noise, it is then necessary to somehow derive lower and upper bounds on the true values of $\phi_p({\bf u}_{\bar k},\tau_{\bar k})$ and $g_{p,j}({\bf u}_{\bar k},\tau_{\bar k})$ with respect to the observed $\hat \phi_p({\bf u}_{\bar k},\tau_{\bar k})$ and $\hat g_{p,j}({\bf u}_{\bar k},\tau_{\bar k})$.

We will now discuss three possible ways to obtain the needed robust bounds, using the cost function $\phi_p$ as the example throughout.

\subsection{Bounding the True Function Value by Bounding the Noise}

The natural requirement for deriving lower and upper bounds on $\phi_p({\bf u}_{\bar k},\tau_{\bar k})$  is that the stochastic terms $w_\phi$ be bounded with a certain probability. Noise bounds that are expected to hold with, for example, a probability of 99\% would need to satisfy

\vspace{-4mm}
\begin{equation}\label{eq:probbound1}
\mathbb{P} (\underline w_{\phi,\bar k} < w_{\phi,{\bar k}} < \overline w_{\phi,\bar k}) = 0.99.
\end{equation}

To obtain the bounds $\underline w_{\phi,\bar k}$ and $\overline w_{\phi,\bar k}$, one should at least know the mean ($w_{ave, \bar k}$) and variance ($\sigma_{\bar k}$) of $w_\phi$. If the probability density function (pdf) of $w_{\phi, \bar k}$ is not known, the most ``brute'' way to determine a sufficiently low (resp., high) $\underline w_\phi$ (resp., $\overline w_\phi$) is to use Chebyshev's inequality \cite{Ferentinos1982,More2011}:

\vspace{-4mm}
\begin{equation}\label{eq:probbound2}
\begin{array}{l}
\mathbb{P} (\underline w_{\phi,\bar k} < w_{\phi,{\bar k}} < \overline w_{\phi,\bar k}) \geq \vspace{1mm} \\
\hspace{6mm} \displaystyle \frac{4[(w_{ave,\bar k}-\underline w_{\phi,\bar k})(\overline w_{\phi,\bar k} - w_{ave,\bar k}) - \sigma_{\bar k}^2]}{(\overline w_{\phi,\bar k} - \underline w_{\phi,\bar k})^2} = 0.99,
\end{array}
\end{equation}

\noindent where one may solve the algebraic equation so as to obtain the appropriate $\underline w_{\phi,\bar k}$ and $\overline w_{\phi,\bar k}$.

\begin{figure}
\begin{center}
\includegraphics[width=7cm]{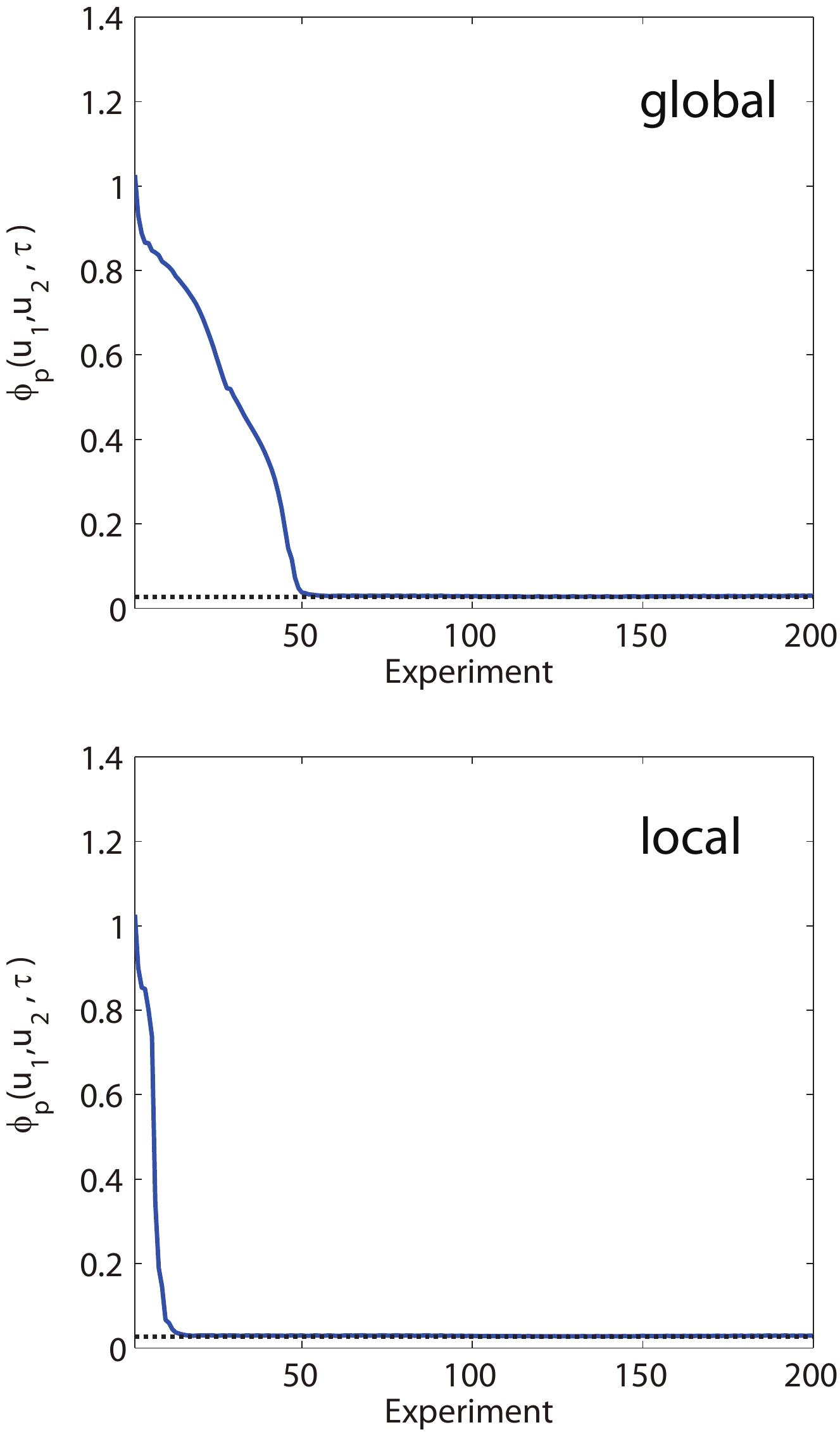}
\caption{Cost function values obtained by the modified SCFO methodology for Problem (\ref{eq:exdeg}) for the ($-$) scenario when global Lipschitz constants (top) and local Lipschitz constants (bottom) are used.}
\label{fig:degEFcost}
\end{center}
\end{figure}

Given the generality of Chebyshev's inequality, such bounds will tend to be fairly conservative, and tighter bounds can naturally be obtained if the pdf is known. Taking the well-known example of zero-mean Gaussian noise, one may simply set the bounds as

\vspace{-2mm}
\begin{equation}\label{eq:probbound3}
\underline w_{\phi,\bar k} := -3\sigma_{\bar k},\; \overline w_{\phi,\bar k} = 3\sigma_{\bar k},
\end{equation}

\noindent which yield a probability slightly higher than 99\% in (\ref{eq:probbound1}). For the general pdf, one may solve (\ref{eq:probbound1}) numerically to obtain an appropriate pair of $\underline w_{\phi,\bar k}$ and $\overline w_{\phi,\bar k}$.

With these bounds available, we may then rearrange (\ref{eq:costnoise}) and apply the bounds to obtain

\vspace{-2mm}
\begin{equation}\label{eq:costbound1}
\hat \phi_p({\bf u}_{\bar k},\tau_{\bar k}) - \overline w_{\phi,\bar k} < \phi_p({\bf u}_{\bar k},\tau_{\bar k}) < \hat \phi_p({\bf u}_{\bar k},\tau_{\bar k}) - \underline w_{\phi,\bar k},
\end{equation}

\noindent which effectively bounds the true value to lie in the interval $(\hat \phi_p({\bf u}_{\bar k},\tau_{\bar k}) - \overline w_{\phi,\bar k}, \hat \phi_p({\bf u}_{\bar k},\tau_{\bar k}) - \underline w_{\phi,\bar k})$ with some desired probability.

\subsection{Reduced-Variance Bounds in the Case of Repeated Measurements}

One may attempt to tighten the bounds of (\ref{eq:costbound1}) in the case when the same variables ${\bf u}_{\bar k}$ have been applied for multiple iterations -- this may occur, for example, if one chooses to temporarily stop experimental optimization or, in cases where the adaptation must be fast, one is forced to ``skip'' an iteration to allow for the algorithm to have enough time to carry out its computations (for an example of the latter, we refer the reader to Case Study 4.2 in \cite{Bunin2013p}). Let us denote by $\bar {\bf k}$ a set of iterations, with cardinality $N$, where the decision variables are equal to ${\bf u}_{\bar k}$, i.e.:

\vspace{-2mm}
\begin{equation}\label{eq:repset}
\bar {\bf k} \subseteq \{ \tilde k : {\bf u}_{\tilde k} = {\bf u}_{\bar k} \},
\end{equation}

\noindent where the cardinality of the right-hand side will be denoted by $\overline N$, with $N \leq \overline N$.

The corresponding $N$ measurement identities (\ref{eq:costnoise}) then become

\vspace{-2mm}
\begin{equation}\label{eq:costnoiserep}
\hat \phi_p({\bf u}_{\tilde k},\tau_{\tilde k}) = \phi_p({\bf u}_{\bar k},\tau_{\tilde k}) + w_{\phi,\tilde k}, \;\; \forall \tilde k \in \bar {\bf k}.
\end{equation}

To proceed further, let us recall the bounds (\ref{eq:costupperbound2})-(\ref{eq:costlowerbound2}) and state their more general (local) versions for the two iterations $\bar k, \tilde k \in [0,k]$ over the space $\mathcal{I}_\tau^{\bar k,\tilde k}$ as defined in (\ref{eq:locspace2}):

\vspace{-2mm}
\begin{equation}\label{eq:costupperbound2loc}
\begin{array}{l}
\displaystyle \phi_p ({\bf u}_{\bar k},\tau_{\tilde k}) \leq \phi_p ({\bf u}_{\bar k},\tau_{\bar k}) \vspace{1mm} \\
\hspace{20mm} \displaystyle + \eta_{c,\phi}^{\bar k, \tilde k} \frac{\partial \phi_{p}}{\partial \tau} \Big |_{({\bf u}_{\bar k},\tau_{\bar k})} ( \tau_{\tilde k} - \tau_{{\bar k}} ) \vspace{1mm} \\
\hspace{20mm}+ (1-  \eta_{c,\phi}^{\bar k, \tilde k}) \mathop {\max} \left[ \begin{array}{l} \underline \kappa_{\phi,\tau}^{\bar k, \tilde k} \left( \tau_{\tilde k} - \tau_{\bar k} \right), \vspace{1mm} \\ \overline \kappa_{\phi,\tau}^{\bar k, \tilde k} \left( \tau_{\tilde k} - \tau_{\bar k} \right) \end{array} \right],
\end{array}
\end{equation}

\vspace{-2mm}
\begin{equation}\label{eq:costlowerbound2loc}
\begin{array}{l}
\displaystyle \phi_p ({\bf u}_{\bar k},\tau_{\tilde k}) \geq \phi_p ({\bf u}_{\bar k},\tau_{\bar k}) \vspace{1mm} \\
\hspace{20mm} \displaystyle +  \eta_{v,\phi}^{\bar k, \tilde k} \frac{\partial \phi_{p}}{\partial \tau} \Big |_{({\bf u}_{\bar k},\tau_{\bar k})} ( \tau_{\tilde k} - \tau_{{\bar k}} ) \vspace{1mm} \\
\hspace{20mm}+ (1- \eta_{v,\phi}^{\bar k, \tilde k}) \mathop {\min} \left[ \begin{array}{l} \underline \kappa_{\phi,\tau}^{\bar k, \tilde k} \left( \tau_{\tilde k} - \tau_{\bar k} \right), \vspace{1mm} \\ \overline \kappa_{\phi,\tau}^{\bar k, \tilde k} \left( \tau_{\tilde k} - \tau_{\bar k} \right) \end{array} \right].
\end{array}
\end{equation}

Applying these bounds to (\ref{eq:costnoiserep}) then yields

\vspace{-2mm}
\begin{equation}\label{eq:costnoiserep2}
\begin{array}{l}
\displaystyle \phi_p ({\bf u}_{\bar k},\tau_{\bar k}) +  \eta_{v,\phi}^{\bar k, \tilde k} \frac{\partial \phi_{p}}{\partial \tau} \Big |_{({\bf u}_{\bar k},\tau_{\bar k})} ( \tau_{\tilde k} - \tau_{{\bar k}} )  \vspace{1mm} \\
+ (1-  \eta_{v,\phi}^{\bar k, \tilde k}) \mathop {\min} \left[ \begin{array}{l} \underline \kappa_{\phi,\tau}^{\bar k, \tilde k} \left( \tau_{\tilde k} - \tau_{\bar k} \right), \vspace{1mm} \\ \overline \kappa_{\phi,\tau}^{\bar k, \tilde k} \left( \tau_{\tilde k} - \tau_{\bar k} \right) \end{array} \right] + w_{\phi,\tilde k} \vspace{1mm} \\
\hspace{7mm} \leq \hat \phi_p ({\bf u}_{\tilde k},\tau_{\tilde k}) \leq \vspace{1mm} \\
\hspace{12mm}\displaystyle \phi_p ({\bf u}_{\bar k},\tau_{\bar k})  + \eta_{c,\phi}^{\bar k, \tilde k} \frac{\partial \phi_{p}}{\partial \tau} \Big |_{({\bf u}_{\bar k},\tau_{\bar k})} ( \tau_{\tilde k} - \tau_{{\bar k}} ) \vspace{1mm} \\
\hspace{12mm}+ (1- \eta_{c,\phi}^{\bar k, \tilde k}) \mathop {\max} \left[ \begin{array}{l} \underline \kappa_{\phi,\tau}^{\bar k, \tilde k} \left( \tau_{\tilde k} - \tau_{\bar k} \right), \vspace{1mm} \\ \overline \kappa_{\phi,\tau}^{\bar k, \tilde k} \left( \tau_{\tilde k} - \tau_{\bar k} \right) \end{array} \right] + w_{\phi,\tilde k},
\end{array}
\end{equation}

\noindent which, upon rearrangement with respect to $\phi_p ({\bf u}_{\bar k},\tau_{\bar k})$, yields

\vspace{-2mm}
\begin{equation}\label{eq:costnoiserep3}
\begin{array}{l}
\displaystyle \hat \phi_p ({\bf u}_{\tilde k},\tau_{\tilde k}) - \eta_{c,\phi}^{\bar k, \tilde k} \frac{\partial \phi_{p}}{\partial \tau} \Big |_{({\bf u}_{\bar k},\tau_{\bar k})} ( \tau_{\tilde k} - \tau_{{\bar k}} ) \vspace{1mm} \\
- (1-  \eta_{c,\phi}^{\bar k, \tilde k}) \mathop {\max} \left[ \begin{array}{l} \underline \kappa_{\phi,\tau}^{\bar k, \tilde k} \left( \tau_{\tilde k} - \tau_{\bar k} \right), \vspace{1mm} \\ \overline \kappa_{\phi,\tau}^{\bar k, \tilde k} \left( \tau_{\tilde k} - \tau_{\bar k} \right) \end{array} \right] - w_{\phi,\tilde k} \vspace{1mm} \\
\hspace{7mm} \leq \phi_p ({\bf u}_{\bar k},\tau_{\bar k}) \leq \vspace{1mm} \\
\hspace{12mm}\displaystyle \hat \phi_p ({\bf u}_{\tilde k},\tau_{\tilde k})  -  \eta_{v,\phi}^{\bar k, \tilde k} \frac{\partial \phi_{p}}{\partial \tau} \Big |_{({\bf u}_{\bar k},\tau_{\bar k})} ( \tau_{\tilde k} - \tau_{{\bar k}} ) \vspace{1mm} \\
\hspace{12mm}- (1-  \eta_{v,\phi}^{\bar k, \tilde k}) \mathop {\min} \left[ \begin{array}{l} \underline \kappa_{\phi,\tau}^{\bar k, \tilde k} \left( \tau_{\tilde k} - \tau_{\bar k} \right), \vspace{1mm} \\ \overline \kappa_{\phi,\tau}^{\bar k, \tilde k} \left( \tau_{\tilde k} - \tau_{\bar k} \right) \end{array} \right] - w_{\phi,\tilde k}.
\end{array}
\end{equation}

Noting that (\ref{eq:costnoiserep3}) represents $N$ pairs of inequalities, we may sum them up to obtain

\vspace{-2mm}
\begin{equation}\label{eq:costnoiserep4}
\begin{array}{l}
\displaystyle \sum_{\tilde k \in \bar {\bf k}} \hat \phi_p ({\bf u}_{\tilde k},\tau_{\tilde k}) - \sum_{\tilde k \in \bar {\bf k}}  \eta_{c,\phi}^{\bar k, \tilde k} \frac{\partial \phi_{p}}{\partial \tau} \Big |_{({\bf u}_{\bar k},\tau_{\bar k})} ( \tau_{\tilde k} - \tau_{{\bar k}} ) \vspace{1mm} \\
\displaystyle - \sum_{\tilde k \in \bar {\bf k}} (1-  \eta_{c,\phi}^{\bar k, \tilde k}) \mathop {\max} \left[ \begin{array}{l} \underline \kappa_{\phi,\tau}^{\bar k, \tilde k} \left( \tau_{\tilde k} - \tau_{\bar k} \right), \vspace{1mm} \\ \overline \kappa_{\phi,\tau}^{\bar k, \tilde k} \left( \tau_{\tilde k} - \tau_{\bar k} \right) \end{array} \right] - \sum_{\tilde k \in \bar {\bf k}} w_{\phi,\tilde k} \vspace{1mm} \\
\hspace{7mm} \leq N \phi_p ({\bf u}_{\bar k},\tau_{\bar k}) \leq \vspace{1mm} \\
\hspace{2mm}\displaystyle \sum_{\tilde k \in \bar {\bf k}} \hat \phi_p ({\bf u}_{\tilde k},\tau_{\tilde k})  - \sum_{\tilde k \in \bar {\bf k}}  \eta_{v,\phi}^{\bar k, \tilde k} \frac{\partial \phi_{p}}{\partial \tau} \Big |_{({\bf u}_{\bar k},\tau_{\bar k})} ( \tau_{\tilde k} - \tau_{{\bar k}} ) \vspace{1mm} \\
\displaystyle \hspace{2mm}- \sum_{\tilde k \in \bar {\bf k}} (1-  \eta_{v,\phi}^{\bar k, \tilde k}) \mathop {\min} \left[ \begin{array}{l} \underline \kappa_{\phi,\tau}^{\bar k, \tilde k} \left( \tau_{\tilde k} - \tau_{\bar k} \right), \vspace{1mm} \\ \overline \kappa_{\phi,\tau}^{\bar k, \tilde k} \left( \tau_{\tilde k} - \tau_{\bar k} \right) \end{array} \right] - \sum_{\tilde k \in \bar {\bf k}} w_{\phi,\tilde k},
\end{array}
\end{equation}

\noindent which then yields

\vspace{-2mm}
\begin{equation}\label{eq:costnoiserep5}
\begin{array}{l}
\displaystyle \frac{1}{N} \sum_{\tilde k \in \bar {\bf k}} \hat \phi_p ({\bf u}_{\tilde k},\tau_{\tilde k}) \vspace{1mm} \\
\displaystyle - \frac{1}{N} \sum_{\tilde k \in \bar {\bf k}} \eta_{c,\phi}^{\bar k, \tilde k} \frac{\partial \phi_{p}}{\partial \tau} \Big |_{({\bf u}_{\bar k},\tau_{\bar k})} ( \tau_{\tilde k} - \tau_{{\bar k}} ) \vspace{1mm} \\
\displaystyle - \frac{1}{N} \sum_{\tilde k \in \bar {\bf k}} (1-  \eta_{c,\phi}^{\bar k, \tilde k}) \mathop {\max} \left[ \begin{array}{l} \underline \kappa_{\phi,\tau}^{\bar k, \tilde k} \left( \tau_{\tilde k} - \tau_{\bar k} \right), \vspace{1mm} \\ \overline \kappa_{\phi,\tau}^{\bar k, \tilde k} \left( \tau_{\tilde k} - \tau_{\bar k} \right) \end{array} \right] \vspace{1mm} \\
\displaystyle - \frac{1}{N} \sum_{\tilde k \in \bar {\bf k}} w_{\phi,\tilde k} \vspace{1mm} \\
\hspace{7mm} \leq \phi_p ({\bf u}_{\bar k},\tau_{\bar k}) \leq \vspace{1mm} \\
\hspace{12mm}\displaystyle \frac{1}{N} \sum_{\tilde k \in \bar {\bf k}} \hat \phi_p ({\bf u}_{\tilde k},\tau_{\tilde k}) \vspace{1mm}  \\
\hspace{12mm} \displaystyle - \frac{1}{N} \sum_{\tilde k \in \bar {\bf k}} \eta_{v,\phi}^{\bar k, \tilde k} \frac{\partial \phi_{p}}{\partial \tau} \Big |_{({\bf u}_{\bar k},\tau_{\bar k})} ( \tau_{\tilde k} - \tau_{{\bar k}} ) \vspace{1mm} \\
\displaystyle \hspace{12mm}- \frac{1}{N} \sum_{\tilde k \in \bar {\bf k}} (1-  \eta_{v,\phi}^{\bar k, \tilde k}) \mathop {\min} \left[ \begin{array}{l} \underline \kappa_{\phi,\tau}^{\bar k, \tilde k} \left( \tau_{\tilde k} - \tau_{\bar k} \right), \vspace{1mm} \\ \overline \kappa_{\phi,\tau}^{\bar k, \tilde k} \left( \tau_{\tilde k} - \tau_{\bar k} \right) \end{array} \right] \vspace{1mm} \\
\hspace{12mm} \displaystyle - \frac{1}{N} \sum_{\tilde k \in \bar {\bf k}} w_{\phi,\tilde k}.
\end{array}
\end{equation}

In a manner analogous to (\ref{eq:probbound1}), we may define bounds, $\underline W_{\phi,\bar k}$ and $\overline W_{\phi, \bar k}$, satisfying the condition

\vspace{-2mm}
\begin{equation}\label{eq:probbound1multi}
\mathbb{P} \left( \underline W_{\phi,\bar k} < \frac{1}{N} \sum_{\tilde k \in \bar {\bf k}} w_{\phi,\tilde k} < \overline W_{\phi,\bar k} \right) = 0.99,
\end{equation}

\noindent which finally allows us to state:

\vspace{-2mm}
\begin{equation}\label{eq:costbound2}
\begin{array}{l}
\displaystyle \frac{1}{N} \sum_{\tilde k \in \bar {\bf k}} \hat \phi_p ({\bf u}_{\tilde k},\tau_{\tilde k})  - \frac{1}{N} \sum_{\tilde k \in \bar {\bf k}}  \eta_{c,\phi}^{\bar k, \tilde k} \frac{\partial \phi_{p}}{\partial \tau} \Big |_{({\bf u}_{\bar k},\tau_{\bar k})} ( \tau_{\tilde k} - \tau_{{\bar k}} ) \vspace{2mm} \\
\displaystyle - \frac{1}{N} \sum_{\tilde k \in \bar {\bf k}} (1-  \eta_{c,\phi}^{\bar k, \tilde k}) \mathop {\max} \left[ \begin{array}{l} \underline \kappa_{\phi,\tau}^{\bar k, \tilde k} \left( \tau_{\tilde k} - \tau_{\bar k} \right), \vspace{1mm} \\ \overline \kappa_{\phi,\tau}^{\bar k, \tilde k} \left( \tau_{\tilde k} - \tau_{\bar k} \right) \end{array} \right]  - \overline W_{\phi, \bar k} \vspace{2mm} \\
\hspace{0mm} \leq \phi_p ({\bf u}_{\bar k},\tau_{\bar k}) \leq \vspace{2mm} \\
\displaystyle \frac{1}{N} \sum_{\tilde k \in \bar {\bf k}} \hat \phi_p ({\bf u}_{\tilde k},\tau_{\tilde k})  -  \frac{1}{N} \sum_{\tilde k \in \bar {\bf k}}  \eta_{v,\phi}^{\bar k, \tilde k} \frac{\partial \phi_{p}}{\partial \tau} \Big |_{({\bf u}_{\bar k},\tau_{\bar k})} ( \tau_{\tilde k} - \tau_{{\bar k}} ) \vspace{2mm} \\
- \displaystyle \frac{1}{N} \sum_{\tilde k \in \bar {\bf k}} (1-  \eta_{v,\phi}^{\bar k, \tilde k}) \mathop {\min} \left[ \begin{array}{l} \underline \kappa_{\phi,\tau}^{\bar k, \tilde k} \left( \tau_{\tilde k} - \tau_{\bar k} \right), \vspace{1mm} \\ \overline \kappa_{\phi,\tau}^{\bar k, \tilde k} \left( \tau_{\tilde k} - \tau_{\bar k} \right) \end{array} \right]  - \underline W_{\phi, \bar k}.
\end{array}
\end{equation}
\vspace{-3mm}

As before, $\underline W_{\phi, \bar k}$ and $\overline W_{\phi, \bar k}$ may be obtained in different ways depending on what is known about the noise. For the case when only the mean and variance are known, one may turn to generalized multivariate versions of Chebyshev's inequality \cite{Birnbaum1947}. For the known general pdf, Monte Carlo methods may be used to compute these bounds by numerically generating the pdf of the summation. For the oft-assumed case of white Gaussian noise, we have the well known variance reduction of $\sigma^2/N$ and the high-probability bounds of $\underline W_{\phi, \bar k} := -3\sigma/\sqrt{N}$ and $\overline W_{\phi,\bar k} := 3\sigma/\sqrt{N}$.

For the particular case when $\overline N=1$ (and $\bar {\bf k} = \bar k$), we see that (\ref{eq:costbound2}) simply yields (\ref{eq:costbound1}). Otherwise, different bounds are obtained. The motivation for using (\ref{eq:costbound2}) comes from its ability to filter out the variance of the noise as $N \rightarrow \infty$, although it is possible that this offers no advantage in the presence of significant degradation, and it is not difficult to show that the lower and upper bounds of (\ref{eq:costbound2}) may actually \emph{diverge} as $N \rightarrow \infty$ when degradation effects are present due to the corresponding summations being potentially unbounded. When degradation is negligible, one does expect the lower and upper bounds to converge to the true value, however. The white Gaussian noise case is a simple illustration as both $\underline W_{\phi,\bar k}$ and $\overline W_{\phi, \bar k}$ would tend to 0 as $N \rightarrow \infty$ and the mean of the measured values would tend to the true value. 

It is also interesting to note that one may essentially obtain $\displaystyle \sum_{N=1}^{\overline N} \left( \begin{array}{c} \overline N \\ N \end{array} \right)$ different bounds by playing with the definition of $\bar {\bf k}$ and considering all possible sets of the repeated measurements, and while taking the largest set possible -- with $N := \overline N$ -- may seem like the intuitive best option, this need not be the case. As such, one may enumerate all the possible $\bar {\bf k}$ and take the tightest bounds that result.

\subsection{Refinement Using Lipschitz Bounds}

Let us now denote the best lower and upper bounds obtained by (\ref{eq:costbound2}) -- considering all possible $\bar {\bf k}$, as these also include (\ref{eq:costbound1}) -- as

\vspace{-4mm}
\begin{equation}\label{eq:bestbounds}
\underline \phi_p ({\bf u}_{\bar k},\tau_{\bar k}) < \phi_p ({\bf u}_{\bar k},\tau_{\bar k}) < \overline \phi_p ({\bf u}_{\bar k},\tau_{\bar k}),
\end{equation}

\noindent where $\underline \phi_p ({\bf u}_{\bar k},\tau_{\bar k})$ is simply the maximum of the left-hand sides of (\ref{eq:costbound2}) for different $\bar {\bf k}$, while $\overline \phi_p ({\bf u}_{\bar k},\tau_{\bar k})$ is the minimum of the right-hand sides.

At this point, we may attempt to tighten these bounds further by coupling them with the local concavity/convexity-refined Lipschitz bounds that are valid for a given two iterations $\bar k$ and $\tilde k$:

\vspace{-4mm}
\begin{equation}\label{eq:lipboundcostU}
\begin{array}{l}
\phi_{p} ({\bf u}_{\bar k},\tau_{\bar k}) \leq \phi_{p} ({\bf u}_{\tilde k},\tau_{\tilde k}) \vspace{.7mm} \\
\displaystyle \hspace{15mm} + \eta_{c,\phi}^{\bar k, \tilde k} \frac{\partial \phi_{p}}{\partial \tau} \Big |_{({\bf u}_{\tilde k},\tau_{\tilde k})} ( \tau_{\bar k} - \tau_{{\tilde k}} ) \vspace{.7mm} \\
\displaystyle  \hspace{15mm} + (1-\eta_{c,\phi}^{\bar k, \tilde k})  \mathop {\max} \left[ \begin{array}{l} \underline \kappa_{\phi,\tau}^{\bar k, \tilde k} ( \tau_{\bar k} - \tau_{\tilde k} ), \vspace{.7mm} \\ \overline \kappa_{\phi,\tau}^{\bar k, \tilde k} ( \tau_{\bar k} - \tau_{\tilde k} ) \end{array} \right] \vspace{.7mm} \\
 \displaystyle \hspace{15mm} + \sum_{i \in I_{c,\phi}^{\bar k, \tilde k}} \frac{\partial \phi_p}{\partial u_i} \Big |_{({\bf u}_{\tilde k},\tau_{\tilde k})} ( u_{\bar k,i} - u_{\tilde k,i} ) \vspace{.7mm} \\
\displaystyle \hspace{15mm} + \sum_{i \not \in I_{c,\phi}^{\bar k, \tilde k}} \mathop {\max} \left[ \begin{array}{l} \underline \kappa_{\phi,i}^{\bar k, \tilde k} ( u_{\bar k,i} - u_{\tilde k,i} ), \vspace{.7mm} \\ \overline \kappa_{\phi,i}^{\bar k, \tilde k} ( u_{\bar k,i} - u_{\tilde k,i} ) \end{array} \right],
\end{array}
\end{equation}

\vspace{-2mm}
\begin{equation}\label{eq:lipboundcostL}
\begin{array}{l}
\phi_{p} ({\bf u}_{\bar k},\tau_{\bar k}) \geq \phi_{p} ({\bf u}_{\tilde k},\tau_{\tilde k}) \vspace{.7mm} \\
\displaystyle \hspace{15mm} + \eta_{v,\phi}^{\bar k, \tilde k} \frac{\partial \phi_{p}}{\partial \tau} \Big |_{({\bf u}_{\tilde k},\tau_{\tilde k})} ( \tau_{\bar k} - \tau_{{\tilde k}} ) \vspace{.7mm} \\
\displaystyle  \hspace{15mm} + (1-\eta_{v,\phi}^{\bar k, \tilde k})  \mathop {\min} \left[ \begin{array}{l} \underline \kappa_{\phi,\tau}^{\bar k, \tilde k} ( \tau_{\bar k} - \tau_{\tilde k} ), \vspace{.7mm} \\ \overline \kappa_{\phi,\tau}^{\bar k, \tilde k} ( \tau_{\bar k} - \tau_{\tilde k} ) \end{array} \right] \vspace{.7mm} \\
 \displaystyle \hspace{15mm} + \sum_{i \in I_{v,\phi}^{\bar k, \tilde k}} \frac{\partial \phi_p}{\partial u_i} \Big |_{({\bf u}_{\tilde k},\tau_{\tilde k})} ( u_{\bar k,i} - u_{\tilde k,i} ) \vspace{.7mm} \\
\displaystyle \hspace{15mm} + \sum_{i \not \in I_{v,\phi}^{\bar k, \tilde k}} \mathop {\min} \left[ \begin{array}{l} \underline \kappa_{\phi,i}^{\bar k, \tilde k} ( u_{\bar k,i} - u_{\tilde k,i} ), \vspace{.7mm} \\ \overline \kappa_{\phi,i}^{\bar k, \tilde k} ( u_{\bar k,i} - u_{\tilde k,i} ) \end{array} \right].
\end{array}
\end{equation}

From (\ref{eq:bestbounds}), it should be clear that

\vspace{-3mm}
\begin{equation}\label{eq:lipboundcostU2}
\begin{array}{l}
\phi_{p} ({\bf u}_{\bar k},\tau_{\bar k}) < \overline \phi_{p} ({\bf u}_{\tilde k},\tau_{\tilde k}) \vspace{.7mm} \\
\displaystyle \hspace{15mm} + \eta_{c,\phi}^{\bar k, \tilde k} \frac{\partial \phi_{p}}{\partial \tau} \Big |_{({\bf u}_{\tilde k},\tau_{\tilde k})} ( \tau_{\bar k} - \tau_{{\tilde k}} ) \vspace{.7mm} \\
\displaystyle  \hspace{15mm} + (1-\eta_{c,\phi}^{\bar k, \tilde k})  \mathop {\max} \left[ \begin{array}{l} \underline \kappa_{\phi,\tau}^{\bar k, \tilde k} ( \tau_{\bar k} - \tau_{\tilde k} ), \vspace{.7mm} \\ \overline \kappa_{\phi,\tau}^{\bar k, \tilde k} ( \tau_{\bar k} - \tau_{\tilde k} ) \end{array} \right] \vspace{.7mm}  \\
 \displaystyle \hspace{15mm} + \sum_{i \in I_{c,\phi}^{\bar k, \tilde k}} \frac{\partial \phi_p}{\partial u_i} \Big |_{({\bf u}_{\tilde k},\tau_{\tilde k})} ( u_{\bar k,i} - u_{\tilde k,i} ) \vspace{.7mm} \\
\displaystyle \hspace{15mm} + \sum_{i \not \in I_{c,\phi}^{\bar k, \tilde k}} \mathop {\max} \left[ \begin{array}{l} \underline \kappa_{\phi,i}^{\bar k, \tilde k} ( u_{\bar k,i} - u_{\tilde k,i} ), \vspace{.7mm} \\ \overline \kappa_{\phi,i}^{\bar k, \tilde k} ( u_{\bar k,i} - u_{\tilde k,i} ) \end{array} \right],
\end{array}
\end{equation}

\vspace{-2mm}
\begin{equation}\label{eq:lipboundcostL2}
\begin{array}{l}
\phi_{p} ({\bf u}_{\bar k},\tau_{\bar k}) > \underline \phi_{p} ({\bf u}_{\tilde k},\tau_{\tilde k}) \vspace{.7mm} \\
\displaystyle \hspace{15mm} + \eta_{v,\phi}^{\bar k, \tilde k} \frac{\partial \phi_{p}}{\partial \tau} \Big |_{({\bf u}_{\tilde k},\tau_{\tilde k})} ( \tau_{\bar k} - \tau_{{\tilde k}} ) \vspace{.7mm} \\
\displaystyle  \hspace{15mm} + (1-\eta_{v,\phi}^{\bar k, \tilde k})  \mathop {\min} \left[ \begin{array}{l} \underline \kappa_{\phi,\tau}^{\bar k, \tilde k} ( \tau_{\bar k} - \tau_{\tilde k} ), \vspace{.7mm} \\ \overline \kappa_{\phi,\tau}^{\bar k, \tilde k} ( \tau_{\bar k} - \tau_{\tilde k} ) \end{array} \right] \vspace{.7mm} \\
 \displaystyle \hspace{15mm} + \sum_{i \in I_{v,\phi}^{\bar k, \tilde k}} \frac{\partial \phi_p}{\partial u_i} \Big |_{({\bf u}_{\tilde k},\tau_{\tilde k})} ( u_{\bar k,i} - u_{\tilde k,i} ) \vspace{.7mm} \\
\displaystyle \hspace{15mm} + \sum_{i \not \in I_{v,\phi}^{\bar k, \tilde k}} \mathop {\min} \left[ \begin{array}{l} \underline \kappa_{\phi,i}^{\bar k, \tilde k} ( u_{\bar k,i} - u_{\tilde k,i} ), \vspace{.7mm} \\ \overline \kappa_{\phi,i}^{\bar k, \tilde k} ( u_{\bar k,i} - u_{\tilde k,i} ) \end{array} \right]
\end{array}
\end{equation}

\noindent hold as well, which yields an additional set of $k$ pairs of lower/upper bounds that are themselves a function of the lower/upper bounds obtained for the other $k$ measurements.

As illustrated in Fig. \ref{fig:liprefine}, these bounds have the potential to carry a very nice ``chain effect'', in that a tight bound for one measurement can be effectively propagated to nearby measurements for which the initial bounds may not be as tight. This depends, of course, on using valid Lipschitz constants, although we note that one is not obliged to use the same constants here as for the SCFO in general, and more conservative values may be chosen specifically for this refinement without having the same detrimental effects on performance as they might if this were done for the SCFO. Additionally, as refining certain bounds can lead to better refinement in others, this bounding procedure should be iterated until no more improvement is obtained.

\begin{figure*}
\begin{center}
\includegraphics[width=12cm]{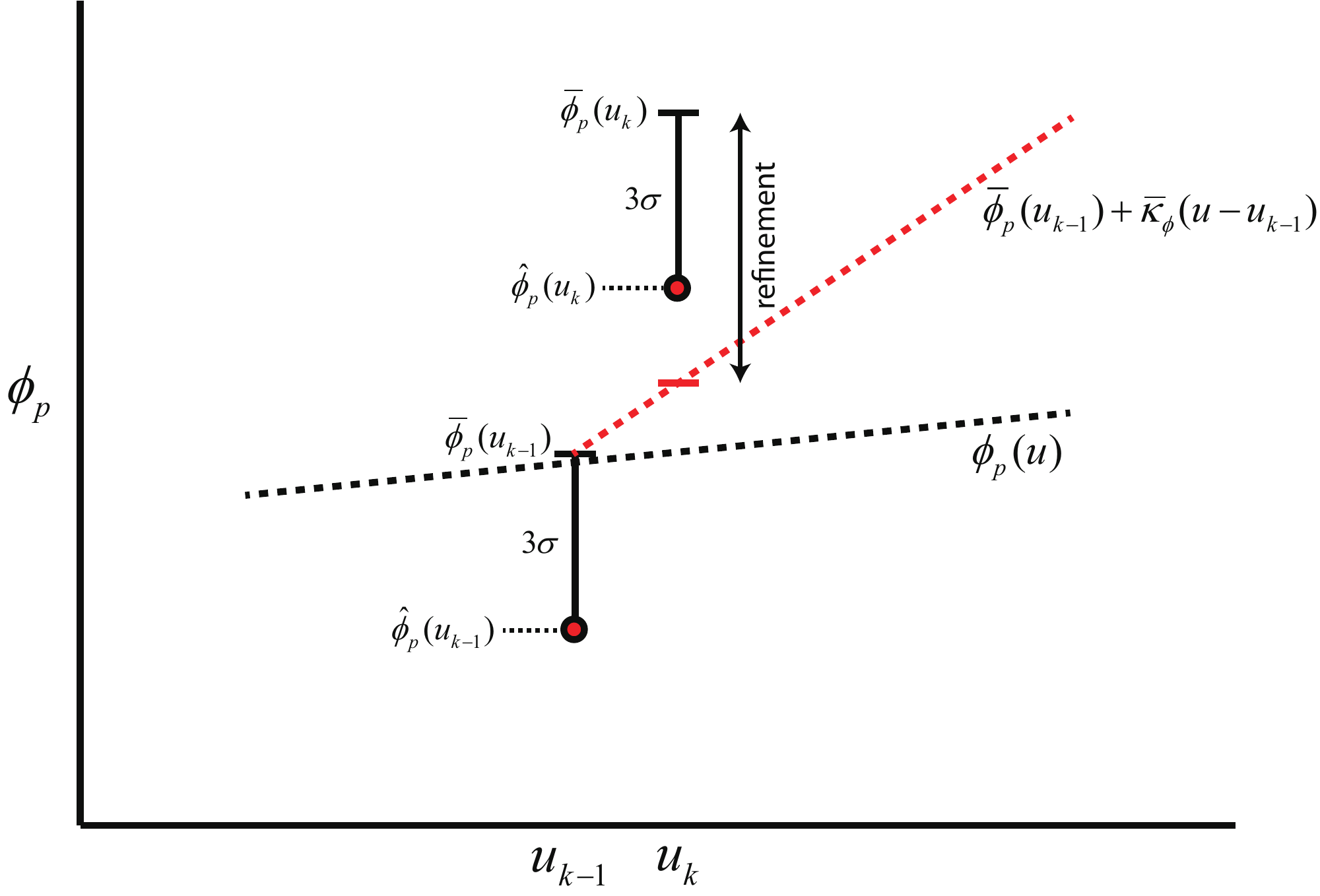}
\caption{An example of how a bound on the true experimental function value may be refined by exploiting the Lipschitz bound. A simple one-dimensional case with neither degradation effects nor convexity/concavity refinements is shown here, with a strong negative corruption due to noise in the measurement at $u_{k-1}$ and a strong positive corruption due to noise at $u_k$. White zero-mean Gaussian noise is assumed, and only the upper bound is considered. If the bound (\ref{eq:costbound1}) is used independently for both $u_{k-1}$ and $u_k$, one sees that the upper bound for $\phi_p (u_k)$ ends up being very high. However, as the upper bound on $\phi_p(u_{k-1})$ is relatively tight, this tightness may be propagated to the next point over with the help of the Lipschitz bound, and a much better upper bound may be obtained.}
\label{fig:liprefine}
\end{center}
\end{figure*}

\subsection{An Algorithm to Compute Lower and Upper Bounds for Experimental Function Values}

To consolidate the results presented so far, we now provide an algorithm that uses all of the techniques presented to obtain the best possible lower and upper bounds on the true values $\phi_p ({\bf u}_{\bar k},\tau_{\bar k})$ for all $\bar k \in [0,k]$.
\newline
\newline
{\bf Algorithm 2 -- Lower and Upper Bounds for Experimental Function Values}
\begin{enumerate}
\item Set $\bar k := 0$. Choose $\Delta_r, \underline \Delta_r \in \mathbb{R}_{++}$ such that $\Delta_r > \underline \Delta_r$.
\item If $\bar k > k$, go to Step 6. Otherwise, proceed to Step 3.
\item Let $\overline N$ denote the number of iterates with the decision variable values ${\bf u}_{\bar k}$. Set $\underline \phi_p ({\bf u}_{\bar k},\tau_{\bar k}) := -\infty$ and $\overline \phi_p ({\bf u}_{\bar k},\tau_{\bar k}) := \infty$. 
\item For $N = 1,...,\overline N$:
\begin{enumerate}
\item Generate the $\left( \begin{array}{c} \overline N \\ N \end{array} \right)$ index sets that correspond to different combinations of iterations with the same decision variable values. For each set:
\begin{enumerate}
\item Define $\bar {\bf k}$ as the corresponding index set and compute the lower and upper bounds as given by (\ref{eq:costbound2}). Set these bounds as the candidate values $\underline \phi_{p,test}$ and $\overline \phi_{p,test}$.
\item If $\underline \phi_{p,test} >  \underline \phi_p ({\bf u}_{\bar k},\tau_{\bar k})$, set $\underline \phi_p ({\bf u}_{\bar k},\tau_{\bar k}) := \underline \phi_{p,test}$. If $\overline \phi_{p,test} < \overline \phi_p ({\bf u}_{\bar k},\tau_{\bar k})$, set $\overline \phi_p ({\bf u}_{\bar k},\tau_{\bar k}) := \overline \phi_{p,test}$.
\end{enumerate}
\end{enumerate}
\item Set $\bar k := \bar k + 1$ and return to Step 2.
\item If $\Delta_r \leq \underline \Delta_r$, terminate. Otherwise, set $\Delta_r := 0, \; \bar k := 0$ and go to Step 7.
\item If $\bar k > k$, return to Step 6. Otherwise, proceed to Step 8.
\item For ${\tilde k} := \{ 0,...,k \} \setminus \bar k$:

\begin{enumerate}
\item Compute the lower and upper bounds as given by (\ref{eq:lipboundcostU2}) and (\ref{eq:lipboundcostL2}). Set these bounds as the candidate values $\underline \phi_{p,test}$ and $\overline \phi_{p,test}$.
\item Set 

\vspace{-2mm}
\begin{equation}\label{eq:Deltar}
\Delta_r := \mathop {\max} \left[  \begin{array}{l} \Delta_r,  \underline \phi_{p,test} - \underline \phi_p ({\bf u}_{\bar k},\tau_{\bar k}), \vspace{1mm} \\ \overline \phi_p ({\bf u}_{\bar k},\tau_{\bar k}) - \overline \phi_{p,test} \end{array} \right].
\end{equation}

\item If $\underline \phi_{p,test} > \underline \phi_p ({\bf u}_{\bar k},\tau_{\bar k})$, set $\underline \phi_p ({\bf u}_{\bar k},\tau_{\bar k}) := \underline \phi_{p,test}$. If $\overline \phi_{p,test} < \overline \phi_p ({\bf u}_{\bar k},\tau_{\bar k})$, set $\overline \phi_p ({\bf u}_{\bar k},\tau_{\bar k}) := \overline \phi_{p,test}$.

\end{enumerate}

\item Set $\bar k := \bar k+1$ and return to Step 7.

\end{enumerate}

Steps 2-5 essentially generate the initial lower and upper bounds, while Steps 6-9 carry out the Lipschitz refinements. For each refinement cycle, $\Delta_r$ represents the best refinement obtained. When this best refinement becomes inferior to some (preferably small) tolerance $\underline \Delta_r$, the algorithm terminates and the resulting lower and upper bounds are outputted. In the case when $\overline N$ is large, an ``incomplete'' version of the algorithm that only considers a limited number of subsets in the definition of $\bar {\bf k}$ may be used so as to avoid computational issues due to the combinatorial nature of $\bar {\bf k}$.

We finish this discussion on obtaining lower/upper bounds by noting that identical procedures may be followed to obtain the corresponding bounds for the constraints, $\underline g_{p,j} ({\bf u}_{\bar k},\tau_{\bar k})$ and $\overline g_{p,j} ({\bf u}_{\bar k},\tau_{\bar k})$.

\subsection{Accounting for Measurement and Estimation Error in the SCFO}

We now go through the modifications necessary to make the SCFO robust against measurement/estimation error.

\subsubsection{Accounting for Error in the Feasibility Conditions}

From $g_{p,j} ({\bf u}_{\bar k}, \tau_{\bar k}) < \overline g_{p,j} ({\bf u}_{\bar k}, \tau_{\bar k})$, it follows that enforcing

\vspace{-2mm}
\begin{equation}\label{eq:SCFO1idegLUccvalllocMN}
\hspace{-4mm}\begin{array}{l}
\mathop {\min} \limits_{\bar k = 0,...,k} \left[ \hspace{-1mm} \begin{array}{l} \overline g_{p,j} ({\bf u}_{\bar k},\tau_{\bar k}) \vspace{1mm} \\
 \displaystyle +\eta_{c,j}^{\bar k} \frac{\partial g_{p,j}}{\partial \tau} \Big |_{({\bf u}_{\bar k},\tau_{\bar k})} ( \tau_{k+1} - \tau_{{\bar k}} ) \vspace{1mm} \\
 \displaystyle + (1-\eta_{c,j}^{\bar k}) \overline \kappa_{p,j\tau}^{\bar k} \left( \tau_{k+1} - \tau_{\bar k} \right) \vspace{1mm}\\
\displaystyle   + \sum_{i \in I_{c,j}^{\bar k}} \frac{\partial g_{p,j}}{\partial u_i} \Big |_{({\bf u}_{\bar k},\tau_{\bar k})} ( u_{k^*,i} + \\
\hspace{15mm}  K_k (\bar u_{k+1,i}^* - u_{k^*,i} ) - u_{{\bar k},i} ) \vspace{1mm} \\
 + \displaystyle \sum_{i \not \in I_{c,j}^{\bar k}} \mathop {\max} \left[ \begin{array}{l} \underline \kappa_{p,ji}^{\bar k} ( u_{k^*,i} + \\ \hspace{2mm} K_k (\bar u_{k+1,i}^* - u_{k^*,i} ) - u_{\bar k,i} ), \vspace{1mm}\\ \overline \kappa_{p,ji}^{\bar k} ( u_{k^*,i} + \\ \hspace{2mm} K_k (\bar u_{k+1,i}^* - u_{k^*,i} ) - u_{\bar k,i} ) \end{array} \right] \end{array} \hspace{-1mm} \right] \leq 0
\end{array}
\end{equation}
\vspace{-1.9mm}

\noindent enforces (\ref{eq:SCFO1idegLUccvallloc}). Naturally, since $g_{p,j} ({\bf u}_{\bar k}, \tau_{\bar k}) < \overline g_{p,j} ({\bf u}_{\bar k}, \tau_{\bar k})$, the price one pays for this form is more conservative steps, as $K_k$ will naturally need to be smaller to satisfy (\ref{eq:SCFO1idegLUccvalllocMN}).

As before, some additional assumptions are needed on the past measurements for (\ref{eq:SCFO1idegLUccvalllocMN}) to be enforcable. Namely, in the general case where the lower and upper Lipschitz constants may have opposite signs and the summation terms are always positive for any positive value of $K_k$, it is sufficient that 

\vspace{-2mm}
\begin{equation}\label{eq:minfeas}
\begin{array}{l}
\overline g_{p,j} ({\bf u}_{k^*},\tau_{k^*}) \displaystyle +\eta_{c,j}^{k^*} \frac{\partial g_{p,j}}{\partial \tau} \Big |_{({\bf u}_{k^*},\tau_{k^*})} ( \tau_{k+1} - \tau_{{k^*}} ) \vspace{1mm} \\
\hspace{20mm} \displaystyle + (1-\eta_{c,j}^{k^*}) \overline \kappa_{p,j\tau}^{k^*} \left( \tau_{k+1} - \tau_{k^*} \right) \leq 0
\end{array}
\end{equation}

\noindent to guarantee that one can choose $K_k$ in a manner so as to guarantee (\ref{eq:SCFO1idegLUccvalllocMN}) and thus $g_{p,j} ({\bf u}_{k+1}, \tau_{k+1}) \leq 0$ -- in the worst case, one can always choose $K_k := 0$.

To avoid premature convergence, the following modified version of the projection (\ref{eq:projdegLUccv}) is also proposed:

\vspace{-2mm}
\begin{equation}\label{eq:projdeg2}
\begin{array}{rl}
\bar {\bf u}_{k+1}^* := \;\;\;\;\;\;\;\;\; & \vspace{1mm}\\
 {\rm arg} \mathop {\rm minimize}\limits_{{\bf u}} & \| {\bf u} - {\bf u}_{k+1}^* \|_2^2  \vspace{1mm} \\
{\rm{subject}}\;{\rm{to}} & \nabla g_{p,j} ({\bf u}_{k^*},\tau_{k+1})^T \left[ \hspace{-1mm} \begin{array}{c} {\bf u} - {\bf u}_{k^*} \\ 0 \end{array} \hspace{-1mm} \right] \leq -\delta_{g_p,j} \vspace{1mm} \\
& \hspace{-6mm} \forall j: \begin{array}{l} \overline g_{p,j} ({\bf u}_{k^*},\tau_{k^*}) \vspace{1mm} \\ \displaystyle + \eta_{c,j} \frac{\partial g_{p,j}}{\partial \tau} \Big |_{({\bf u}_{k^*},\tau_{k^*})} ( \tau_{k+1} - \tau_{{k^*}} ) \vspace{1mm}  \\ + (1 - \eta_{c,j}) \overline \kappa_{p,j\tau} (\tau_{k+1} - \tau_{k^*} ) \geq -\epsilon_{p,j} \end{array} \vspace{1mm} \\
 & \nabla g_{j} ({\bf u}_{k^*})^T ({\bf u} - {\bf u}_{k^*}) \leq -\delta_{g,j} \vspace{1mm} \\
& \forall j : g_{j}({\bf u}_{k^*}) \geq -\epsilon_{j} \vspace{1mm} \\
 & \nabla \phi_{p} ({\bf u}_{k^*},\tau_{k+1})^T  \left[ \begin{array}{c} {\bf u} - {\bf u}_{k^*} \\ 0 \end{array} \right] \leq -\delta_{\phi} \vspace{1mm} \\
 & {\bf u}^L \preceq {\bf u} \preceq {\bf u}^U,
\end{array}
\end{equation}

\noindent where we have simply substituted $\overline g_{p,j} ({\bf u}_{k^*}, \tau_{k^*})$ for $g_{p,j} ({\bf u}_{k^*}, \tau_{k^*})$ in the Boolean condition. Alternatively, one could propose to use the estimate $\hat g_{p,j} ({\bf u}_{k^*}, \tau_{k^*})$, but it is easy to show that this choice may lead to failure for certain noise distributions. For example, the distribution with a mean smaller than $-\epsilon_{p,j}$ and a very small variance would essentially lead to the projection never being applied with respect to the constraint $g_{p,j}$, which could in turn lead to premature convergence if this constraint were to become active.

\subsubsection{Accounting for Error in the Necessary Conditions for Cost Decrease}

To make the condition (\ref{eq:costhighmaxPFloc}) implementable in the presence of errors in the measurement of $ \phi_p ({\bf u}_{\bar k},\tau_{\bar k})$, we propose

\vspace{-2mm}
\begin{equation}\label{eq:costhighmaxPFlocMN}
\begin{array}{l}
\mathop {\max} \limits_{\bar k = 0,...,k}\left[ \begin{array}{l}
\displaystyle \underline \phi_{p} ({\bf u}_{\bar k},\tau_{\bar k}) \vspace{1mm} \\
\displaystyle  +\eta_{v,\phi}^{\bar k} \frac{\partial \phi_{p}}{\partial \tau} \Big |_{({\bf u}_{\bar k},\tau_{\bar k})} ( \tau_{k+1} - \tau_{{\bar k}} ) \vspace{1mm} \\
\displaystyle + (1-\eta_{v,\phi}^{\bar k}) \underline \kappa_{\phi,\tau}^{\bar k} \left( \tau_{k+1} - \tau_{\bar k} \right) \vspace{1mm}\\
\displaystyle   + \sum_{i \in I_{v,\phi}^{\bar k}} \frac{\partial \phi_{p}}{\partial u_i} \Big |_{({\bf u}_{\bar k},\tau_{\bar k})} ( u_{k^*,i} + \\ \hspace{15mm}  K_k(\bar u_{k+1,i}^* - u_{k^*,i}) - u_{{\bar k},i} ) \vspace{1mm} \\
+ \displaystyle \sum_{i \not \in I_{v,\phi}^{\bar k}} \mathop {\min} \left[ \begin{array}{l} \underline \kappa_{\phi,i}^{\bar k} ( u_{k^*,i} +\\ \hspace{2mm} K_k(\bar u_{k+1,i}^* - u_{k^*,i}) - u_{{\bar k},i} ), \vspace{1mm}\\ \overline \kappa_{\phi,i}^{\bar k} ( u_{k^*,i} +\\ \hspace{2mm} K_k(\bar u_{k+1,i}^* - u_{k^*,i}) - u_{{\bar k},i} ) \end{array} \right] 
\end{array} \right] \vspace{2mm} \\
\hspace{8mm}\displaystyle \leq \mathop {\min}_{\tilde k = 0,...,k} \left[ \begin{array}{l} \displaystyle \overline \phi_{p} ({\bf u}_{\tilde k},\tau_{\tilde k}) \vspace{1mm} \\
\displaystyle  +\eta_{c,\phi}^{\tilde k} \frac{\partial \phi_{p}}{\partial \tau} \Big |_{({\bf u}_{\tilde k},\tau_{\tilde k})} ( \tau_{k+1} - \tau_{{\tilde k}} ) \vspace{1mm} \\
\hspace{0mm} \displaystyle + (1-\eta_{c,\phi}^{\tilde k}) \overline \kappa_{\phi,\tau}^{\tilde k} \left( \tau_{k+1} - \tau_{\tilde k} \right) \vspace{1mm}\\
\hspace{0mm}\displaystyle   + \sum_{i \in I_{c,\phi}^{\tilde k}} \frac{\partial \phi_{p}}{\partial u_i} \Big |_{({\bf u}_{\tilde k},\tau_{\tilde k})} ( u_{k^*,i} - u_{{\tilde k},i} ) \vspace{1mm} \\
\hspace{0mm} + \displaystyle \sum_{i \not \in I_{c,\phi}^{\tilde k}} \mathop {\max} \left[ \begin{array}{l} \underline \kappa_{\phi,i}^{\tilde k} ( u_{k^*,i} - u_{{\tilde k},i} ), \vspace{1mm}\\ \overline \kappa_{\phi,i}^{\tilde k} ( u_{k^*,i} - u_{{\tilde k},i} ) \end{array} \right] \end{array} \right]
\end{array}
\end{equation}

\noindent as its robust version, with the necessity of (\ref{eq:costhighmaxPFloc}) implying the necessity of (\ref{eq:costhighmaxPFlocMN}) as the left-hand side of the inequality is made lower and the right-hand side higher.

The result, as should be expected, is that less of the decision variable space can be ruled out in the robust version as larger values of $K_k$ are permitted -- i.e., less of the decision space can be robustly proven to have cost function values superior to the cost at the reference point.

\subsubsection{Accounting for Error in the Choice of Reference Point}

In choosing the reference point via the optimizations in (\ref{eq:kstarLUccvcostloc}) or (\ref{eq:kstar2LUccvloc}), we propose the following robust versions:

\vspace{-2mm}
\begin{equation}\label{eq:kstarLUccvcostlocMN}
\begin{array}{rl}
k^* := \;\;\;\;\;\;\;\;\;\;\;\;\;\;& \vspace{1mm} \\
{\rm arg} \mathop {\rm maximize}\limits_{\bar k \in [0,k]} & \bar k \vspace{1mm} \\
{\rm{subject}}\;{\rm{to}} & \overline g_{p,j} ({\bf u}_{\bar k},\tau_{\bar k}) \vspace{1mm} \\
& \displaystyle +  \eta_{c,j} \frac{\partial g_{p,j}}{\partial \tau} \Big |_{({\bf u}_{\bar k},\tau_{\bar k})} ( \tau_{k+1} - \tau_{{\bar k}} ) \vspace{1mm} \\
&  + (1-  \eta_{c,j}) \overline \kappa_{p,j\tau} \left( \tau_{k+1} - \tau_{\bar k} \right) \leq 0 \vspace{1mm} \\
& \forall j = 1,...,n_{g_p} \vspace{1mm} \\
& \displaystyle \underline \phi_p ({\bf u}_{\bar k},\tau_{\bar k}) +  \eta_{v,\phi}^{\bar k, k} \frac{\partial \phi_{p}}{\partial \tau} \Big |_{({\bf u}_{\bar k},\tau_{\bar k})} ( \tau_{k} - \tau_{{\bar k}} ) \vspace{1mm} \\
&+ (1-  \eta_{v,\phi}^{\bar k, k}) \underline \kappa_{\phi,\tau}^{\bar k, k} \left( \tau_{k} - \tau_{\bar k} \right) \leq  \vspace{1mm} \\
& \mathop {\min} \limits_{\tilde k \in {\bf k}_f} \left[ \begin{array}{l} \overline \phi_p ({\bf u}_{\tilde k},\tau_{\tilde k}) + \vspace{1mm} \\
\displaystyle  \eta_{c,\phi}^{\tilde k, k} \frac{\partial \phi_{p}}{\partial \tau} \Big |_{({\bf u}_{\tilde k},\tau_{\tilde k})} ( \tau_{k} - \tau_{{\tilde k}} ) \vspace{1mm} \\
+ (1-  \eta_{c,\phi}^{\tilde k, k}) \overline \kappa_{\phi,\tau}^{\tilde k, k} \left( \tau_{k} - \tau_{\tilde k} \right) \end{array} \right],
\end{array}
\end{equation}

\vspace{-2mm}
\begin{equation}\label{eq:kfeas3}
{\bf k}_f = \left\{ \bar k : \begin{array}{l} \overline g_{p,j} ({\bf u}_{\bar k},\tau_{\bar k}) \vspace{1mm} \\  + \displaystyle  \eta_{c,j} \frac{\partial g_{p,j}}{\partial \tau} \Big |_{({\bf u}_{\bar k},\tau_{\bar k})} ( \tau_{k+1} - \tau_{{\bar k}} ) \vspace{1mm} \\ + (1- \eta_{c,j}) \overline \kappa_{p,j\tau} \left( \tau_{k+1} - \tau_{\bar k} \right)  \leq 0, \vspace{1mm} \\ \forall j = 1,...,n_{g_p} \end{array} \right\},
\end{equation}

\vspace{-2mm}
\begin{equation}\label{eq:kstar2LUccvlocMN}
\begin{array}{l}
k^*  := \\
\displaystyle {\rm arg} \mathop {\rm minimize}\limits_{\bar k \in [0,k]}  \mathop {\max} \limits_{j = 1,...,n_{g_p}}  \left[  \begin{array}{l} \overline g_{p,j} ({\bf u}_{\bar k},\tau_{\bar k}) \vspace{1mm} \\
\displaystyle +  \eta_{c,j} \frac{\partial g_{p,j}}{\partial \tau} \Big |_{({\bf u}_{\bar k},\tau_{\bar k})} \vspace{1mm} \\
\hspace{15mm}( \tau_{k+1} - \tau_{{\bar k}} ) \vspace{1mm} \\
+ (1-  \eta_{c,j}) \overline \kappa_{p,j\tau} \vspace{1mm} \\
\hspace{15mm}\left( \tau_{k+1} - \tau_{\bar k} \right) \end{array}  \right]
\end{array}
\end{equation}

\noindent where (\ref{eq:kstarLUccvcostlocMN}) chooses the most recent iterate that satisfies (\ref{eq:minfeas}) and is robustly guaranteed to not have a cost value greater than the value at any of the other feasible iterates.

\begin{figure*}
\begin{center}
\includegraphics[width=16cm]{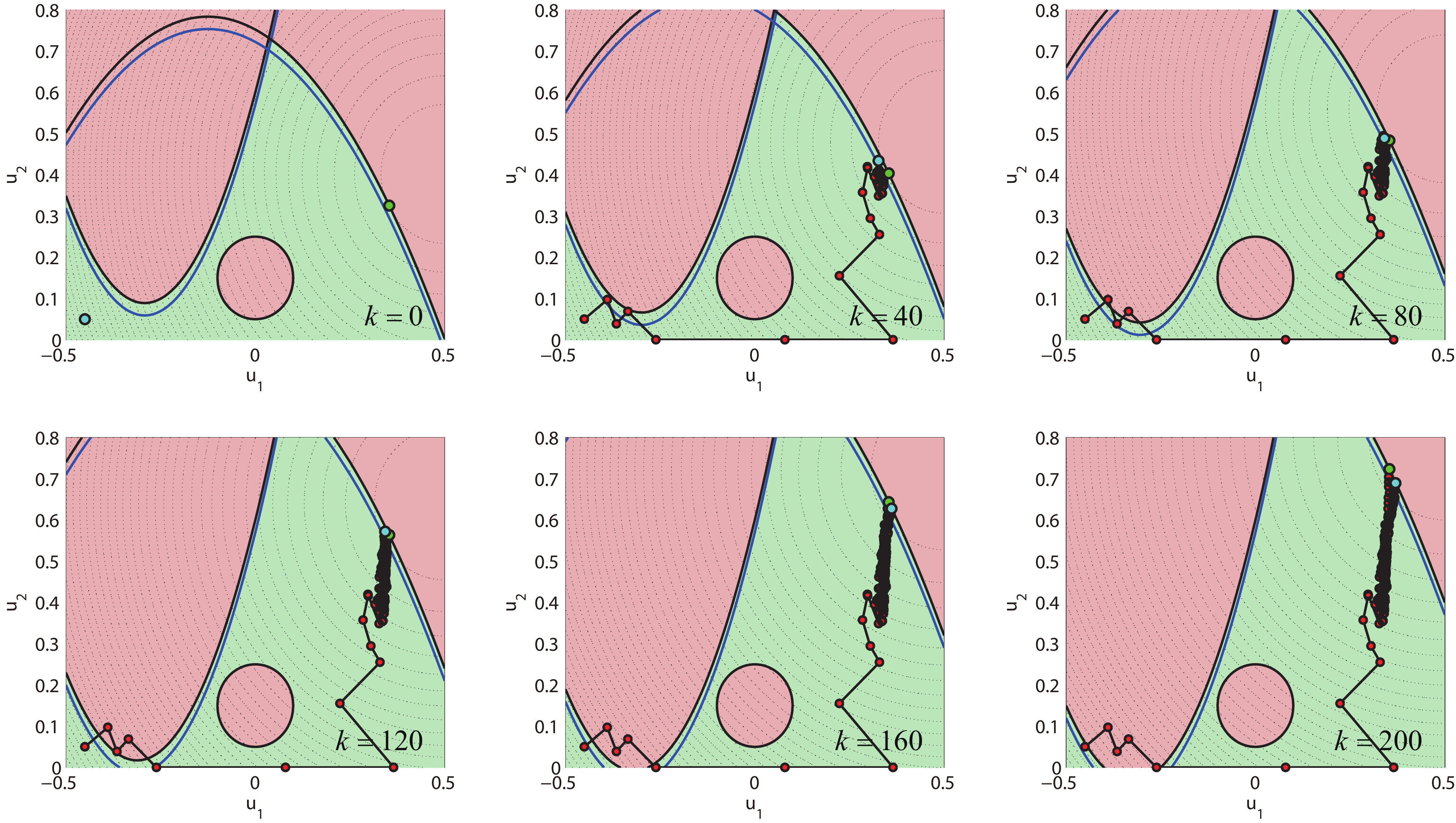}
\caption{Chain of experiments generated by applying the modified SCFO methodology to Problem (\ref{eq:exdeg}) for the ($-$) scenario with measurement/estimation error in the experimental functions accounted for. The effect of the noise in the constraints is illustrated by showing the $3\sigma$ back-offs in blue.}
\label{fig:degG}
\end{center}
\end{figure*} 

\subsubsection{Modifying the Lipschitz Consistency Check}

If Algorithm 1 is to be run in order to verify the consistency of the Lipschitz constants, then one must modify the bounds (\ref{eq:lipcheck1U})-(\ref{eq:lipcheck2L}) to make them implementable in the presence of measurement or estimation error. The modifications we propose here are the following:

\vspace{-2mm}
\begin{equation}\label{eq:lipcheck1UMN}
\begin{array}{l}
\underline g_{p,j} ({\bf u}_{k_2},\tau_{k_2}) \leq \overline g_{p,j} ({\bf u}_{k_1},\tau_{k_1}) \vspace{1mm} \\
\hspace{20mm} \displaystyle + \mathop {\max} \left[ \begin{array}{l} \underline \kappa_{p,j\tau} \left( \tau_{k_2} - \tau_{k_1} \right) \vspace{1mm} \\ 
\overline \kappa_{p,j\tau} \left( \tau_{k_2} - \tau_{k_1} \right) \end{array} \right] \vspace{1mm} \\
\hspace{20mm}\displaystyle + \sum_{i=1}^{n_u} \mathop {\max} \left[ \begin{array}{l} \underline \kappa_{p,ji} ( u_{k_2,i} - u_{k_1,i} ), \vspace{1mm} \\ \overline \kappa_{p,ji} ( u_{k_2,i} - u_{{k_1},i} ) \end{array} \right],
\end{array}
\end{equation}

\vspace{-2mm}
\begin{equation}\label{eq:lipcheck1LMN}
\begin{array}{l}
\overline g_{p,j} ({\bf u}_{k_2},\tau_{k_2}) \geq \underline g_{p,j} ({\bf u}_{k_1},\tau_{k_1}) \vspace{1mm} \\
\hspace{20mm} \displaystyle + \mathop {\min} \left[ \begin{array}{l} \underline \kappa_{p,j\tau} \left( \tau_{k_2} - \tau_{k_1} \right) \vspace{1mm} \\ 
\overline \kappa_{p,j\tau} \left( \tau_{k_2} - \tau_{k_1} \right) \end{array} \right] \vspace{1mm} \\
\hspace{20mm}\displaystyle + \sum_{i=1}^{n_u} \mathop {\min} \left[ \begin{array}{l} \underline \kappa_{p,ji} ( u_{k_2,i} - u_{k_1,i} ), \vspace{1mm} \\ \overline \kappa_{p,ji} ( u_{k_2,i} - u_{{k_1},i} ) \end{array} \right],
\end{array}
\end{equation}

\vspace{-2mm}
\begin{equation}\label{eq:lipcheck1costUMN}
\begin{array}{l}
\underline \phi_{p} ({\bf u}_{k_2},\tau_{k_2}) \leq \overline \phi_{p} ({\bf u}_{k_1},\tau_{k_1}) \vspace{1mm} \\
\hspace{20mm} \displaystyle + \mathop {\max} \left[ \begin{array}{l} \underline \kappa_{\phi,\tau} \left( \tau_{k_2} - \tau_{k_1} \right) \vspace{1mm} \\ 
\overline \kappa_{\phi,\tau} \left( \tau_{k_2} - \tau_{k_1} \right) \end{array} \right] \vspace{1mm} \\
\hspace{20mm}\displaystyle + \sum_{i=1}^{n_u} \mathop {\max} \left[ \begin{array}{l} \underline \kappa_{\phi,i} ( u_{k_2,i} - u_{k_1,i} ), \vspace{1mm} \\ \overline \kappa_{\phi,i} ( u_{k_2,i} - u_{{k_1},i} ) \end{array} \right],
\end{array}
\end{equation}

\vspace{-2mm}
\begin{equation}\label{eq:lipcheck1costLMN}
\begin{array}{l}
\overline \phi_{p} ({\bf u}_{k_2},\tau_{k_2}) \geq \underline \phi_{p} ({\bf u}_{k_1},\tau_{k_1}) \vspace{1mm} \\
\hspace{20mm} \displaystyle + \mathop {\min} \left[ \begin{array}{l} \underline \kappa_{\phi,\tau} \left( \tau_{k_2} - \tau_{k_1} \right) \vspace{1mm} \\ 
\overline \kappa_{\phi,\tau} \left( \tau_{k_2} - \tau_{k_1} \right) \end{array} \right] \vspace{1mm} \\
\hspace{20mm}\displaystyle + \sum_{i=1}^{n_u} \mathop {\min} \left[ \begin{array}{l} \underline \kappa_{\phi,i} ( u_{k_2,i} - u_{k_1,i} ), \vspace{1mm} \\ \overline \kappa_{\phi,i} ( u_{k_2,i} - u_{{k_1},i} ) \end{array} \right],
\end{array}
\end{equation}

\vspace{-2mm}
\begin{equation}\label{eq:lipcheck2UMN}
\begin{array}{l}
\underline \phi_{p} ({\bf u}_{k_2},\tau_{k_2}) \leq \\
\overline \phi_{p} ({\bf u}_{k_1},\tau_{k_1}) + \nabla \phi_p({\bf u}_{k_1},\tau_{k_1})^T \left[ \hspace{-.5mm} \begin{array}{c} {\bf u}_{k_2} - {\bf u}_{k_1} \\ 0 \end{array} \hspace{-.5mm} \right]  \vspace{1mm} \\
+ \mathop {\max} \left[  \begin{array}{l} \underline \kappa_{\phi,\tau} (\tau_{k_2} - \tau_{k_1}), \\ \overline \kappa_{\phi,\tau} (\tau_{k_2} - \tau_{k_1})  \end{array} \right] \vspace{1mm} \\
+\displaystyle \frac{1}{2} \sum_{i_1=1}^{n_u} \sum_{i_2=1}^{n_u} \mathop {\max} \left[ \begin{array}{l} \underline M_{\phi,i_1 i_2} (u_{k_2,i_1} - u_{k_1,i_1}) \\
\hspace{15mm} (u_{k_2,i_2} - u_{{k_1},i_2}), \\ \overline M_{\phi,i_1 i_2} (u_{k_2,i_1} - u_{{k_1},i_1}) \\
\hspace{15mm}(u_{k_2,i_2} - u_{{k_1},i_2}) \end{array} \right],
\end{array}
\end{equation}

\vspace{-2mm}
\begin{equation}\label{eq:lipcheck2LMN}
\begin{array}{l}
\overline \phi_{p} ({\bf u}_{k_2},\tau_{k_2}) \geq \\
\underline \phi_{p} ({\bf u}_{k_1},\tau_{k_1}) + \nabla \phi_p({\bf u}_{k_1},\tau_{k_1})^T \left[ \hspace{-.5mm} \begin{array}{c} {\bf u}_{k_2} - {\bf u}_{k_1} \\ 0 \end{array} \hspace{-.5mm} \right]  \vspace{1mm} \\
+ \mathop {\min} \left[  \begin{array}{l} \underline \kappa_{\phi,\tau} (\tau_{k_2} - \tau_{k_1}), \\ \overline \kappa_{\phi,\tau} (\tau_{k_2} - \tau_{k_1})  \end{array} \right] \vspace{1mm} \\
+\displaystyle \frac{1}{2} \sum_{i_1=1}^{n_u} \sum_{i_2=1}^{n_u} \mathop {\min} \left[ \begin{array}{l} \underline M_{\phi,i_1 i_2} (u_{k_2,i_1} - u_{k_1,i_1}) \\
\hspace{15mm} (u_{k_2,i_2} - u_{{k_1},i_2}), \\ \overline M_{\phi,i_1 i_2} (u_{k_2,i_1} - u_{{k_1},i_1}) \\
\hspace{15mm}(u_{k_2,i_2} - u_{{k_1},i_2}) \end{array} \right],
\end{array}
\end{equation}

\noindent as inconsistency in (\ref{eq:lipcheck1UMN})-(\ref{eq:lipcheck2LMN}) implies inconsistency in (\ref{eq:lipcheck1U})-(\ref{eq:lipcheck2L}). The natural effect of the noise in the consistency check is that less conservative Lipschitz constants suffice for the robust version given by (\ref{eq:lipcheck1UMN})-(\ref{eq:lipcheck2LMN}), as some of the change in function values across the decision variable space are ``absorbed'' by the noise and not attributed to the deterministic changes in the function, the latter being linked to the Lipschitz constants.

It is, however, important to note that the two tasks of checking the consistency of the Lipschitz constants and computing lower and upper bounds become convoluted with these modifications, as the Lipschitz constants will affect the bounds (by Algorithm 2) and the bounds, in turn, may affect the Lipschitz constants (by Algorithm 1). To avoid computing improper bounds when a poor choice of Lipschitz constants is used, it is proposed that the bounds used in (\ref{eq:lipcheck1UMN})-(\ref{eq:lipcheck2LMN}) are those computed by (\ref{eq:costbound1}), as this bound is independent of the Lipschitz constants. Afterwards, Lipschitz constants that are consistent with (\ref{eq:lipcheck1UMN})-(\ref{eq:lipcheck2LMN}) may be used in Algorithm 2.

\subsection{Example}
\label{sec:sec4ex}

Considering again the ($-$) case of Problem (\ref{eq:exdeg}), we corrupt the measurements of the cost and constraints with the additive noise elements

\vspace{-2mm}
\begin{equation}\label{eq:noiseex}
w_\phi, w_1, w_2 \sim \mathcal{N}(0,10^{-4}),
\end{equation}

\noindent and apply a modified version of Algorithm 2 where only the full sets (of cardinality $\overline N$) and the single-element sets (of cardinality 1) are considered in the definition of $\bar {\bf k}$ -- this is done to avoid complexity issues. Using the computed bounds, we then use the modified SCFO implementation as given by (\ref{eq:SCFO1idegLUccvalllocMN}), (\ref{eq:projdeg2}), (\ref{eq:costhighmaxPFlocMN}), and (\ref{eq:kstarLUccvcostlocMN}). Convexity and concavity refinements with $I_{c,1} := \{ 1,2 \}$, $\eta_{c,1} := 0$, $I_{c,2} := \{ 2 \}$, and $\eta_{c,2} := 1$ are used for the constraints, with the cost refined via $I_{v,\phi} := \{ 1,2 \}$ and $\eta_{v,\phi} := 1$. The same local refinements as given in (\ref{eq:liploc1})-(\ref{eq:liploc5}) are used. The noise bounds $\underline W_{\phi, \bar k} := -3 \sigma_{\bar k}/\sqrt{\overline N}$ and $\overline W_{\phi, \bar k} := 3\sigma_{\bar k}/\sqrt{\overline N}$ are employed, with $\underline \Delta_r := 10^{-6}$ in Algorithm 2.

We give the results in Figs. \ref{fig:degG} and \ref{fig:degGcost}, from which we can see that the modified SCFO are able to obtain good performance without violating the constraints even in the presence of measurement/estimation error.

\begin{figure}
\begin{center}
\includegraphics[width=8cm]{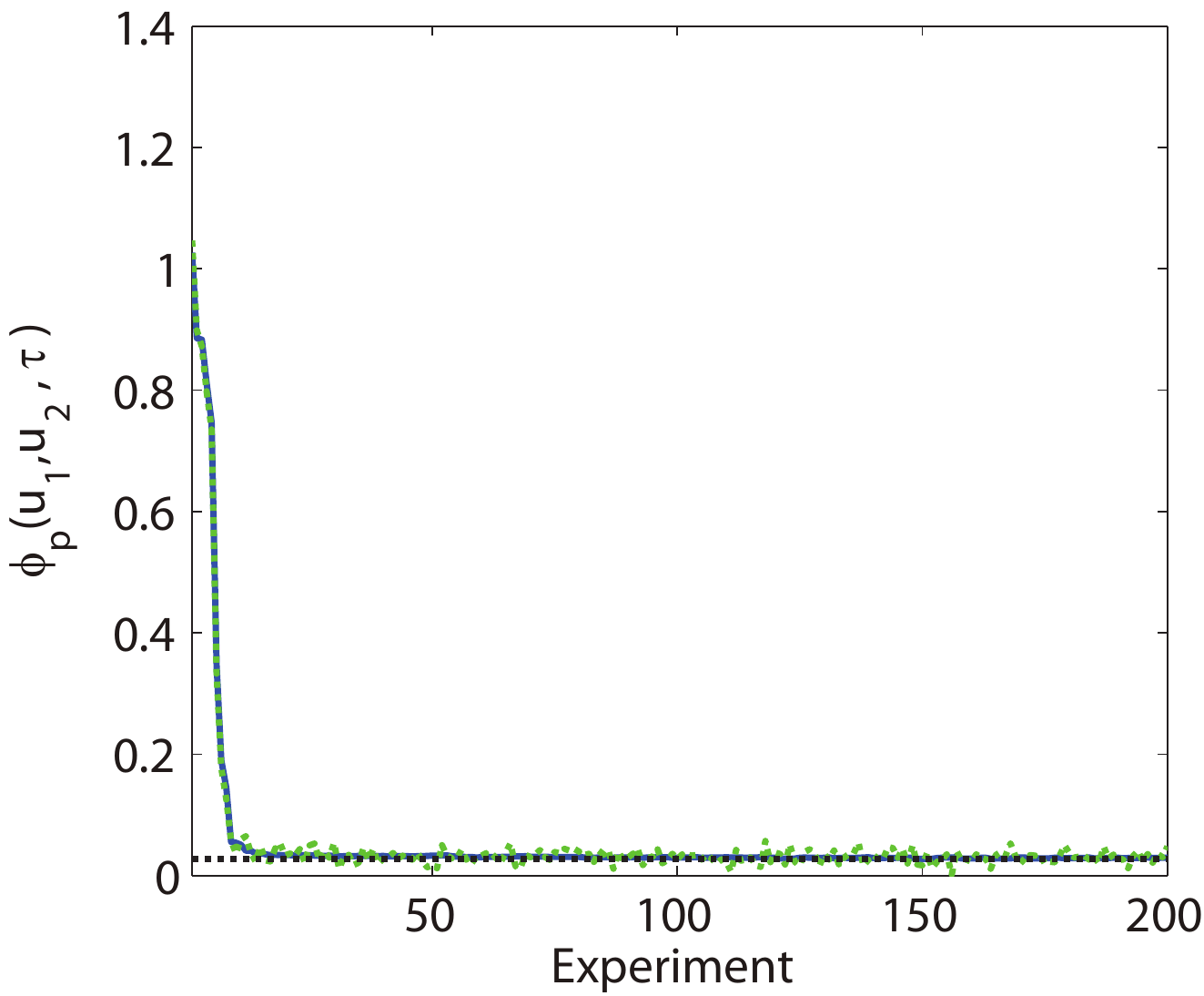}
\caption{Cost function values obtained by the modified SCFO methodology for Problem (\ref{eq:exdeg}) for the ($-$) scenario with measurement/estimation error in the experimental functions accounted for. The effect of the noise in the cost is illustrated via the dotted green line, which represents the corrupted function values.}
\label{fig:degGcost}
\end{center}
\end{figure}

\section{Gradient Estimation and Robust Projection}
\label{sec:gradest}

As an extension to the issues discussed in the previous section, let us now discuss the handling of errors in the \emph{gradient estimates} $\nabla \hat \phi_p ({\bf u}_{\bar k},\tau_{\bar k})$ and $\nabla \hat  g_{p,j} ({\bf u}_{\bar k},\tau_{\bar k})$, which is what one will have to work with in practice since the true gradients $\nabla \phi_p ({\bf u}_{\bar k},\tau_{\bar k})$ and $\nabla g_{p,j} ({\bf u}_{\bar k},\tau_{\bar k})$ will never be available exactly\footnote{We will use the notations $\nabla \hat \phi_p ({\bf u}_{\bar k},\tau_{\bar k})$ and $\nabla \hat  g_{p,j} ({\bf u}_{\bar k},\tau_{\bar k})$ to refer to the estimates of $\nabla \phi_p ({\bf u}_{\bar k},\tau_{\bar k})$ and $\nabla  g_{p,j} ({\bf u}_{\bar k},\tau_{\bar k})$, and \emph{not} to the gradients of $\hat \phi_p ({\bf u}_{\bar k},\tau_{\bar k})$ and $\hat  g_{p,j} ({\bf u}_{\bar k},\tau_{\bar k})$ as given in (\ref{eq:costnoise}) and (\ref{eq:connoise}).}. An additional implementation difficulty, and one that we admittedly will not treat here in detail, is that the additive noise model (\ref{eq:costnoise})-(\ref{eq:connoise}) for the function values does not extend to the gradient case in the measurement sense -- i.e., one does not simply ``measure'' a gradient with additive noise. One may, however, consider obtaining a gradient estimate with some uncertainty bounds that are guaranteed to contain the true gradient. At the time of writing, methods for doing this are not very developed -- we refer the reader to our previous work \cite{Bunin2013a} for one possible approach, as well as a regularization-based algorithm that provides bounds on a gradient estimate and that is being employed in the latest version of the SCFO solver \cite{SCFOug}.

Throughout this discussion, we will assume to have at our disposal estimation algorithms $\Omega_{{\bf u}_{\bar k},\tau_{\bar k}}$, $\Omega_{{\bf u}_{\bar k},\tau_{k+1}}$ that take all of the available measured/estimated data and return estimates of the gradient at $({\bf u}_{\bar k},\tau_{\bar k})$ and $({\bf u}_{\bar k},\tau_{k+1})$, together with the bounds on these estimates:

\vspace{-2mm}
\begin{equation}\label{eq:gradestcost}
\begin{array}{l}
\left[ \nabla \hat \phi_p ({\bf u}_{\bar k},\tau_{\bar k}),  \nabla \underline \phi_p ({\bf u}_{\bar k},\tau_{\bar k}),  \nabla \overline \phi_p ({\bf u}_{\bar k},\tau_{\bar k})  \right] \vspace{1mm} \\
\hspace{15mm} = \Omega_{{\bf u}_{\bar k},\tau_{\bar k}} \left( \begin{array}{l} {\bf u}_0,...,{\bf u}_k, \tau_0,...,\tau_k, \\ \hat \phi_p ({\bf u}_0,\tau_0),...,\hat \phi_p ({\bf u}_k,\tau_k) \end{array} \right),
\end{array}
\end{equation}

\vspace{-2mm}
\begin{equation}\label{eq:gradestcon}
\begin{array}{l}
\left[ \nabla \hat g_{p,j} ({\bf u}_{\bar k},\tau_{\bar k}),  \nabla \underline g_{p,j} ({\bf u}_{\bar k},\tau_{\bar k}),  \nabla \overline g_{p,j} ({\bf u}_{\bar k},\tau_{\bar k})  \right] \vspace{1mm} \\
\hspace{15mm} = \Omega_{{\bf u}_{\bar k},\tau_{\bar k}} \left( \begin{array}{l} {\bf u}_0,...,{\bf u}_k, \tau_0,...,\tau_k, \\ \hat g_{p,j} ({\bf u}_0,\tau_0),...,\hat g_{p,j} ({\bf u}_k,\tau_k) \end{array} \right),
\end{array}
\end{equation}

\vspace{-2mm}
\begin{equation}\label{eq:gradestcost2}
\begin{array}{l}
\left[ \nabla \hat \phi_p ({\bf u}_{\bar k},\tau_{k+1}),  \nabla \underline \phi_p ({\bf u}_{\bar k},\tau_{k+1}),  \nabla \overline \phi_p ({\bf u}_{\bar k},\tau_{k+1})  \right] \vspace{1mm} \\
\hspace{15mm} = \Omega_{{\bf u}_{\bar k},\tau_{k+1}} \left( \begin{array}{l} {\bf u}_0,...,{\bf u}_k, \tau_0,...,\tau_{k+1}, \\ \hat \phi_p ({\bf u}_0,\tau_0),...,\hat \phi_p ({\bf u}_k,\tau_k) \end{array} \right),
\end{array}
\end{equation}

\vspace{-2mm}
\begin{equation}\label{eq:gradestcon2}
\begin{array}{l}
\left[ \nabla \hat g_{p,j} ({\bf u}_{\bar k},\tau_{k+1}),  \nabla \underline g_{p,j} ({\bf u}_{\bar k},\tau_{k+1}),  \nabla \overline g_{p,j} ({\bf u}_{\bar k},\tau_{k+1})  \right] \vspace{1mm} \\
\hspace{13mm} = \Omega_{{\bf u}_{\bar k},\tau_{k+1}} \left( \begin{array}{l} {\bf u}_0,...,{\bf u}_k, \tau_0,...,\tau_{k+1}, \\ \hat g_{p,j} ({\bf u}_0,\tau_0),...,\hat g_{p,j} ({\bf u}_k,\tau_{k}) \end{array} \right).
\end{array}
\end{equation}

\noindent Given the current state of the art with regard to the gradient estimation problem, we will make no strong statistical assumptions on the estimates, apart from the general statement that $\nabla \hat \phi_p ({\bf u}_{\bar k},\tau_{\bar k})$, $\nabla \hat g_{p,j} ({\bf u}_{\bar k},\tau_{\bar k})$, $\nabla \hat \phi_p ({\bf u}_{\bar k},\tau_{k+1})$, and $\nabla \hat g_{p,j} ({\bf u}_{\bar k},\tau_{k+1})$ are, in some statistical sense, ``best'' estimates. We will also assume that 

\vspace{-2mm}
\begin{equation}\label{eq:gradincost}
\nabla \phi_p ({\bf u}_{\bar k},\tau_{\bar k}) \in \left[  \nabla \underline \phi_p ({\bf u}_{\bar k},\tau_{\bar k}),  \nabla \overline \phi_p ({\bf u}_{\bar k},\tau_{\bar k}) \right],
\end{equation}

\vspace{-2mm}
\begin{equation}\label{eq:gradincon}
\nabla g_{p,j} ({\bf u}_{\bar k},\tau_{\bar k}) \in \left[  \nabla \underline g_{p,j} ({\bf u}_{\bar k},\tau_{\bar k}),  \nabla \overline g_{p,j} ({\bf u}_{\bar k},\tau_{\bar k}) \right],
\end{equation}

\vspace{-2mm}
\begin{equation}\label{eq:gradincost2}
\nabla \phi_p ({\bf u}_{\bar k},\tau_{k+1}) \in \left[  \nabla \underline \phi_p ({\bf u}_{\bar k},\tau_{k+1}),  \nabla \overline \phi_p ({\bf u}_{\bar k},\tau_{k+1}) \right],
\end{equation}

\vspace{-2mm}
\begin{equation}\label{eq:gradincon2}
\nabla g_{p,j} ({\bf u}_{\bar k},\tau_{k+1}) \in \left[  \nabla \underline g_{p,j} ({\bf u}_{\bar k},\tau_{k+1}),  \nabla \overline g_{p,j} ({\bf u}_{\bar k},\tau_{k+1}) \right]
\end{equation}

\noindent all hold with sufficiently high probability. Finally, we will assume the estimates and their bounds to be consistent with the Lipschitz constants, i.e., that

\vspace{-2mm}
\begin{equation}\label{eq:gradlipconsist}
\begin{array}{l}
\underline \kappa_{p,ji} < \displaystyle \frac{\partial \underline g_{p,j}}{\partial u_i} \Big |_{({\bf u}_{\bar k},\tau_{\bar k})} \leq \displaystyle \frac{\partial \overline g_{p,j}}{\partial u_i} \Big |_{({\bf u}_{\bar k},\tau_{\bar k})} < \overline \kappa_{p,ji}, \vspace{1mm} \\
\underline \kappa_{p,j\tau} \leq \displaystyle \frac{\partial \underline g_{p,j}}{\partial \tau} \Big |_{({\bf u}_{\bar k},\tau_{\bar k})} \leq \displaystyle \frac{\partial \overline g_{p,j}}{\partial \tau} \Big |_{({\bf u}_{\bar k},\tau_{\bar k})}  \leq \overline \kappa_{p,j\tau}, \vspace{1mm} \\
\underline \kappa_{\phi,i} \leq \displaystyle \frac{\partial \underline \phi_{p}}{\partial u_i} \Big |_{({\bf u}_{\bar k},\tau_{\bar k})} \leq \displaystyle \frac{\partial \overline \phi_{p}}{\partial u_i} \Big |_{({\bf u}_{\bar k},\tau_{\bar k})} \leq \overline \kappa_{\phi,i}, \vspace{1mm} \\
\underline \kappa_{\phi,\tau} \leq \displaystyle \frac{\partial \underline \phi_{p}}{\partial \tau} \Big |_{({\bf u}_{\bar k},\tau_{\bar k})} \leq \displaystyle \frac{\partial \overline \phi_{p}}{\partial \tau} \Big |_{({\bf u}_{\bar k},\tau_{\bar k})} \leq \overline \kappa_{\phi,\tau}, \vspace{1mm} \\
\underline \kappa_{p,ji} < \displaystyle \frac{\partial \underline g_{p,j}}{\partial u_i} \Big |_{({\bf u}_{\bar k},\tau_{k+1})} \leq \displaystyle \frac{\partial \overline g_{p,j}}{\partial u_i} \Big |_{({\bf u}_{\bar k},\tau_{k+1})} < \overline \kappa_{p,ji}, \vspace{1mm} \\
\underline \kappa_{p,j\tau} \leq \displaystyle \frac{\partial \underline g_{p,j}}{\partial \tau} \Big |_{({\bf u}_{\bar k},\tau_{k+1})} \leq \displaystyle \frac{\partial \overline g_{p,j}}{\partial \tau} \Big |_{({\bf u}_{\bar k},\tau_{k+1})}  \leq \overline \kappa_{p,j\tau}, \vspace{1mm} \\
\underline \kappa_{\phi,i} \leq \displaystyle \frac{\partial \underline \phi_{p}}{\partial u_i} \Big |_{({\bf u}_{\bar k},\tau_{k+1})} \leq \displaystyle \frac{\partial \overline \phi_{p}}{\partial u_i} \Big |_{({\bf u}_{\bar k},\tau_{k+1})} \leq \overline \kappa_{\phi,i}, \vspace{1mm} \\
\underline \kappa_{\phi,\tau} \leq \displaystyle \frac{\partial \underline \phi_{p}}{\partial \tau} \Big |_{({\bf u}_{\bar k},\tau_{k+1})} \leq \displaystyle \frac{\partial \overline \phi_{p}}{\partial \tau} \Big |_{({\bf u}_{\bar k},\tau_{k+1})} \leq \overline \kappa_{\phi,\tau}.
\end{array}
\end{equation}

Given only the estimates and bounds provided by (\ref{eq:gradestcost})-(\ref{eq:gradestcon2}), we are now interested in formulating a robust version of the SCFO that has, until now, only utilized the true (unavailable) gradients.

\subsection{A Robust Version of the Projection}

In proposing an implementable version of (\ref{eq:projdeg2}), one could, of course, simply replace the true experimental gradients by their estimated versions:

\vspace{-4mm}
\begin{equation}\label{eq:projdeg2est}
\begin{array}{rl}
\bar {\bf u}_{k+1}^* := \;\;\;\;\;\;\;\;\; & \vspace{1mm}\\
 {\rm arg} \mathop {\rm minimize}\limits_{{\bf u}} & \| {\bf u} - {\bf u}_{k+1}^* \|_2^2  \vspace{1mm} \\
{\rm{subject}}\;{\rm{to}} & \nabla \hat g_{p,j} ({\bf u}_{k^*},\tau_{k+1})^T \left[ \hspace{-1mm} \begin{array}{c} {\bf u} - {\bf u}_{k^*} \\ 0 \end{array} \hspace{-1mm} \right] \leq -\delta_{g_p,j}, \vspace{1mm} \\
& \hspace{-6mm} \forall j: \begin{array}{l} \overline g_{p,j} ({\bf u}_{k^*},\tau_{k^*}) \vspace{1mm} \\ \displaystyle + \eta_{c,j} \frac{\partial g_{p,j}}{\partial \tau} \Big |_{({\bf u}_{k^*},\tau_{k^*})} ( \tau_{k+1} - \tau_{{k^*}} ) \vspace{1mm}  \\ + (1 - \eta_{c,j}) \overline \kappa_{p,j\tau} (\tau_{k+1} - \tau_{k^*} ) \geq -\epsilon_{p,j} \end{array} \vspace{1mm} \\
 & \nabla g_{j} ({\bf u}_{k^*})^T ({\bf u} - {\bf u}_{k^*}) \leq -\delta_{g,j}, \vspace{1mm} \\
& \forall j : g_{j}({\bf u}_{k^*}) \geq -\epsilon_{j} \vspace{1mm} \\
 & \nabla \hat \phi_{p} ({\bf u}_{k^*},\tau_{k+1})^T  \left[ \begin{array}{c} {\bf u} - {\bf u}_{k^*} \\ 0 \end{array} \right] \leq -\delta_{\phi} \vspace{1mm} \\
 & {\bf u}^L \preceq {\bf u} \preceq {\bf u}^U.
\end{array}
\end{equation}

\noindent While such an implementation may yield desired results in the long run -- if, for example, the estimates are unbiased -- there may be significant short-term losses that would result from using particularly bad estimates without accounting for their uncertainty or, in statistical terms, their \emph{variance}, which is in some sense represented by the size of the box corresponding to the lower and upper bounds. 

Here, we propose to use the standard robust approach of guaranteeing that the SCFO hold for the true gradients by ensuring that they hold for \emph{all of the gradients in the uncertainty set}. In other words, we propose to replace (\ref{eq:projdeg2est}) by the more robust

\vspace{-2mm}
\begin{equation}\label{eq:projdeg2rob}
\begin{array}{rl}
\bar {\bf u}_{k+1}^* := \;\;\;\;\;\;\;\;\; & \vspace{.5mm}\\
 {\rm arg} \mathop {\rm minimize}\limits_{{\bf u}} & \| {\bf u} - {\bf u}_{k+1}^* \|_2^2 \vspace{.5mm} \\
{\rm{subject}}\;{\rm{to}} & \nabla \tilde g_{p,j} ({\bf u}_{k^*},\tau_{k+1})^T \left[ \hspace{-1mm} \begin{array}{c} {\bf u} - {\bf u}_{k^*} \vspace{1mm} \\ 0 \end{array} \hspace{-1mm} \right] \leq -\delta_{g_p,j}, \\
& \forall \nabla \tilde g_{p,j} ({\bf u}_{k^*},\tau_{k+1}) \in \vspace{.5mm} \\
&\hspace{2mm} \left[  \nabla \underline g_{p,j} ({\bf u}_{k^*},\tau_{k+1}),  \nabla \overline g_{p,j} ({\bf u}_{k^*},\tau_{k+1}) \right], \vspace{.5mm} \\
& \hspace{-6mm} \forall j: \begin{array}{l} \overline g_{p,j} ({\bf u}_{k^*},\tau_{k^*}) \vspace{.5mm} \\ \displaystyle + \eta_{c,j} \frac{\partial \overline g_{p,j}}{\partial \tau} \Big |_{({\bf u}_{k^*},\tau_{k^*})} ( \tau_{k+1} - \tau_{{k^*}} ) \vspace{.5mm}  \\ + (1 - \eta_{c,j}) \overline \kappa_{p,j\tau} (\tau_{k+1} - \tau_{k^*} ) \geq -\epsilon_{p,j} \end{array} \vspace{.5mm} \\
 & \nabla g_{j} ({\bf u}_{k^*})^T ({\bf u} - {\bf u}_{k^*}) \leq -\delta_{g,j}, \vspace{.5mm} \\
& \forall j : g_{j}({\bf u}_{k^*}) \geq -\epsilon_{j} \vspace{.5mm} \\
 &  \nabla \tilde \phi_{p} ({\bf u}_{k^*},\tau_{k+1})^T  \left[ \begin{array}{c} {\bf u} - {\bf u}_{k^*} \\ 0 \end{array} \right] \leq -\delta_{\phi}, \vspace{.5mm} \\
& \forall \nabla \tilde \phi_{p} ({\bf u}_{k^*},\tau_{k+1}) \in \vspace{.5mm} \\
& \hspace{6mm} \left[  \nabla \underline \phi_p ({\bf u}_{k^*},\tau_{k+1}),  \nabla \overline \phi_p ({\bf u}_{k^*},\tau_{k+1}) \right] \vspace{.5mm} \\
 & {\bf u}^L \preceq {\bf u} \preceq {\bf u}^U, 
\end{array}
\end{equation}

\noindent as any $\bar {\bf u}_{k+1}^*$ that satisfies the constraints of (\ref{eq:projdeg2rob}) is guaranteed to satisfy the constraints of (\ref{eq:projdeg2}) as well.

It should be clear that (\ref{eq:projdeg2rob}) is not numerically implementable as given since it has a semi-infinite number of constraints -- i.e., the gradient uncertainty set, while bounded, contains an infinite number of gradients. However, due to the box nature of the uncertainty set, one may easily reformulate (\ref{eq:projdeg2rob}) by adding auxiliary slack variables $s$ (contained in the vector ${\bf s}_\phi$ and matrix ${\bf S}$):

\vspace{-2mm}
\begin{equation}\label{eq:projdeg2robslack}
\begin{array}{rl}
\bar {\bf u}_{k+1}^* := \;\;\;\;\;\;\;\;\; & \vspace{1mm}\\
 {\rm arg} \mathop {\rm minimize}\limits_{{\bf u},{\bf s}_\phi, {\bf S}} & \| {\bf u} - {\bf u}_{k+1}^* \|_2^2 \vspace{1mm}  \\
 {\rm{subject}}\;{\rm{to}} & \displaystyle \sum_{i=1}^{n_u} s_{ji}  \leq -\delta_{g_p,j} \vspace{1mm} \\
& \displaystyle \frac{\partial \underline g_{p,j}}{\partial u_i} \Big |_{({\bf u}_{k^*}, \tau_{k+1})} (u_i - u_{k^*,i}) \leq s_{ji} \vspace{1mm} \\
& \displaystyle \frac{\partial \overline g_{p,j}}{\partial u_i} \Big |_{({\bf u}_{k^*}, \tau_{k+1})} (u_i - u_{k^*,i}) \leq s_{ji}, \vspace{1mm} \\
& \forall i = 1,...,n_u, \vspace{1mm} \\
& \hspace{-6mm} \forall j: \begin{array}{l} \overline g_{p,j} ({\bf u}_{k^*},\tau_{k^*}) \vspace{1mm} \\ \displaystyle + \eta_{c,j} \frac{\partial \overline g_{p,j}}{\partial \tau} \Big |_{({\bf u}_{k^*},\tau_{k^*})} ( \tau_{k+1} - \tau_{{k^*}} ) \vspace{1mm}  \\ + (1 - \eta_{c,j}) \overline \kappa_{p,j\tau} (\tau_{k+1} - \tau_{k^*} ) \geq -\epsilon_{p,j} \end{array} \vspace{1mm} \\
 & \nabla g_{j} ({\bf u}_{k^*})^T ({\bf u} - {\bf u}_{k^*}) \leq -\delta_{g,j}, \vspace{1mm} \\
& \forall j : g_{j}({\bf u}_{k^*}) \geq -\epsilon_{j} \vspace{1mm} \\
 & \displaystyle  \sum_{i=1}^{n_u} s_{\phi,i}  \leq -\delta_{\phi} \vspace{1mm} \\
& \displaystyle \frac{\partial \underline \phi_{p}}{\partial u_i} \Big |_{({\bf u}_{k^*}, \tau_{k+1})} (u_i - u_{k^*,i}) \leq s_{\phi,i} \vspace{1mm} \\
& \displaystyle \frac{\partial \overline \phi_{p}}{\partial u_i} \Big |_{({\bf u}_{k^*}, \tau_{k+1})} (u_i - u_{k^*,i}) \leq s_{\phi,i} \vspace{1mm} \\
 & {\bf u}^L \preceq {\bf u} \preceq {\bf u}^U. 
\end{array}
\end{equation}

\noindent Clearly, (\ref{eq:projdeg2robslack}) is a standard quadratic program with no more than $2n_{g_p}(1+n_u)+n_g+4n_u+2$ linear constraints. We prove the equivalence of (\ref{eq:projdeg2robslack}) and (\ref{eq:projdeg2rob}) in the following lemma.

\begin{lemma}[Equivalence of Projections (\ref{eq:projdeg2robslack}) and (\ref{eq:projdeg2rob})]
\label{lemma:equiv}
Problems (\ref{eq:projdeg2robslack}) and (\ref{eq:projdeg2rob}) are equivalent in the following sense:

\vspace{-2mm}
\begin{equation}\label{eq:equiv}
\begin{array}{l}
{\bf u}\;{\rm feasible\;for\;} (\ref{eq:projdeg2rob}) \Leftrightarrow \\
\hspace{15mm} \exists {\bf s}_\phi, {\bf S} : {\bf u}, {\bf s}_\phi, {\bf S}\;{\rm feasible\;for\;} (\ref{eq:projdeg2robslack}).
\end{array}
\end{equation}

\end{lemma}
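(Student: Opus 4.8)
The plan is to reduce each semi-infinite constraint of (\ref{eq:projdeg2rob}) to a finite ``sum-of-maxima'' form and then match it, term by term, with the slack constraints of (\ref{eq:projdeg2robslack}). The key observation is that, writing $d_i := u_i - u_{k^*,i}$, the quantity $\nabla \tilde g_{p,j}({\bf u}_{k^*},\tau_{k+1})^T [\,{\bf u}-{\bf u}_{k^*};\ 0\,]$ equals $\sum_{i=1}^{n_u} (\partial \tilde g_{p,j}/\partial u_i)\, d_i$ evaluated at $({\bf u}_{k^*},\tau_{k+1})$ (the appended zero kills the $\tau$-component), and the uncertainty set for the gradient is a \emph{box}: by (\ref{eq:gradincon2}) the $i$-th partial derivative varies over $[\,\partial \underline g_{p,j}/\partial u_i,\ \partial \overline g_{p,j}/\partial u_i\,]$ independently of the others. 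Since the functional is separable in these components, its supremum over the (compact) box is the sum of the coordinatewise suprema, i.e. $\sum_{i=1}^{n_u} \max[\,(\partial \underline g_{p,j}/\partial u_i)\,d_i,\ (\partial \overline g_{p,j}/\partial u_i)\,d_i\,]$, each endpoint being chosen according to the sign of $d_i$. Hence the robust constraint ``$\nabla \tilde g_{p,j}^T[\,{\bf u}-{\bf u}_{k^*};0\,]\le -\delta_{g_p,j}$ for all gradients in the box'' is equivalent to this sum being $\le -\delta_{g_p,j}$, and the same reduction applies verbatim to the cost-gradient constraint with the box $[\,\nabla \underline \phi_p({\bf u}_{k^*},\tau_{k+1}),\,\nabla \overline \phi_p({\bf u}_{k^*},\tau_{k+1})\,]$.

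With this in hand I would prove the two directions of (\ref{eq:equiv}) directly. For ($\Leftarrow$), let $({\bf u},{\bf s}_\phi,{\bf S})$ be feasible for (\ref{eq:projdeg2robslack}). The two inequalities $(\partial \underline g_{p,j}/\partial u_i)\,d_i\le s_{ji}$ and $(\partial \overline g_{p,j}/\partial u_i)\,d_i\le s_{ji}$ force $s_{ji}\ge \max[\cdot,\cdot]$, so for \emph{any} gradient in the box the $i$-th term is at most $s_{ji}$; summing over $i$ and using $\sum_i s_{ji}\le -\delta_{g_p,j}$ recovers the robust inequality, and likewise for the cost via ${\bf s}_\phi$. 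All remaining constraints of (\ref{eq:projdeg2robslack}) --- the descent conditions on the numerical constraints $g_j$, the $\epsilon$-activity Boolean conditions, and the bounds ${\bf u}^L\preceq {\bf u}\preceq {\bf u}^U$ --- coincide literally with those of (\ref{eq:projdeg2rob}), so ${\bf u}$ is feasible for (\ref{eq:projdeg2rob}). For ($\Rightarrow$), given ${\bf u}$ feasible for (\ref{eq:projdeg2rob}), I would exhibit the \emph{minimal} slacks $s_{ji} := \max[(\partial \underline g_{p,j}/\partial u_i)\,d_i,\ (\partial \overline g_{p,j}/\partial u_i)\,d_i]$ and $s_{\phi,i} := \max[(\partial \underline \phi_p/\partial u_i)\,d_i,\ (\partial \overline \phi_p/\partial u_i)\,d_i]$. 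These satisfy the slack-defining inequalities by construction, and $\sum_i s_{ji}$, $\sum_i s_{\phi,i}$ are precisely the worst-case values identified in the first paragraph, which by feasibility of ${\bf u}$ are $\le -\delta_{g_p,j}$ and $\le -\delta_\phi$ respectively; thus $({\bf u},{\bf s}_\phi,{\bf S})$ is feasible for (\ref{eq:projdeg2robslack}). This establishes (\ref{eq:equiv}), and since the objective $\|{\bf u}-{\bf u}_{k+1}^*\|_2^2$ depends only on ${\bf u}$, the two programs also return the same minimizer $\bar{\bf u}_{k+1}^*$.

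The only step that is more than bookkeeping is the decoupling in the first paragraph: one must justify that maximizing a linear functional over a Cartesian product of intervals equals the sum of the coordinatewise maxima. I would state this explicitly, as it is exactly the reason the finitely many linear inequalities in (\ref{eq:projdeg2robslack}) faithfully encode the semi-infinite family in (\ref{eq:projdeg2rob}); everything else is a direct substitution together with the observation that the non-gradient constraints are identical in the two formulations.
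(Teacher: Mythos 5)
Your proposal is correct and follows essentially the same route as the paper's proof: the paper likewise reduces each semi-infinite box-constrained family to the finite worst-case form $\sum_i \max[\,\cdot,\cdot\,]\le -\delta$ (noting the maximizing gradient lies in the box, so the two are equivalent), and then takes exactly your ``minimal'' slacks $s_{ji}$, $s_{\phi,i}$ for the forward direction and the domination argument $s_{ji}\ge\max[\,\cdot,\cdot\,]$ for the backward one. The decoupling step you single out --- that the supremum of a separable linear functional over a product of intervals is the sum of coordinatewise suprema --- is precisely the content of the paper's inequalities (\ref{eq:sumbounds2})--(\ref{eq:sumformworst}), so no gap remains.
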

\begin{proof}
Let us first prove the forward implication

\vspace{-2mm}
\begin{equation}\label{eq:equivlr}
\begin{array}{l}
{\bf u}\;{\rm feasible\;for\;} (\ref{eq:projdeg2rob}) \Rightarrow \\
\hspace{15mm} \exists {\bf s}_\phi, {\bf S} : {\bf u}, {\bf s}_\phi, {\bf S}\;{\rm feasible\;for\;} (\ref{eq:projdeg2robslack}).
\end{array}
\end{equation}

To do this, consider the inequality constraints -- on the cost only, for brevity's sake -- in summation form:

\vspace{-2mm}
\begin{equation}\label{eq:sumform}
\begin{array}{l}
\displaystyle \sum_{i=1}^{n_u} \frac{\partial \tilde \phi_p}{\partial u_i} \Big |_{({\bf u}_{k^*},\tau_{k+1})} (u_i - u_{k^*,i}) \leq -\delta_\phi, \\
\hspace{-2mm} \forall \nabla \tilde \phi_{p} ({\bf u}_{k^*},\tau_{k+1}) \in  \left[  \nabla \underline \phi_p ({\bf u}_{k^*},\tau_{k+1}),  \nabla \overline \phi_p ({\bf u}_{k^*},\tau_{k+1}) \right].
\end{array}
\end{equation}

Since the terms inside the summation are linear in $u_i$, the following bound must hold:

\vspace{-2mm}
\begin{equation}\label{eq:sumbounds2}
\begin{array}{l}
\displaystyle \frac{\partial \tilde \phi_p}{\partial u_i} \Big |_{({\bf u}_{k^*},\tau_{k+1})} (u_{i} - u_{k^*,i}) \leq \vspace{1mm}\\
\hspace{10mm} \mathop {\max} \left[ \begin{array}{l} \displaystyle \frac{\partial \underline \phi_p}{\partial u_i} \Big |_{({\bf u}_{k^*},\tau_{k+1})} (u_{i} - u_{k^*,i}), \vspace{1mm} \\ \displaystyle \frac{\partial \overline \phi_p}{\partial u_i} \Big |_{({\bf u}_{k^*},\tau_{k+1})} (u_{i} - u_{k^*,i}) \end{array} \right],
\end{array}
\end{equation}

\noindent with the maximum term in (\ref{eq:sumbounds2}) determined by the sign of $u_{i} - u_{k^*,i}$.

We may thus replace the semi-infinite family of constraints in (\ref{eq:sumform}) by their worst-case maximum:

\vspace{-4mm}
\begin{equation}\label{eq:sumformworst}
\displaystyle \sum_{i=1}^{n_u} \mathop {\max} \left[ \begin{array}{l} \displaystyle \frac{\partial \underline \phi_p}{\partial u_i} \Big |_{({\bf u}_{k^*},\tau_{k+1})} (u_{i} - u_{k^*,i}), \vspace{1mm} \\ \displaystyle \frac{\partial \overline \phi_p}{\partial u_i} \Big |_{({\bf u}_{k^*},\tau_{k+1})} (u_{i} - u_{k^*,i}) \end{array} \right] \leq -\delta_\phi.
\end{equation}

By (\ref{eq:sumbounds2}), it is clear that any ${\bf u}$ that satisfies (\ref{eq:sumformworst}) satisfies (\ref{eq:sumform}) as well. Likewise, since the gradient realization of (\ref{eq:sumformworst}) belongs to the gradient uncertainty set in (\ref{eq:sumform}), it follows that any ${\bf u}$ that satisfies (\ref{eq:sumform}) naturally satisfies (\ref{eq:sumformworst}). (\ref{eq:sumform}) and (\ref{eq:sumformworst}) are thus equivalent. The feasibility of ${\bf u}$ for (\ref{eq:projdeg2rob}) then implies the feasibility of ${\bf u}$ for (\ref{eq:sumform}) and, consequently, for (\ref{eq:sumformworst}).

Having established this, let us choose the slack variables ${\bf s}_\phi$ as

\vspace{-4mm}
\begin{equation}\label{eq:slackchoice2}
s_{\phi,i} := \mathop {\max} \left[ \begin{array}{l} \displaystyle \frac{\partial \underline \phi_p}{\partial u_i} \Big |_{({\bf u}_{k^*},\tau_{k+1})} (u_{i} - u_{k^*,i}), \vspace{1mm} \\ \displaystyle \frac{\partial \overline \phi_p}{\partial u_i} \Big |_{({\bf u}_{k^*},\tau_{k+1})} (u_{i} - u_{k^*,i}) \end{array} \right].
\end{equation}

Substituting this choice into (\ref{eq:sumformworst}) clearly shows that the constraint

\vspace{-4mm}
\begin{equation}\label{eq:consat}
\displaystyle \sum_{i=1}^{n_u} s_{\phi,i}  \leq -\delta_{\phi}
\end{equation}

\noindent is satisfied. Furthermore, it is easily seen that this choice satisfies the constraints

\vspace{-4mm}
\begin{equation}\label{eq:consat2}
\begin{array}{l}
\displaystyle \frac{\partial \underline \phi_{p}}{\partial u_i} \Big |_{({\bf u}_{k^*}, \tau_{k+1})} (u_i - u_{k^*,i}) \leq s_{\phi,i} \vspace{1mm} \\
\displaystyle \frac{\partial \overline \phi_{p}}{\partial u_i} \Big |_{({\bf u}_{k^*}, \tau_{k+1})} (u_i - u_{k^*,i}) \leq s_{\phi,i}
\end{array}
\end{equation}

\noindent as well.

An entirely analogous procedure for the experimental constraint functions and the analogous choice of slack variables ${\bf S}$,

\vspace{-2mm}
\begin{equation}\label{eq:slackchoicecon2}
s_{ji} := \mathop {\max} \left[ \begin{array}{l} \displaystyle \frac{\partial \underline g_{p,j}}{\partial u_i} \Big |_{({\bf u}_{k^*},\tau_{k+1})} (u_{i} - u_{k^*,i}), \vspace{1mm} \\ \displaystyle \frac{\partial \overline g_{p,j}}{\partial u_i} \Big |_{({\bf u}_{k^*},\tau_{k+1})} (u_{i} - u_{k^*,i}) \end{array} \right],
\end{equation}

\noindent leads to the constraints

\vspace{-2mm}
\begin{equation}\label{eq:consatcon}
\displaystyle \sum_{i=1}^{n_u} s_{ji}  \leq -\delta_{g_p,j},
\end{equation}

\vspace{-2mm}
\begin{equation}\label{eq:consat2con}
\begin{array}{l}
\displaystyle \frac{\partial \underline g_{p,j}}{\partial u_i} \Big |_{({\bf u}_{k^*}, \tau_{k+1})} (u_i - u_{k^*,i}) \leq s_{ji} \vspace{1mm} \\
\displaystyle \frac{\partial \overline g_{p,j}}{\partial u_i} \Big |_{({\bf u}_{k^*}, \tau_{k+1})} (u_i - u_{k^*,i}) \leq s_{ji}
\end{array}
\end{equation}

\noindent being satisfied.

As such, we have proven that for any ${\bf u}$ that is feasible for (\ref{eq:projdeg2rob}) there exist ${\bf s}_\phi, {\bf S}$ -- namely, those given by  (\ref{eq:slackchoice2}) and (\ref{eq:slackchoicecon2}) -- so that the extended set ${\bf u}, {\bf s}_\phi, {\bf S}$ is feasible for (\ref{eq:projdeg2robslack}).

We will now prove the backward implication

\vspace{-2mm}
\begin{equation}\label{eq:equivrl}
\begin{array}{l}
{\bf u}\;{\rm feasible\;for\;} (\ref{eq:projdeg2rob}) \Leftarrow \\
\hspace{15mm} \exists {\bf s}_\phi, {\bf S} : {\bf u}, {\bf s}_\phi, {\bf S}\;{\rm feasible\;for\;} (\ref{eq:projdeg2robslack}).
\end{array}
\end{equation}

In this case, the constraints (\ref{eq:consat}) and (\ref{eq:consat2}) are known to hold, which clearly implies that (\ref{eq:sumformworst}) holds as well. By the equivalence of (\ref{eq:sumform}) and (\ref{eq:sumformworst}), it follows that semi-infinite set of constraints (\ref{eq:sumform}) is also satisfied. An identical analysis for the experimental constraint functions shows that the analogous semi-infinite sets of constraints for these functions hold as well by virtue of (\ref{eq:consatcon}) and (\ref{eq:consat2con}). Since these semi-infinite sets are the constraints of (\ref{eq:projdeg2rob}), we have thereby proven (\ref{eq:equivrl}) and, consequently, (\ref{eq:equiv}). \qed

\end{proof}

The practical implication of Lemma \ref{lemma:equiv} is that all ${\bf u}$ that are feasible solutions for the tractable convex problem (\ref{eq:projdeg2robslack}) are also feasible solutions for the semi-infinite and intractable problem (\ref{eq:projdeg2rob}). As such, no ${\bf u}$ is lost by solving the reformulated (\ref{eq:projdeg2robslack}) -- additionally, if a feasible ${\bf u}$ exists for (\ref{eq:projdeg2rob}), then we are guaranteed to find a feasible ${\bf u}$ for (\ref{eq:projdeg2robslack}) provided the availability of a reliable convex programming solver \cite{Boyd2008}.

\subsection{Infeasibility Due to Robustness}

As discussed in \cite{Bunin2013SIAM}, the infeasibility of the standard projection for a sufficiently low choice of projection parameters essentially implies that the algorithm has converged very close to a Fritz John (FJ) point. Unfortunately, this is no longer the case for the robust projection where lower and upper bounds on the gradients are used instead. In fact, it is easily shown that the projection may be infeasible at any experimental iteration regardless of how far one is from an FJ point. For example, suppose that the gradient bounds are sufficiently loose and one has the following

\vspace{-2mm}
\begin{equation}\label{eq:infeasbounds}
\nabla \underline \phi_p ({\bf u}_{k^*},\tau_{k+1}) = - \nabla \overline \phi_p ({\bf u}_{k^*},\tau_{k+1}).
\end{equation}

It is readily seen that any ${\bf u}$ that satisfies the constraint (\ref{eq:sumform}) for $\nabla \tilde \phi_p ({\bf u}_{k^*},\tau_{k+1}) = \nabla \underline \phi_p ({\bf u}_{k^*},\tau_{k+1})$ cannot possibly satisfy it for $\nabla \tilde \phi_p ({\bf u}_{k^*},\tau_{k+1}) = \nabla \overline \phi_p ({\bf u}_{k^*},\tau_{k+1})$ as well, as the two constraint values will have opposite signs. As both need to be satisfied in the robust implementation, the constraint set will thus be infeasible and, consequently, Projection (\ref{eq:projdeg2robslack}) will admit no solution. Setting ${\bf u}_{k+1} := {\bf u}_{k^*}$ in this case may not be a good option since the gradient bounds may simply be bad due to lack of data and would not be an indicator of proximity to an FJ point.

The trade-off that comes up in such scenarios is the desire to continue to move and to adapt the decision variables so as to optimize the system while maintaining some notion of robustness. What is proposed here is to artificially tighten the gradient uncertainty bounds until the projection becomes feasible. Note, however, that, as infeasibility in the projection may be caused both by large projection parameters and conservative gradient bounds, a systematic way to reduce both in tandem is needed. The projection algorithm provided in \cite{Bunin2013SIAM} is thus modified accordingly.

\vspace{2mm}
\noindent {\bf{Algorithm 3 -- Projection with Automatic Choice of Projection Parameters and Gradient Robustness}}
\vspace{2mm}

\begin{enumerate}
\item Set ${\boldsymbol \epsilon}_p := \overline {\boldsymbol \epsilon}_p$, ${\boldsymbol \epsilon} := \overline {\boldsymbol \epsilon}$, $\boldsymbol{\delta}_{g_p} := \boldsymbol{\overline \delta}_{g_p}$, $\boldsymbol{\delta}_{g} := \boldsymbol{\overline \delta}_{g}$, and $\delta_\phi := \overline \delta_\phi$, where $\overline \epsilon_{p,j} := \overline \delta_{g_p,j} \approx - \mathop {\min} \limits_{({\bf u},\tau) \in \mathcal{I}_\tau} g_{p,j} ({\bf u},\tau)$, $\overline \epsilon_{j} := \overline \delta_{g,j} \approx - \mathop {\min} \limits_{{\bf u} \in \mathcal{I}} g_{j} ({\bf u})$, and $\overline \delta_\phi \approx \phi_p ({\bf u}_0,\tau_0) - \mathop {\min} \limits_{({\bf u},\tau) \in \mathcal{I}_\tau} \phi_p ({\bf u},\tau)$. Set $\underline P := 0, \overline P := 1$.
\item Check the feasibility of (\ref{eq:projdeg2est}) for the given choice of ${\boldsymbol \epsilon}_{p}, {\boldsymbol \epsilon}, {\boldsymbol \delta}_{g_p}, {\boldsymbol \delta}_{g}, \delta_\phi$ by solving a linear programming feasibility problem. If no solution exists and $\delta_\phi \geq \overline \delta_\phi / 2^{10}$, set ${\boldsymbol \epsilon}_p := {\boldsymbol \epsilon}_p/2$, ${\boldsymbol \epsilon} := {\boldsymbol \epsilon}/2$, $\boldsymbol{\delta}_{g_p} := \boldsymbol{\delta}_{g_p}/2$, $\boldsymbol{\delta}_{g} := \boldsymbol{\delta}_{g}/2$, $\delta_\phi := \delta_\phi/2$, and repeat this step. Otherwise, proceed to Step 3.

\item If $\delta_\phi < \overline \delta_\phi / 2^{10}$, terminate with $\bar {\bf u}_{k+1}^* := {\bf u}_{k^*}$. Otherwise, set $P := 0.5 \underline P + 0.5 \overline P$ and define the tightened bounds as

\vspace{-2mm}
\begin{equation}\label{eq:boundtight}
\begin{array}{l}
\nabla \underline \phi_p^t ({\bf u}_{k^*},\tau_{k+1}) := \nabla \hat \phi_p ({\bf u}_{k^*},\tau_{k+1}) \vspace{1mm} \\
\hspace{5mm} + P \left[ \nabla \underline \phi_p ({\bf u}_{k^*},\tau_{k+1}) - \nabla \hat \phi_p ({\bf u}_{k^*},\tau_{k+1}) \right], \vspace{1mm} \\
\nabla \overline \phi_p^t ({\bf u}_{k^*},\tau_{k+1}) := \nabla \hat \phi_p ({\bf u}_{k^*},\tau_{k+1}) \vspace{1mm} \\
\hspace{5mm} + P \left[ \nabla \overline \phi_p ({\bf u}_{k^*},\tau_{k+1}) - \nabla \hat \phi_p ({\bf u}_{k^*},\tau_{k+1}) \right],
\end{array}
\end{equation}

$$
\begin{array}{l}
\nabla \underline g_{p,j}^t ({\bf u}_{k^*},\tau_{k+1}) := \nabla \hat g_{p,j} ({\bf u}_{k^*},\tau_{k+1}) \vspace{1mm} \\
\hspace{5mm} + P \left[ \nabla \underline g_{p,j} ({\bf u}_{k^*},\tau_{k+1}) - \nabla \hat g_{p,j} ({\bf u}_{k^*},\tau_{k+1}) \right],  \vspace{1mm} \\
\nabla \overline g_{p,j}^t ({\bf u}_{k^*},\tau_{k+1}) := \nabla \hat g_{p,j} ({\bf u}_{k^*},\tau_{k+1}) \vspace{1mm} \\
\hspace{5mm} + P \left[ \nabla \overline g_{p,j} ({\bf u}_{k^*},\tau_{k+1}) - \nabla \hat g_{p,j} ({\bf u}_{k^*},\tau_{k+1}) \right].
\end{array}
$$

\item Check the feasibility of the robust projection with the tightened bounds for the obtained ${\boldsymbol \epsilon}_{p}, {\boldsymbol \epsilon}, {\boldsymbol \delta}_{g_p}, {\boldsymbol \delta}_{g}, \delta_\phi$:

\vspace{-2mm}
\begin{equation}\label{eq:projdeg2robslackt}
\hspace{-6mm}\begin{array}{rl}
\bar {\bf u}_{k+1}^* := \;\;\;\;\;\;\;\;\; & \vspace{1mm}\\
 {\rm arg} \mathop {\rm minimize}\limits_{{\bf u},{\bf s}_\phi, {\bf S}} & \| {\bf u} - {\bf u}_{k+1}^* \|_2^2 \vspace{1mm}  \\
 {\rm{subject}}\;{\rm{to}} & \displaystyle \sum_{i=1}^{n_u} s_{ji}  \leq -\delta_{g_p,j} \vspace{1mm} \\
& \displaystyle \frac{\partial \underline g_{p,j}^t}{\partial u_i} \Big |_{({\bf u}_{k^*}, \tau_{k+1})} (u_i - u_{k^*,i}) \leq s_{ji} \vspace{1mm} \\
& \displaystyle \frac{\partial \overline g_{p,j}^t}{\partial u_i} \Big |_{({\bf u}_{k^*}, \tau_{k+1})} (u_i - u_{k^*,i}) \leq s_{ji}, \vspace{1mm} \\
& \forall i = 1,...,n_u, \vspace{1mm} \\
& \hspace{-6mm} \forall j: \begin{array}{l} \overline g_{p,j} ({\bf u}_{k^*},\tau_{k^*}) \vspace{1mm} \\ \displaystyle + \eta_{c,j} \frac{\partial \overline g_{p,j}}{\partial \tau} \Big |_{({\bf u}_{k^*},\tau_{k^*})} ( \tau_{k+1} - \tau_{{k^*}} ) \vspace{1mm}  \\ + (1 - \eta_{c,j}) \overline \kappa_{p,j\tau} (\tau_{k+1} - \tau_{k^*} ) \geq -\epsilon_{p,j} \end{array} \vspace{1mm} \\
 & \nabla g_{j} ({\bf u}_{k^*})^T ({\bf u} - {\bf u}_{k^*}) \leq -\delta_{g,j}, \vspace{1mm} \\
& \forall j : g_{j}({\bf u}_{k^*}) \geq -\epsilon_{j} \vspace{1mm} \\
 & \displaystyle  \sum_{i=1}^{n_u} s_{\phi,i}  \leq -\delta_{\phi} \vspace{1mm} \\
& \displaystyle \frac{\partial \underline \phi_{p}^t}{\partial u_i} \Big |_{({\bf u}_{k^*}, \tau_{k+1})} (u_i - u_{k^*,i}) \leq s_{\phi,i} \vspace{1mm} \\
& \displaystyle \frac{\partial \overline \phi_{p}^t}{\partial u_i} \Big |_{({\bf u}_{k^*}, \tau_{k+1})} (u_i - u_{k^*,i}) \leq s_{\phi,i} \vspace{1mm} \\
 & {\bf u}^L \preceq {\bf u} \preceq {\bf u}^U. 
\end{array}
\end{equation}

\noindent If (\ref{eq:projdeg2robslackt}) is infeasible, set $\overline P := P$. Otherwise, set $\underline P := P$. If $\overline P - \underline P < 0.01$, proceed to Step 5. Otherwise, return to Step 3. 

\item Set $P := 0.5\underline P$, define the tightened bounds as in (\ref{eq:boundtight}), and solve (\ref{eq:projdeg2robslackt}) to obtain the projected target $\bar {\bf u}_{k+1}^*$. Terminate.

\end{enumerate}

Algorithm 3 can be seen as consisting of two parts. In the first, appropriate projection parameters are chosen based on the non-robust projection carried out with the estimated gradients. Once a sufficiently low set of projection parameters is found, the second part consists in adding as much robustness as possible for this particular set by finding the largest possible gradient uncertainty set that still maintains the feasibility of the projection. Since this way of ``pushing robustness to its limits'' can have performance drawbacks (as will be shown later), we take the heuristic approach of setting $P := 0.5 \underline P$ to remove some of the robustness.

\subsection{Accounting for Gradient Uncertainty in the Feasibility Conditions}

Consider the feasibility-guaranteeing condition (\ref{eq:SCFO1idegLUccvalllocMN}). From the inequalities

\vspace{-2mm}
\begin{equation}\label{eq:gradprodbounddeg}
\begin{array}{l}
\displaystyle \frac{\partial g_{p,j}}{\partial \tau} \Big |_{({\bf u}_{\bar k},\tau_{\bar k})} ( \tau_{k+1} - \tau_{\bar k} ) \leq  \frac{\partial \overline g_{p,j}}{\partial \tau} \Big |_{({\bf u}_{\bar k},\tau_{\bar k})} (\tau_{k+1} - \tau_{\bar k} ),
\end{array}
\end{equation}

\vspace{-2mm}
\begin{equation}\label{eq:gradprodbound}
\hspace{-2mm}\begin{array}{l}
\displaystyle \frac{\partial g_{p,j}}{\partial u_i} \Big |_{({\bf u}_{\bar k},\tau_{\bar k})} ( u_{k^*,i} + K_k (\bar u_{k+1,i}^* - u_{k^*,i} ) - u_{{\bar k},i} ) \vspace{1mm} \\
 \leq \mathop {\max} \left[ \hspace{-1mm} \begin{array}{l} \displaystyle \frac{\partial \underline g_{p,j}}{\partial u_i} \Big |_{({\bf u}_{\bar k},\tau_{\bar k})} \hspace{-1mm} ( u_{k^*,i} + K_k (\bar u_{k+1,i}^* - u_{k^*,i} ) - u_{{\bar k},i} ), \vspace{1mm} \\ \displaystyle \frac{\partial \overline g_{p,j}}{\partial u_i} \Big |_{({\bf u}_{\bar k},\tau_{\bar k})} \hspace{-1mm} ( u_{k^*,i} + K_k (\bar u_{k+1,i}^* - u_{k^*,i} ) - u_{{\bar k},i} )  \end{array} \hspace{-1mm}  \right]
\end{array}
\end{equation}

\noindent we may propose a version of (\ref{eq:SCFO1idegLUccvalllocMN}) that accounts for gradient uncertainty:

\vspace{-2mm}
\begin{equation}\label{eq:SCFO1idegLUccvalllocMNgrad}
\hspace{-5mm}\begin{array}{l}
\mathop {\min} \limits_{\bar k = 0,...,k} \left[ \hspace{-1mm} \begin{array}{l} \overline g_{p,j} ({\bf u}_{\bar k},\tau_{\bar k}) \vspace{1mm} \\
 \displaystyle +\eta_{c,j}^{\bar k} \frac{\partial \overline g_{p,j}}{\partial \tau} \Big |_{({\bf u}_{\bar k},\tau_{\bar k})} ( \tau_{k+1} - \tau_{{\bar k}} ) \vspace{1mm} \\
 \displaystyle + (1-\eta_{c,j}^{\bar k}) \overline \kappa_{p,j\tau}^{\bar k} \left( \tau_{k+1} - \tau_{\bar k} \right) \vspace{1mm}\\
\displaystyle   + \sum_{i \in I_{c,j}^{\bar k}} \mathop {\max} \left[ \begin{array}{l} \displaystyle \frac{\partial \underline g_{p,j}}{\partial u_i} \Big |_{({\bf u}_{\bar k},\tau_{\bar k})} ( u_{k^*,i} + \\ \hspace{2mm} K_k (\bar u_{k+1,i}^* - u_{k^*,i} ) - u_{\bar k,i} ), \vspace{1mm}\\ \displaystyle \frac{\partial \overline g_{p,j}}{\partial u_i} \Big |_{({\bf u}_{\bar k},\tau_{\bar k})} ( u_{k^*,i} + \\ \hspace{2mm} K_k (\bar u_{k+1,i}^* - u_{k^*,i} ) - u_{\bar k,i} ) \end{array} \right] \vspace{1mm} \\
 + \displaystyle \sum_{i \not \in I_{c,j}^{\bar k}} \mathop {\max} \left[ \begin{array}{l} \underline \kappa_{p,ji}^{\bar k} ( u_{k^*,i} + \\ \hspace{2mm} K_k (\bar u_{k+1,i}^* - u_{k^*,i} ) - u_{\bar k,i} ), \vspace{1mm}\\ \overline \kappa_{p,ji}^{\bar k} ( u_{k^*,i} + \\ \hspace{2mm} K_k (\bar u_{k+1,i}^* - u_{k^*,i} ) - u_{\bar k,i} ) \end{array} \right] \end{array} \hspace{-1mm} \right] \leq 0,
\end{array}
\end{equation}

\noindent which is implementable and robust since the satisfaction of (\ref{eq:SCFO1idegLUccvalllocMNgrad}) implies the satisfaction of (\ref{eq:SCFO1idegLUccvalllocMN}).

\subsection{Accounting for Gradient Uncertainty in the Cost Decrease Conditions}

For the sufficient condition on cost decrease as given by (\ref{eq:SCFO7idegLUloc}), we consider first the inner product of the gradient with the change in decision variables in summation form:

\vspace{-2mm}
\begin{equation}\label{eq:costdecsum}
\begin{array}{l}
 \nabla \phi_p({\bf u}_{k^*},\tau_{k+1})^T \left[ \begin{array}{c} \bar {\bf u}_{k+1}^* - {\bf u}_{k^*} \\ 0 \end{array} \right] = \vspace{1mm} \\
\hspace{25mm} \displaystyle \sum_{i=1}^{n_u} \frac{\partial \phi_p}{\partial u_i} \Big |_{({\bf u}_{k^*},\tau_{k+1})} (\bar u_{k+1,i}^* - u_{k^*,i}).
\end{array}
\end{equation}

Using bounds analogous to that of (\ref{eq:gradprodbound}), we may thus upper bound this term as

\vspace{-2mm}
\begin{equation}\label{eq:costdecsum2}
\begin{array}{l}
 \nabla \phi_p({\bf u}_{k^*},\tau_{k+1})^T \left[ \begin{array}{c} \bar {\bf u}_{k+1}^* - {\bf u}_{k^*} \\ 0 \end{array} \right] \leq \vspace{1mm} \\
\hspace{10mm} \displaystyle \sum_{i=1}^{n_u} \mathop {\max} \left[ \begin{array}{l}\displaystyle \frac{\partial \underline \phi_p}{\partial u_i} \Big |_{({\bf u}_{k^*},\tau_{k+1})} (\bar u_{k+1,i}^* - u_{k^*,i}), \vspace{1mm} \\ \displaystyle \frac{\partial \overline \phi_p}{\partial u_i} \Big |_{({\bf u}_{k^*},\tau_{k+1})} (\bar u_{k+1,i}^* - u_{k^*,i}) \end{array} \right],
\end{array}
\end{equation}

\noindent which allows for the robust version of (\ref{eq:SCFO7idegLUloc}):

\vspace{-2mm}
\begin{equation}\label{eq:SCFO7idegLUlocgrad}
\begin{array}{l}
\displaystyle  \sum_{i=1}^{n_u} \mathop {\max} \left[ \begin{array}{l}\displaystyle \frac{\partial \underline \phi_p}{\partial u_i} \Big |_{({\bf u}_{k^*},\tau_{k+1})} (\bar u_{k+1,i}^* - u_{k^*,i}), \vspace{1mm} \\ \displaystyle \frac{\partial \overline \phi_p}{\partial u_i} \Big |_{({\bf u}_{k^*},\tau_{k+1})} (\bar u_{k+1,i}^* - u_{k^*,i}) \end{array} \right] \vspace{2mm} \\
\displaystyle + \frac{K_k}{2} \sum_{i_1=1}^{n_u} \sum_{i_2=1}^{n_u} \mathop {\max} \left[ \begin{array}{l} \underline M_{\phi,i_1 i_2}^{k^*} (\bar u_{k+1,i_1}^* - u_{{k^*},i_1}) \vspace{1mm} \\
\hspace{11mm} (\bar u_{k+1,i_2}^* - u_{{k^*},i_2}), \vspace{1mm} \\ \overline M_{\phi,i_1 i_2}^{k^*} (\bar u_{k+1,i_1}^* - u_{{k^*},i_1}) \\
\hspace{11mm}(\bar u_{k+1,i_2}^* - u_{{k^*},i_2}) \end{array} \right]  \leq 0.
\end{array}
\end{equation}

Satisfying (\ref{eq:SCFO7idegLUlocgrad}) implies satisfying (\ref{eq:SCFO7idegLUloc}). However, it may occur that very conservative gradient bounds -- as those, for example, in (\ref{eq:infeasbounds}) -- would lead to the terms in (\ref{eq:SCFO7idegLUlocgrad}) being positive for any $K_k \in [0,1]$, thereby making the satisfaction of this condition impossible. To avoid this, we propose to use the tightened gradient bounds instead:

\vspace{-2mm}
\begin{equation}\label{eq:SCFO7idegLUlocgradt}
\begin{array}{l}
\displaystyle  \sum_{i=1}^{n_u} \mathop {\max} \left[ \begin{array}{l}\displaystyle \frac{\partial \underline \phi_p^t}{\partial u_i} \Big |_{({\bf u}_{k^*},\tau_{k+1})} (\bar u_{k+1,i}^* - u_{k^*,i}), \vspace{1mm} \\ \displaystyle \frac{\partial \overline \phi_p^t}{\partial u_i} \Big |_{({\bf u}_{k^*},\tau_{k+1})} (\bar u_{k+1,i}^* - u_{k^*,i}) \end{array} \right] \vspace{2mm} \\
\displaystyle + \frac{K_k}{2} \sum_{i_1=1}^{n_u} \sum_{i_2=1}^{n_u} \mathop {\max} \left[ \begin{array}{l} \underline M_{\phi,i_1 i_2}^{k^*} (\bar u_{k+1,i_1}^* - u_{{k^*},i_1}) \vspace{1mm} \\
\hspace{11mm} (\bar u_{k+1,i_2}^* - u_{{k^*},i_2}), \vspace{1mm} \\ \overline M_{\phi,i_1 i_2}^{k^*} (\bar u_{k+1,i_1}^* - u_{{k^*},i_1}) \\
\hspace{11mm}(\bar u_{k+1,i_2}^* - u_{{k^*},i_2}) \end{array} \right]  \leq 0.
\end{array}
\end{equation}

\noindent As using more conservative gradient bounds will almost always lead to smaller acceptable $K_k$, it is also for this reason that we employ the heuristic $P := 0.5 \underline P$ in Algorithm 3, as this allows for larger steps while compromising only part of the robustness.

For the necessary conditions for cost decrease, we are forced to modify (\ref{eq:costhighmaxPFlocMN}) to account for gradient uncertainty. Here, one must make modifications with respect to both the lower and upper bounds on the cost at the future and reference (respectively) experimental iterates, as we are interested in excluding those points for which the lower bound at the former is superior to the upper bound at the latter. Let us consider the following lower bound analogues to (\ref{eq:gradprodbounddeg}) and (\ref{eq:gradprodbound}):

\vspace{-2mm}
\begin{equation}\label{eq:gradprodbounddegcost}
\displaystyle \frac{\partial \phi_{p}}{\partial \tau} \Big |_{({\bf u}_{\bar k},\tau_{\bar k})} ( \tau_{k+1} - \tau_{\bar k} ) \geq  \frac{\partial \underline \phi_{p}}{\partial \tau} \Big |_{({\bf u}_{\bar k},\tau_{\bar k})} (\tau_{k+1} - \tau_{\bar k} ),
\end{equation}

\vspace{-2mm}
\begin{equation}\label{eq:gradprodboundcost}
\hspace{-2mm}\begin{array}{l}
\displaystyle \frac{\partial \phi_{p}}{\partial u_i} \Big |_{({\bf u}_{\bar k},\tau_{\bar k})} ( u_{k^*,i} + K_k (\bar u_{k+1,i}^* - u_{k^*,i} ) - u_{{\bar k},i} ) \vspace{2mm} \\
 \geq \mathop {\min} \left[ \hspace{-1mm} \begin{array}{l} \displaystyle \frac{\partial \underline \phi_{p}}{\partial u_i} \Big |_{({\bf u}_{\bar k},\tau_{\bar k})} \hspace{-1mm} ( u_{k^*,i} + K_k (\bar u_{k+1,i}^* - u_{k^*,i} ) - u_{{\bar k},i} ), \vspace{1mm} \\ \displaystyle \frac{\partial \overline \phi_{p}}{\partial u_i} \Big |_{({\bf u}_{\bar k},\tau_{\bar k})} \hspace{-1mm} ( u_{k^*,i} + K_k (\bar u_{k+1,i}^* - u_{k^*,i} ) - u_{{\bar k},i} )  \end{array} \hspace{-1mm}  \right],
\end{array}
\end{equation}

\noindent which lead to the robust version of (\ref{eq:costhighmaxPFlocMN}):

\vspace{-2mm}
\begin{equation}\label{eq:costhighmaxPFlocMNgrad}
\hspace{-2mm}\begin{array}{l}
\mathop {\max} \limits_{\bar k = 0,...,k}\left[ \begin{array}{l}
\displaystyle \underline \phi_{p} ({\bf u}_{\bar k},\tau_{\bar k})  +\eta_{v,\phi}^{\bar k} \frac{\partial \underline \phi_{p}}{\partial \tau} \Big |_{({\bf u}_{\bar k},\tau_{\bar k})} ( \tau_{k+1} - \tau_{{\bar k}} ) \vspace{1mm} \\
\displaystyle + (1-\eta_{v,\phi}^{\bar k}) \underline \kappa_{\phi,\tau}^{\bar k} \left( \tau_{k+1} - \tau_{\bar k} \right) \vspace{1mm}\\
\displaystyle   + \sum_{i \in I_{v,\phi}^{\bar k}} \mathop {\min} \left[ \begin{array}{l} \displaystyle \frac{\partial \underline \phi_{p}}{\partial u_i} \Big |_{({\bf u}_{\bar k},\tau_{\bar k})} ( u_{k^*,i} +\\ \hspace{2mm} K_k(\bar u_{k+1,i}^* - u_{k^*,i}) - u_{{\bar k},i} ), \vspace{1mm}\\ \displaystyle \frac{\partial \overline \phi_{p}}{\partial u_i} \Big |_{({\bf u}_{\bar k},\tau_{\bar k})} ( u_{k^*,i} +\\ \hspace{2mm} K_k(\bar u_{k+1,i}^* - u_{k^*,i}) - u_{{\bar k},i} ) \end{array} \right] \vspace{1mm} \\
+ \displaystyle \sum_{i \not \in I_{v,\phi}^{\bar k}} \mathop {\min} \left[ \begin{array}{l} \underline \kappa_{\phi,i}^{\bar k} ( u_{k^*,i} +\\ \hspace{3mm} K_k(\bar u_{k+1,i}^* - u_{k^*,i}) - u_{{\bar k},i} ), \vspace{1mm}\\ \overline \kappa_{\phi,i}^{\bar k} ( u_{k^*,i} +\\ \hspace{3mm} K_k(\bar u_{k+1,i}^* - u_{k^*,i}) - u_{{\bar k},i} ) \end{array} \right] 
\end{array} \right] \vspace{1mm}\\
\displaystyle \leq \mathop {\min}_{\tilde k = 0,...,k} \left[ \hspace{-1mm} \begin{array}{l} \displaystyle \overline \phi_{p} ({\bf u}_{\tilde k},\tau_{\tilde k})  +\eta_{c,\phi}^{\tilde k} \frac{\partial \overline \phi_{p}}{\partial \tau} \Big |_{({\bf u}_{\tilde k},\tau_{\tilde k})} ( \tau_{k+1} - \tau_{{\tilde k}} ) \vspace{1mm} \\
\hspace{0mm} \displaystyle + (1-\eta_{c,\phi}^{\tilde k}) \overline \kappa_{\phi,\tau}^{\tilde k} \left( \tau_{k+1} - \tau_{\tilde k} \right) \vspace{1mm}\\
\hspace{0mm}\displaystyle   + \sum_{i \in I_{c,\phi}^{\tilde k}} \mathop {\max} \left[ \begin{array}{l} \displaystyle \frac{\partial \underline \phi_{p}}{\partial u_i} \Big |_{({\bf u}_{\tilde k},\tau_{\tilde k})} ( u_{k^*,i} - u_{{\tilde k},i} ), \\ \displaystyle  \frac{\partial \overline \phi_{p}}{\partial u_i} \Big |_{({\bf u}_{\tilde k},\tau_{\tilde k})} ( u_{k^*,i} - u_{{\tilde k},i} )   \end{array} \right] \vspace{1mm} \\
\hspace{0mm} + \displaystyle \sum_{i \not \in I_{c,\phi}^{\tilde k}} \mathop {\max} \left[ \begin{array}{l} \underline \kappa_{\phi,i}^{\tilde k} ( u_{k^*,i} - u_{{\tilde k},i} ), \vspace{1mm}\\ \overline \kappa_{\phi,i}^{\tilde k} ( u_{k^*,i} - u_{{\tilde k},i} ) \end{array} \right] \end{array} \right].
\end{array}
\end{equation}

Clearly, any point that would be excluded for failing to satisfy (\ref{eq:costhighmaxPFlocMNgrad}) would also be excluded for failing to satisfy (\ref{eq:costhighmaxPFlocMN}).

\subsection{Choosing a Reference Point While Accounting for Gradient Uncertainty}

In choosing a reference point, one must further modify the subproblems of (\ref{eq:kstarLUccvcostlocMN}) and (\ref{eq:kstar2LUccvlocMN}). Using bounds analogous to (\ref{eq:gradprodbounddeg}) and (\ref{eq:gradprodbounddegcost}), one obtains

\vspace{-2mm}
\begin{equation}\label{eq:kstarLUccvcostlocMNgrad}
\begin{array}{rl}
k^* := \;\;\;\;\;\;\;\;\;\;\;\;\;\;& \vspace{1mm} \\
{\rm arg} \mathop {\rm maximize}\limits_{\bar k \in [0,k]} & \bar k \vspace{1mm}  \\
{\rm{subject}}\;{\rm{to}} & \overline g_{p,j} ({\bf u}_{\bar k},\tau_{\bar k}) \vspace{1mm} \\
& \displaystyle + \eta_{c,j} \frac{\partial \overline g_{p,j}}{\partial \tau} \Big |_{({\bf u}_{\bar k},\tau_{\bar k})} ( \tau_{k+1} - \tau_{{\bar k}} ) \vspace{1mm} \\
&  + (1- \eta_{c,j}) \overline \kappa_{p,j\tau} \left( \tau_{k+1} - \tau_{\bar k} \right) \leq 0, \vspace{1mm} \\
& \forall j = 1,...,n_{g_p} \vspace{1mm} \\
& \displaystyle \underline \phi_p ({\bf u}_{\bar k},\tau_{\bar k}) +  \eta_{v,\phi}^{\bar k, k} \frac{\partial \underline \phi_{p}}{\partial \tau} \Big |_{({\bf u}_{\bar k},\tau_{\bar k})} ( \tau_{k} - \tau_{{\bar k}} ) \vspace{1mm} \\
&+ (1-  \eta_{v,\phi}^{\bar k, k}) \underline \kappa_{\phi,\tau}^{\bar k, k} \left( \tau_{k} - \tau_{\bar k} \right) \leq \vspace{1mm} \\
& \mathop {\min} \limits_{\tilde k \in {\bf k}_f} \left[ \begin{array}{l} \overline \phi_p ({\bf u}_{\tilde k},\tau_{\tilde k}) + \vspace{1mm} \\
\displaystyle \eta_{c,\phi}^{\tilde k, k} \frac{\partial \overline \phi_{p}}{\partial \tau} \Big |_{({\bf u}_{\tilde k},\tau_{\tilde k})} ( \tau_{k} - \tau_{{\tilde k}} ) \vspace{1mm} \\
+ (1- \eta_{c,\phi}^{\tilde k, k}) \overline \kappa_{\phi,\tau}^{\tilde k, k} \left( \tau_{k} - \tau_{\tilde k} \right) \end{array} \right],
\end{array}
\end{equation}

\vspace{-2mm}
\begin{equation}\label{eq:kfeas4}
{\bf k}_f = \left\{ \bar k : \begin{array}{l} \overline g_{p,j} ({\bf u}_{\bar k},\tau_{\bar k}) \vspace{1mm} \\  + \displaystyle \eta_{c,j} \frac{\partial \overline g_{p,j}}{\partial \tau} \Big |_{({\bf u}_{\bar k},\tau_{\bar k})} ( \tau_{k+1} - \tau_{{\bar k}} ) \vspace{1mm} \\ + (1-  \eta_{c,j}) \overline \kappa_{p,j\tau} \left( \tau_{k+1} - \tau_{\bar k} \right)  \leq 0, \vspace{1mm} \\ \forall j = 1,...,n_{g_p} \end{array} \right\},
\end{equation}

\vspace{-2mm}
\begin{equation}\label{eq:kstar2LUccvlocMNgrad}
\begin{array}{l}
k^*  := \\
\displaystyle {\rm arg} \mathop {\rm minimize}\limits_{\bar k \in [0,k]}  \mathop {\max} \limits_{j = 1,...,n_{g_p}}  \left[  \begin{array}{l} \overline g_{p,j} ({\bf u}_{\bar k},\tau_{\bar k}) \vspace{1mm} \\
\displaystyle +  \eta_{c,j} \frac{\partial \overline g_{p,j}}{\partial \tau} \Big |_{({\bf u}_{\bar k},\tau_{\bar k})} \vspace{1mm} \\
\hspace{15mm}( \tau_{k+1} - \tau_{{\bar k}} ) \vspace{1mm} \\
+ (1-  \eta_{c,j}) \overline \kappa_{p,j\tau} \vspace{1mm} \\
\hspace{15mm}\left( \tau_{k+1} - \tau_{\bar k} \right) \end{array}  \right].
\end{array}
\end{equation}

Here, (\ref{eq:kstarLUccvcostlocMNgrad}) selects the most recent point that is guaranteed to be robustly feasible and that cannot be guaranteed to have a cost function value superior to that at any other feasible point. (\ref{eq:kstar2LUccvlocMNgrad}) is a possible alternative for when no robustly feasible point is available, and simply selects the point with the lowest upper constraint value.

\subsection{Accounting for Gradient Uncertainty in the Computation of Lower and Upper Bounds on the Experimental Function Values}

As certain methods of calculating the lower and upper bounds on the experimental function values (discussed in Section \ref{sec:bounds}) use the function derivatives, these too must be modified to account for gradient uncertainty. Consider first the robust version of (\ref{eq:costbound2}):

\vspace{-2mm}
\begin{equation}\label{eq:costbound2grad}
\begin{array}{l}
\displaystyle \frac{1}{N} \sum_{\tilde k \in \bar {\bf k}} \hat \phi_p ({\bf u}_{\tilde k},\tau_{\tilde k}) \vspace{1mm} \\
\displaystyle - \frac{1}{N} \sum_{\tilde k \in \bar {\bf k}}  \eta_{c,\phi}^{\bar k, \tilde k} \mathop {\max} \left[ \begin{array}{l} \displaystyle \frac{\partial \underline \phi_{p}}{\partial \tau} \Big |_{({\bf u}_{\bar k},\tau_{\bar k})} ( \tau_{\tilde k} - \tau_{{\bar k}} ) , \vspace{1mm} \\ \displaystyle \frac{\partial \overline \phi_{p}}{\partial \tau} \Big |_{({\bf u}_{\bar k},\tau_{\bar k})} ( \tau_{\tilde k} - \tau_{{\bar k}} ) \end{array} \right] \vspace{1mm}  \\
\displaystyle - \frac{1}{N} \sum_{\tilde k \in \bar {\bf k}} (1-  \eta_{c,\phi}^{\bar k, \tilde k}) \mathop {\max} \left[ \begin{array}{l} \underline \kappa_{\phi,\tau}^{\bar k, \tilde k} \left( \tau_{\tilde k} - \tau_{\bar k} \right), \vspace{1mm} \\ \overline \kappa_{\phi,\tau}^{\bar k, \tilde k} \left( \tau_{\tilde k} - \tau_{\bar k} \right) \end{array} \right]  - \overline W_{\phi, \bar k} \vspace{1mm} \\
\hspace{0mm} \leq \phi_p ({\bf u}_{\bar k},\tau_{\bar k}) \leq \vspace{1mm} \\
\displaystyle \frac{1}{N} \sum_{\tilde k \in \bar {\bf k}} \hat \phi_p ({\bf u}_{\tilde k},\tau_{\tilde k}) \vspace{1mm} \\
\displaystyle -  \frac{1}{N} \sum_{\tilde k \in \bar {\bf k}}  \eta_{v,\phi}^{\bar k, \tilde k} \mathop {\min} \left[ \begin{array}{l} \displaystyle \frac{\partial \underline \phi_{p}}{\partial \tau} \Big |_{({\bf u}_{\bar k},\tau_{\bar k})} ( \tau_{\tilde k} - \tau_{{\bar k}} ) , \vspace{1mm} \\ \displaystyle \frac{\partial \overline \phi_{p}}{\partial \tau} \Big |_{({\bf u}_{\bar k},\tau_{\bar k})} ( \tau_{\tilde k} - \tau_{{\bar k}} ) \end{array} \right] \vspace{1mm} \\
- \displaystyle \frac{1}{N} \sum_{\tilde k \in \bar {\bf k}} (1-  \eta_{v,\phi}^{\bar k, \tilde k}) \mathop {\min} \left[ \begin{array}{l} \underline \kappa_{\phi,\tau}^{\bar k, \tilde k} \left( \tau_{\tilde k} - \tau_{\bar k} \right), \vspace{1mm} \\ \overline \kappa_{\phi,\tau}^{\bar k, \tilde k} \left( \tau_{\tilde k} - \tau_{\bar k} \right) \end{array} \right]  - \underline W_{\phi, \bar k},
\end{array}
\end{equation}

\noindent the robustness of which is easily seen by the fact that the resulting bounds must be more conservative than the bounds of the version with the exact derivatives (\ref{eq:costbound2}).

Consider as well the bounds (\ref{eq:lipboundcostU2}) and (\ref{eq:lipboundcostL2}), the robust versions of which are given as

\vspace{-2mm}
\begin{equation}\label{eq:lipboundcostU2grad}
\begin{array}{l}
\phi_{p} ({\bf u}_{\bar k},\tau_{\bar k}) < \overline \phi_{p} ({\bf u}_{\tilde k},\tau_{\tilde k}) \vspace{1mm} \\
\displaystyle \hspace{15mm} + \eta_{c,\phi}^{\bar k, \tilde k} \mathop {\max} \left[ \begin{array}{l} \displaystyle \frac{\partial \underline \phi_{p}}{\partial \tau} \Big |_{({\bf u}_{\tilde k},\tau_{\tilde k})} ( \tau_{\bar k} - \tau_{\tilde k} ) , \vspace{1mm} \\ \displaystyle \frac{\partial \overline \phi_{p}}{\partial \tau} \Big |_{({\bf u}_{\tilde k},\tau_{\tilde k})} ( \tau_{\bar k} - \tau_{\tilde k} ) \end{array} \right] \vspace{1mm} \\
\displaystyle  \hspace{15mm} + (1-\eta_{c,\phi}^{\bar k, \tilde k})  \mathop {\max} \left[ \begin{array}{l} \underline \kappa_{\phi,\tau}^{\bar k, \tilde k} ( \tau_{\bar k} - \tau_{\tilde k} ), \vspace{1mm} \\ \overline \kappa_{\phi,\tau}^{\bar k, \tilde k} ( \tau_{\bar k} - \tau_{\tilde k} ) \end{array} \right] \vspace{1mm}  \\
 \displaystyle \hspace{15mm} + \sum_{i \in I_{c,\phi}^{\bar k, \tilde k}} \mathop {\max} \left[ \begin{array}{l} \displaystyle \frac{\partial \underline \phi_p}{\partial u_i} \Big |_{({\bf u}_{\tilde k},\tau_{\tilde k})} ( u_{\bar k,i} - u_{\tilde k,i} ), \vspace{1mm} \\ \displaystyle \frac{\partial \overline \phi_p}{\partial u_i} \Big |_{({\bf u}_{\tilde k},\tau_{\tilde k})} ( u_{\bar k,i} - u_{\tilde k,i} ) \end{array} \right] \vspace{1mm} \\
\displaystyle \hspace{15mm} + \sum_{i \not \in I_{c,\phi}^{\bar k, \tilde k}} \mathop {\max} \left[ \begin{array}{l} \underline \kappa_{\phi,i}^{\bar k, \tilde k} ( u_{\bar k,i} - u_{\tilde k,i} ), \vspace{1mm} \\ \overline \kappa_{\phi,i}^{\bar k, \tilde k} ( u_{\bar k,i} - u_{\tilde k,i} ) \end{array} \right],
\end{array}
\end{equation}

\vspace{-2mm}
\begin{equation}\label{eq:lipboundcostL2grad}
\begin{array}{l}
\phi_{p} ({\bf u}_{\bar k},\tau_{\bar k}) > \underline \phi_{p} ({\bf u}_{\tilde k},\tau_{\tilde k}) \vspace{1mm} \\
\displaystyle \hspace{15mm} + \eta_{v,\phi}^{\bar k, \tilde k} \mathop {\min} \left[ \begin{array}{l} \displaystyle \frac{\partial \underline \phi_{p}}{\partial \tau} \Big |_{({\bf u}_{\tilde k},\tau_{\tilde k})} ( \tau_{\bar k} - \tau_{\tilde k} ) , \vspace{1mm} \\ \displaystyle \frac{\partial \overline \phi_{p}}{\partial \tau} \Big |_{({\bf u}_{\tilde k},\tau_{\tilde k})} ( \tau_{\bar k} - \tau_{\tilde k} ) \end{array} \right] \vspace{1mm} \\
\displaystyle  \hspace{15mm} + (1-\eta_{v,\phi}^{\bar k, \tilde k})  \mathop {\min} \left[ \begin{array}{l} \underline \kappa_{\phi,\tau}^{\bar k, \tilde k} ( \tau_{\bar k} - \tau_{\tilde k} ), \vspace{1mm} \\ \overline \kappa_{\phi,\tau}^{\bar k, \tilde k} ( \tau_{\bar k} - \tau_{\tilde k} ) \end{array} \right] \vspace{1mm}  \\
 \displaystyle \hspace{15mm} + \sum_{i \in I_{v,\phi}^{\bar k, \tilde k}} \mathop {\min} \left[ \begin{array}{l} \displaystyle \frac{\partial \underline \phi_p}{\partial u_i} \Big |_{({\bf u}_{\tilde k},\tau_{\tilde k})} ( u_{\bar k,i} - u_{\tilde k,i} ), \vspace{1mm} \\ \displaystyle \frac{\partial \overline \phi_p}{\partial u_i} \Big |_{({\bf u}_{\tilde k},\tau_{\tilde k})} ( u_{\bar k,i} - u_{\tilde k,i} ) \end{array} \right] \vspace{1mm} \\
\displaystyle \hspace{15mm} + \sum_{i \not \in I_{v,\phi}^{\bar k, \tilde k}} \mathop {\min} \left[ \begin{array}{l} \underline \kappa_{\phi,i}^{\bar k, \tilde k} ( u_{\bar k,i} - u_{\tilde k,i} ), \vspace{1mm} \\ \overline \kappa_{\phi,i}^{\bar k, \tilde k} ( u_{\bar k,i} - u_{\tilde k,i} ) \end{array} \right].
\end{array}
\end{equation}

\noindent These follow from (\ref{eq:lipboundcostU2}) and (\ref{eq:lipboundcostL2}) by the same reasoning as before.

\subsection{Gradient Uncertainty in the Lipschitz Consistency Check}

Of the inequalities considered in the robust version of Algorithm 1, only (\ref{eq:lipcheck2UMN}) and (\ref{eq:lipcheck2LMN}) use derivatives and thus require modification. Using the same concepts as before, we modify them as follows:

\vspace{-2mm}
\begin{equation}\label{eq:lipcheck2UMNgrad}
\begin{array}{l}
\underline \phi_{p} ({\bf u}_{k_2},\tau_{k_2}) \leq \\
\overline \phi_{p} ({\bf u}_{k_1},\tau_{k_1}) + \mathop {\max} \left[ \begin{array}{l} \underline \kappa_{\phi,\tau} ( \tau_{k_2} - \tau_{k_1} ) ,  \vspace{1mm}\\ \overline \kappa_{\phi,\tau} ( \tau_{k_2} - \tau_{k_1} ) \end{array} \right]  \vspace{1mm} \\
\displaystyle + \sum_{i=1}^{n_u} \mathop {\max} \left[ \begin{array}{l} \displaystyle \frac{\partial \underline \phi_{p}}{\partial u_i} \Big |_{({\bf u}_{k_1},\tau_{k_1})} ( u_{k_2,i} - u_{k_1,i} ) , \\ \displaystyle \frac{\partial \overline \phi_{p}}{\partial u_i} \Big |_{({\bf u}_{k_1},\tau_{k_1})} ( u_{k_2,i} - u_{k_1,i} ) \end{array} \right] \vspace{1mm} \\
+\displaystyle \frac{1}{2} \sum_{i_1=1}^{n_u} \sum_{i_2=1}^{n_u} \mathop {\max} \left[ \begin{array}{l} \underline M_{\phi,i_1 i_2} (u_{k_2,i_1} - u_{k_1,i_1}) \\
\hspace{20mm} (u_{k_2,i_2} - u_{{k_1},i_2}), \\ \overline M_{\phi,i_1 i_2} (u_{k_2,i_1} - u_{{k_1},i_1}) \\
\hspace{20mm}(u_{k_2,i_2} - u_{{k_1},i_2}) \end{array} \right],
\end{array}
\end{equation}

\vspace{-2mm}
\begin{equation}\label{eq:lipcheck2LMNgrad}
\begin{array}{l}
\overline \phi_{p} ({\bf u}_{k_2},\tau_{k_2}) \geq \\
\underline \phi_{p} ({\bf u}_{k_1},\tau_{k_1}) + \mathop {\min} \left[ \begin{array}{l} \underline \kappa_{\phi,\tau} ( \tau_{k_2} - \tau_{k_1} ) , \vspace{1mm} \\ \overline \kappa_{\phi,\tau} ( \tau_{k_2} - \tau_{k_1} ) \end{array} \right]  \vspace{1mm} \\
\displaystyle + \sum_{i=1}^{n_u} \mathop {\min} \left[ \begin{array}{l} \displaystyle \frac{\partial \underline \phi_{p}}{\partial u_i} \Big |_{({\bf u}_{k_1},\tau_{k_1})} ( u_{k_2,i} - u_{k_1,i} ) , \\ \displaystyle \frac{\partial \overline \phi_{p}}{\partial u_i} \Big |_{({\bf u}_{k_1},\tau_{k_1})} ( u_{k_2,i} - u_{k_1,i} ) \end{array} \right]
\end{array}
\end{equation}

$$
\begin{array}{l}
+\displaystyle \frac{1}{2} \sum_{i_1=1}^{n_u} \sum_{i_2=1}^{n_u} \mathop {\min} \left[ \begin{array}{l} \underline M_{\phi,i_1 i_2} (u_{k_2,i_1} - u_{k_1,i_1}) \\
\hspace{20mm} (u_{k_2,i_2} - u_{{k_1},i_2}), \\ \overline M_{\phi,i_1 i_2} (u_{k_2,i_1} - u_{{k_1},i_1}) \\
\hspace{20mm}(u_{k_2,i_2} - u_{{k_1},i_2}) \end{array} \right].
\end{array}
$$

\noindent It follows that any Lipschitz constants that are inconsistent for (\ref{eq:lipcheck2UMNgrad}) and (\ref{eq:lipcheck2LMNgrad}) will be inconsistent for (\ref{eq:lipcheck2UMN}) and (\ref{eq:lipcheck2LMN}) as well.

\begin{figure*}
\begin{center}
\includegraphics[width=16cm]{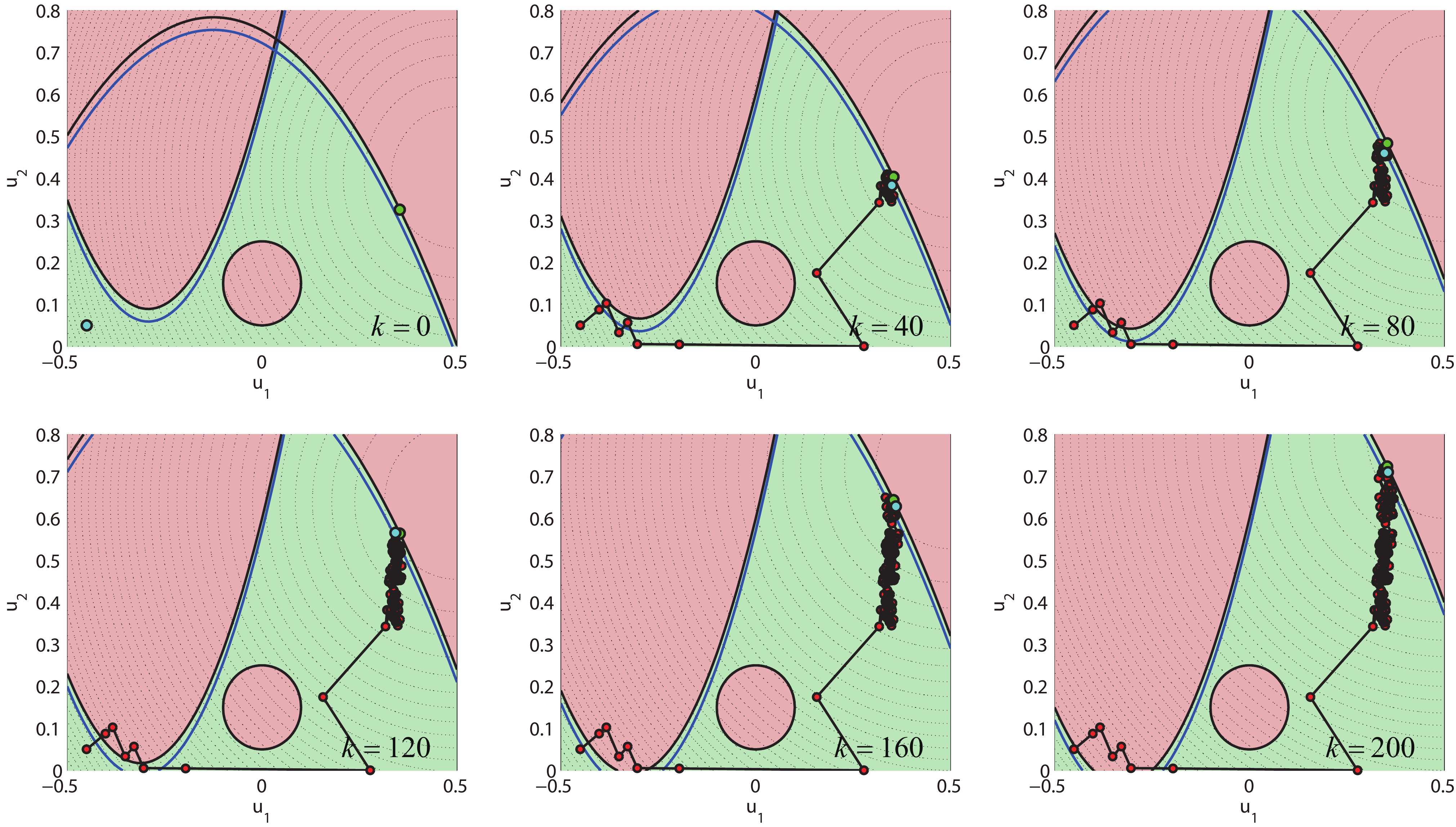}
\caption{Chain of experiments generated by applying the modified SCFO methodology to Problem (\ref{eq:exdeg}) for the ($-$) scenario with estimation error in the experimental function gradients accounted for. $\alpha_\sigma = 0.05$ is used here.}
\label{fig:degH}
\end{center}
\end{figure*}

\begin{figure*}
\begin{center}
\includegraphics[width=16cm]{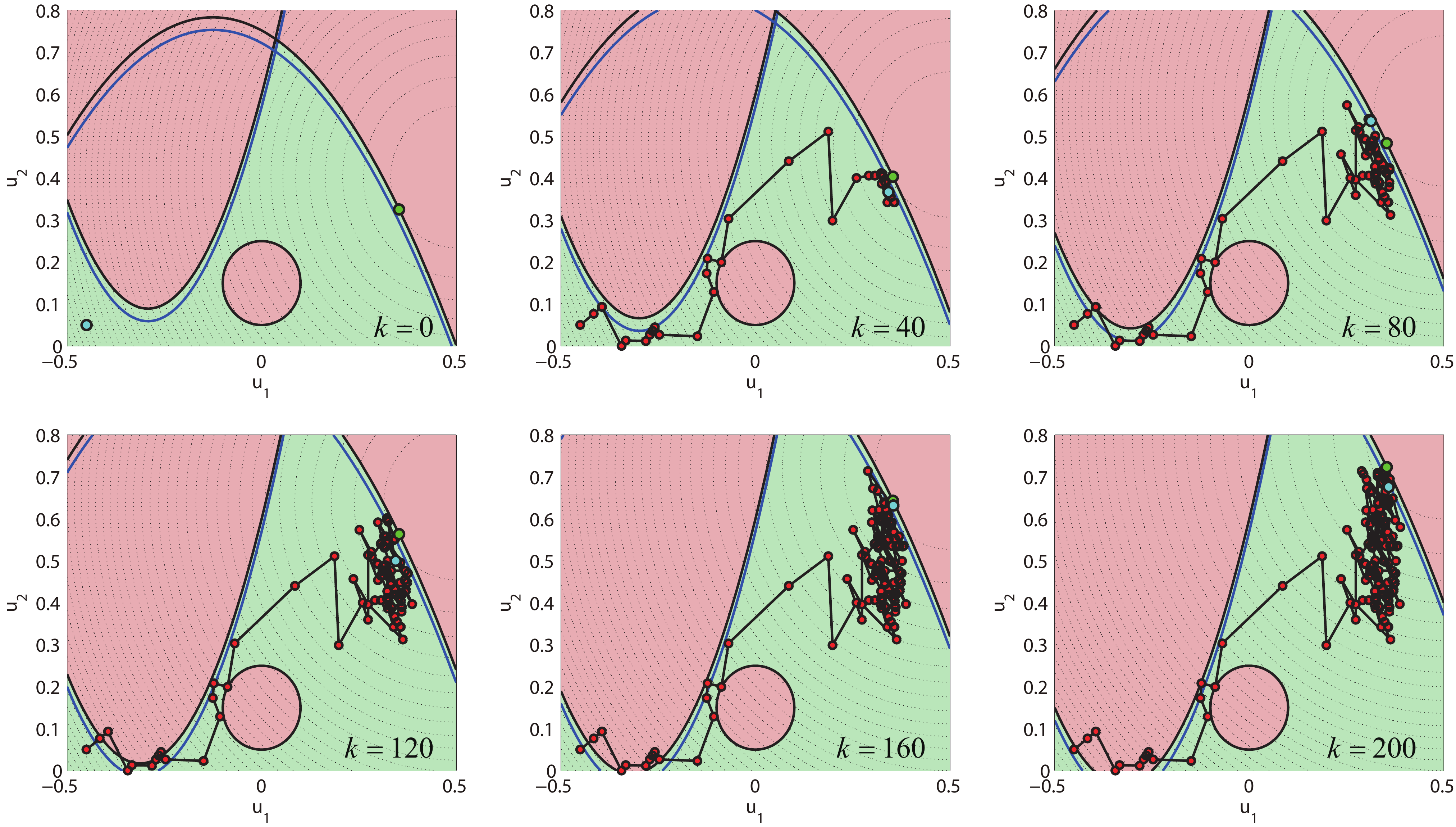}
\caption{Chain of experiments generated by applying the modified SCFO methodology to Problem (\ref{eq:exdeg}) for the ($-$) scenario with estimation error in the experimental function gradients accounted for. $\alpha_\sigma = 0.15$ is used here.}
\label{fig:degHa}
\end{center}
\end{figure*}

\begin{figure*}
\begin{center}
\includegraphics[width=16cm]{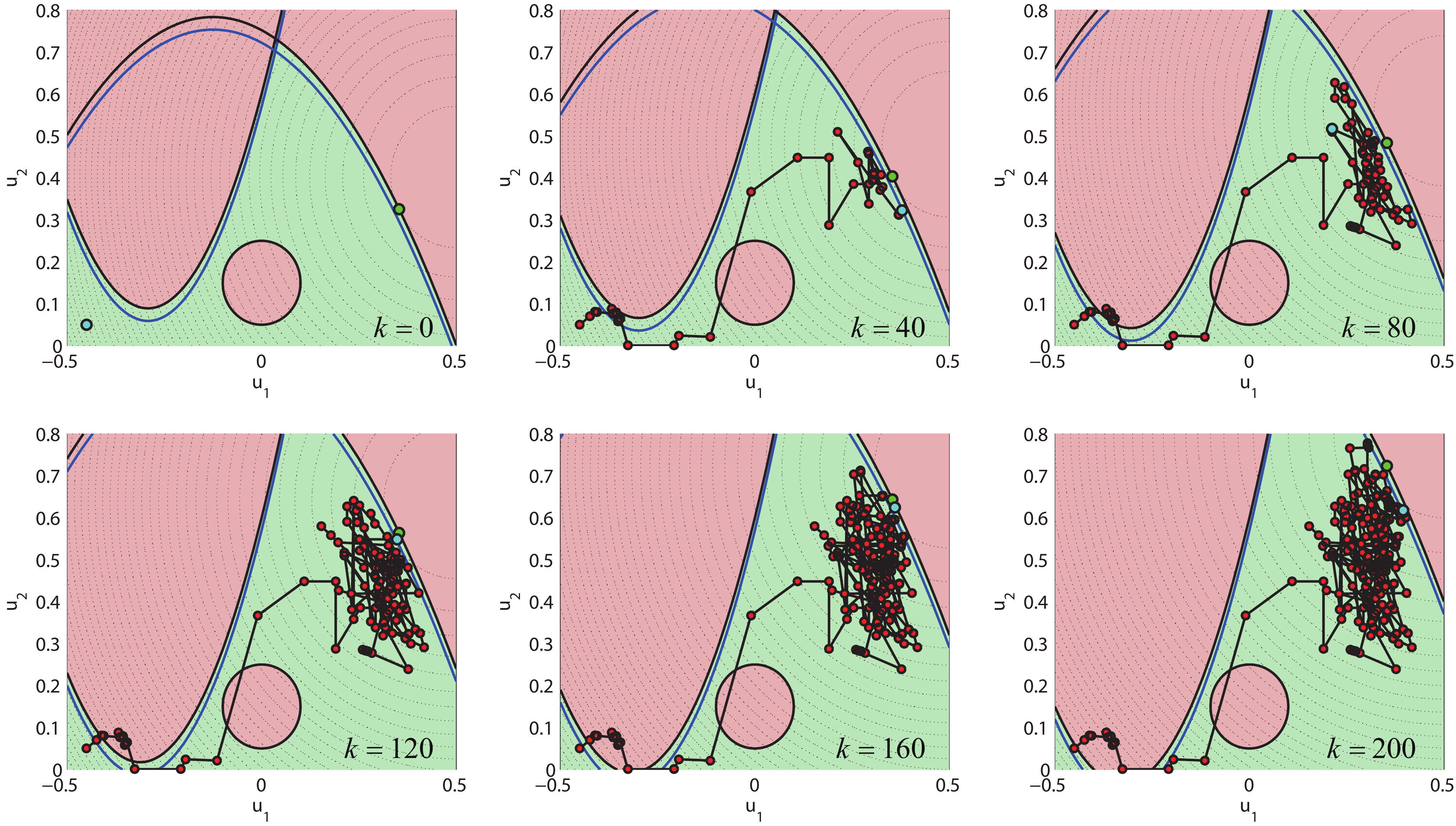}
\caption{Chain of experiments generated by applying the modified SCFO methodology to Problem (\ref{eq:exdeg}) for the ($-$) scenario with estimation error in the experimental function gradients accounted for. $\alpha_\sigma = 0.25$ is used here.}
\label{fig:degHb}
\end{center}
\end{figure*}

\subsection{Example}
\label{sec:gradestex}

Considering the example of Section \ref{sec:sec4ex}, we suppose that the gradient estimation algorithm is able to obtain derivative estimates that are the true derivatives corrupted by additive uniform noise, i.e.:

\vspace{-2mm}
\begin{equation}\label{eq:gradestcor}
\begin{array}{l}
\displaystyle \frac{\partial \hat \phi_p}{\partial u_i} \Big |_{({\bf u}_{\bar k}, \tau_{\bar k})} := \vspace{1mm} \\
\hspace{10mm} \displaystyle \frac{\partial \phi_p}{\partial u_i} \Big |_{({\bf u}_{\bar k}, \tau_{\bar k})} + \alpha_{\sigma} (\overline \kappa_{\phi,i} - \underline \kappa_{\phi,i} ) \mathcal{U} [-1,1], \vspace{1mm}\\
\displaystyle \frac{\partial \hat \phi_p}{\partial \tau} \Big |_{({\bf u}_{\bar k}, \tau_{\bar k})} := \vspace{1mm} \\
\hspace{10mm} \displaystyle \frac{\partial \phi_p}{\partial \tau} \Big |_{({\bf u}_{\bar k}, \tau_{\bar k})} + \alpha_{\sigma} (\overline \kappa_{\phi,\tau} - \underline \kappa_{\phi,\tau} ) \mathcal{U} [-1,1], \vspace{1mm} \\
\displaystyle \frac{\partial \hat \phi_p}{\partial u_i} \Big |_{({\bf u}_{\bar k}, \tau_{ k+1})} := \vspace{1mm} \\
\hspace{10mm} \displaystyle \frac{\partial \phi_p}{\partial u_i} \Big |_{({\bf u}_{\bar k}, \tau_{k+1})} + \alpha_{\sigma} (\overline \kappa_{\phi,i} - \underline \kappa_{\phi,i} ) \mathcal{U} [-1,1], \vspace{1mm}\\
\displaystyle \frac{\partial \hat \phi_p}{\partial \tau} \Big |_{({\bf u}_{\bar k}, \tau_{k+1})} := \vspace{1mm} \\
\hspace{10mm} \displaystyle \frac{\partial \phi_p}{\partial \tau} \Big |_{({\bf u}_{\bar k}, \tau_{k+1})} + \alpha_{\sigma} (\overline \kappa_{\phi,\tau} - \underline \kappa_{\phi,\tau} ) \mathcal{U} [-1,1],
\end{array}
\end{equation}

\noindent with analogous formulas for the constraints. Here, $\alpha_\sigma > 0$ is used to control the size of the uncertainty, with the scaling taken care of by multiplying with the range that the derivatives may take, which is simply the difference between the upper and lower global Lipschitz constants.

Valid bounds are then computed as

\vspace{-2mm}
\begin{equation}\label{eq:gradestbounds}
\begin{array}{l}
\displaystyle \frac{\partial \underline \phi_p}{\partial u_i} \Big |_{({\bf u}_{\bar k}, \tau_{\bar k})} := \displaystyle \frac{\partial \hat \phi_p}{\partial u_i} \Big |_{({\bf u}_{\bar k}, \tau_{\bar k})} - \alpha_{\sigma} (\overline \kappa_{\phi,i} - \underline \kappa_{\phi,i} ), \vspace{1mm}\\
\displaystyle \frac{\partial \overline \phi_p}{\partial u_i} \Big |_{({\bf u}_{\bar k}, \tau_{\bar k})} := \displaystyle \frac{\partial \hat \phi_p}{\partial u_i} \Big |_{({\bf u}_{\bar k}, \tau_{\bar k})} + \alpha_{\sigma} (\overline \kappa_{\phi,i} - \underline \kappa_{\phi,i} ), \vspace{1mm}\\
\displaystyle \frac{\partial \underline \phi_p}{\partial \tau} \Big |_{({\bf u}_{\bar k}, \tau_{\bar k})} := \displaystyle \frac{\partial \hat \phi_p}{\partial \tau} \Big |_{({\bf u}_{\bar k}, \tau_{\bar k})} - \alpha_{\sigma} (\overline \kappa_{\phi,\tau} - \underline \kappa_{\phi,\tau} ), \vspace{1mm}\\
\displaystyle \frac{\partial \overline \phi_p}{\partial \tau} \Big |_{({\bf u}_{\bar k}, \tau_{\bar k})} := \displaystyle \frac{\partial \hat \phi_p}{\partial \tau} \Big |_{({\bf u}_{\bar k}, \tau_{\bar k})} + \alpha_{\sigma} (\overline \kappa_{\phi,\tau} + \underline \kappa_{\phi,\tau} ),
\end{array}
\end{equation}

\vspace{-2mm}
$$
\begin{array}{l}

\displaystyle \frac{\partial \underline \phi_p}{\partial u_i} \Big |_{({\bf u}_{\bar k}, \tau_{k+1})} := \displaystyle \frac{\partial \hat \phi_p}{\partial u_i} \Big |_{({\bf u}_{\bar k}, \tau_{k+1})} - \alpha_{\sigma} (\overline \kappa_{\phi,i} - \underline \kappa_{\phi,i} ), \vspace{1mm}\\
\displaystyle \frac{\partial \overline \phi_p}{\partial u_i} \Big |_{({\bf u}_{\bar k}, \tau_{k+1})} := \displaystyle \frac{\partial \hat \phi_p}{\partial u_i} \Big |_{({\bf u}_{\bar k}, \tau_{k+1})} + \alpha_{\sigma} (\overline \kappa_{\phi,i} - \underline \kappa_{\phi,i} ), \vspace{1mm} \\
\displaystyle \frac{\partial \underline \phi_p}{\partial \tau} \Big |_{({\bf u}_{\bar k}, \tau_{k+1})} := \displaystyle \frac{\partial \hat \phi_p}{\partial \tau} \Big |_{({\bf u}_{\bar k}, \tau_{k+1})} - \alpha_{\sigma} (\overline \kappa_{\phi,\tau} - \underline \kappa_{\phi,\tau} ), \vspace{1mm}\\
\displaystyle \frac{\partial \overline \phi_p}{\partial \tau} \Big |_{({\bf u}_{\bar k}, \tau_{k+1})} := \displaystyle \frac{\partial \hat \phi_p}{\partial \tau} \Big |_{({\bf u}_{\bar k}, \tau_{k+1})} + \alpha_{\sigma} (\overline \kappa_{\phi,\tau} + \underline \kappa_{\phi,\tau} ).
\end{array}
$$

Results are provided for increasing values of $\alpha_\sigma$ (0.05, 0.15, and 0.25) in Figs. \ref{fig:degH}-\ref{fig:degHb}, with the corresponding cost values shown in Fig. \ref{fig:degHcost}. As expected, uncertainty in the gradient estimates does not affect the feasibility of the experimental iterates, as this is guaranteed by the robust modifications. However, performance with regard to convergence speed does degrade with increasing uncertainty, in that the general trend remains the same but with the variance of the iterates increasing with $\alpha_\sigma$. Additionally, it is more difficult for the implementation to ``maneuver'' around the experimental concave constraint as the uncertainty increases, which leads to an increase in the number of experiments needed to get to the neighborhood of the optimum.

We note that the gradient estimation algorithm supposed for this example is artificial in nature and used only for its simplicity -- for a look at how real estimation algorithms would perform in the SCFO context, the interested reader is referred to the examples in \cite{SCFOug}.

\begin{figure}
\begin{center}
\includegraphics[width=8cm]{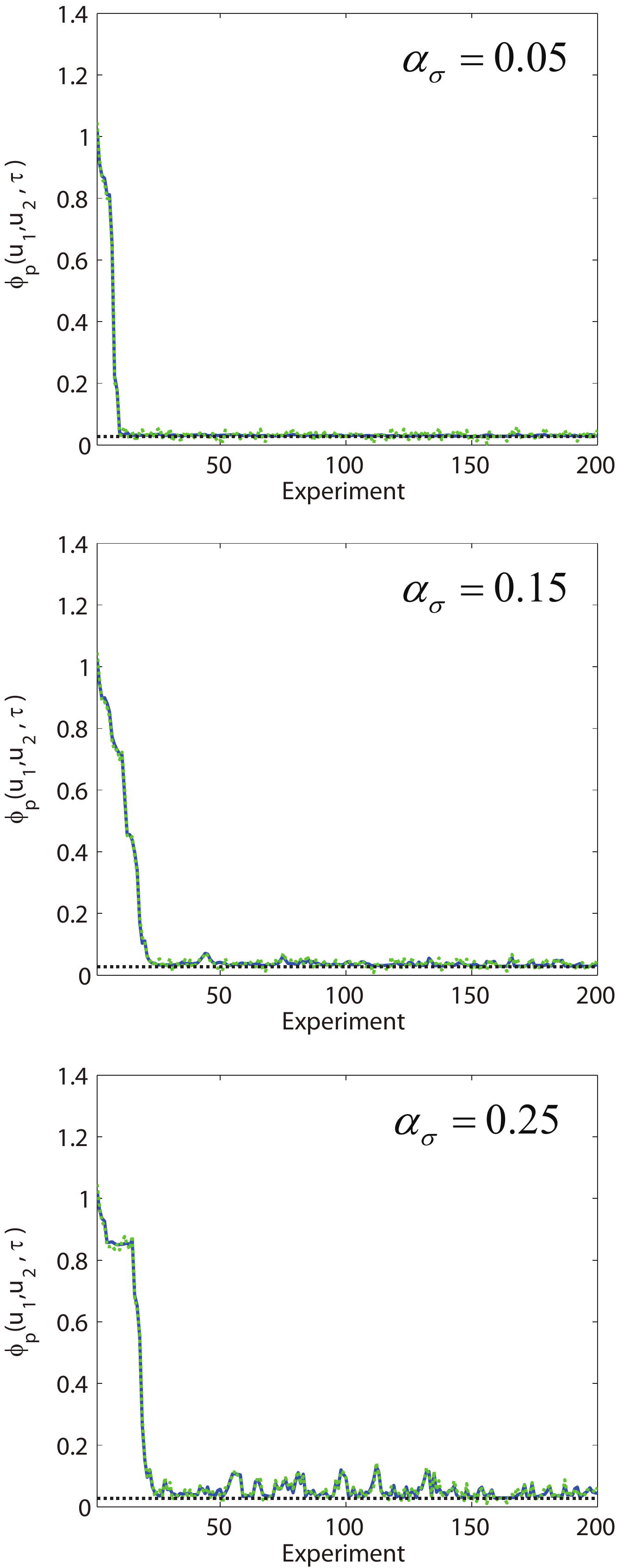}
\caption{Cost function values obtained by the modified SCFO methodology for Problem (\ref{eq:exdeg}) for the ($-$) scenario with estimation error in the experimental function gradients accounted for.}
\label{fig:degHcost}
\end{center}
\end{figure}

\section{Guarantee of Sufficient Excitation}
\label{sec:suffexc}

The proper management of the information-optimization tradeoff is a common requirement in both experimental and derivative-free optimization frameworks. Stated in simple terms, this basically means that one cannot \emph{only} optimize since to optimize well one needs to sample the experimental space in a manner that is adequate for getting the information necessary for optimization. In the SCFO framework, this means sufficiently perturbing the decision variables in all directions so as to be able to allow the gradient estimation algorithm to properly estimate the full gradients of the experimental functions in question \cite{Rodger2010a,Bunin2013a}. Similarly, derivative-free optimization algorithms often need to ensure the more general property of well-poisedness so as to be able to build proper interpolation or regression models from the obtained data \cite{Conn2008,Conn2009}. Finally, methods employing parametric models of the experimental functions may also want to perturb the decision variables in a way that makes identification of these parameters possible \cite{Pfaff2001,Pfaff2006}. 

Regardless of the context, the basic requirement is mathematically the same -- one should be able, when needed, to sufficiently excite the decision variables without incurring any significant drawbacks. As temporary economic losses reflected by increase in the cost function are, in the general case, inevitable once one overrides optimization to gather information, we will not consider these as ``significant''. What is significant, however, are constraint violations, which are not admissible and must be avoided even while perturbing the system for information. This issue becomes particularly relevant when one approaches a constrained optimum.

In this section, we will propose to solve this problem by ensuring that there always exists a ball of radius $\delta_e > 0$ around the reference iterate ${\bf u}_{k^*}$:

\vspace{-2mm}
\begin{equation}\label{eq:feasball}
\mathcal{B}_{e,k^*} = \{ {\bf u} : \| {\bf u} - {\bf u}_{k^*} \|_2 \leq \delta_e \},
\end{equation}

\noindent such that all ${\bf u} \in \mathcal{B}_{e,k^*}$ are guaranteed to be feasible. The following two points justify this choice of approach:

\begin{itemize}
\item By ensuring the existence of a feasible \emph{ball}, one is able to perturb in all directions and to generate a sample set with any geometry.
\item Enforcing that the ball be of a certain radius $\delta_e$ allows for one to tune the size of the excitations with respect to the noise or estimation error, as perturbations that are too small may, for example, lead to overly corrupted gradient estimates \cite{Marchetti2010,Rodger2010a}.  
\end{itemize}

It should be clear that guaranteeing the existence of such a ball will be impossible if a constraint is approached too closely -- one cannot sit on an active constraint and perturb in absolutely any direction without losing feasibility. From this, it follows that a natural way to guarantee that an entire ball is feasible is by adding constraint \emph{back-offs}, denoted here by $b > 0$, to the constraints of the original problem (\ref{eq:mainprobdeg}):

\vspace{-2mm}
\begin{equation}\label{eq:mainprobback}
\begin{array}{rll}
\mathop {{\rm{minimize}}}\limits_{\bf{u}} & \phi_p ({\bf{u}},\tau) & \\
{\rm{subject}}\hspace{1mm}{\rm{to}} & g_{p,j}({\bf{u}},\tau) + b_{p,j}^{k^*,k} \leq 0, & j = 1,...,n_{g_p} \\
 & g_{j}({\bf{u}}) + b_{j}^{k^*} \leq 0, & j = 1,...,n_{g} \\
& {\bf{u}}^L + {\bf b}_u \preceq {\bf u} \preceq {\bf u}^U - {\bf b}_u, &
\end{array}
\end{equation}

\noindent which will allow us to achieve our stated goal of guaranteeing a feasible sufficient-excitation ball at the cost of potential suboptimality, which will naturally result due to the feasible space being tightened. So as to work with a very general case, we make the back-offs for the experimental constraints \emph{iteration-dependent} with respect to both $k^*$ and $k$ so as to properly handle degradation effects and to exploit some of the relaxation theory developed in the earlier sections. For the numerical constraints, it will be shown that iteration dependence with respect to $k^*$ only is needed since degradation is absent in these constraints. As the bound constraints are not subject to degradation and cannot be relaxed, we make their back-offs constant and independent of iteration. The basic idea of adding a back-off to guarantee the existence of a feasible ball is illustrated geometrically in Fig. \ref{fig:minballSIAM}.

\begin{figure}
\begin{center}
\includegraphics[width=8cm]{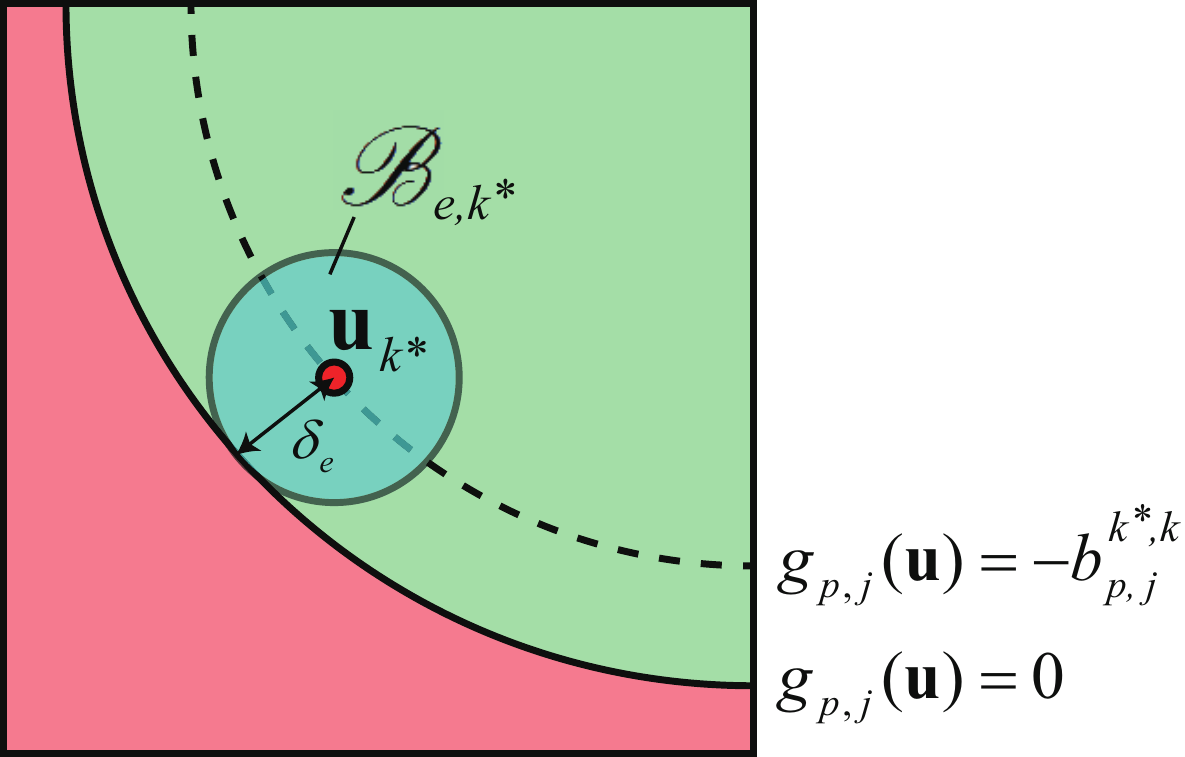}
\caption{Guaranteeing the existence of a sufficient-excitation ball by adding a constraint back-off. A single experimental constraint is considered here.}
\label{fig:minballSIAM}
\end{center}
\end{figure} 

In what follows, we will derive the back-off values sufficient to guarantee the feasibility of $\mathcal{B}_{e,k^*}$, and then proceed to describe how the SCFO should be modified further to account for solving (\ref{eq:mainprobback}) instead of (\ref{eq:mainprobdeg}).

\subsection{Determining the Sufficient Back-offs}

Although we seek to guarantee

\vspace{-2mm}
\begin{equation}\label{eq:feasibility}
\begin{array}{l}
g_{p,j} ({\bf u},\tau_{k+1}) \leq 0, \;\; \forall j = 1,...,n_{g_p}, \vspace{1mm} \\
g_{j} ({\bf u}) \leq 0, \;\; \forall j = 1,...,n_{g}, \vspace{1mm}\\ 
{\bf u}^U \preceq {\bf u} \preceq {\bf u}^U 
\end{array}
\end{equation}

\noindent $\forall {\bf u} \in \mathcal{B}_{e,k^*}$, we may proceed by deriving the appropriate back-off for each constraint separately as the back-offs do not ``interact'' -- i.e., a back-off that guarantees the existence of a ball that is feasible with respect to one constraint in no way influences the value of the back-off needed to guarantee the existence of such a ball with respect to another.

We will first carry out the theoretical analysis for a given experimental constraint $g_{p,j}$. So as to derive back-offs that reduce conservatism by exploiting local relaxations, we define the following local temporal experimental space:

\vspace{-2mm}
\begin{equation}\label{eq:locspace3}
\mathcal{I}_{\tau}^{e,k^*} = \mathcal{B}_{e,k^*} \times \left\{ \tau : \tau_{k^*} \leq \tau \leq \tau_{k+1} \right\},
\end{equation}

\noindent and the corresponding local Lipschitz constants

\vspace{-2mm}
\begin{equation}\label{eq:lipcondegLUlocball}
\begin{array}{l}
\displaystyle \underline \kappa_{p,ji} \leq \underline \kappa_{p,ji}^{e,k^*} < \frac{\partial g_{p,j}}{\partial u_i} \Big |_{({\bf u},\tau)} < \overline \kappa_{p,ji}^{e,k^*} \leq \overline \kappa_{p,ji}, \\
\hspace{50mm} \forall( {\bf u},\tau) \in \mathcal{I}_\tau^{e,k^*},
\end{array}
\end{equation}

\vspace{-2mm}
\begin{equation}\label{eq:lipdegLUloc2ball}
\begin{array}{l}
\displaystyle \underline \kappa_{p,j\tau} \leq \underline \kappa_{p,j\tau}^{e,k^*} \leq \frac{\partial g_{p,j}}{\partial \tau} \Big |_{({\bf u},\tau)} \leq \overline \kappa_{p,j\tau}^{e,k^*} \leq \overline \kappa_{p,j\tau}, \\
\hspace{50mm} \forall ({\bf u},\tau) \in \mathcal{I}_\tau^{e,k^*}.
\end{array}
\end{equation}

Likewise, we will define the concavity index sets $\eta_{c,j}^{e,k^*}$ and $I_{c,j}^{e,k^*}$ in the same manner as $\eta_{c,j}^{\bar k}$ and $I_{c,j}^{\bar k}$ but with respect to $\mathcal{I}_\tau^{e,k^*}$ rather than $\mathcal{I}_\tau^{\bar k}$. By virtue of all of the same principles as before, this allows us to state the bound

\vspace{-2mm}
\begin{equation}\label{eq:ballbound}
\begin{array}{l} 
g_{p,j} ({\bf u},\tau_{k+1}) \leq \overline g_{p,j} ({\bf u}_{k^*},\tau_{k^*}) \vspace{1mm} \\
\hspace{10mm} \displaystyle +\eta_{c,j}^{e,k^*} \frac{\partial \overline g_{p,j}}{\partial \tau} \Big |_{({\bf u}_{k^*},\tau_{k^*})} ( \tau_{k+1} - \tau_{{k^*}} ) \vspace{1mm} \\
\hspace{10mm} \displaystyle + (1-\eta_{c,j}^{e,k^*}) \overline \kappa_{p,j\tau}^{e,k^*} \left( \tau_{k+1} - \tau_{k^*} \right) \vspace{1mm}\\
\hspace{10mm} \displaystyle   + \sum_{i \in I_{c,j}^{e,k^*}} \mathop {\max} \left[ \begin{array}{l} \displaystyle \frac{\partial \underline g_{p,j}}{\partial u_i} \Big |_{({\bf u}_{k^*},\tau_{k^*})} ( u_{i} - u_{k^*,i} ), \vspace{1mm}\\ \displaystyle \frac{\partial \overline g_{p,j}}{\partial u_i} \Big |_{({\bf u}_{k^*},\tau_{k^*})} ( u_{i} - u_{k^*,i} ) \end{array} \right] \vspace{1mm} \\
\hspace{10mm} + \displaystyle \sum_{i \not \in I_{c,j}^{e,k^*}} \mathop {\max} \left[ \begin{array}{l} \underline \kappa_{p,ji}^{e,k^*} ( u_{i} - u_{k^*,i} ), \vspace{1mm}\\ \overline \kappa_{p,ji}^{e,k^*} ( u_{i} - u_{k^*,i} ) \end{array} \right]
\end{array}
\end{equation}

\noindent for all ${\bf u} \in \mathcal{B}_{e,k^*}$.

The major theoretical result of this section is given in the following theorem.

\begin{theorem}[Sufficient back-off for an experimental constraint function]
\label{thm:backoff}
Let the reference point ${\bf u}_{k^*}$ satisfy the backed off robust constraint $\overline g_{p,j} ({\bf u}_{k^*},\tau_{k^*}) + b_{p,j}^{k^*,k} \leq 0$. Defining

\vspace{-2mm}
\begin{equation}\label{eq:kappastar}
\begin{array}{l}
\kappa_{p,ji}^{m,k^*} = \left\{ \begin{array}{ll}  \mathop {\max} \left[ \begin{array}{l} \displaystyle \Bigg |  \frac{\partial \underline g_{p,j}}{\partial u_i} \Big |_{({\bf u}_{k^*},\tau_{k^*})} \Bigg |,\vspace{1mm} \\ \displaystyle \Bigg | \frac{\partial \overline g_{p,j}}{\partial u_i} \Big |_{({\bf u}_{k^*},\tau_{k^*})} \Bigg | \end{array} \right], & i \in I_{c,j}^{e,k^*} \vspace{1mm} \\  \mathop {\max} \left[  | \underline \kappa_{p,ji}^{e,k^*} |, | \overline \kappa_{p,ji}^{e,k^*} | \right], & i \not \in I_{c,j}^{e,k^*}  \end{array} \right . \vspace{1mm} \\
\kappa_{p,j}^{m,k^*} = \left[ \kappa_{p,j1}^{m,k^*}\; \hdots \; \kappa_{p,jn_u}^{m,k^*}  \right]^T
\end{array}
\end{equation}

\noindent and setting the back-off as

\vspace{-2mm}
\begin{equation}\label{eq:suffback}
\begin{array}{lll}
b_{p,j}^{k^*,k} & = & \displaystyle \eta_{c,j}^{e,k^*} \frac{\partial \overline g_{p,j}}{\partial \tau} \Big |_{({\bf u}_{k^*},\tau_{k^*})} ( \tau_{k+1} - \tau_{{k^*}} ) \vspace{1mm} \\
& & + (1-\eta_{c,j}^{e,k^*}) \overline \kappa_{p,j\tau}^{e,k^*} \left( \tau_{k+1} - \tau_{k^*} \right) + \delta_e \| \kappa_{p,j}^{m,k^*} \|_2
\end{array}
\end{equation}

\noindent guarantees that

\vspace{-2mm}
\begin{equation}\label{eq:feasguar}
g_{p,j} ({\bf u},\tau_{k+1}) \leq 0, \;\; \forall {\bf u} \in \mathcal{B}_{e,k^*}.
\end{equation}

\end{theorem}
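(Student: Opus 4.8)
The plan is to start from the bound (\ref{eq:ballbound}), which already controls $g_{p,j}({\bf u},\tau_{k+1})$ for every ${\bf u}\in\mathcal{B}_{e,k^*}$ in terms of the value at the reference point plus a degradation term and a sum of worst-case spatial increments. The degradation term in (\ref{eq:ballbound}) is exactly the first two addends of the proposed back-off (\ref{eq:suffback}), so the real work is to bound the spatial sum $\sum_{i\in I_{c,j}^{e,k^*}}\max[\cdot]+\sum_{i\not\in I_{c,j}^{e,k^*}}\max[\cdot]$ uniformly over $\mathcal{B}_{e,k^*}$ by $\delta_e\|\kappa_{p,j}^{m,k^*}\|_2$.

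First I would observe that for each coordinate $i$, the quantity $\max\bigl[\underline\kappa(u_i-u_{k^*,i}),\overline\kappa(u_i-u_{k^*,i})\bigr]$ (and likewise the version with the true partial derivatives for $i\in I_{c,j}^{e,k^*}$) is bounded above by $\kappa_{p,ji}^{m,k^*}\,|u_i-u_{k^*,i}|$: whatever the sign of $u_i-u_{k^*,i}$, the larger of the two products is at most the larger of the two coefficient magnitudes times $|u_i-u_{k^*,i}|$, and $\kappa_{p,ji}^{m,k^*}$ is defined in (\ref{eq:kappastar}) precisely as that maximum magnitude. Summing over $i$ gives $\sum_i\max[\cdot]\le\sum_i\kappa_{p,ji}^{m,k^*}|u_i-u_{k^*,i}|=(\kappa_{p,j}^{m,k^*})^T|{\bf u}-{\bf u}_{k^*}|$ where $|\cdot|$ is componentwise. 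Then Cauchy--Schwarz gives $(\kappa_{p,j}^{m,k^*})^T|{\bf u}-{\bf u}_{k^*}|\le\|\kappa_{p,j}^{m,k^*}\|_2\,\|{\bf u}-{\bf u}_{k^*}\|_2\le\delta_e\|\kappa_{p,j}^{m,k^*}\|_2$, the last step using the definition (\ref{eq:feasball}) of $\mathcal{B}_{e,k^*}$.

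Assembling the pieces: for any ${\bf u}\in\mathcal{B}_{e,k^*}$, combine (\ref{eq:ballbound}) with the two bounds above to get $g_{p,j}({\bf u},\tau_{k+1})\le\overline g_{p,j}({\bf u}_{k^*},\tau_{k^*})+\eta_{c,j}^{e,k^*}\tfrac{\partial\overline g_{p,j}}{\partial\tau}|_{({\bf u}_{k^*},\tau_{k^*})}(\tau_{k+1}-\tau_{k^*})+(1-\eta_{c,j}^{e,k^*})\overline\kappa_{p,j\tau}^{e,k^*}(\tau_{k+1}-\tau_{k^*})+\delta_e\|\kappa_{p,j}^{m,k^*}\|_2=\overline g_{p,j}({\bf u}_{k^*},\tau_{k^*})+b_{p,j}^{k^*,k}$, where the last equality is just the definition (\ref{eq:suffback}) of the back-off. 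By the theorem's hypothesis $\overline g_{p,j}({\bf u}_{k^*},\tau_{k^*})+b_{p,j}^{k^*,k}\le 0$, hence $g_{p,j}({\bf u},\tau_{k+1})\le 0$, which is (\ref{eq:feasguar}). Since ${\bf u}$ was arbitrary in $\mathcal{B}_{e,k^*}$, the claim follows.

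The main obstacle is essentially bookkeeping rather than a deep step: one must be careful that the coordinatewise bound $\max[\underline\kappa\,\Delta u_i,\overline\kappa\,\Delta u_i]\le\kappa_{p,ji}^{m,k^*}|\Delta u_i|$ genuinely holds in all sign cases of $\Delta u_i$ and for \emph{both} the concave-index coordinates (where the true derivative at $({\bf u}_{k^*},\tau_{k^*})$ appears and is sandwiched between the local bounds by (\ref{eq:gradlipconsist})) and the non-concave coordinates (where the local Lipschitz constants appear); the definition (\ref{eq:kappastar}) was chosen to cover exactly these two cases, so no subtlety beyond matching notation remains. A secondary point worth stating explicitly is that (\ref{eq:ballbound}) itself is valid because $\mathcal{B}_{e,k^*}\times\{\tau:\tau_{k^*}\le\tau\le\tau_{k+1}\}=\mathcal{I}_\tau^{e,k^*}$ is exactly the local space over which the constants in (\ref{eq:lipcondegLUlocball})--(\ref{eq:lipdegLUloc2ball}) and the concavity indices are defined, so the earlier Lipschitz/concavity machinery (Lemma \ref{lem:concave} and its localizations) applies verbatim.
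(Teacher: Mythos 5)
Your proposal is correct and follows essentially the same route as the paper: start from (\ref{eq:ballbound}), dominate each coordinatewise maximum by $\kappa_{p,ji}^{m,k^*}\,|u_i-u_{k^*,i}|$, and then bound the resulting weighted sum over the ball by $\delta_e\|\kappa_{p,j}^{m,k^*}\|_2$. The only cosmetic difference is that you obtain this last bound by a direct application of Cauchy--Schwarz to $(\kappa_{p,j}^{m,k^*})^T|{\bf u}-{\bf u}_{k^*}|$, whereas the paper reaches the identical inequality by inscribing $\mathcal{B}_{e,k^*}$ in the sublevel set $\mathcal{K}_{k^*}$ and explicitly maximizing the linear form over the unit ball.
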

\begin{proof}

We start with the following general analysis for any given $\underline x, \overline x, y \in \mathbb{R}$:

\vspace{-2mm}
\begin{equation}\label{eq:xy}
\begin{array}{l}
\underline x y \leq |\underline x| |y|, \; \overline x y \leq |\overline x| |y| \vspace{1mm} \\
\Rightarrow \underline x y \leq \mathop {\max} [ |\underline x|, |\overline x| ] |y|, \; \overline x y \leq \mathop {\max} [ |\underline x|, |\overline x| ] |y| \vspace{1mm} \\
\Leftrightarrow \mathop {\max} [\underline x y, \overline x y] \leq \mathop {\max} [ |\underline x|, |\overline x| ] |y|,
\end{array}
\end{equation}

\noindent from which we have

\vspace{-2mm}
\begin{equation}\label{eq:xy2}
\begin{array}{l}
\mathop {\max} \left[ \begin{array}{l} \displaystyle \frac{\partial \underline g_{p,j}}{\partial u_i} \Big |_{({\bf u}_{k^*},\tau_{k^*})} ( u_{i} - u_{k^*,i} ), \vspace{1mm}\\ \displaystyle \frac{\partial \overline g_{p,j}}{\partial u_i} \Big |_{({\bf u}_{k^*},\tau_{k^*})} ( u_{i} - u_{k^*,i} ) \end{array} \right] \vspace{1mm} \\
\hspace{30mm} \leq \kappa_{p,ji}^{m,k^*} | u_{i} - u_{k^*,i} |, \;\; i \in I_{c,j}^{e,k^*} \vspace{1mm} \\
\mathop {\max}  \left[ \begin{array}{l} \underline \kappa_{p,ji} (u_{i} - u_{k^*,i}), \vspace{1mm} \\ \overline \kappa_{p,ji} (u_{i} - u_{k^*,i}) \end{array} \right] \hspace{-1mm} \leq \kappa_{p,ji}^{m,k^*} | u_{i} - u_{k^*,i} |, \;\; i \not \in I_{c,j}^{e,k^*}.
\end{array}
\end{equation}

This then allows us to extend (\ref{eq:ballbound}) as

\vspace{-2mm}
\begin{equation}\label{eq:nsbound2}
\begin{array}{lll} 
g_{p,j} ({\bf u},\tau_{k+1}) & \leq & \overline g_{p,j} ({\bf u}_{k^*},\tau_{k^*}) \\
& & \displaystyle +\eta_{c,j}^{e,k^*} \frac{\partial \overline g_{p,j}}{\partial \tau} \Big |_{({\bf u}_{k^*},\tau_{k^*})} ( \tau_{k+1} - \tau_{{k^*}} ) \vspace{1mm} \\
& & \displaystyle + (1-\eta_{c,j}^{e,k^*}) \overline \kappa_{p,j\tau}^{e,k^*} \left( \tau_{k+1} - \tau_{k^*} \right) \vspace{1mm}\\
& & \displaystyle   + \sum_{i = 1}^{n_u} \kappa_{p,ji}^{m,k^*} | u_{i} - u_{k^*,i} |.
\end{array}
\end{equation}

Let us denote by $\mathcal{K}_{k^*}$ the set of decision variables for which the right-hand side of (\ref{eq:nsbound2}) is nonpositive:

\begin{equation}\label{eq:lipregionK}
\mathcal{K}_{k^*} = \left\{ {\bf u} : \begin{array}{l} \overline g_{p,j} ({\bf u}_{k^*},\tau_{k^*}) \\
 \displaystyle +\eta_{c,j}^{e,k^*} \frac{\partial \overline g_{p,j}}{\partial \tau} \Big |_{({\bf u}_{k^*},\tau_{k^*})} ( \tau_{k+1} - \tau_{{k^*}} ) \vspace{1mm} \\
 \displaystyle + (1-\eta_{c,j}^{e,k^*}) \overline \kappa_{p,j\tau}^{e,k^*} \left( \tau_{k+1} - \tau_{k^*} \right) \vspace{1mm}\\
 \displaystyle   + \sum_{i = 1}^{n_u} \kappa_{p,ji}^{m,k^*} | u_{i} - u_{k^*,i} | \leq 0 \end{array} \right\}.
\end{equation}

\noindent It follows that $g_{p,j} ({\bf u},\tau_{k+1}) \leq 0, \; \forall {\bf u} \in \mathcal{K}_{k^*} \cap \mathcal{B}_{e,k^*}$.

We will complete the proof by essentially inscribing the ball $\mathcal{B}_{e,k^*}$ inside $\mathcal{K}_{k^*}$ and showing that $\mathcal{B}_{e,k^*} \subseteq \mathcal{K}_{k^*}$ for the back-off proposed in (\ref{eq:suffback}) -- this, of course, implies that $g_{p,j} ({\bf u},\tau_{k+1}) \leq 0, \; \forall {\bf u} \in \mathcal{B}_{e,k^*}$ as well, which is our desired result. However, since $\mathcal{B}_{e,k^*}$ is a hypersphere and $\mathcal{K}_{k^*}$ is a convex set, it is sufficient\footnote{This is easily proven by considering the fact that any point in the interior of the sphere may be described as a convex combination of two boundary points. If all boundary points of $\mathcal{B}_{e,k^*}$ belong to the convex set $\mathcal{K}_{k^*}$, their convex combinations (the interior points) must as well.} to show that ${\rm bd} \left( \mathcal{B}_{e,k^*} \right) \subseteq \mathcal{K}_{k^*}$ since ${\rm bd} \left( \mathcal{B}_{e,k^*} \right) \subseteq \mathcal{K}_{k^*} \Leftrightarrow \mathcal{B}_{e,k^*} \subseteq \mathcal{K}_{k^*}$.

Let us parameterize the boundary points of $\mathcal{B}_{e,k^*}$ as

\vspace{-2mm}
\begin{equation}\label{eq:boundpoint}
{\bf u}_{\rm bd} = {\bf u}_{k^*} + \delta_e \delta {\bf u},
\end{equation}

\noindent where $\delta {\bf u} \in \mathbb{R}^{n_u}$ is a unit vector. Since the mapping between $\delta {\bf u}$ and ${\bf u}_{\rm bd}$ is bijective, the desired result may be obtained by proving that for any choice of $\delta {\bf u}$ the resulting boundary point belongs to $\mathcal{K}_{k^*}$. Substituting ${\bf u}_{\rm bd}$ for ${\bf u}$ in (\ref{eq:lipregionK}) yields

\vspace{-2mm}
\begin{equation}\label{eq:lipregionK2}
\begin{array}{ll}
\displaystyle \overline g_{p,j} ({\bf u}_{k^*},\tau_{k^*}) & \displaystyle + \eta_{c,j}^{e,k^*} \frac{\partial \overline g_{p,j}}{\partial \tau} \Big |_{({\bf u}_{k^*},\tau_{k^*})} ( \tau_{k+1} - \tau_{{k^*}} ) \vspace{1mm} \\
& \displaystyle  + (1-\eta_{c,j}^{e,k^*}) \overline \kappa_{p,j\tau}^{e,k^*} \left( \tau_{k+1} - \tau_{k^*} \right) \vspace{1mm} \\
&\displaystyle + \delta_e \sum_{i=1}^{n_u} \kappa_{p,ji}^{m,k^*} | \delta u_i | \leq 0.
\end{array}
\end{equation}

To prove that (\ref{eq:lipregionK2}) holds for all $\delta {\bf u}$, it is sufficient to only consider the direction that maximizes the value of the right-hand side or, more specifically, the direction that maximizes the summation term. In other words, we need to prove that

\vspace{-2mm}
\begin{equation}\label{eq:lipregionK4}
\begin{array}{ll}
\displaystyle \overline g_{p,j} ({\bf u}_{k^*},\tau_{k^*}) & \displaystyle + \eta_{c,j}^{e,k^*} \frac{\partial \overline g_{p,j}}{\partial \tau} \Big |_{({\bf u}_{k^*},\tau_{k^*})} ( \tau_{k+1} - \tau_{{k^*}} ) \vspace{1mm} \\
& \displaystyle  + (1-\eta_{c,j}^{e,k^*}) \overline \kappa_{p,j\tau}^{e,k^*} \left( \tau_{k+1} - \tau_{k^*} \right) \vspace{1mm} \\
&\displaystyle + \delta_e \mathop {\max} \limits_{\| \delta {\bf u} \|_2 = 1} \sum_{i=1}^{n_u} \kappa_{p,ji}^{m,k^*} | \delta u_i | \leq 0.
\end{array}
\end{equation}

To make the analysis easier, we will first show that we may restrict our consideration to $\delta {\bf u} \in \mathbb{R}^{n_u}_+$ since the point where the maximum is attained is nonunique and since there will always be one in the positive orthant. To prove this, suppose that there exists a direction $\delta \tilde {\bf u} \not \in \mathbb{R}^{n_u}_+$, $\| \delta \tilde {\bf u} \|_2 = 1$ such that

\vspace{-2mm}
\begin{equation}\label{eq:nonposdu}
\sum_{i=1}^{n_u} \kappa_{p,ji}^{m,k^*} | \delta \tilde u_i | > \mathop {\max} \limits_{{\footnotesize \begin{array}{c} \| \delta {\bf u} \|_2 = 1 \\ \delta {\bf u} \in \mathbb{R}^{n_u}_+ \end{array} }} \sum_{i=1}^{n_u} \kappa_{p,ji}^{m,k^*} | \delta u_i |.
\end{equation}

\noindent Clearly, the choice $\delta u_i := | \delta \tilde u_i |, \; i = 1,...,n_u$ shows that there exists a unit vector $\delta {\bf u} \in \mathbb{R}^{n_u}_+$ with a value equal to the left-hand side of (\ref{eq:nonposdu}), thereby proving that (\ref{eq:nonposdu}) cannot hold for any choice of $\delta \tilde {\bf u}$. As such, we may simplify (\ref{eq:lipregionK4}) further:

\vspace{-2mm}
\begin{equation}\label{eq:lipregionK5}
\begin{array}{ll}
\displaystyle \overline g_{p,j} ({\bf u}_{k^*},\tau_{k^*}) & \displaystyle + \eta_{c,j}^{e,k^*} \frac{\partial \overline g_{p,j}}{\partial \tau} \Big |_{({\bf u}_{k^*},\tau_{k^*})} ( \tau_{k+1} - \tau_{{k^*}} ) \vspace{1mm} \\
& \displaystyle  + (1-\eta_{c,j}^{e,k^*}) \overline \kappa_{p,j\tau}^{e,k^*} \left( \tau_{k+1} - \tau_{k^*} \right) \vspace{1mm} \\
&\displaystyle + \delta_e \mathop {\max} \limits_{{\footnotesize \begin{array}{c} \| \delta {\bf u} \|_2 = 1 \\ \delta {\bf u} \in \mathbb{R}^{n_u}_+ \end{array} }} \left( \kappa_{p,j}^{m,k^*} \right)^T  \delta {\bf u} \leq 0,
\end{array}
\end{equation}

\noindent where we have removed the absolute value due to the positive-orthant restriction and have written the resulting summation in vector form.

To evaluate the maximum term analytically, we may consider a simpler case by taking the following steps:

\vspace{-2mm}
\begin{equation}\label{eq:maxterm}
\begin{array}{lll}
\mathop {\max} \limits_{{\footnotesize \begin{array}{c} \| \delta {\bf u} \|_2 = 1 \\ \delta {\bf u} \in \mathbb{R}^{n_u}_+ \end{array} }} \left( \kappa_{p,j}^{m,k^*} \right)^T  \delta {\bf u} & = & \mathop {\max} \limits_{{\footnotesize \begin{array}{c} \| \delta {\bf u} \|_2^2 = 1 \\ \delta {\bf u} \in \mathbb{R}^{n_u}_+ \end{array} }} \left( \kappa_{p,j}^{m,k^*} \right)^T  \delta {\bf u} \\
& \leq & \mathop {\max} \limits_{{\footnotesize \begin{array}{c} \| \delta {\bf u} \|_2^2 \leq 1 \end{array} }} \left( \kappa_{p,j}^{m,k^*} \right)^T  \delta {\bf u},
\end{array}
\end{equation}

\noindent as squaring the unit-norm constraint does not affect the directions considered and relaxing the restrictions by relaxing the equality and removing the positivity constraint can only increase the maximum value. As the last term is equivalent to maximizing a linear function over a unit ball, writing out its stationarity conditions readily yields the following maximum point:

\vspace{-2mm}
\begin{equation}\label{eq:argmax}
\begin{array}{rl}
\displaystyle \frac{\kappa_{p,j}^{m,k^*}}{\| \kappa_{p,j}^{m,k^*} \|_2} = {\rm arg} \mathop {\rm maximize}\limits_{\delta {\bf u}} & \left( \kappa_{p,j}^{m,k^*} \right)^T  \delta {\bf u}  \\
{\rm{subject}}\;{\rm{to}} & \delta {\bf u}^T \delta {\bf u} \leq 1,
\end{array}
\end{equation}

\noindent provided that $\kappa_{p,j}^{m,k^*} \neq {\bf 0}$. This allows us to advance (\ref{eq:maxterm}) to state:

\vspace{-2mm}
\begin{equation}\label{eq:maxterm2}
\mathop {\max} \limits_{{\footnotesize \begin{array}{c} \| \delta {\bf u} \|_2 = 1 \\ \delta {\bf u} \in \mathbb{R}^{n_u}_+ \end{array} }} \left( \kappa_{p,j}^{m,k^*} \right)^T  \delta {\bf u} \leq \| \kappa_{p,j}^{m,k^*} \|_2,
\end{equation}

\noindent which we note is valid for any $\kappa_{p,j}^{m,k^*}$ (the bound holding trivially in the somewhat pathological case of $\kappa_{p,j}^{m,k^*} = {\bf 0}$).

Using this result to push (\ref{eq:lipregionK5}) further, we obtain

\vspace{-2mm}
\begin{equation}\label{eq:lipregionK6}
\begin{array}{l}
\displaystyle \overline g_{p,j} ({\bf u}_{k^*},\tau_{k^*})  + \eta_{c,j}^{e,k^*} \frac{\partial \overline g_{p,j}}{\partial \tau} \Big |_{({\bf u}_{k^*},\tau_{k^*})} ( \tau_{k+1} - \tau_{{k^*}} ) \vspace{1mm} \\
 + (1-\eta_{c,j}^{e,k^*}) \overline \kappa_{p,j\tau}^{e,k^*} \left( \tau_{k+1} - \tau_{k^*} \right) + \delta_e \| \kappa_{p,j}^{m,k^*} \|_2 \leq 0,
\end{array}
\end{equation}

\noindent which must clearly hold given the choice of back-off in (\ref{eq:suffback}) and the assumed condition $\overline g_{p,j} ({\bf u}_{k^*},\tau_{k^*}) + b_{p,j}^{k^*,k} \leq 0$. Since $(\ref{eq:lipregionK6}) \Rightarrow (\ref{eq:lipregionK5}) \Rightarrow (\ref{eq:lipregionK4}) \Rightarrow (\ref{eq:lipregionK2})$, we have proven that ${\bf u}_{\rm bd} \in \mathcal {K}_{k^*}$ for all possible ${\bf u}_{\rm bd}$ and thus that $\mathcal{B}_{e,k^*} \subseteq \mathcal{K}_{k^*}$, which, since  $g_{p,j} ({\bf u},\tau_{k+1}) \leq 0, \; \forall {\bf u} \in \mathcal{K}_{k^*} \cap \mathcal{B}_{e,k^*}$, finally proves that $g_{p,j} ({\bf u},\tau_{k+1}) \leq 0, \; \forall {\bf u} \in \mathcal{B}_{e,k^*}$. \qed

\end{proof}

We now proceed with the simpler derivations for the numerical and bound constraints.

\begin{theorem}[Sufficient back-off for a numerical constraint function]
\label{thm:backoff2}
Let the reference point ${\bf u}_{k^*}$ satisfy the backed off numerical constraint $g_{j} ({\bf u}_{k^*}) + b_{j}^{k^*} \leq 0$. Setting the back-off as

\vspace{-2mm}
\begin{equation}\label{eq:suffbacknum}
b_{j}^{k^*} = \mathop {\max} \limits_{{\bf u} \in \mathcal{B}_{e,k^*}} \; g_j ({\bf u}) - g_j ({\bf u}_{k^*})
\end{equation}

\noindent guarantees that

\vspace{-2mm}
\begin{equation}\label{eq:feasguarnum}
g_{j} ({\bf u}) \leq 0, \;\; \forall {\bf u} \in \mathcal{B}_{e,k^*}.
\end{equation}

\end{theorem}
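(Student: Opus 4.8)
The plan is to obtain this directly from the definition of the back-off, with essentially no machinery, since the numerical constraint $g_j$ is a known function and its worst-case value over the excitation ball can be handled exactly rather than through a conservative Lipschitz surrogate. First I would observe that, by Assumption A1, $g_j$ is continuous, and $\mathcal{B}_{e,k^*}$ is a compact set, so the maximum appearing in (\ref{eq:suffbacknum}) is attained; hence $b_j^{k^*}$ is a well-defined, finite quantity that one may compute offline (or, if $g_j$ is nonconvex, via a global optimization).

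The core argument is then a single line. Take any ${\bf u} \in \mathcal{B}_{e,k^*}$. Since ${\bf u}$ is itself a feasible point of the maximization that defines the back-off, we have
\[
g_j({\bf u}) \;\leq\; \mathop{\max}\limits_{{\bf v} \in \mathcal{B}_{e,k^*}} g_j({\bf v}) \;=\; g_j({\bf u}_{k^*}) + b_j^{k^*}.
\]
Invoking the hypothesis $g_j({\bf u}_{k^*}) + b_j^{k^*} \leq 0$ immediately gives $g_j({\bf u}) \leq 0$, which is (\ref{eq:feasguarnum}); as ${\bf u}$ was arbitrary in $\mathcal{B}_{e,k^*}$, the claim follows.

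There is no real obstacle here -- the only points worth flagging are the contrast with Theorem \ref{thm:backoff} and an implementation caveat. In the experimental-constraint case the value $g_{p,j}({\bf u},\tau_{k+1})$ is unavailable, forcing one to inflate a Lipschitz/concavity bound by the term $\delta_e \| \kappa_{p,j}^{m,k^*} \|_2$; here $g_j$ is numerical, so the \emph{exact} worst-case increase over the ball is used, at the cost of possibly needing to solve a nonconvex program for $\mathop{\max}\limits_{{\bf u} \in \mathcal{B}_{e,k^*}} g_j({\bf u})$. If that is undesirable, one could instead replace this maximum by a cheaper Lipschitz-based upper bound, exactly as was done in (\ref{eq:suffback}), trading tightness for tractability. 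Finally, I would note that degradation plays no role in this derivation, which is why $b_j^{k^*}$ is indexed only by $k^*$, unlike $b_{p,j}^{k^*,k}$.
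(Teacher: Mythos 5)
Your proof is correct and is essentially the paper's own argument: substituting the definition of $b_j^{k^*}$ into the hypothesis $g_j({\bf u}_{k^*}) + b_j^{k^*} \leq 0$ gives $\mathop{\max}\limits_{{\bf u} \in \mathcal{B}_{e,k^*}} g_j({\bf u}) \leq 0$, which is exactly (\ref{eq:feasguarnum}). The additional remarks on attainment of the maximum and the contrast with Theorem \ref{thm:backoff} are accurate but not needed for the proof itself.
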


\begin{proof}
Substituting (\ref{eq:suffbacknum}) into the assumed condition $g_{j} ({\bf u}_{k^*}) + b_{j}^{k^*} \leq 0$ yields

\vspace{-2mm}
\begin{equation}\label{eq:suffbacknum1}
\mathop {\max} \limits_{{\bf u} \in \mathcal{B}_{e,k^*}} \; g_j ({\bf u}) \leq 0,
\end{equation}

\noindent which is equivalent to (\ref{eq:feasguarnum}). \qed

\end{proof}

\begin{corollary}[Sufficient back-off for a bound constraint]
\label{cor:backoff3}
Let the reference point ${\bf u}_{k^*}$ satisfy the backed off bound constraints $u^L_i + b_{u,i} \leq u_{k^*,i} \leq u^U_i - b_{u,i}$. Setting the back-off as

\vspace{-2mm}
\begin{equation}\label{eq:suffbackbound}
b_{u,i} = \delta_e
\end{equation}

\noindent guarantees that

\vspace{-2mm}
\begin{equation}\label{eq:feasguarbound}
{\bf u}^L \preceq {\bf u} \preceq {\bf u}^U, \;\; \forall {\bf u} \in \mathcal{B}_{e,k^*}.
\end{equation}

\end{corollary}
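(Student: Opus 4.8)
The plan is to verify that the proposed back-off $b_{u,i} = \delta_e$ is sufficient to keep every point of the ball $\mathcal{B}_{e,k^*}$ inside the box $\{ {\bf u} : {\bf u}^L \preceq {\bf u} \preceq {\bf u}^U \}$. First I would substitute the assumed condition $u^L_i + b_{u,i} \leq u_{k^*,i} \leq u^U_i - b_{u,i}$ with the setting $b_{u,i} = \delta_e$, obtaining $u^L_i + \delta_e \leq u_{k^*,i} \leq u^U_i - \delta_e$ for every component $i = 1,\ldots,n_u$. This says the reference point lies at least $\delta_e$ away from each bound along each coordinate.

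Next I would take an arbitrary ${\bf u} \in \mathcal{B}_{e,k^*}$, so that $\| {\bf u} - {\bf u}_{k^*} \|_2 \leq \delta_e$. The elementary observation is that the Euclidean norm dominates each coordinate's absolute deviation, i.e. $| u_i - u_{k^*,i} | \leq \| {\bf u} - {\bf u}_{k^*} \|_2 \leq \delta_e$ for every $i$. Combining this with the previous inequality, we get $u_i \geq u_{k^*,i} - \delta_e \geq u^L_i$ and $u_i \leq u_{k^*,i} + \delta_e \leq u^U_i$, which holds for each $i$ and therefore gives ${\bf u}^L \preceq {\bf u} \preceq {\bf u}^U$. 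Since ${\bf u}$ was arbitrary in $\mathcal{B}_{e,k^*}$, this establishes (\ref{eq:feasguarbound}).

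There is essentially no obstacle here: unlike Theorem \ref{thm:backoff}, where one must carefully pass through Lipschitz bounds, worst-case directions, and the maximization of a linear functional over a ball, the bound constraints are simply box constraints with no degradation and no Lipschitz dependence, so the back-off need only absorb the radius of the ball in the $\infty$-norm sense. The only point worth stating explicitly is the norm inequality $| u_i - u_{k^*,i} | \leq \| {\bf u} - {\bf u}_{k^*} \|_2$, which is immediate. Accordingly, the corollary follows directly as a degenerate, coordinate-wise special case of the back-off philosophy used for the experimental constraints in Theorem \ref{thm:backoff}.
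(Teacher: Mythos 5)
Your proof is correct. The only substantive step, the coordinate-wise bound $|u_i - u_{k^*,i}| \leq \|{\bf u} - {\bf u}_{k^*}\|_2 \leq \delta_e$, is valid, and combined with $u^L_i + \delta_e \leq u_{k^*,i} \leq u^U_i - \delta_e$ it immediately yields the claim. Your route differs slightly from the paper's: the paper treats the bound constraints as special cases of numerical constraints, rewrites them in canonical form $u^L_i - u_i \leq 0$ and $u_i - u^U_i \leq 0$, and then invokes Theorem \ref{thm:backoff2}, so that the back-off is obtained by evaluating $\mathop{\max}_{{\bf u} \in \mathcal{B}_{e,k^*}} (u_{k^*,i} - u_i) = \delta_e$. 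Your argument is a direct, self-contained verification that bypasses Theorem \ref{thm:backoff2} entirely; the underlying computation is the same in both cases (the maximal coordinate deviation over an $\ell_2$-ball of radius $\delta_e$ is $\delta_e$), but the paper's version emphasizes that the corollary is literally an instance of the general numerical-constraint result, which keeps the back-off machinery uniform across constraint types, whereas yours is arguably more transparent for the reader who just wants to see why $\delta_e$ suffices.
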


\begin{proof}
As the bound constraints are just special cases of the general numerical constraints, we will place them into the canonical form $g_j ({\bf u}) \leq 0$:

\vspace{-2mm}
\begin{equation}\label{eq:boundcanon}
u^L_i - u_i \leq 0, \;\; u_i - u^U_i \leq 0,
\end{equation}

\noindent and then apply the result of Theorem \ref{thm:backoff2}:

\vspace{-2mm}
\begin{equation}\label{eq:boundconmax}
\begin{array}{lll}
b_{u,i} & = & \mathop {\max} \limits_{{\bf u} \in \mathcal{B}_{e,k^*}} \; \left( u^L_i - u_i \right) - \left( u^L_i - u_{k^*,i} \right) \\
& = & \mathop {\max} \limits_{{\bf u} \in \mathcal{B}_{e,k^*}} \; \left( u_i - u^U_i \right) - \left( u_{k^*,i} - u^U_i \right) = \delta_e. \;\;\;\; \qed
\end{array}
\end{equation}

\end{proof}

\subsection{Choosing the Reference Point to Satisfy the Back-offs}

The back-offs derived in (\ref{eq:suffback}), (\ref{eq:suffbacknum}), and (\ref{eq:suffbackbound}) guarantee that both the reference point ${\bf u}_{k^*}$ and all the points in the ball around it, $\mathcal{B}_{e,k^*}$, satisfy the problem constraints provided that

\vspace{-2mm}
\begin{equation}\label{eq:backedoff}
\begin{array}{ll}
\overline g_{p,j} ({\bf u}_{k^*},\tau_{k^*}) + b_{p,j}^{k^*,k} \leq 0, & \forall j = 1,...,n_{g_p} \\
g_{j} ({\bf u}_{k^*}) + b_{j}^{k^*} \leq 0, & \forall j = 1,...,n_{g} \\
{\bf u}^L + {\bf b}_u \preceq {\bf u}_{k^*} \preceq {\bf u}^U - {\bf b}_u, & 
\end{array}
\end{equation}

\noindent or, if we substitute in the results of (\ref{eq:suffback}), (\ref{eq:suffbacknum}), and (\ref{eq:suffbackbound}) directly,

\vspace{-2mm}
\begin{equation}\label{eq:backedoff2}
\begin{array}{l}
\overline g_{p,j} ({\bf u}_{k^*},\tau_{k^*}) + \displaystyle \eta_{c,j}^{e,k^*} \frac{\partial \overline g_{p,j}}{\partial \tau} \Big |_{({\bf u}_{k^*},\tau_{k^*})} ( \tau_{k+1} - \tau_{{k^*}} ) \vspace{1mm} \\
+ (1-\eta_{c,j}^{e,k^*}) \overline \kappa_{p,j\tau}^{e,k^*} \left( \tau_{k+1} - \tau_{k^*} \right) + \delta_e \| \kappa_{p,j}^{m,k^*} \|_2 \leq 0, \vspace{1mm} \\
\hspace{50mm} \forall j = 1,...,n_{g_p} \vspace{1mm} \\
\mathop {\max} \limits_{{\bf u} \in \mathcal{B}_{e,k^*}} g_j ({\bf u}) \leq 0, \hspace{25mm} \forall j = 1,...,n_{g} \vspace{1mm} \\
{\bf u}^L + \delta_e {\bf 1}  \preceq {\bf u}_{k^*} \preceq {\bf u}^U - \delta_e {\bf 1}.
\end{array}
\end{equation}

We may thus use these conditions as constraints in the search for a reference point and modify the search of (\ref{eq:kstarLUccvcostlocMNgrad}) as

\vspace{-2mm}
\begin{equation}\label{eq:kstarLUccvcostlocMNgradBO}
\begin{array}{rl}
k^* := \;\;\;\;\;\;\;\;\;\;\;\;\;\;& \vspace{1mm} \\
{\rm arg} \mathop {\rm maximize}\limits_{\bar k \in [0,k]} & \bar k \vspace{1mm} \\
{\rm{subject}}\;{\rm{to}} & \overline g_{p,j} ({\bf u}_{\bar k},\tau_{\bar k}) \vspace{1mm} \\
& \displaystyle + \displaystyle \eta_{c,j}^{e,\bar k} \frac{\partial \overline g_{p,j}}{\partial \tau} \Big |_{({\bf u}_{\bar k},\tau_{\bar k})} ( \tau_{k+1} - \tau_{{\bar k}} ) \vspace{1mm} \\
&  + (1-\eta_{c,j}^{e,\bar k}) \overline \kappa_{p,j\tau}^{e,\bar k} \left( \tau_{k+1} - \tau_{\bar k} \right) \vspace{1mm} \\
& + \delta_e \| \kappa_{p,j}^{m,\bar k} \|_2 \leq 0, \;\; \forall j = 1,...,n_{g_p} \vspace{1mm} \\
& \mathop {\max} \limits_{{\bf u} \in \mathcal{B}_{e,\bar k}} g_j ({\bf u}) \leq 0, \;\; \forall j = 1,...,n_{g} \vspace{1mm} \\
& {\bf u}^L + \delta_e {\bf 1}  \preceq {\bf u}_{\bar k} \preceq {\bf u}^U - \delta_e {\bf 1} \vspace{1mm} \\
& \displaystyle \underline \phi_p ({\bf u}_{\bar k},\tau_{\bar k}) +  \eta_{v,\phi}^{\bar k, k} \frac{\partial \underline \phi_{p}}{\partial \tau} \Big |_{({\bf u}_{\bar k},\tau_{\bar k})} ( \tau_{k} - \tau_{{\bar k}} ) \vspace{1mm} \\
&+ (1-  \eta_{v,\phi}^{\bar k, k}) \underline \kappa_{\phi,\tau}^{\bar k, k} \left( \tau_{k} - \tau_{\bar k} \right) \leq \vspace{1mm} \\
& \mathop {\min} \limits_{\tilde k \in {\bf k}_f} \left[ \begin{array}{l} \overline \phi_p ({\bf u}_{\tilde k},\tau_{\tilde k}) + \vspace{1mm} \\
\displaystyle  \eta_{c,\phi}^{\tilde k, k} \frac{\partial \overline \phi_{p}}{\partial \tau} \Big |_{({\bf u}_{\tilde k},\tau_{\tilde k})} ( \tau_{k} - \tau_{{\tilde k}} ) \vspace{1mm} \\
+ (1-  \eta_{c,\phi}^{\tilde k, k}) \overline \kappa_{\phi,\tau}^{\tilde k, k} \left( \tau_{k} - \tau_{\tilde k} \right) \end{array} \right],
\end{array}
\end{equation}

\vspace{-2mm}
\begin{equation}\label{eq:kfeas5}
{\bf k}_f = \left\{ \bar k : 
\begin{array}{l}
\overline g_{p,j} ({\bf u}_{\bar k},\tau_{\bar k}) \vspace{1mm} \\ 
+ \displaystyle \eta_{c,j}^{e,\bar k} \frac{\partial \overline g_{p,j}}{\partial \tau} \Big |_{({\bf u}_{\bar k},\tau_{\bar k})} ( \tau_{k+1} - \tau_{{\bar k}} ) \vspace{1mm} \\ 
+ (1-\eta_{c,j}^{e,\bar k}) \overline \kappa_{p,j\tau}^{e,\bar k} \left( \tau_{k+1} - \tau_{\bar k} \right) \vspace{1mm} \\ 
+ \delta_e \| \kappa_{p,j}^{m,\bar k} \|_2  \leq 0, \;\; \forall j = 1,...,n_{g_p}; \vspace{1mm} \\
 \mathop {\max} \limits_{{\bf u} \in \mathcal{B}_{e,\bar k}} g_j ({\bf u}) \leq 0, \;\; \forall j = 1,...,n_{g}; \vspace{1mm} \\
{\bf u}^L + \delta_e {\bf 1}  \preceq {\bf u}_{\bar k} \preceq {\bf u}^U - \delta_e {\bf 1}
\end{array} \right\}.
\end{equation}

It may of course occur that no experimental iterate satisfy the restrictions of (\ref{eq:kstarLUccvcostlocMNgradBO}). Naturally, this is more likely in the back-off case since the restrictions are made tighter, and choosing both overly conservative Lipschitz constants or a $\delta_e$ value that is too large can lead to infeasibility in the reference point search. If one cannot obtain a ${\bf u}_{k^*}$ by reducing the conservativism of the Lipschitz constants or choosing to work with a smaller $\delta_e$, the natural option, if one insists on continuing to optimize, would be to choose as the reference that iteration at which the maximum violation is minimized:

\vspace{-2mm}
\begin{equation}\label{eq:kstar2LUccvlocMNgradBO}
k^*  := {\rm arg} \mathop {\rm minimize}\limits_{\bar k \in [0,k]}  \mathop {\max} \left[ g_{p,m}^{\bar k}, g_{m}^{\bar k}, u_{L,m}^{\bar k}, u_{U,m}^{\bar k}  \right],
\end{equation}

\noindent where

\vspace{-2mm}
\begin{equation}\label{eq:viols}
\begin{array}{l}
g_{p,m}^{\bar k} = \mathop {\max} \limits_{j = 1,...,n_{g_p}} \left[ \begin{array}{l}
\overline g_{p,j} ({\bf u}_{\bar k},\tau_{\bar k}) \vspace{1mm} \\ 
+ \displaystyle \eta_{c,j}^{e,\bar k} \frac{\partial \overline g_{p,j}}{\partial \tau} \Big |_{({\bf u}_{\bar k},\tau_{\bar k})} ( \tau_{k+1} - \tau_{{\bar k}} ) \vspace{1mm} \\ 
+ (1-\eta_{c,j}^{e,\bar k}) \overline \kappa_{p,j\tau}^{e,\bar k} \left( \tau_{k+1} - \tau_{\bar k} \right) \vspace{1mm} \\ 
+ \delta_e \| \kappa_{p,j}^{m,\bar k} \|_2 \end{array} \right] \vspace{1mm} \\
g_{m}^{\bar k} = \mathop {\max} \limits_{j = 1,...,n_g} \mathop {\max} \limits_{{\bf u} \in \mathcal{B}_{e,\bar k}} g_j ({\bf u}) \vspace{1mm} \\
u_{L,m}^{\bar k} = \mathop {\max} \limits_{i = 1,...,n_u} \left( u^L_i + \delta_e - u_{\bar k,i} \right)  \vspace{1mm} \\
u_{U,m}^{\bar k} = \mathop {\max} \limits_{i = 1,...,n_u} \left( u_{\bar k,i} + \delta_e - u^U_i  \right).
\end{array}
\end{equation}

To close this subsection, we note that computing the maximum of $g_j$ over $\mathcal{B}_{e,\bar k}$ may not be trivial if the numerical constraint $g_j$ is not concave, and numerical global optimization methods may be required to obtain this value. Alternatively, one may also take an upper concave relaxation of the function and maximize this instead, which may solve numerical issues but may introduce more conservative back-offs.

\subsection{Adding Back-offs to the Feasibility Conditions}

In principle, the condition (\ref{eq:SCFO1idegLUccvalllocMNgrad}), together with (\ref{eq:SCFO2i}), will guarantee feasibility without any additional modifications. However, the goal of the back-offs is to leave some feasible space specifically intended for perturbations -- as already mentioned, the problem being solved once these are added is no longer (\ref{eq:mainprobdeg}) but (\ref{eq:mainprobback}). As such, we are not really interested in generating a chain of experiments that satisfy the constraints of (\ref{eq:mainprobdeg}) but violate the constraints of (\ref{eq:mainprobback}). 

In the ideal case, we would expect the future experimental iterate ${\bf u}_{k+1}$ to become the next reference ${\bf u}_{k^*}$, and so would like to be able to ensure that the appropriate back-offs are ensured for it in advance. In other words, we would like to be able to ensure, \emph{a priori}, that

\vspace{-2mm}
\begin{equation}\label{eq:aprioriback}
\begin{array}{ll}
\overline g_{p,j} ({\bf u}_{k+1},\tau_{k+1}) + b_{p,j}^{k+1,k+1} \leq 0, & \forall j = 1,...,n_{g_p} \vspace{1mm} \\
g_{j} ({\bf u}_{k+1}) + b_{j}^{k+1} \leq 0, & \forall j = 1,...,n_{g} \vspace{1mm} \\
{\bf u}^L + {\bf b}_u \preceq {\bf u}_{k+1} \preceq {\bf u}^U - {\bf b}_u, &
\end{array}
\end{equation}

\noindent where we have simply made the substitutions $k^* \rightarrow k+1$ (assuming that $k+1$ becomes the next reference) and $k \rightarrow k+1$ (indicating a shift in indices) in (\ref{eq:backedoff}). Substituting in the back-off expressions yields the one-iteration-ahead analogue of (\ref{eq:backedoff2}):

\vspace{-2mm}
\begin{equation}\label{eq:aprioriback2a}
\begin{array}{l}
\overline g_{p,j} ({\bf u}_{k+1},\tau_{k+1}) \vspace{1mm} \\
+ \displaystyle \eta_{c,j}^{e,k+1} \frac{\partial \overline g_{p,j}}{\partial \tau} \Big |_{({\bf u}_{k+1},\tau_{k+1})} ( \tau_{k+2} - \tau_{{k+1}} ) \vspace{1mm} \\
+ (1-\eta_{c,j}^{e,k+1}) \overline \kappa_{p,j\tau}^{e,k+1} \left( \tau_{k+2} - \tau_{k+1} \right) \vspace{1mm} \\
 + \delta_e \| \kappa_{p,j}^{m,k+1} \|_2 \leq 0, \;\; \forall j = 1,...,n_{g_p},
\end{array}
\end{equation}

\vspace{-2mm}
\begin{equation}\label{eq:aprioriback2b}
\mathop {\max} \limits_{{\bf u} \in \mathcal{B}_{e,k+1}} g_j ({\bf u}) \leq 0, \;\; \forall j = 1,...,n_{g},
\end{equation}

\vspace{-2mm}
\begin{equation}\label{eq:aprioriback2c}
{\bf u}^L + \delta_e {\bf 1}  \preceq {\bf u}_{k+1} \preceq {\bf u}^U - \delta_e {\bf 1}.
\end{equation}

Let us consider (\ref{eq:aprioriback2b}) and (\ref{eq:aprioriback2c}), as these are relatively simple. For (\ref{eq:aprioriback2c}), we note that one may ensure these constraints by simply modifying the constraints in the projection (shown later). For (\ref{eq:aprioriback2b}), one may modify Condition (\ref{eq:SCFO2i}) to

\vspace{-2mm}
\begin{equation}\label{eq:SCFO2iback}
\mathop {\max} \limits_{{\bf u} \in \mathcal{B}_{K}} g_{j}({\bf u}) \leq 0, \;\; \forall j = 1,...,n_g,
\end{equation}

\noindent where

\vspace{-2mm}
\begin{equation}\label{eq:ballsearch}
\mathcal{B}_{K} = \{ {\bf u} : \| {\bf u} - {\bf u}_{k^*} - K_k \left( \bar {\bf u}_{k+1}^* - {\bf u}_{k^*} \right) \|_2 \leq \delta_e \}.
\end{equation}

It is important to note that the computational burden of the line search, considered up to now to be negligible, may increase significantly depending on the computational burden of evaluating $\mathop {\max} \limits_{{\bf u} \in \mathcal{B}_{K}} g_{j}({\bf u})$, as this computation would now have to be done for every $K_k$ considered in the line search. Again, it may be of practical interest to upper bound $g_j$ by a simpler function for which the computation is cheap if this issue arises.

Condition (\ref{eq:aprioriback2a}) is a bit more difficult to grasp. First, note that while enforcing (\ref{eq:SCFO2iback}) automatically enforces (\ref{eq:SCFO2i}), enforcing (\ref{eq:aprioriback2a}) \emph{does not imply} that the standard feasibility condition $g_{p,j} ({\bf u}_{k+1},\tau_{k+1}) \leq 0$ is satisfied. In fact, all that it implies is that $g_{p,j} ({\bf u},\tau_{k+2}) \leq 0, \; \forall {\bf u} \in \mathcal{B}_K$ and, in turn, that $g_{p,j} ({\bf u}_{k+1},\tau_{k+2}) \leq 0$, which may be possible to satisfy while allowing a violation at $({\bf u}_{k+1},\tau_{k+1})$ due to degradation effects. As such, (\ref{eq:aprioriback2a}) is an additional condition that should be added to the line search, and not a replacement.

In handling (\ref{eq:aprioriback2a}), note that the bound $\overline g_{p,j} ({\bf u}_{k+1},\tau_{k+1})$ may be obtained as the left-hand side of (\ref{eq:SCFO1idegLUccvalllocMNgrad}), with 

\vspace{-2mm}
\begin{equation}\label{eq:SCFO1idegLUccvalllocMNgradgpk1}
\begin{array}{l}
\overline g_{p,j} ({\bf u}_{k+1},\tau_{k+1}) = \vspace{1mm} \\
\mathop {\min} \limits_{\bar k = 0,...,k} \left[ \hspace{-1mm} \begin{array}{l} \overline g_{p,j} ({\bf u}_{\bar k},\tau_{\bar k}) \displaystyle +\eta_{c,j}^{\bar k} \frac{\partial \overline g_{p,j}}{\partial \tau} \Big |_{({\bf u}_{\bar k},\tau_{\bar k})} ( \tau_{k+1} - \tau_{{\bar k}} ) \vspace{1mm} \\
 \displaystyle + (1-\eta_{c,j}^{\bar k}) \overline \kappa_{p,j\tau}^{\bar k} \left( \tau_{k+1} - \tau_{\bar k} \right) \vspace{1mm}\\
\displaystyle   + \sum_{i \in I_{c,j}^{\bar k}} \mathop {\max} \left[ \begin{array}{l} \displaystyle \frac{\partial \underline g_{p,j}}{\partial u_i} \Big |_{({\bf u}_{\bar k},\tau_{\bar k})} ( u_{k^*,i} + \\ \hspace{2mm} K_k (\bar u_{k+1,i}^* - u_{k^*,i} ) - u_{\bar k,i} ), \vspace{1mm}\\ \displaystyle \frac{\partial \overline g_{p,j}}{\partial u_i} \Big |_{({\bf u}_{\bar k},\tau_{\bar k})} ( u_{k^*,i} + \\ \hspace{2mm} K_k (\bar u_{k+1,i}^* - u_{k^*,i} ) - u_{\bar k,i} ) \end{array} \right] \vspace{1mm} \\
 + \displaystyle \sum_{i \not \in I_{c,j}^{\bar k}} \mathop {\max} \left[ \begin{array}{l} \underline \kappa_{p,ji}^{\bar k} ( u_{k^*,i} + \\ \hspace{2mm} K_k (\bar u_{k+1,i}^* - u_{k^*,i} ) - u_{\bar k,i} ), \vspace{1mm}\\ \overline \kappa_{p,ji}^{\bar k} ( u_{k^*,i} + \\ \hspace{2mm} K_k (\bar u_{k+1,i}^* - u_{k^*,i} ) - u_{\bar k,i} ) \end{array} \right] \end{array} \hspace{-1mm} \right].
\end{array}
\end{equation}

\noindent The portion of (\ref{eq:aprioriback2a}) that corresponds to the back-off is more troublesome, however, as $\eta_{c,j}^{e,k+1}$, $\frac{\partial \overline g_{p,j}}{\partial \tau} \Big |_{({\bf u}_{k+1},\tau_{k+1})}$, $\overline \kappa_{p,j\tau}^{e,k+1}$, and $\kappa_{p,j}^{m,k+1}$ are all functions of $K_k$ and must be evaluated at each point in the line search. While the difficulty is not conceptual -- in theory, one could obtain all of these values given a certain $K_k$ -- it is relevant in practice, since certain components require calling a gradient estimation algorithm and thus could lead to large computational burdens. Removing the concavity and locality relaxations would solve this issue, however, as the need to estimate gradients would be removed and the above-stated components would simply be constants.

The addition to (\ref{eq:SCFO1idegLUccvalllocMNgrad}) is then given as

\vspace{-2mm}
\begin{equation}\label{eq:SCFO1idegLUccvalllocMNgradback}
\begin{array}{l}
\mathop {\min} \limits_{\bar k = 0,...,k} \left[ \hspace{-1mm} \begin{array}{l} \overline g_{p,j} ({\bf u}_{\bar k},\tau_{\bar k}) \displaystyle +\eta_{c,j}^{\bar k} \frac{\partial \overline g_{p,j}}{\partial \tau} \Big |_{({\bf u}_{\bar k},\tau_{\bar k})} ( \tau_{k+1} - \tau_{{\bar k}} ) \vspace{1mm} \\
 \displaystyle + (1-\eta_{c,j}^{\bar k}) \overline \kappa_{p,j\tau}^{\bar k} \left( \tau_{k+1} - \tau_{\bar k} \right) \vspace{1mm}\\
\displaystyle   + \sum_{i \in I_{c,j}^{\bar k}} \mathop {\max} \left[ \begin{array}{l} \displaystyle \frac{\partial \underline g_{p,j}}{\partial u_i} \Big |_{({\bf u}_{\bar k},\tau_{\bar k})} ( u_{k^*,i} + \\ \hspace{2mm} K_k (\bar u_{k+1,i}^* - u_{k^*,i} ) - u_{\bar k,i} ), \vspace{1mm}\\ \displaystyle \frac{\partial \overline g_{p,j}}{\partial u_i} \Big |_{({\bf u}_{\bar k},\tau_{\bar k})} ( u_{k^*,i} + \\ \hspace{2mm} K_k (\bar u_{k+1,i}^* - u_{k^*,i} ) - u_{\bar k,i} ) \end{array} \right] \vspace{1mm} \\
 + \displaystyle \sum_{i \not \in I_{c,j}^{\bar k}} \mathop {\max} \left[ \begin{array}{l} \underline \kappa_{p,ji}^{\bar k} ( u_{k^*,i} + \\ \hspace{2mm} K_k (\bar u_{k+1,i}^* - u_{k^*,i} ) - u_{\bar k,i} ), \vspace{1mm}\\ \overline \kappa_{p,ji}^{\bar k} ( u_{k^*,i} + \\ \hspace{2mm} K_k (\bar u_{k+1,i}^* - u_{k^*,i} ) - u_{\bar k,i} ) \end{array} \right] \end{array} \hspace{-1mm} \right] \vspace{1mm} \\
+ \displaystyle \eta_{c,j}^{K} \frac{\partial \overline g_{p,j}}{\partial \tau} \Big |_{\left( {\bf u}_{k^*} + K_k ( \bar {\bf u}_{k+1}^* - {\bf u}_{k^*} ),\tau_{k+1}\right)} ( \tau_{k+2} - \tau_{{k+1}} ) \vspace{1mm} \\
+ (1-\eta_{c,j}^{K}) \overline \kappa_{p,j\tau}^{K} \left( \tau_{k+2} - \tau_{k+1} \right) + \delta_e \| \kappa_{p,j}^{m,K} \|_2  \leq 0, \vspace{1mm} \\
\hspace{50mm} \forall j = 1,...,n_{g_p},
\end{array}
\end{equation}

\noindent where, defining

\vspace{-2mm}
\begin{equation}\label{eq:locspace4}
\mathcal{I}_{\tau}^{K} = \mathcal{B}_{K} \times \left\{ \tau : \tau_{k+1} \leq \tau \leq \tau_{k+2} \right\},
\end{equation}

\noindent we define $\eta_{c,j}^K$ and $\overline \kappa_{p,j\tau}^{K}$ as the analogues of $\eta_{c,j}^{e,k^*}$ and $\overline \kappa_{p,j\tau}^{e,k^*}$ over $\mathcal{I}_{\tau}^K$ (as opposed to $\mathcal{I}_{\tau}^{e,k^*}$). Likewise, $\kappa_{p,j}^{m,K}$ is defined as

\vspace{-2mm}
\begin{equation}\label{eq:kappastarM}
\begin{array}{l}
\kappa_{p,ji}^{m,K} = \left\{ \begin{array}{l}  \mathop {\max} \left[ \begin{array}{l} \displaystyle \Bigg |  \frac{\partial \underline g_{p,j}}{\partial u_i} \Big |_{\left( {\bf u}_{k^*} + K_k ( \bar {\bf u}_{k+1}^* - {\bf u}_{k^*} ),\tau_{k+1}\right)} \Bigg |,\vspace{1mm} \\ \displaystyle \Bigg | \frac{\partial \overline g_{p,j}}{\partial u_i} \Big |_{\left({\bf u}_{k^*} + K_k ( \bar {\bf u}_{k+1}^* - {\bf u}_{k^*} ),\tau_{k+1}\right)} \Bigg | \end{array} \right], \vspace{1mm} \\
\hspace{47mm} i \in I_{c,j}^{K} \vspace{1mm} \\  \mathop {\max} \left[  | \underline \kappa_{p,ji}^{K} |, | \overline \kappa_{p,ji}^{K} | \right], \hspace{17mm} i \not \in I_{c,j}^{K}  \end{array} \right . \vspace{1mm} \\
\kappa_{p,j}^{m,K} = \left[ \kappa_{p,j1}^{m,K}\; \hdots \; \kappa_{p,jn_u}^{m,K}  \right]^T,
\end{array}
\end{equation}

\noindent with $I_{c,j}^K$ denoting the concavity indices over $\mathcal{I}_\tau^{K}$ and $\underline \kappa_{p,ji}^{K}, \overline \kappa_{p,ji}^{K}$ defined as

\vspace{-2mm}
\begin{equation}\label{eq:lipcondegLUlocK}
\begin{array}{l}
\displaystyle \underline \kappa_{p,ji} \leq \underline \kappa_{p,ji}^{K} < \frac{\partial g_{p,j}}{\partial u_i} \Big |_{({\bf u},\tau)} < \overline \kappa_{p,ji}^{K} \leq \overline \kappa_{p,ji}, \vspace{1mm} \\
\hspace{45mm} \forall ({\bf u},\tau) \in \mathcal{I}_\tau^{K}.
\end{array}
\end{equation}

Note that an additional bit of information, in the form of $\tau_{k+2}$, is required in this implementation, as the user should specify not only the time of the upcoming experiment at $\tau_{k+1}$ but also the time of the one after.

Practically, it may occur that the degradation term

\vspace{-2mm}
$$
\begin{array}{l}
\displaystyle \eta_{c,j}^{K} \frac{\partial \overline g_{p,j}}{\partial \tau} \Big |_{\left({\bf u}_{k^*} + K_k ( \bar {\bf u}_{k+1}^* - {\bf u}_{k^*} ),\tau_{k+1} \right)} ( \tau_{k+2} - \tau_{{k+1}} ) \vspace{1mm} \\
+ (1-\eta_{c,j}^{K}) \overline \kappa_{p,j\tau}^{K} \left( \tau_{k+2} - \tau_{k+1} \right)
\end{array}
$$

\noindent render the condition (\ref{eq:SCFO1idegLUccvalllocMNgradback}) infeasible. In this case, we suggest using an alternative to  (\ref{eq:SCFO1idegLUccvalllocMNgradback}) that ignores this term:

\vspace{-2mm}
\begin{equation}\label{eq:SCFO1idegLUccvalllocMNgradback2}
\begin{array}{l}
\mathop {\min} \limits_{\bar k = 0,...,k} \left[ \hspace{-1mm} \begin{array}{l} \overline g_{p,j} ({\bf u}_{\bar k},\tau_{\bar k}) \displaystyle +\eta_{c,j}^{\bar k} \frac{\partial \overline g_{p,j}}{\partial \tau} \Big |_{({\bf u}_{\bar k},\tau_{\bar k})} ( \tau_{k+1} - \tau_{{\bar k}} ) \vspace{1mm} \\
 \displaystyle + (1-\eta_{c,j}^{\bar k}) \overline \kappa_{p,j\tau}^{\bar k} \left( \tau_{k+1} - \tau_{\bar k} \right) \vspace{1mm}\\
\displaystyle   + \sum_{i \in I_{c,j}^{\bar k}} \mathop {\max} \left[ \begin{array}{l} \displaystyle \frac{\partial \underline g_{p,j}}{\partial u_i} \Big |_{({\bf u}_{\bar k},\tau_{\bar k})} ( u_{k^*,i} + \\ \hspace{2mm} K_k (\bar u_{k+1,i}^* - u_{k^*,i} ) - u_{\bar k,i} ), \vspace{1mm}\\ \displaystyle \frac{\partial \overline g_{p,j}}{\partial u_i} \Big |_{({\bf u}_{\bar k},\tau_{\bar k})} ( u_{k^*,i} + \\ \hspace{2mm} K_k (\bar u_{k+1,i}^* - u_{k^*,i} ) - u_{\bar k,i} ) \end{array} \right] \vspace{1mm} \\
 + \displaystyle \sum_{i \not \in I_{c,j}^{\bar k}} \mathop {\max} \left[ \begin{array}{l} \underline \kappa_{p,ji}^{\bar k} ( u_{k^*,i} + \\ \hspace{2mm} K_k (\bar u_{k+1,i}^* - u_{k^*,i} ) - u_{\bar k,i} ), \vspace{1mm}\\ \overline \kappa_{p,ji}^{\bar k} ( u_{k^*,i} + \\ \hspace{2mm} K_k (\bar u_{k+1,i}^* - u_{k^*,i} ) - u_{\bar k,i} ) \end{array} \right] \end{array} \hspace{-1mm} \right] \vspace{1mm} \\
\hspace{25mm} + \delta_e \| \kappa_{p,j}^{m,K} \|_2  \leq 0, \;\; \forall j = 1,...,n_{g_p}.
\end{array}
\end{equation}

\begin{figure*}
\begin{center}
\includegraphics[width=16cm]{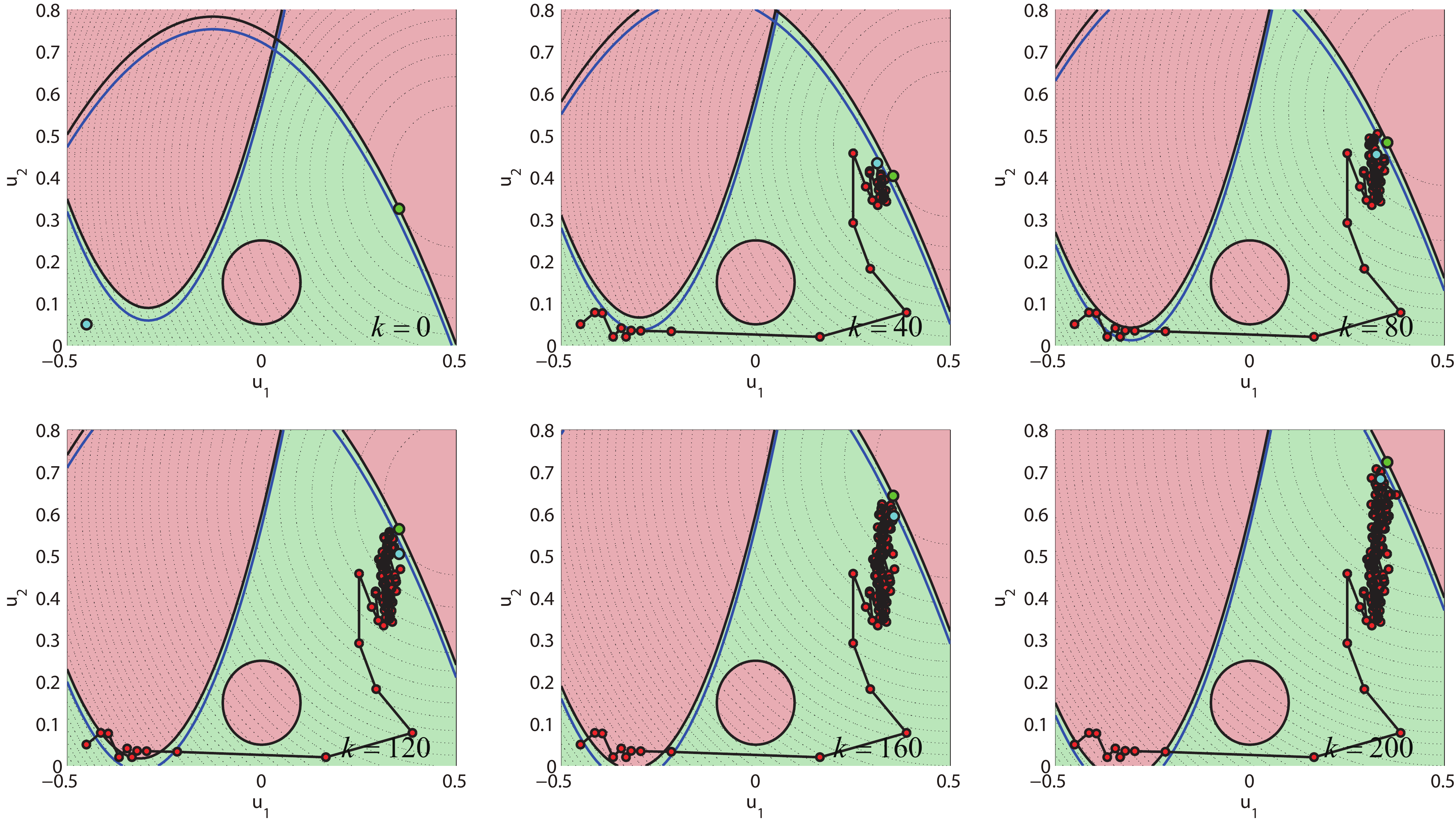}
\caption{Chain of experiments generated by applying the modified SCFO methodology to Problem (\ref{eq:exdeg}) for the ($-$) scenario with a sufficient excitation requirement of $\delta_e = 0.02$ added.}
\label{fig:degIa}
\end{center}
\end{figure*}

\subsection{Adding Back-offs in the Projection}

Since the feasibility conditions have essentially been offset by the back-offs, it makes sense that the projection conditions of the SCFO are offset as well to avoid premature convergence to a backed off constraint. As already mentioned, the bound constraints are also offset so as to ensure satisfaction of (\ref{eq:aprioriback2c}):

\vspace{-2mm}
\begin{equation}\label{eq:projdeg2robslackBO}
\begin{array}{rl}
\bar {\bf u}_{k+1}^* := \;\;\;\;\;\;\;\;\; & \vspace{1mm}\\
 {\rm arg} \mathop {\rm minimize}\limits_{{\bf u},{\bf s}_\phi, {\bf S}} & \| {\bf u} - {\bf u}_{k+1}^* \|_2^2 \vspace{1mm}  \\
 {\rm{subject}}\;{\rm{to}} & \displaystyle \sum_{i=1}^{n_u} s_{ji}  \leq -\delta_{g_p,j} \vspace{1mm} \\
& \displaystyle \frac{\partial \underline g_{p,j}}{\partial u_i} \Big |_{({\bf u}_{k^*}, \tau_{k+1})} (u_i - u_{k^*,i}) \leq s_{ji} \vspace{1mm} \\
& \displaystyle \frac{\partial \overline g_{p,j}}{\partial u_i} \Big |_{({\bf u}_{k^*}, \tau_{k+1})} (u_i - u_{k^*,i}) \leq s_{ji}, \vspace{1mm} \\
& \forall i = 1,...,n_u, \vspace{1mm} \\
&  \forall j: \overline g_{p,j} ({\bf u}_{k^*},\tau_{k^*}) + b_{p,j}^{k^*,k} \geq -\epsilon_{p,j} \vspace{1mm} \\
 & \nabla g_{j} ({\bf u}_{k^*})^T ({\bf u} - {\bf u}_{k^*}) \leq -\delta_{g,j}, \vspace{1mm} \\
& \forall j : g_{j}({\bf u}_{k^*}) + b_j^{k^*,k} \geq -\epsilon_{j} \vspace{1mm} \\
 & \displaystyle \sum_{i=1}^{n_u} s_{\phi,i}  \leq -\delta_{\phi} \vspace{1mm} \\
& \displaystyle \frac{\partial \underline \phi_{p}}{\partial u_i} \Big |_{({\bf u}_{k^*}, \tau_{k+1})} (u_i - u_{k^*,i}) \leq s_{\phi,i} \vspace{1mm} \\
& \displaystyle \frac{\partial \overline \phi_{p}}{\partial u_i} \Big |_{({\bf u}_{k^*}, \tau_{k+1})} (u_i - u_{k^*,i}) \leq s_{\phi,i} \vspace{1mm} \\
 & {\bf u}^L + \delta_e {\bf 1} \preceq {\bf u} \preceq {\bf u}^U - \delta_e {\bf 1}, 
\end{array}
\end{equation}

\noindent or, with the expressions (\ref{eq:suffback}) and (\ref{eq:suffbacknum}) substituted in for the back-offs:

\vspace{-2mm}
\begin{equation}\label{eq:projdeg2robslackBO2}
\begin{array}{rl}
\bar {\bf u}_{k+1}^* := \;\;\;\;\;\;\;\;\; & \vspace{1mm}\\
 {\rm arg} \mathop {\rm minimize}\limits_{{\bf u},{\bf s}_\phi, {\bf S}} & \| {\bf u} - {\bf u}_{k+1}^* \|_2^2 \vspace{1mm}  \\
 {\rm{subject}}\;{\rm{to}} & \displaystyle \sum_{i=1}^{n_u} s_{ji}  \leq -\delta_{g_p,j} \vspace{1mm} \\
& \displaystyle \frac{\partial \underline g_{p,j}}{\partial u_i} \Big |_{({\bf u}_{k^*}, \tau_{k+1})} (u_i - u_{k^*,i}) \leq s_{ji} \vspace{1mm} \\
& \displaystyle \frac{\partial \overline g_{p,j}}{\partial u_i} \Big |_{({\bf u}_{k^*}, \tau_{k+1})} (u_i - u_{k^*,i}) \leq s_{ji}, \vspace{1mm} \\
& \forall i = 1,...,n_u, \vspace{1mm} \\
&  \forall j: \begin{array}{l} \overline g_{p,j} ({\bf u}_{k^*},\tau_{k^*}) \vspace{1mm} \\
 +  \displaystyle \eta_{c,j}^{e,k^*} \frac{\partial \overline g_{p,j}}{\partial \tau} \Big |_{({\bf u}_{k^*},\tau_{k^*})} ( \tau_{k+1} - \tau_{{ k^*}} ) \vspace{1mm} \\ 
 + (1-\eta_{c,j}^{e,k^*}) \overline \kappa_{p,j\tau}^{e,k^*} \left( \tau_{k+1} - \tau_{k^*} \right) \vspace{1mm} \\ 
 + \delta_e \| \kappa_{p,j}^{m,k^*} \|_2 \geq -\epsilon_{p,j} \end{array}
\end{array}
\end{equation}

$$
\begin{array}{rl}
 & \nabla g_{j} ({\bf u}_{k^*})^T ({\bf u} - {\bf u}_{k^*}) \leq -\delta_{g,j}, \vspace{1mm} \\
& \forall j : \mathop {\max} \limits_{{\bf u} \in \mathcal{B}_{e,k^*}} g_{j}({\bf u}) \geq -\epsilon_{j} \vspace{1mm} \\
 & \displaystyle \sum_{i=1}^{n_u} s_{\phi,i}  \leq -\delta_{\phi} \vspace{1mm} \\
& \displaystyle \frac{\partial \underline \phi_{p}}{\partial u_i} \Big |_{({\bf u}_{k^*}, \tau_{k+1})} (u_i - u_{k^*,i}) \leq s_{\phi,i} \vspace{1mm} \\
& \displaystyle \frac{\partial \overline \phi_{p}}{\partial u_i} \Big |_{({\bf u}_{k^*}, \tau_{k+1})} (u_i - u_{k^*,i}) \leq s_{\phi,i} \vspace{1mm} \\
 & {\bf u}^L + \delta_e {\bf 1} \preceq {\bf u} \preceq {\bf u}^U - \delta_e {\bf 1}. 
\end{array}
$$

\subsection{Example}
\label{sec:BOex}

We build on the $\alpha_\sigma = 0.05$ example of Section \ref{sec:gradestex} by adding the requirement that feasible perturbations of norm $\delta_e$ always be possible, and incorporate the theory discussed in this section into the implementation. 

Due to the concavity (or absence thereof) being global in the constraint functions, we simply set $I_{c,j}^{e,k^*} := I_{c,j}$, but set $I_{c,j}^K := \varnothing$ to simplify the line search and avoid gradient bound computations during the search. The local Lipschitz constants are computed in a manner analogous to (\ref{eq:liploc1})-(\ref{eq:liploc5}). The maximum of $g_1$ over a ball may be upper bounded by its maximum over an inscribing box, for which the maximum may be easily evaluated due to the separability of $g_1$:

\vspace{-2mm}
\begin{equation}\label{eq:maxterm10}
\begin{array}{l}
\mathop {\max} \limits_{{\bf u} \in \mathcal{B}_{e,k^*}} \left( -u_1^2 - (u_2-0.15)^2 + 0.01 \right) \\
\leq \mathop {\max} \limits_{{\bf u} \in {\rm bbox} ({\bf u}_{k^*} \pm \delta_e {\bf 1})} \left( -u_1^2 - (u_2-0.15)^2 + 0.01 \right) \\
= - \mathop {\min} \limits_{u_1 \in [u_1-\delta_e,u_1+\delta_e]} u_1^2 \\
\hspace{3.4mm}- \mathop {\min} \limits_{u_2 \in [u_2-\delta_e,u_2+\delta_e]} (u_2-0.15)^2 + 0.01,
\end{array}
\end{equation}

\noindent where the separate minimum terms may be easily solved analytically, the minimum occurring at either $u_i-\delta_e$ or $u_i+\delta_e$ or at the value that brings the term to 0, if this value belongs to ${\rm bbox} ({\bf u}_{k^*} \pm \delta_e {\bf 1})$. The same is done to easily compute the maximum over $\mathcal{B}_{K}$ in the line search.

Sufficient excitation is enforced whenever the $\| {\bf u}_{k+1} - {\bf u}_{k^*} \|_2$ value for the ${\bf u}_{k+1}$ found by the standard implementation is inferior to $\delta_e$. In this example, we do this in a somewhat brute manner by simply picking a random unit vector $\delta {\bf u}_r$ and redefining ${\bf u}_{k+1}$ as

\vspace{-2mm}
\begin{equation}\label{eq:uredef}
{\bf u}_{k+1} := {\bf u}_{k^*} + \delta_e \delta {\bf u}_r,
\end{equation}

\noindent more intelligent perturbation schemes of course being possible.

Results for $\delta_e = 0.02$ are provided in Fig. \ref{fig:degIa}, which show that feasibility is maintained throughout despite the sufficient excitation requirement, which is shown to be enforced in Fig. \ref{fig:degIperta}. The cost function values are given in Fig. \ref{fig:degIcosta}.

\begin{figure}
\begin{center}
\includegraphics[width=8cm]{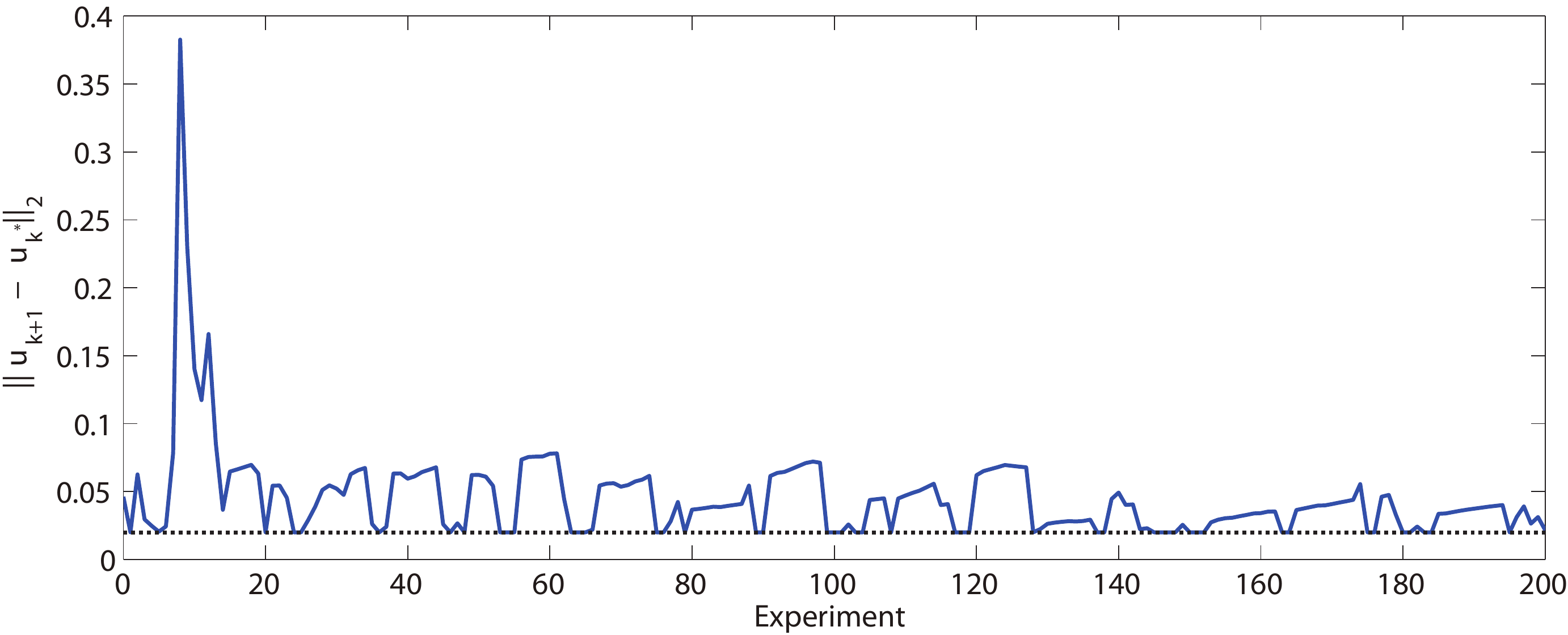}
\caption{Size of the steps taken between experimental iterations. The dashed black line gives the value of $\delta_e$.}
\label{fig:degIperta}
\end{center}
\end{figure}

\begin{figure}
\begin{center}
\includegraphics[width=8cm]{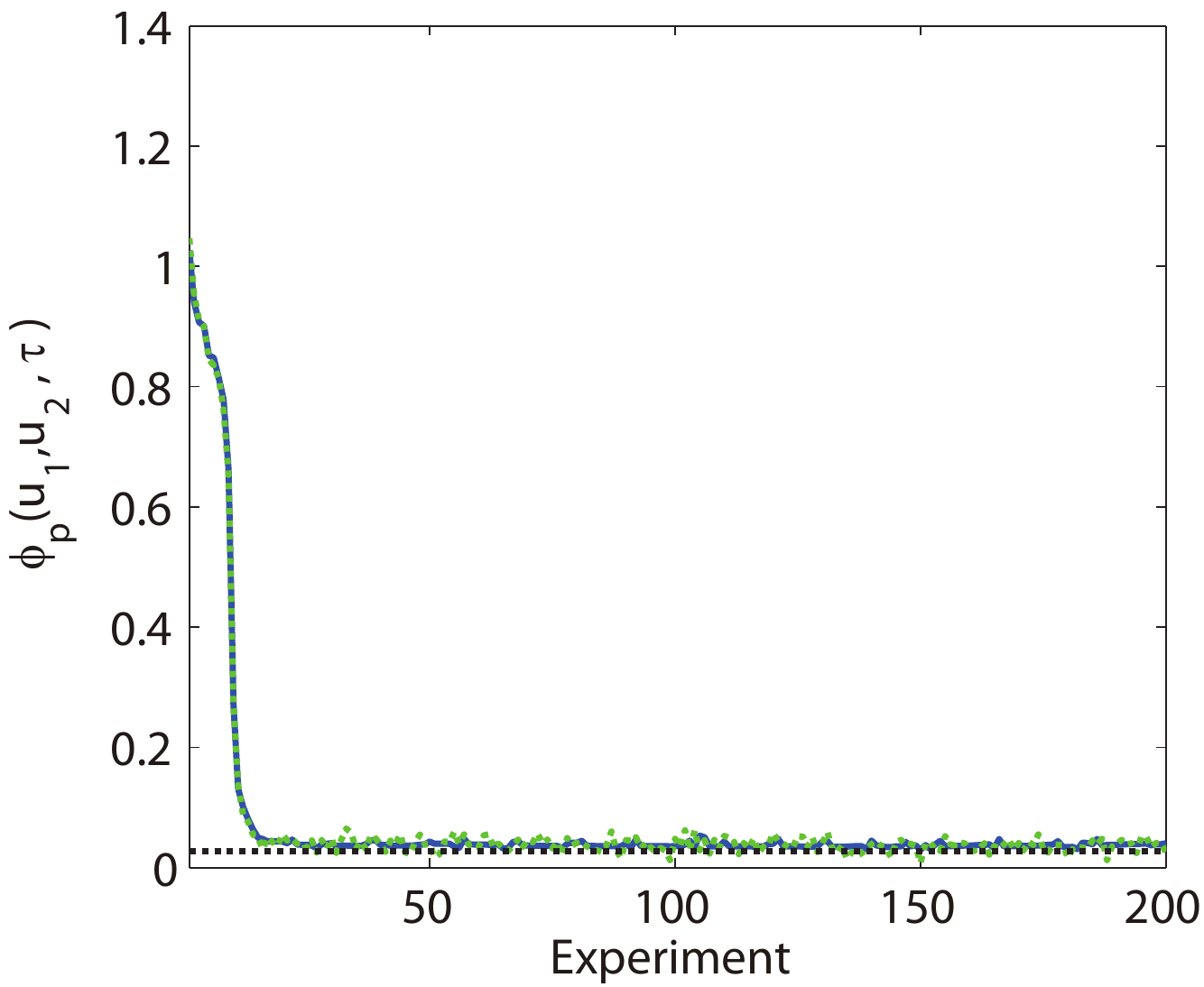}
\caption{Cost function values obtained by the modified SCFO methodology for Problem (\ref{eq:exdeg}) for the ($-$) scenario with a sufficient excitation requirement of $\delta_e = 0.02$ enforced.}
\label{fig:degIcosta}
\end{center}
\end{figure}

\section{Accomodating Soft Constraints}
\label{sec:softcon}

Up to now, we have developed all of our theory for the case where the experimental and numerical constraints were considered inviolable, in that we had the requirement

\vspace{-2mm}
\begin{equation}\label{eq:inviol}
\begin{array}{ll}
g_{p,j} ({\bf u}_k,\tau_k) \leq 0, \; & \forall j = 1,...,n_{g_p} \\
g_{j} ({\bf u}_k) \leq 0, \; & \forall j = 1,...,n_{g}
\end{array}
\end{equation}

\noindent for all experimental iterations $k$. However, it may often occur that one can violate certain constraints temporarily during the convergence process. Supposing an allowable violation for each constraint and denoting this by $d \geq 0$, this is tantamount to saying that one may allow

\vspace{-2mm}
\begin{equation}\label{eq:inviolslack}
\begin{array}{ll}
g_{p,j} ({\bf u}_k,\tau_k) \leq d_{p,j},\; & \forall j = 1,...,n_{g_p} \\
g_{j} ({\bf u}_k) \leq d_j,\; & \forall j = 1,...,n_{g}
\end{array}
\end{equation}

\noindent for some iterations.

Such a relaxation is of practical interest as it generally leads to faster convergence speed, the SCFO being allowed to take greater steps due to less stringent feasibility conditions. However, some care should be taken to manage the slacks $d$, as one needs to ensure that the violations are, in some sense, bounded and do not last for an arbitrarily large number of experiments since they are constrained to be temporary by definition.

Here, we propose to ensure these properties by making the slacks iteration-dependent

\vspace{-2mm}
\begin{equation}\label{eq:inviolslackiter}
\begin{array}{ll}
g_{p,j} ({\bf u}_k,\tau_k) \leq d_{p,j}^{k},\; & \forall j = 1,...,n_{g_p} \\
g_{j} ({\bf u}_k) \leq d_j^k, \; & \forall j = 1,...,n_{g}
\end{array}
\end{equation}

\noindent and managing them as follows:

\vspace{-2mm}
\begin{equation}\label{eq:slackmanage}
\begin{array}{l}
d_{p,j}^0 := \overline d_{p,j}, \;d_j^0 := \overline d_j \vspace{1mm} \\
d_{p,j}^{k+1} := \left\{ \begin{array}{ll} \beta_{p,j} d_{p,j}^k, & \;\;\; \overline g_{p,j} ({\bf u}_k, \tau_k) > 0 \\ d_{p,j}^k, & \;\;\; \overline g_{p,j} ({\bf u}_k, \tau_k) \leq 0 \end{array} \right . \vspace{1mm} \\
d_{j}^{k+1} := \left\{ \begin{array}{ll} \beta_j d_{j}^k, & \;\;\; g_{j} ({\bf u}_k) > 0 \\ d_{j}^k, & \;\;\; g_{j} ({\bf u}_k) \leq 0, \end{array} \right . 
\end{array}
\end{equation}

\noindent where $\overline d_{p,j}$ and $\overline d_{j}$ are the maximum allowable violations while $\beta_{p,j}, \beta_j \in [0,1)$ are slack reduction constants. The temporary nature of the violations is then quantified by

\vspace{-3mm}
\begin{equation}\label{eq:violsum}
\begin{array}{l}
\displaystyle \sum_{\bar k =0}^{\infty} \mathop {\max} \left[ 0, g_{p,j} ({\bf u}_{\bar k}, \tau_{\bar k}) \right]   \leq d_{p,j}^{S} \vspace{1mm} \\
\displaystyle \sum_{\bar k =0}^{\infty} \mathop {\max} \left[ 0, g_{j} ({\bf u}_{\bar k}) \right]   \leq d_{j}^{S},
\end{array}
\end{equation}

\noindent where $d_{p,j}^{S}, d_{j}^{S} \geq 0$ are some user-specified constants and may be thought of as upper bounds on the ``discrete violation integrals''.

We now derive sufficiently low values of $\beta_{p,j}, \beta_j$ to guarantee that the bounds (\ref{eq:violsum}) are satisfied.

\begin{theorem}[Slack reduction constant for a given experimental constraint]
\label{thm:beta}
Let $I_{\beta}$ denote the set of experimental iteration indices, ordered from smallest to largest, where the constraint cannot be proven to be satisfied:

\vspace{-2mm}
\begin{equation}\label{eq:Ibeta}
I_\beta = \left\{  \bar k : \overline g_{p,j} ({\bf u}_{\bar k}, \tau_{\bar k}) > 0  \right\}.
\end{equation}

\noindent If the update strategy (\ref{eq:slackmanage}) is applied with

\vspace{-2mm}
\begin{equation}\label{eq:betamax}
\beta_{p,j} \leq \frac{d_{p,j}^S - \overline d_{p,j}}{d_{p,j}^S},
\end{equation}

\noindent then the bound for $g_{p,j}$ in (\ref{eq:violsum}) is satisfied.

\end{theorem}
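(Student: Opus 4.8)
The plan is to track how the slack $d_{p,j}^k$ evolves over the iterations in $I_\beta$ and to bound the total violation by the sum of the slacks at the iterations where violations are permitted. First I would observe that, by the update rule (\ref{eq:slackmanage}), the slack only decreases (by the factor $\beta_{p,j}$) at iterations $\bar k \in I_\beta$ and stays constant otherwise; hence if $I_\beta = \{ \bar k_1, \bar k_2, \ldots \}$ in increasing order, then $d_{p,j}^{\bar k_m} = \beta_{p,j}^{m-1} \overline d_{p,j}$ for the $m$-th such iteration. The key point is that at each iteration $\bar k$ the constraint relaxation being enforced is $g_{p,j}({\bf u}_{\bar k},\tau_{\bar k}) \le d_{p,j}^{\bar k}$, so $\max[0, g_{p,j}({\bf u}_{\bar k},\tau_{\bar k})] \le d_{p,j}^{\bar k}$, and moreover this max is zero whenever $\bar k \notin I_\beta$ is not the only worry --- one must be a little careful, since $\overline g_{p,j}({\bf u}_{\bar k},\tau_{\bar k}) \le 0$ (the condition defining $\bar k \notin I_\beta$) implies $g_{p,j}({\bf u}_{\bar k},\tau_{\bar k}) < \overline g_{p,j}({\bf u}_{\bar k},\tau_{\bar k}) \le 0$ by (\ref{eq:bestbounds}), so indeed the violation term vanishes off $I_\beta$.

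Given that, I would write
\begin{equation}\label{eq:betaproofsum}
\sum_{\bar k = 0}^{\infty} \max\left[ 0, g_{p,j}({\bf u}_{\bar k},\tau_{\bar k}) \right] = \sum_{m \ge 1} \max\left[ 0, g_{p,j}({\bf u}_{\bar k_m},\tau_{\bar k_m}) \right] \le \sum_{m \ge 1} d_{p,j}^{\bar k_m} = \sum_{m \ge 1} \beta_{p,j}^{m-1} \overline d_{p,j} = \frac{\overline d_{p,j}}{1 - \beta_{p,j}},
\end{equation}
where the geometric series converges because $\beta_{p,j} \in [0,1)$. It then remains to check that the bound (\ref{eq:betamax}) forces $\overline d_{p,j}/(1-\beta_{p,j}) \le d_{p,j}^S$. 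Rearranging (\ref{eq:betamax}) gives $1 - \beta_{p,j} \ge \overline d_{p,j}/d_{p,j}^S$, hence $\overline d_{p,j}/(1-\beta_{p,j}) \le d_{p,j}^S$, which is exactly the bound for $g_{p,j}$ claimed in (\ref{eq:violsum}). (One should note in passing that (\ref{eq:betamax}) only yields a nonnegative $\beta_{p,j}$, as required by the hypothesis $\beta_{p,j} \in [0,1)$, when $d_{p,j}^S \ge \overline d_{p,j}$; I would mention that this is a natural standing assumption, since otherwise even a single maximal violation already exceeds the budget.)

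The step I expect to be the only subtle point is the one flagged above: the slacks $d_{p,j}^k$ are reduced based on the \emph{robust upper bound} $\overline g_{p,j}({\bf u}_k,\tau_k)$ rather than the true value, so I must argue that the set $I_\beta$ defined via $\overline g_{p,j} > 0$ really does contain every iteration where a genuine violation occurs; this follows because $g_{p,j}({\bf u}_{\bar k},\tau_{\bar k}) \le \overline g_{p,j}({\bf u}_{\bar k},\tau_{\bar k})$, so $\overline g_{p,j}({\bf u}_{\bar k},\tau_{\bar k}) \le 0$ implies no violation. Everything else is the elementary geometric-series estimate in (\ref{eq:betaproofsum}) together with the algebraic rearrangement of (\ref{eq:betamax}). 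The analogous statement for the numerical constraints $g_j$ --- which are not subject to measurement error in the same way, so one works with $g_j({\bf u}_k)$ directly --- would be proved identically and I would simply remark that the proof carries over verbatim with $\beta_j \le (d_j^S - \overline d_j)/d_j^S$.
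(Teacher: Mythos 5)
Your proof is correct and follows essentially the same route as the paper's: restrict the violation sum to the index set $I_\beta$ using $g_{p,j} \leq \overline g_{p,j}$, use the update law to bound the violation at the $m$-th element of $I_\beta$ by $\beta_{p,j}^{m-1}\overline d_{p,j}$, sum the geometric series to get $\overline d_{p,j}/(1-\beta_{p,j})$, and rearrange (\ref{eq:betamax}). The only additions beyond the paper's argument are your explicit remarks on the standing assumption $d_{p,j}^S \geq \overline d_{p,j}$ and on why the robust bound $\overline g_{p,j}$ suffices to define $I_\beta$, both of which are consistent with the paper.
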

\begin{proof}
Let us rewrite the violation integral as

\vspace{-2mm}
\begin{equation}\label{eq:violsum2}
\begin{array}{l}
\displaystyle \sum_{\bar k =0}^{\infty} \mathop {\max} \left[ 0, g_{p,j} ({\bf u}_{\bar k}, \tau_{\bar k}) \right] \\
\hspace{10mm} \displaystyle = \sum_{i = 0}^{\# I_\beta - 1} \mathop {\max} [ 0, g_{p,j} ({\bf u}_{I_\beta (i)}, \tau_{I_\beta (i)}) ],
\end{array}
\end{equation}

\noindent with $\# I_\beta$ denoting the cardinality of $I_\beta$ and $I_\beta (i-1)$ denoting its $i$-th element. The validity of (\ref{eq:violsum2}) follows from the fact that $g_{p,j} ({\bf u}_{\bar k}, \tau_{\bar k}) \leq \overline g_{p,j} ({\bf u}_{\bar k}, \tau_{\bar k})$, which implies that any non-zero term in the left-hand-side summation must also appear on the right-hand side as it will be indexed by $I_\beta$.

From the requirement (\ref{eq:inviolslackiter}) and the update law (\ref{eq:slackmanage}), we have that

\vspace{-2mm}
\begin{equation}\label{eq:update}
\begin{array}{l}
 g_{p,j} ({\bf u}_{I_\beta (0)}, \tau_{I_\beta (0)}) \leq \overline d_{p,j} \\
 g_{p,j} ({\bf u}_{I_\beta (1)}, \tau_{I_\beta (1)}) \leq \beta_{p,j} \overline d_{p,j} \\
\hspace{10mm} \vdots \\
 g_{p,j} ({\bf u}_{I_\beta (i)}, \tau_{I_\beta (i)}) \leq \beta_{p,j}^i \overline d_{p,j},
\end{array}
\end{equation}

\noindent which then allows us to upper bound the summation further as

\vspace{-2mm}
\begin{equation}\label{eq:violsum3}
\begin{array}{lll}
\displaystyle \sum_{\bar k =0}^{\infty} \mathop {\max} \left[ 0, g_{p,j} ({\bf u}_{\bar k}, \tau_{\bar k}) \right] & \leq &  \displaystyle \sum_{i = 0}^{\# I_\beta  - 1} \mathop {\max} [0, \beta_{p,j}^i \overline d_{p,j}  ] \vspace{1mm} \\
& = & \displaystyle \overline d_{p,j}  \sum_{i = 0}^{\# I_\beta  - 1} \beta_{p,j}^i \vspace{1mm}  \\
& \leq & \overline d_{p,j} \displaystyle \sum_{i = 0}^{\infty} \beta_{p,j}^i \vspace{1mm} \\
& = & \displaystyle \frac{\overline d_{p,j}}{1-\beta_{p,j}},
\end{array}
\end{equation}

\noindent which takes its largest value when $\beta_{p,j}$ is maximized. Substituting in the maximum value given by the imposed bound in (\ref{eq:betamax}), we see that

\vspace{-2mm}
\begin{equation}\label{eq:violsum4}
\displaystyle \sum_{\bar k =0}^{\infty} \mathop {\max} \left[ 0, g_{p,j} ({\bf u}_{\bar k}, \tau_{\bar k}) \right] \leq \frac{\overline d_{p,j}}{1-\frac{d_{p,j}^S - \overline d_{p,j}}{d_{p,j}^S}} = d_{p,j}^S. \;\;\;\;\; \qed
\end{equation}

\end{proof}

The appropriate $\beta_j$ bound for a given numerical constraint may be derived in identical fashion, with

\vspace{-2mm}
\begin{equation}\label{eq:betamaxnum}
\beta_{j} \leq \frac{d_{j}^S - \overline d_{j}}{d_{j}^S}
\end{equation}

\noindent guaranteeing the second inequality in (\ref{eq:violsum}).

\subsection{Accounting for Slack in the Feasibility Conditions}

The modifications to the feasibility conditions are straightforward and simply involve replacing the zeros on the right-hand sides with the slacks at the next iteration:

\vspace{-2mm}
\begin{equation}\label{eq:SCFO1idegLUccvalllocMNgradslack}
\begin{array}{l}
\mathop {\min} \limits_{\bar k = 0,...,k} \left[ \hspace{-1mm} \begin{array}{l} \overline g_{p,j} ({\bf u}_{\bar k},\tau_{\bar k}) \displaystyle +\eta_{c,j}^{\bar k} \frac{\partial \overline g_{p,j}}{\partial \tau} \Big |_{({\bf u}_{\bar k},\tau_{\bar k})} ( \tau_{k+1} - \tau_{{\bar k}} ) \vspace{1mm} \\
 \displaystyle + (1-\eta_{c,j}^{\bar k}) \overline \kappa_{p,j\tau}^{\bar k} \left( \tau_{k+1} - \tau_{\bar k} \right) \vspace{1mm}\\
\displaystyle   + \sum_{i \in I_{c,j}^{\bar k}} \mathop {\max} \left[ \begin{array}{l} \displaystyle \frac{\partial \underline g_{p,j}}{\partial u_i} \Big |_{({\bf u}_{\bar k},\tau_{\bar k})} ( u_{k^*,i} + \\ \hspace{2mm} K_k (\bar u_{k+1,i}^* - u_{k^*,i} ) - u_{\bar k,i} ), \vspace{1mm}\\ \displaystyle \frac{\partial \overline g_{p,j}}{\partial u_i} \Big |_{({\bf u}_{\bar k},\tau_{\bar k})} ( u_{k^*,i} + \\ \hspace{2mm} K_k (\bar u_{k+1,i}^* - u_{k^*,i} ) - u_{\bar k,i} ) \end{array} \right] \vspace{1mm} \\
 + \displaystyle \sum_{i \not \in I_{c,j}^{\bar k}} \mathop {\max} \left[ \begin{array}{l} \underline \kappa_{p,ji}^{\bar k} ( u_{k^*,i} + \\ \hspace{2mm} K_k (\bar u_{k+1,i}^* - u_{k^*,i} ) - u_{\bar k,i} ), \vspace{1mm}\\ \overline \kappa_{p,ji}^{\bar k} ( u_{k^*,i} + \\ \hspace{2mm} K_k (\bar u_{k+1,i}^* - u_{k^*,i} ) - u_{\bar k,i} ) \end{array} \right] \end{array} \hspace{-1mm} \right] \vspace{1mm} \\
\hspace{40mm} \leq d_{p,j}^{k+1}, \;\; \forall j = 1,...,n_{g_p}
\end{array}
\end{equation}

\vspace{-2mm}
\begin{equation}\label{eq:SCFO1idegLUccvalllocMNgradbackslack}
\begin{array}{l}
\mathop {\min} \limits_{\bar k = 0,...,k} \left[ \hspace{-1mm} \begin{array}{l} \overline g_{p,j} ({\bf u}_{\bar k},\tau_{\bar k}) \displaystyle +\eta_{c,j}^{\bar k} \frac{\partial \overline g_{p,j}}{\partial \tau} \Big |_{({\bf u}_{\bar k},\tau_{\bar k})} ( \tau_{k+1} - \tau_{{\bar k}} ) \vspace{1mm} \\
 \displaystyle + (1-\eta_{c,j}^{\bar k}) \overline \kappa_{p,j\tau}^{\bar k} \left( \tau_{k+1} - \tau_{\bar k} \right) \vspace{1mm}\\
\displaystyle   + \sum_{i \in I_{c,j}^{\bar k}} \mathop {\max} \left[ \begin{array}{l} \displaystyle \frac{\partial \underline g_{p,j}}{\partial u_i} \Big |_{({\bf u}_{\bar k},\tau_{\bar k})} ( u_{k^*,i} + \\ \hspace{2mm} K_k (\bar u_{k+1,i}^* - u_{k^*,i} ) - u_{\bar k,i} ), \vspace{1mm}\\ \displaystyle \frac{\partial \overline g_{p,j}}{\partial u_i} \Big |_{({\bf u}_{\bar k},\tau_{\bar k})} ( u_{k^*,i} + \\ \hspace{2mm} K_k (\bar u_{k+1,i}^* - u_{k^*,i} ) - u_{\bar k,i} ) \end{array} \right] \vspace{1mm} \\
 + \displaystyle \sum_{i \not \in I_{c,j}^{\bar k}} \mathop {\max} \left[ \begin{array}{l} \underline \kappa_{p,ji}^{\bar k} ( u_{k^*,i} + \\ \hspace{2mm} K_k (\bar u_{k+1,i}^* - u_{k^*,i} ) - u_{\bar k,i} ), \vspace{1mm}\\ \overline \kappa_{p,ji}^{\bar k} ( u_{k^*,i} + \\ \hspace{2mm} K_k (\bar u_{k+1,i}^* - u_{k^*,i} ) - u_{\bar k,i} ) \end{array} \right] \end{array} \hspace{-1mm} \right] \vspace{1mm} \\
+ \displaystyle \eta_{c,j}^{K} \frac{\partial \overline g_{p,j}}{\partial \tau} \Big |_{\left({\bf u}_{k^*} + K_k ( \bar {\bf u}_{k+1}^* - {\bf u}_{k^*} ),\tau_{k+1}\right)} ( \tau_{k+2} - \tau_{{k+1}} ) \vspace{1mm} \\
+ (1-\eta_{c,j}^{K}) \overline \kappa_{p,j\tau}^{K} \left( \tau_{k+2} - \tau_{k+1} \right) + \delta_e \| \kappa_{p,j}^{m,K} \|_2  \leq d_{p,j}^{k+1}, \vspace{1mm} \\
\hspace{50mm} \forall j = 1,...,n_{g_p},
\end{array}
\end{equation}

\vspace{-2mm}
\begin{equation}\label{eq:SCFO2ibackslack}
\mathop {\max} \limits_{{\bf u} \in \mathcal{B}_{K}} g_{j}({\bf u}) \leq d_j^{k+1}, \;\; \forall j = 1,...,n_g.
\end{equation}

As before, the degradation term may be removed from (\ref{eq:SCFO1idegLUccvalllocMNgradbackslack}) if this condition is too restricting:

\vspace{-2mm}
\begin{equation}\label{eq:SCFO1idegLUccvalllocMNgradbackslack2}
\begin{array}{l}
\mathop {\min} \limits_{\bar k = 0,...,k} \left[ \hspace{-1mm} \begin{array}{l} \overline g_{p,j} ({\bf u}_{\bar k},\tau_{\bar k}) \displaystyle +\eta_{c,j}^{\bar k} \frac{\partial \overline g_{p,j}}{\partial \tau} \Big |_{({\bf u}_{\bar k},\tau_{\bar k})} ( \tau_{k+1} - \tau_{{\bar k}} ) \vspace{1mm} \\
 \displaystyle + (1-\eta_{c,j}^{\bar k}) \overline \kappa_{p,j\tau}^{\bar k} \left( \tau_{k+1} - \tau_{\bar k} \right) \vspace{1mm}\\
\displaystyle   + \sum_{i \in I_{c,j}^{\bar k}} \mathop {\max} \left[ \begin{array}{l} \displaystyle \frac{\partial \underline g_{p,j}}{\partial u_i} \Big |_{({\bf u}_{\bar k},\tau_{\bar k})} ( u_{k^*,i} + \\ \hspace{2mm} K_k (\bar u_{k+1,i}^* - u_{k^*,i} ) - u_{\bar k,i} ), \vspace{1mm}\\ \displaystyle \frac{\partial \overline g_{p,j}}{\partial u_i} \Big |_{({\bf u}_{\bar k},\tau_{\bar k})} ( u_{k^*,i} + \\ \hspace{2mm} K_k (\bar u_{k+1,i}^* - u_{k^*,i} ) - u_{\bar k,i} ) \end{array} \right] \vspace{1mm} \\
 + \displaystyle \sum_{i \not \in I_{c,j}^{\bar k}} \mathop {\max} \left[ \begin{array}{l} \underline \kappa_{p,ji}^{\bar k} ( u_{k^*,i} + \\ \hspace{2mm} K_k (\bar u_{k+1,i}^* - u_{k^*,i} ) - u_{\bar k,i} ), \vspace{1mm}\\ \overline \kappa_{p,ji}^{\bar k} ( u_{k^*,i} + \\ \hspace{2mm} K_k (\bar u_{k+1,i}^* - u_{k^*,i} ) - u_{\bar k,i} ) \end{array} \right] \end{array} \hspace{-1mm} \right] \vspace{1mm} \\
\hspace{20mm} + \delta_e \| \kappa_{p,j}^{m,K} \|_2  \leq d_{p,j}^{k+1}, \;\; \forall  j = 1,...,n_{g_p}.
\end{array}
\end{equation}

\subsection{Accounting for Slack in the Choice of Reference Point}

\begin{figure*}
\begin{center}
\includegraphics[width=16cm]{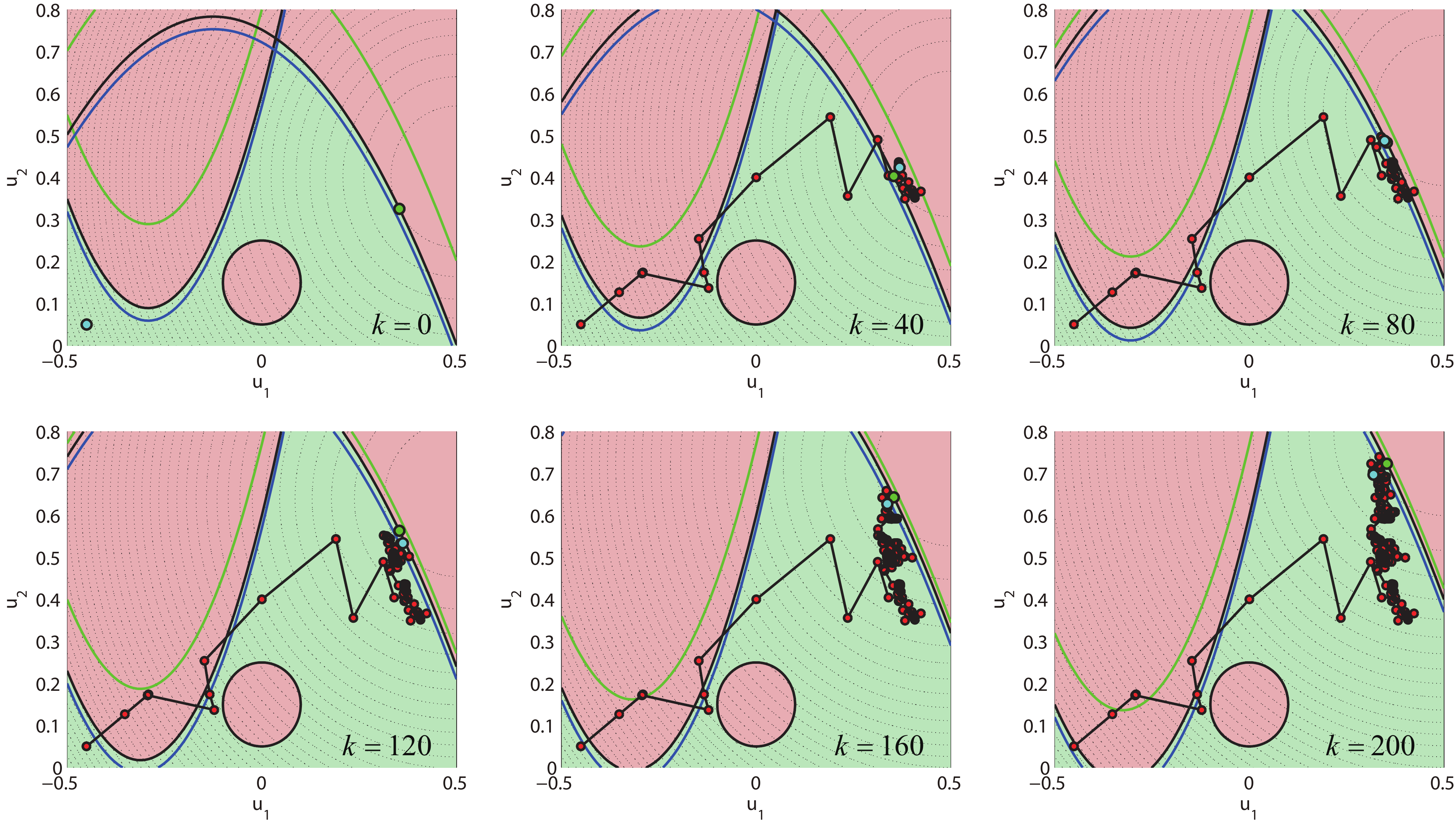}
\caption{Chain of experiments generated by applying the modified SCFO methodology to Problem (\ref{eq:exdeg}) for the ($-$) scenario with slacks added to the experimental constraints (shown in green).}
\label{fig:degJ}
\end{center}
\end{figure*}

Similar modifications are made to (\ref{eq:kstarLUccvcostlocMNgradBO}) and (\ref{eq:kfeas5}):

\vspace{-2mm}
\begin{equation}\label{eq:kstarLUccvcostlocMNgradBOslack}
\begin{array}{rl}
k^* := \;\;\;\;\;\;\;\;\;\;\;\;\;\;& \vspace{1mm} \\
{\rm arg} \mathop {\rm maximize}\limits_{\bar k \in [0,k]} & \bar k \vspace{1mm}  \\
{\rm{subject}}\;{\rm{to}} & \overline g_{p,j} ({\bf u}_{\bar k},\tau_{\bar k}) \vspace{1mm} \\
& \displaystyle + \displaystyle \eta_{c,j}^{e,\bar k} \frac{\partial \overline g_{p,j}}{\partial \tau} \Big |_{({\bf u}_{\bar k},\tau_{\bar k})} ( \tau_{k+1} - \tau_{{\bar k}} ) \vspace{1mm} \\
&  + (1-\eta_{c,j}^{e,\bar k}) \overline \kappa_{p,j\tau}^{e,\bar k} \left( \tau_{k+1} - \tau_{\bar k} \right) \vspace{1mm} \\
& + \delta_e \| \kappa_{p,j}^{m,\bar k} \|_2 \leq d_{p,j}^{k+1}, \;\; \forall j = 1,...,n_{g_p}  \vspace{1mm} \\
& \mathop {\max} \limits_{{\bf u} \in \mathcal{B}_{e,\bar k}} g_j ({\bf u}) \leq d_j^{k+1}, \;\; \forall j = 1,...,n_{g} \vspace{1mm} \\
& {\bf u}^L + \delta_e {\bf 1}  \preceq {\bf u}_{\bar k} \preceq {\bf u}^U - \delta_e {\bf 1} \vspace{1mm} \\
& \displaystyle \underline \phi_p ({\bf u}_{\bar k},\tau_{\bar k}) +  \eta_{v,\phi}^{\bar k, k} \frac{\partial \underline \phi_{p}}{\partial \tau} \Big |_{({\bf u}_{\bar k},\tau_{\bar k})} ( \tau_{k} - \tau_{{\bar k}} ) \vspace{1mm} \\
&+ (1-  \eta_{v,\phi}^{\bar k, k}) \underline \kappa_{\phi,\tau}^{\bar k, k} \left( \tau_{k} - \tau_{\bar k} \right) \leq \vspace{1mm} \\
& \mathop {\min} \limits_{\tilde k \in {\bf k}_f} \left[ \begin{array}{l} \overline \phi_p ({\bf u}_{\tilde k},\tau_{\tilde k}) + \vspace{1mm} \\
\displaystyle \eta_{c,\phi}^{\tilde k, k} \frac{\partial \overline \phi_{p}}{\partial \tau} \Big |_{({\bf u}_{\tilde k},\tau_{\tilde k})} ( \tau_{k} - \tau_{{\tilde k}} ) \vspace{1mm} \\
+ (1-  \eta_{c,\phi}^{\tilde k, k}) \overline \kappa_{\phi,\tau}^{\tilde k, k} \left( \tau_{k} - \tau_{\tilde k} \right) \end{array} \right],
\end{array}
\end{equation}

\vspace{-2mm}
\begin{equation}\label{eq:kfeas6}
{\bf k}_f = \left\{ \bar k : 
\begin{array}{l}
\overline g_{p,j} ({\bf u}_{\bar k},\tau_{\bar k}) \vspace{1mm} \\ 
+ \displaystyle \eta_{c,j}^{e,\bar k} \frac{\partial \overline g_{p,j}}{\partial \tau} \Big |_{({\bf u}_{\bar k},\tau_{\bar k})} ( \tau_{k+1} - \tau_{{\bar k}} ) \vspace{1mm} \\ 
+ (1-\eta_{c,j}^{e,\bar k}) \overline \kappa_{p,j\tau}^{e,\bar k} \left( \tau_{k+1} - \tau_{\bar k} \right) \vspace{1mm} \\ 
+ \delta_e \| \kappa_{p,j}^{m,\bar k} \|_2  \leq d_{p,j}^{k+1}, \;\; \forall j = 1,...,n_{g_p}; \vspace{1mm} \\
 \mathop {\max} \limits_{{\bf u} \in \mathcal{B}_{e,\bar k}} g_j ({\bf u}) \leq d_j^{k+1}, \;\; \forall j = 1,...,n_{g}; \vspace{1mm} \\
{\bf u}^L + \delta_e {\bf 1}  \preceq {\bf u}_{\bar k} \preceq {\bf u}^U - \delta_e {\bf 1}
\end{array} \right\},
\end{equation}

\noindent as this would yield a ${\bf u}_{k^*}$ that would guarantee the existence of a $K_k \geq 0$ that satisfies, at least, the relaxed condition of (\ref{eq:SCFO1idegLUccvalllocMNgradbackslack2}).

In the case that no $K_k$ satisfying these restrictions can be found, one may proceed with (\ref{eq:kstar2LUccvlocMNgradBO}) with the following modifications made to (\ref{eq:viols}):

\vspace{-2mm}
\begin{equation}\label{eq:violsmod}
\begin{array}{l}
g_{p,m}^{\bar k} = \mathop {\max} \limits_{j = 1,...,n_{g_p}} \left[ \begin{array}{l}
\overline g_{p,j} ({\bf u}_{\bar k},\tau_{\bar k}) \vspace{1mm} \\ 
+ \displaystyle \eta_{c,j}^{e,\bar k} \frac{\partial \overline g_{p,j}}{\partial \tau} \Big |_{({\bf u}_{\bar k},\tau_{\bar k})} ( \tau_{k+1} - \tau_{{\bar k}} ) \vspace{1mm} \\ 
+ (1-\eta_{c,j}^{e,\bar k}) \overline \kappa_{p,j\tau}^{e,\bar k} \left( \tau_{k+1} - \tau_{\bar k} \right) \vspace{1mm} \\ 
+ \delta_e \| \kappa_{p,j}^{m,\bar k} \|_2 - d_{p,j}^{k+1} \end{array} \right] \vspace{1mm} \\
g_{m}^{\bar k} = \mathop {\max} \limits_{j = 1,...,n_g} \mathop {\max} \limits_{{\bf u} \in \mathcal{B}_{e,\bar k}} \left( g_j ({\bf u}) - d_{j}^{k+1} \right) \vspace{1mm} \\
u_{L,m}^{\bar k} = \mathop {\max} \limits_{i = 1,...,n_u} \left( u^L_i + \delta_e - u_{\bar k,i} \right) \vspace{1mm} \\
u_{U,m}^{\bar k} = \mathop {\max} \limits_{i = 1,...,n_u} \left( u_{\bar k,i} + \delta_e - u^U_i  \right).
\end{array}
\end{equation}

\subsection{Accounting for Slack in the Projection}

Finally, we make the same modification to the Boolean triggers in the projection (\ref{eq:projdeg2robslackBO2}):

\vspace{-2mm}
\begin{equation}\label{eq:projdeg2robslackBO2slack}
\begin{array}{rl}
\bar {\bf u}_{k+1}^* := \;\;\;\;\;\;\;\;\; & \vspace{1mm}\\
 {\rm arg} \mathop {\rm minimize}\limits_{{\bf u},{\bf s}_\phi, {\bf S}} & \| {\bf u} - {\bf u}_{k+1}^* \|_2^2 \vspace{1mm}  \\
 {\rm{subject}}\;{\rm{to}} & \displaystyle \sum_{i=1}^{n_u} s_{ji}  \leq -\delta_{g_p,j} \vspace{1mm} \\
& \displaystyle \frac{\partial \underline g_{p,j}}{\partial u_i} \Big |_{({\bf u}_{k^*}, \tau_{k+1})} (u_i - u_{k^*,i}) \leq s_{ji} \vspace{1mm} \\
& \displaystyle \frac{\partial \overline g_{p,j}}{\partial u_i} \Big |_{({\bf u}_{k^*}, \tau_{k+1})} (u_i - u_{k^*,i}) \leq s_{ji}, \vspace{1mm} \\
& \forall i = 1,...,n_u, \vspace{1mm} \\
&  \forall j: \begin{array}{l} \overline g_{p,j} ({\bf u}_{k^*},\tau_{k^*}) \vspace{1mm} \\
 +  \displaystyle \eta_{c,j}^{e,k^*} \frac{\partial \overline g_{p,j}}{\partial \tau} \Big |_{({\bf u}_{k^*},\tau_{k^*})} ( \tau_{k+1} - \tau_{{ k^*}} ) \vspace{1mm} \\ 
 + (1-\eta_{c,j}^{e,k^*}) \overline \kappa_{p,j\tau}^{e,k^*} \left( \tau_{k+1} - \tau_{k^*} \right) \vspace{1mm} \\ 
 + \delta_e \| \kappa_{p,j}^{m,k^*} \|_2 \geq -\epsilon_{p,j}+d_{p,j}^{k+1} \end{array} \vspace{1mm} \\
 & \nabla g_{j} ({\bf u}_{k^*})^T ({\bf u} - {\bf u}_{k^*}) \leq -\delta_{g,j}, \vspace{1mm} \\
& \forall j : \mathop {\max} \limits_{{\bf u} \in \mathcal{B}_{e,k^*}} g_{j}({\bf u}) \geq -\epsilon_{j}+d_{j}^{k+1} \vspace{1mm} \\
 & \displaystyle \sum_{i=1}^{n_u} s_{\phi,i}  \leq -\delta_{\phi} \vspace{1mm} \\
& \displaystyle \frac{\partial \underline \phi_{p}}{\partial u_i} \Big |_{({\bf u}_{k^*}, \tau_{k+1})} (u_i - u_{k^*,i}) \leq s_{\phi,i} \vspace{1mm} \\
& \displaystyle \frac{\partial \overline \phi_{p}}{\partial u_i} \Big |_{({\bf u}_{k^*}, \tau_{k+1})} (u_i - u_{k^*,i}) \leq s_{\phi,i} \vspace{1mm} \\
 & {\bf u}^L + \delta_e {\bf 1} \preceq {\bf u} \preceq {\bf u}^U - \delta_e {\bf 1}, 
\end{array}
\end{equation}

\noindent which simply ``delays'' the projection into the locally feasible space for those constraints that are relaxed.

\subsection{Example}

We build further on the previous example of Section \ref{sec:BOex} by adding the slacks

\vspace{-2mm}
\begin{equation}\label{eq:exslacks}
\begin{array}{l}
\overline d_{p,1} = \overline d_{p,2} = 0.2 \vspace{1mm} \\
d_{p,1}^S = 5, \; d_{p,2}^S = 10,
\end{array}
\end{equation}

\noindent to the experimental constraints only, and apply the proposed modified SCFO with $\beta_{p,j}$ taken as the maximum value given by the bound (\ref{eq:betamax}).

The results are shown in Fig. \ref{fig:degJ}, with the corresponding cost function values given in Fig. \ref{fig:degJcost}. As expected, we see two key benefits of allowing temporary violations -- the first being that it is possible to converge to the optimum faster, and the second that it is also possible to temporarily obtain a cost that is better than the one at the optimum, thus leading to greater overall gains. Fig. \ref{fig:degJcon} shows that the sums of the violations are well below\footnote{While we do not explore this option here, it should be clear that one could propose more elegant slack reduction techniques that would likely lead to even better performance.} the limits set by $d_{p,j}^S$.

\begin{figure}
\begin{center}
\includegraphics[width=8cm]{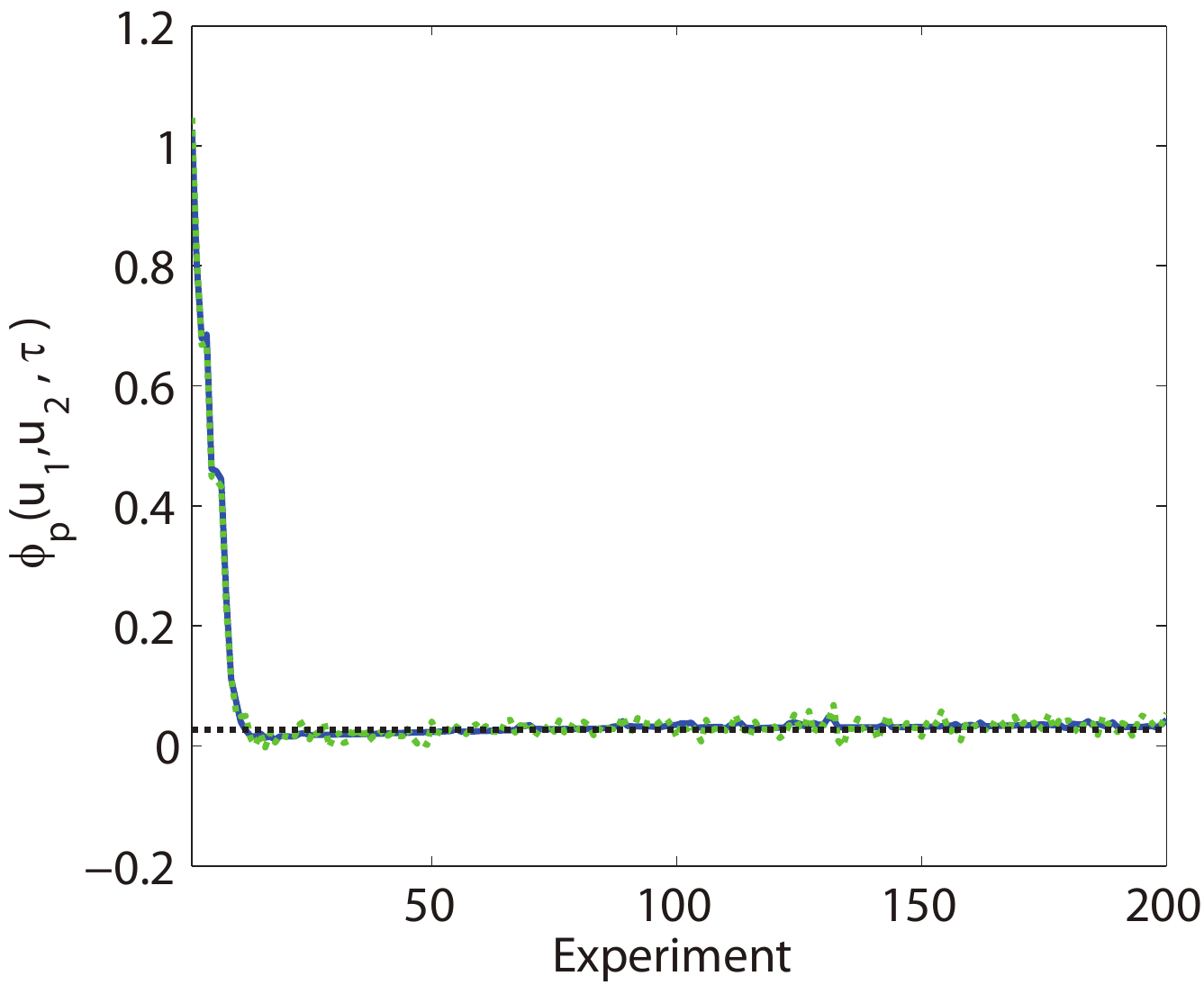}
\caption{Cost function values obtained by the modified SCFO methodology for Problem (\ref{eq:exdeg}) for the ($-$) scenario when slacks are added to the experimental constraints.}
\label{fig:degJcost}
\end{center}
\end{figure} 

\begin{figure}
\begin{center}
\includegraphics[width=8cm]{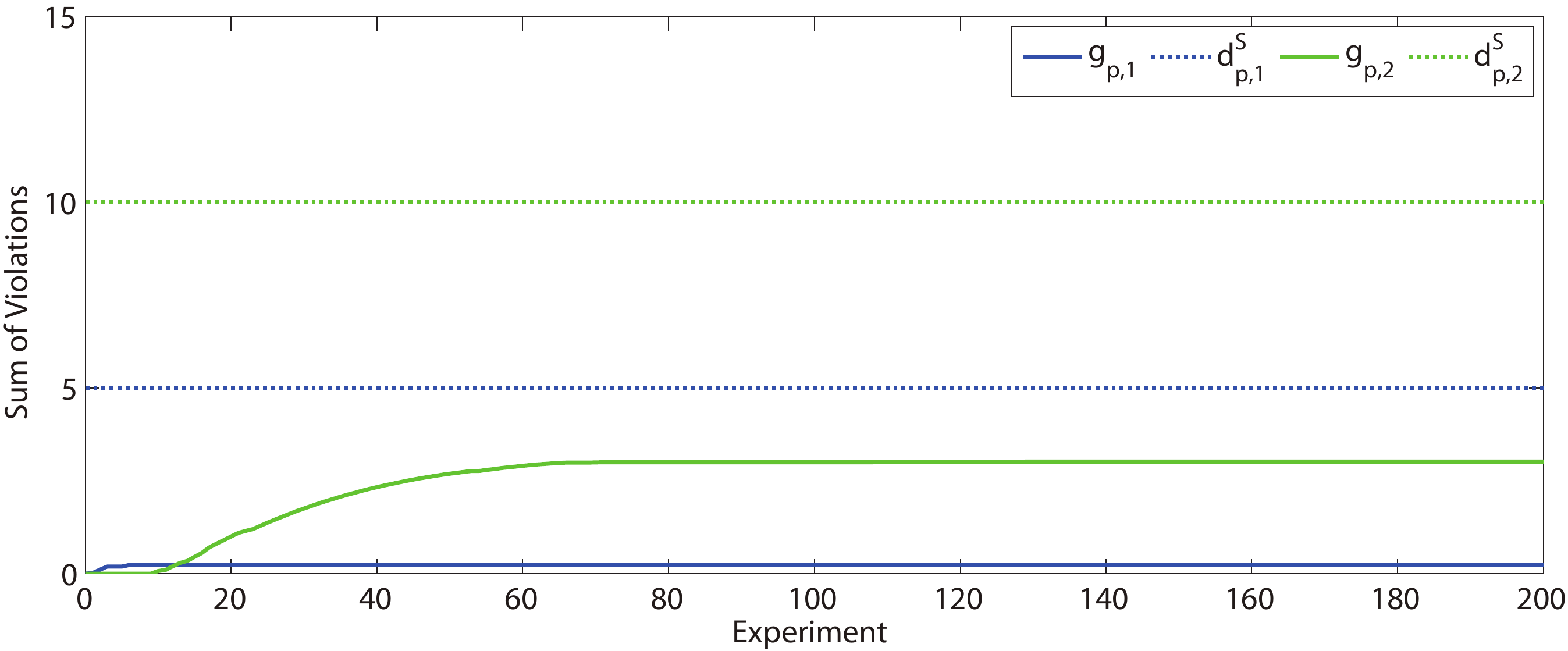}
\caption{The values of the sums of experimental constraint violations over the course of optimization.}
\label{fig:degJcon}
\end{center}
\end{figure} 

\section{Accomodating a Numerical Cost Function}
\label{sec:numcost}

In some experimental optimization problems it may occur that the cost function is numerical and can be easily evaluated without requiring experiments -- see, e.g., the second application example in \cite{SCFOug}. When this is so, we may simplify the SCFO considerably by treating $\phi$ (a known function) instead of $\phi_p$ (an unknown function), and exploiting the fact that derivative information on the function is available, that there is no degradation, and that the Lipschitz constants for the cost are no longer required for the desired properties to be enforced.

Most notably, the line search may be modified from finding the largest $K_k$ that satisfies the SCFO to finding the $K_k$ that minimizes the cost while satisfying (\ref{eq:SCFO1idegLUccvalllocMNgradslack})-(\ref{eq:SCFO2ibackslack}):

\vspace{-2mm}
\begin{equation}\label{eq:linesearchnum}
\begin{array}{rl}
K_k := {\rm arg} \mathop {\rm minimize}\limits_{K \in [0,1]} & \phi \left( {\bf u}_{k^*} + K (\bar {\bf u}_{k+1}^* - {\bf u}_{k^*}) \right)  \\
{\rm{subject}}\;{\rm{to}} & (\ref{eq:SCFO1idegLUccvalllocMNgradslack}){\rm -}(\ref{eq:SCFO2ibackslack}).
\end{array}
\end{equation}

\noindent Because this line search is guaranteed to yield a cost decrease if one exists -- unlike the line search incorporating (\ref{eq:SCFO7idegLUlocgrad}), which can fail to yield an actual decrease in the case of erroneous Lipschitz constants -- the necessary condition for cost decrease (\ref{eq:costhighmaxPFlocMNgrad}) is not needed.

In choosing the reference point as in (\ref{eq:kstarLUccvcostlocMNgradBOslack}) and (\ref{eq:kfeas6}), one no longer needs to use the heuristic of taking the latest feasible experimental iterate that is guaranteed to not have a cost value that is worse than that an another feasible experimental iterate. Instead, one may simply choose the feasible iterate with the lowest cost value, i.e.:

\vspace{-2mm}
\begin{equation}\label{eq:kstarLUccvcostlocMNgradBOslacknum}
\begin{array}{rl}
k^* := \;\;\;\;\;\;\;\;\;\;\;\;\;\;& \vspace{1mm} \\
{\rm arg} \mathop {\rm minimize}\limits_{\bar k \in [0,k]} & \phi({\bf u}_{\bar k}) \vspace{1mm}  \\
{\rm{subject}}\;{\rm{to}} & \overline g_{p,j} ({\bf u}_{\bar k},\tau_{\bar k}) \vspace{1mm} \\
& \displaystyle + \displaystyle \eta_{c,j}^{e,\bar k} \frac{\partial \overline g_{p,j}}{\partial \tau} \Big |_{({\bf u}_{\bar k},\tau_{\bar k})} ( \tau_{k+1} - \tau_{{\bar k}} ) \vspace{1mm} \\
&  + (1-\eta_{c,j}^{e,\bar k}) \overline \kappa_{p,j\tau}^{e,\bar k} \left( \tau_{k+1} - \tau_{\bar k} \right) \vspace{1mm} \\
& + \delta_e \| \kappa_{p,j}^{m,\bar k} \|_2 \leq d_{p,j}^{k+1}, \;\; \forall j = 1,...,n_{g_p} \vspace{1mm} \\
& \mathop {\max} \limits_{{\bf u} \in \mathcal{B}_{e,\bar k}} g_j ({\bf u}) \leq d_j^{k+1}, \;\; \forall j = 1,...,n_{g} \vspace{1mm} \\
& {\bf u}^L + \delta_e {\bf 1}  \preceq {\bf u}_{\bar k} \preceq {\bf u}^U - \delta_e {\bf 1}.
\end{array}
\end{equation}

Finally, the projection (\ref{eq:projdeg2robslackBO2slack}) may be simplified to

\vspace{-2mm}
\begin{equation}\label{eq:projdeg2robslackBO2slacknum}
\begin{array}{rl}
\bar {\bf u}_{k+1}^* := \;\;\;\;\;\;\;\;\; & \vspace{1mm}\\
 {\rm arg} \mathop {\rm minimize}\limits_{{\bf u},{\bf S}} & \| {\bf u} - {\bf u}_{k+1}^* \|_2^2 \vspace{1mm}  \\
 {\rm{subject}}\;{\rm{to}} & \displaystyle \sum_{i=1}^{n_u} s_{ji}  \leq -\delta_{g_p,j} \vspace{1mm} \\
& \displaystyle \frac{\partial \underline g_{p,j}}{\partial u_i} \Big |_{({\bf u}_{k^*}, \tau_{k+1})} (u_i - u_{k^*,i}) \leq s_{ji} \vspace{1mm} \\
& \displaystyle \frac{\partial \overline g_{p,j}}{\partial u_i} \Big |_{({\bf u}_{k^*}, \tau_{k+1})} (u_i - u_{k^*,i}) \leq s_{ji}, \vspace{1mm} \\
& \forall i = 1,...,n_u, \vspace{1mm} \\
&  \forall j: \begin{array}{l} \overline g_{p,j} ({\bf u}_{k^*},\tau_{k^*}) \vspace{1mm} \\
 +  \displaystyle \eta_{c,j}^{e,k^*} \frac{\partial \overline g_{p,j}}{\partial \tau} \Big |_{({\bf u}_{k^*},\tau_{k^*})} ( \tau_{k+1} - \tau_{{ k^*}} ) \vspace{1mm} \\ 
 + (1-\eta_{c,j}^{e,k^*}) \overline \kappa_{p,j\tau}^{e,k^*} \left( \tau_{k+1} - \tau_{k^*} \right) \vspace{1mm} \\ 
 + \delta_e \| \kappa_{p,j}^{m,k^*} \|_2 \geq -\epsilon_{p,j}+d_{p,j}^{k+1} \end{array} \vspace{1mm} \\
 & \nabla g_{j} ({\bf u}_{k^*})^T ({\bf u} - {\bf u}_{k^*}) \leq -\delta_{g,j}, \vspace{1mm} \\
& \forall j : \mathop {\max} \limits_{{\bf u} \in \mathcal{B}_{e,k^*}} g_{j}({\bf u}) \geq -\epsilon_{j}+d_{j}^{k+1} \vspace{1mm} \\
 & \nabla \phi ({\bf u}_{k^*})^T ({\bf u} - {\bf u}_{k^*})  \leq -\delta_{\phi} \vspace{1mm} \\
 & {\bf u}^L + \delta_e {\bf 1} \preceq {\bf u} \preceq {\bf u}^U - \delta_e {\bf 1}. 
\end{array}
\end{equation}

\section{Complete Implementable Form of the SCFO}
\label{sec:complete}

In this section, we will simply summarize the results derived so far to give the SCFO in their ready-to-code and fully implementable form, where by ``fully implementable'' we mean that all of the different issues raised in this document are simultaneously accounted for. Additionally, we will restate all presented algorithms fully so as to include the proposed modifications -- some of which came several sections later in the text after the algorithm was first stated -- directly inside the algorithm statement. This is done to ease the task of coding for those readers who are intent on writing their own SCFO implementation or modifying the one currently available \cite{SCFOug}.

We start by noting that the practical SCFO implementation may be seen as consisting of an initialization step followed by six iterative steps, the latter being carried out at every experimental iteration:

\begin{enumerate}
\item (Initialization -- carried out once prior to any optimization) One defines all the elements of the experimental optimization problem that one expects to remain constant (i.e., $\overline \epsilon_{p,j}$, $\overline \epsilon_{j}$, $\overline \delta_{g_p,j}$, $\overline \delta_{g,j}$, $\overline \delta_\phi$, $\underline \kappa$, $\overline \kappa$, $\underline M$, $\overline M$, $I_{c}$, $I_{v}$, $\eta$, $\underline W$, $\overline W$, $\delta_e$, $\overline d$, $d^S$).
\item (Pretreatment of collected data) Using the measurements obtained up until the current iteration, one runs the Lipschitz consistency check algorithm and then uses the consistent Lipschitz constants to compute lower and upper bounds on the measured experimental function values.
\item (Choice of reference point) The reference point ${\bf u}_{k^*}$ is chosen as the point that meets the feasibility requirements while being likely to have the best cost function value obtained in all experiments carried out so far.
\item (Computation of the optimal target) The target ${\bf u}_{k+1}^*$ is found by some prescribed optimization method -- see \cite{Bunin2013SIAM} for a review of such methods.
\item (Projection of optimal target) The target ${\bf u}_{k+1}^*$ is projected to obtain the projected target $\bar {\bf u}_{k+1}^*$ satisfying a portion of the SCFO in some robust sense.
\item (Filtering of adaptation step) The filter gain $K_k$ is found by a line search to define ${\bf u}_{k+1}$ as a function of ${\bf u}_{k^*}$, $\bar {\bf u}_{k+1}^*$, and $K_k$. If the excitation obtained by changing the decision variables is insufficient, this choice may be overriden by a sufficiently exciting ${\bf u}_{k+1}$. 
\item (Experimental application) An experiment defined by the computed ${\bf u}_{k+1}$ is carried out, new measurements are obtained, and the cycle restarts with Step 2.
\end{enumerate}

\noindent Note that this scheme does not have a termination criterion. This is intentional and reflects the philosophy discussed by, for example, Box and Draper in \cite{Box:69}, in that in experimental optimization there is little reason to stop continuous improvement, especially when effects like degradation may continually force the algorithm to pursue a moving optimum. From an algorithmic viewpoint, it also does not make sense to terminate the SCFO adaptations since these adaptations are designed to obtain better and better performance until this becomes impossible. Of course, one may nevertheless propose certain termination criteria when the current level of optimality is known to be sufficient, and the most recent version of the SCFO solver  accomodates this \cite{SCFOug} -- we do not, however, address this here.

The rest of this section is devoted to going through Steps 2, 3, 5, and 6 and giving the relevant formulas used in the different subroutines of each in their finalized, complete forms.

\subsection{Pretreatment of Measured Data}

In the pretreatment of the data collected, one has the tasks of (a) verifying that the Lipschitz constants provided are consistent with the data and, if not, making the constants consistent, and (b) computing lower and upper bounds on the true experimental function values. As mentioned previously, the two tasks are convoluted since the Lipschitz consistency check uses the computed lower and upper bounds and the lower and upper bounds may in turn be a function of the Lipschitz constants. To deconvolute the two, we make it so that the bounds used in the Lipschitz consistency check are independent of the Lipschitz constants, i.e., we use the bounds given in (\ref{eq:costbound1}):

\vspace{-2mm}
\begin{equation}\label{eq:costbound1a}
\begin{array}{l}
\underline {\underline \phi}_p ({\bf u}_{\bar k},\tau_{\bar k}) = \hat \phi_p({\bf u}_{\bar k},\tau_{\bar k}) - \overline w_{\phi, \bar k} \vspace{1mm} \\
\overline {\overline \phi}_p ({\bf u}_{\bar k},\tau_{\bar k}) = \hat \phi_p({\bf u}_{\bar k},\tau_{\bar k}) - \underline w_{\phi, \bar k},
\end{array}
\end{equation}

\noindent where the double lines denote that these are more conservative bounds than what would be obtained if we were to run Algorithm 2 with the correct Lipschitz constants.

We now state the finalized version of Algorithm 1.
\newline
\newline
{\bf Algorithm 1F -- Lipschitz Consistency Check (Finalized)}
\begin{enumerate}
\item Set $a := 1$.
\item Check the validity of

\vspace{-2mm}
\begin{equation}\label{eq:lipcheck1costUMNf}
\begin{array}{l}
\underline {\underline \phi}_{p} ({\bf u}_{k_2},\tau_{k_2}) \leq \overline {\overline \phi}_{p} ({\bf u}_{k_1},\tau_{k_1}) \vspace{1mm} \\
\hspace{20mm} \displaystyle + \mathop {\max} \left[ \begin{array}{l} \underline \kappa_{\phi,\tau} \left( \tau_{k_2} - \tau_{k_1} \right) \vspace{1mm} \\ 
\overline \kappa_{\phi,\tau} \left( \tau_{k_2} - \tau_{k_1} \right) \end{array} \right] \vspace{1mm} \\
\hspace{20mm}\displaystyle + \sum_{i=1}^{n_u} \mathop {\max} \left[ \begin{array}{l} \underline \kappa_{\phi,i} ( u_{k_2,i} - u_{k_1,i} ), \vspace{1mm} \\ \overline \kappa_{\phi,i} ( u_{k_2,i} - u_{{k_1},i} ) \end{array} \right]
\end{array}
\end{equation}

\noindent and

\vspace{-2mm}
\begin{equation}\label{eq:lipcheck1costLMNf}
\begin{array}{l}
\overline {\overline \phi}_{p} ({\bf u}_{k_2},\tau_{k_2}) \geq \underline {\underline \phi}_{p} ({\bf u}_{k_1},\tau_{k_1}) \vspace{1mm} \\
\hspace{20mm} \displaystyle + \mathop {\min} \left[ \begin{array}{l} \underline \kappa_{\phi,\tau} \left( \tau_{k_2} - \tau_{k_1} \right) \vspace{1mm} \\ 
\overline \kappa_{\phi,\tau} \left( \tau_{k_2} - \tau_{k_1} \right) \end{array} \right] \vspace{1mm} \\
\hspace{20mm}\displaystyle + \sum_{i=1}^{n_u} \mathop {\min} \left[ \begin{array}{l} \underline \kappa_{\phi,i} ( u_{k_2,i} - u_{k_1,i} ), \vspace{1mm} \\ \overline \kappa_{\phi,i} ( u_{k_2,i} - u_{{k_1},i} ) \end{array} \right]
\end{array}
\end{equation}
 
for every combination $( k_1, k_2 ) \in \{0,...,k\} \times \{0,...,k\}$. If these inequalities are satisfied for every $( k_1, k_2 )$, then terminate. Otherwise, proceed to Step 3.
\item If $a \leq 5$, then for each $i = 1,...,n_u$ set

\vspace{-2mm}
$$
\begin{array}{rcl}
\underline \kappa_{\phi,i} & := & 2^{-{\rm sign}\;\underline \kappa_{\phi,i}}\underline \kappa_{\phi,i}, \vspace{1mm} \\
\overline \kappa_{\phi,i} & := & 2^{{\rm sign}\;\overline \kappa_{\phi,i}}\overline \kappa_{\phi,i}, \vspace{1mm} \\
\underline \kappa_{\phi,\tau} & := & 2^{-{\rm sign}\;\underline \kappa_{\phi,\tau}}\underline \kappa_{\phi,\tau}, \vspace{1mm} \\
\overline \kappa_{\phi,\tau} & := & 2^{{\rm sign}\;\overline \kappa_{\phi,\tau}}\overline \kappa_{\phi,\tau}.
\end{array}
$$

If $5 < a \leq 10$, set

\vspace{-2mm}
$$
\begin{array}{rcl}
\underline \kappa_{\phi,i} & := & -2 \mathop {\max} \left[ | \underline \kappa_{\phi,i} |, | \overline \kappa_{\phi,i} | \right], \vspace{1mm} \\ 
\overline \kappa_{\phi,i} & := & 2 \mathop {\max} \left[ | \underline \kappa_{\phi,i} |, | \overline \kappa_{\phi,i} | \right], \vspace{1mm} \\ 
\underline \kappa_{\phi,\tau} & := & -2 \mathop {\max} \left[ | \underline \kappa_{\phi,\tau} |, | \overline \kappa_{\phi,\tau} | \right], \vspace{1mm} \\
\overline \kappa_{\phi,\tau} & := & 2 \mathop {\max} \left[ | \underline \kappa_{\phi,\tau} |, | \overline \kappa_{\phi,\tau} | \right].
\end{array}
$$

If $10 < a$, set

\vspace{-2mm}
$$
\begin{array}{rcl}
\underline \kappa_{\phi,i} & := & 2^{a-10} \underline \kappa_{\phi,i}, \vspace{1mm} \\
\overline \kappa_{\phi,i} & := & 2^{a-10} \overline \kappa_{\phi,i}, \vspace{1mm} \\
\underline \kappa_{\phi,\tau} & := & 2^{a-10} \underline \kappa_{\phi,\tau}, \vspace{1mm} \\
\overline \kappa_{\phi,\tau} & := & 2^{a-10} \overline \kappa_{\phi,\tau}.
\end{array}
$$

\item Set $a := a+1$ and return to Step 2.

\end{enumerate}

For the sake of completeness, let us also state the version for the higher-order Lipschitz constants $M$.
\newline
\newline
{\bf Algorithm 1FA -- Higher-Order Lipschitz Consistency Check (Finalized)}
\begin{enumerate}
\item Set $a := 1$.
\item Check the validity of

\vspace{-2mm}
\begin{equation}\label{eq:lipcheck2UMNgradf}
\begin{array}{l}
\underline {\underline \phi}_{p} ({\bf u}_{k_2},\tau_{k_2}) \leq \vspace{1mm} \\
\overline {\overline \phi}_{p} ({\bf u}_{k_1},\tau_{k_1}) + \mathop {\max} \left[ \begin{array}{l}  \underline \kappa_{\phi,\tau} ( \tau_{k_2} - \tau_{k_1} ) , \vspace{1mm} \\ \overline \kappa_{\phi,\tau} ( \tau_{k_2} - \tau_{k_1} ) \end{array} \right]  \vspace{1mm} \\
\displaystyle + \sum_{i=1}^{n_u} \mathop {\max} \left[ \begin{array}{l} \displaystyle \frac{\partial \underline \phi_{p}}{\partial u_i} \Big |_{({\bf u}_{k_1},\tau_{k_1})} ( u_{k_2,i} - u_{k_1,i} ) , \vspace{1mm} \\ \displaystyle \frac{\partial \overline \phi_{p}}{\partial u_i} \Big |_{({\bf u}_{k_1},\tau_{k_1})} ( u_{k_2,i} - u_{k_1,i} ) \end{array} \right] \vspace{1mm} \\
+\displaystyle \frac{1}{2} \sum_{i_1=1}^{n_u} \sum_{i_2=1}^{n_u} \mathop {\max} \left[ \begin{array}{l} \underline M_{\phi,i_1 i_2} (u_{k_2,i_1} - u_{k_1,i_1}) \vspace{1mm} \\
\hspace{15mm} (u_{k_2,i_2} - u_{{k_1},i_2}), \\ \overline M_{\phi,i_1 i_2} (u_{k_2,i_1} - u_{{k_1},i_1}) \vspace{1mm} \\
\hspace{15mm}(u_{k_2,i_2} - u_{{k_1},i_2}) \end{array} \right]
\end{array}
\end{equation}

\noindent and

\vspace{-2mm}
\begin{equation}\label{eq:lipcheck2LMNgradf}
\begin{array}{l}
\overline {\overline \phi}_{p} ({\bf u}_{k_2},\tau_{k_2}) \geq \\
\underline {\underline \phi}_{p} ({\bf u}_{k_1},\tau_{k_1}) + \mathop {\min} \left[ \begin{array}{l}  \underline \kappa_{\phi,\tau} ( \tau_{k_2} - \tau_{k_1} ) , \vspace{1mm} \\ \overline \kappa_{\phi,\tau} ( \tau_{k_2} - \tau_{k_1} ) \end{array} \right]  \vspace{1mm} \\
\displaystyle + \sum_{i=1}^{n_u} \mathop {\min} \left[ \begin{array}{l} \displaystyle \frac{\partial \underline \phi_{p}}{\partial u_i} \Big |_{({\bf u}_{k_1},\tau_{k_1})} ( u_{k_2,i} - u_{k_1,i} ) , \vspace{1mm} \\ \displaystyle \frac{\partial \overline \phi_{p}}{\partial u_i} \Big |_{({\bf u}_{k_1},\tau_{k_1})} ( u_{k_2,i} - u_{k_1,i} ) \end{array} \right] \vspace{1mm} \\
+\displaystyle \frac{1}{2} \sum_{i_1=1}^{n_u} \sum_{i_2=1}^{n_u} \mathop {\min} \left[ \begin{array}{l} \underline M_{\phi,i_1 i_2} (u_{k_2,i_1} - u_{k_1,i_1}) \vspace{1mm} \\
\hspace{15mm} (u_{k_2,i_2} - u_{{k_1},i_2}), \\ \overline M_{\phi,i_1 i_2} (u_{k_2,i_1} - u_{{k_1},i_1}) \vspace{1mm} \\
\hspace{15mm}(u_{k_2,i_2} - u_{{k_1},i_2}) \end{array} \right]
\end{array}
\end{equation}
 
for every combination $( k_1, k_2 ) \in \{ 0,...,k \} \times \{ 0,..., k\}$. If these inequalities are satisfied for every $( k_1, k_2 )$, then terminate. Otherwise, proceed to Step 3.
\item If $a \leq 5$, then, for each $i_1 = 1,...,n_u, \; i_2 = 1,...,n_u$, set

\vspace{-2mm}
$$
\begin{array}{rcl}
\underline M_{\phi,i_1 i_2} & := & 2^{-{\rm sign}\;\underline M_{\phi,i_1 i_2}}\underline M_{\phi,i_1 i_2}, \vspace{1mm} \\
\overline M_{\phi,i_1 i_2} & := & 2^{{\rm sign}\;\overline M_{\phi,i_1 i_2}}\overline M_{\phi,i_1 i_2}.
\end{array}
$$

If $5 < a \leq 10$, set

\vspace{-2mm}
$$
\begin{array}{rcl}
\underline M_{\phi,i_1 i_2} & := & -2 \mathop {\max} \left[ | \underline M_{\phi,i_1 i_2} |, | \overline M_{\phi,i_1 i_2} | \right], \vspace{1mm} \\ 
\overline M_{\phi,i_1 i_2} & := & 2 \mathop {\max} \left[ | \underline M_{\phi,i_1 i_2} |, | \overline M_{\phi,i_1 i_2} | \right].
\end{array}
$$

If $10 < a$, set

\vspace{-2mm}
$$
\begin{array}{rcl}
\underline M_{\phi,i_1 i_2} & := & 2^{a-10} \underline M_{\phi,i_1 i_2}, \vspace{1mm} \\
\overline M_{\phi,i_1 i_2} & := & 2^{a-10} \overline M_{\phi,i_1 i_2}.
\end{array}
$$

\item Set $a := a+1$ and return to Step 2.

\end{enumerate}

To obtain consistent Lipschitz constants for the experimental constraints, one applies Algorithm 1F to the functions $g_{p,j}$ as well.

Following this treatment of the Lipschitz constants, we may now apply the finalized version of Algorithm 2.
\newline
\newline
{\bf Algorithm 2F -- Lower and Upper Bounds for Experimental Function Values (Finalized)}
\begin{enumerate}
\item Set $\bar k := 0$. Choose $\Delta_r, \underline \Delta_r \in \mathbb{R}_{++}$ such that $\Delta_r > \underline \Delta_r$.
\item If $\bar k > k$, go to Step 6. Otherwise, proceed to Step 3.
\item Let $\overline N$ denote the number of iterates with the decision variable values ${\bf u}_{\bar k}$. Set $\underline \phi_p ({\bf u}_{\bar k},\tau_{\bar k}) := -\infty$ and $\overline \phi_p ({\bf u}_{\bar k},\tau_{\bar k}) := \infty$. 
\item For $N = 1,...,\overline N$:
\begin{enumerate}
\item Generate the $\left( \begin{array}{c} \overline N \\ N \end{array} \right)$ index sets that correspond to different combinations of iterations with the same decision variable values. For each set:
\begin{enumerate}
\item Define $\bar {\bf k}$ as the corresponding index set and compute the candidate lower and upper bounds as

\vspace{-2mm}
\begin{equation}\label{eq:costbound2gradLf}
\hspace{-4mm}\begin{array}{l}
\underline \phi_{p,test} := \displaystyle \frac{1}{N} \sum_{\tilde k \in \bar {\bf k}} \hat \phi_p ({\bf u}_{\tilde k},\tau_{\tilde k}) \vspace{1mm} \\
\displaystyle - \frac{1}{N} \sum_{\tilde k \in \bar {\bf k}}  \eta_{c,\phi}^{\bar k, \tilde k} \mathop {\max} \left[ \begin{array}{l} \displaystyle \frac{\partial \underline \phi_{p}}{\partial \tau} \Big |_{({\bf u}_{\bar k},\tau_{\bar k})} ( \tau_{\tilde k} - \tau_{{\bar k}} ) , \vspace{1mm} \\ \displaystyle \frac{\partial \overline \phi_{p}}{\partial \tau} \Big |_{({\bf u}_{\bar k},\tau_{\bar k})} ( \tau_{\tilde k} - \tau_{{\bar k}} ) \end{array} \right] \vspace{1mm}  \\
\displaystyle - \frac{1}{N} \sum_{\tilde k \in \bar {\bf k}} (1-  \eta_{c,\phi}^{\bar k, \tilde k}) \mathop {\max} \left[ \begin{array}{l} \underline \kappa_{\phi,\tau}^{\bar k, \tilde k} \left( \tau_{\tilde k} - \tau_{\bar k} \right), \vspace{1mm} \\ \overline \kappa_{\phi,\tau}^{\bar k, \tilde k} \left( \tau_{\tilde k} - \tau_{\bar k} \right) \end{array} \right] \vspace{1mm} \\
  - \overline W_{\phi, \bar k},
\end{array}
\end{equation}

\vspace{-2mm}
\begin{equation}\label{eq:costbound2gradUf}
\hspace{-4mm}\begin{array}{l}
\overline \phi_{p,test} := \displaystyle \frac{1}{N} \sum_{\tilde k \in \bar {\bf k}} \hat \phi_p ({\bf u}_{\tilde k},\tau_{\tilde k}) \vspace{1mm} \\
\displaystyle -  \frac{1}{N} \sum_{\tilde k \in \bar {\bf k}}  \eta_{v,\phi}^{\bar k, \tilde k} \mathop {\min} \left[ \begin{array}{l} \displaystyle \frac{\partial \underline \phi_{p}}{\partial \tau} \Big |_{({\bf u}_{\bar k},\tau_{\bar k})} ( \tau_{\tilde k} - \tau_{{\bar k}} ) , \vspace{1mm} \\ \displaystyle \frac{\partial \overline \phi_{p}}{\partial \tau} \Big |_{({\bf u}_{\bar k},\tau_{\bar k})} ( \tau_{\tilde k} - \tau_{{\bar k}} ) \end{array} \right] \vspace{1mm} \\
- \displaystyle \frac{1}{N} \sum_{\tilde k \in \bar {\bf k}} (1-  \eta_{v,\phi}^{\bar k, \tilde k}) \mathop {\min} \left[ \begin{array}{l} \underline \kappa_{\phi,\tau}^{\bar k, \tilde k} \left( \tau_{\tilde k} - \tau_{\bar k} \right), \vspace{1mm} \\ \overline \kappa_{\phi,\tau}^{\bar k, \tilde k} \left( \tau_{\tilde k} - \tau_{\bar k} \right) \end{array} \right] \vspace{1mm} \\  
- \underline W_{\phi, \bar k}.
\end{array}
\end{equation}

\item If $\underline \phi_{p,test} >  \underline \phi_p ({\bf u}_{\bar k},\tau_{\bar k})$, set $\underline \phi_p ({\bf u}_{\bar k},\tau_{\bar k}) := \underline \phi_{p,test}$. If $\overline \phi_{p,test} < \overline \phi_p ({\bf u}_{\bar k},\tau_{\bar k})$, set $\overline \phi_p ({\bf u}_{\bar k},\tau_{\bar k}) := \overline \phi_{p,test}$.
\end{enumerate}
\end{enumerate}
\item Set $\bar k := \bar k + 1$ and return to Step 2.
\item If $\Delta_r \leq \underline \Delta_r$, terminate. Otherwise, set $\Delta_r := 0, \; \bar k := 0$ and go to Step 7.
\item If $\bar k > k$, return to Step 6. Otherwise, proceed to Step 8.
\item For ${\tilde k} := \{ 0,...,k \} \setminus \bar k$:

\begin{enumerate}
\item Compute the lower and upper bound candidate values as

\vspace{-2mm}
\begin{equation}\label{eq:lipboundcostU2gradf}
\begin{array}{l}
\overline \phi_{p,test} := \overline \phi_{p} ({\bf u}_{\tilde k},\tau_{\tilde k}) \vspace{1mm} \\
\displaystyle \hspace{5mm} + \eta_{c,\phi}^{\bar k, \tilde k} \mathop {\max} \left[ \begin{array}{l} \displaystyle \frac{\partial \underline \phi_{p}}{\partial \tau} \Big |_{({\bf u}_{\tilde k},\tau_{\tilde k})} ( \tau_{\bar k} - \tau_{\tilde k} ) , \vspace{1mm} \\ \displaystyle \frac{\partial \overline \phi_{p}}{\partial \tau} \Big |_{({\bf u}_{\tilde k},\tau_{\tilde k})} ( \tau_{\bar k} - \tau_{\tilde k} ) \end{array} \right] \vspace{1mm} \\
\displaystyle  \hspace{5mm} + (1-\eta_{c,\phi}^{\bar k, \tilde k})  \mathop {\max} \left[ \begin{array}{l} \underline \kappa_{\phi,\tau}^{\bar k, \tilde k} ( \tau_{\bar k} - \tau_{\tilde k} ), \vspace{1mm} \\ \overline \kappa_{\phi,\tau}^{\bar k, \tilde k} ( \tau_{\bar k} - \tau_{\tilde k} ) \end{array} \right] \vspace{1mm} \\
 \displaystyle \hspace{5mm} + \sum_{i \in I_{c,\phi}^{\bar k, \tilde k}} \mathop {\max} \left[ \begin{array}{l} \displaystyle \frac{\partial \underline \phi_p}{\partial u_i} \Big |_{({\bf u}_{\tilde k},\tau_{\tilde k})} ( u_{\bar k,i} - u_{\tilde k,i} ), \vspace{1mm} \\ \displaystyle \frac{\partial \overline \phi_p}{\partial u_i} \Big |_{({\bf u}_{\tilde k},\tau_{\tilde k})} ( u_{\bar k,i} - u_{\tilde k,i} ) \end{array} \right] \vspace{1mm} \\
\displaystyle \hspace{5mm} + \sum_{i \not \in I_{c,\phi}^{\bar k, \tilde k}} \mathop {\max} \left[ \begin{array}{l} \underline \kappa_{\phi,i}^{\bar k, \tilde k} ( u_{\bar k,i} - u_{\tilde k,i} ), \vspace{1mm} \\ \overline \kappa_{\phi,i}^{\bar k, \tilde k} ( u_{\bar k,i} - u_{\tilde k,i} ) \end{array} \right],
\end{array}
\end{equation}

\vspace{-2mm}
\begin{equation}\label{eq:lipboundcostL2gradf}
\begin{array}{l}
\underline \phi_{p,test} := \underline \phi_{p} ({\bf u}_{\tilde k},\tau_{\tilde k}) \vspace{1mm} \\
\displaystyle \hspace{5mm} + \eta_{v,\phi}^{\bar k, \tilde k} \mathop {\min} \left[ \begin{array}{l} \displaystyle \frac{\partial \underline \phi_{p}}{\partial \tau} \Big |_{({\bf u}_{\tilde k},\tau_{\tilde k})} ( \tau_{\bar k} - \tau_{\tilde k} ) , \vspace{1mm} \\ \displaystyle \frac{\partial \overline \phi_{p}}{\partial \tau} \Big |_{({\bf u}_{\tilde k},\tau_{\tilde k})} ( \tau_{\bar k} - \tau_{\tilde k} ) \end{array} \right] \vspace{1mm} \\
\displaystyle  \hspace{5mm} + (1-\eta_{v,\phi}^{\bar k, \tilde k})  \mathop {\min} \left[ \begin{array}{l} \underline \kappa_{\phi,\tau}^{\bar k, \tilde k} ( \tau_{\bar k} - \tau_{\tilde k} ), \vspace{1mm} \\ \overline \kappa_{\phi,\tau}^{\bar k, \tilde k} ( \tau_{\bar k} - \tau_{\tilde k} ) \end{array} \right] \vspace{1mm}  \\
 \displaystyle \hspace{5mm} + \sum_{i \in I_{v,\phi}^{\bar k, \tilde k}} \mathop {\min} \left[ \begin{array}{l} \displaystyle \frac{\partial \underline \phi_p}{\partial u_i} \Big |_{({\bf u}_{\tilde k},\tau_{\tilde k})} ( u_{\bar k,i} - u_{\tilde k,i} ), \vspace{1mm} \\ \displaystyle \frac{\partial \overline \phi_p}{\partial u_i} \Big |_{({\bf u}_{\tilde k},\tau_{\tilde k})} ( u_{\bar k,i} - u_{\tilde k,i} ) \end{array} \right] \vspace{1mm} \\
\displaystyle \hspace{5mm} + \sum_{i \not \in I_{v,\phi}^{\bar k, \tilde k}} \mathop {\min} \left[ \begin{array}{l} \underline \kappa_{\phi,i}^{\bar k, \tilde k} ( u_{\bar k,i} - u_{\tilde k,i} ), \vspace{1mm} \\ \overline \kappa_{\phi,i}^{\bar k, \tilde k} ( u_{\bar k,i} - u_{\tilde k,i} ) \end{array} \right].
\end{array}
\end{equation} 

\item Set 

\vspace{-2mm}
\begin{equation}\label{eq:Deltar}
\Delta_r := \mathop {\max} \left[  \begin{array}{l} \Delta_r,  \underline \phi_{p,test} - \underline \phi_p ({\bf u}_{\bar k},\tau_{\bar k}), \vspace{1mm} \\ \overline \phi_p ({\bf u}_{\bar k},\tau_{\bar k}) - \overline \phi_{p,test} \end{array} \right].
\end{equation}

\item If $\underline \phi_{p,test} > \underline \phi_p ({\bf u}_{\bar k},\tau_{\bar k})$, set $\underline \phi_p ({\bf u}_{\bar k},\tau_{\bar k}) := \underline \phi_{p,test}$. If $\overline \phi_{p,test} < \overline \phi_p ({\bf u}_{\bar k},\tau_{\bar k})$, set $\overline \phi_p ({\bf u}_{\bar k},\tau_{\bar k}) := \overline \phi_{p,test}$.

\end{enumerate}

\item Set $\bar k := \bar k+1$ and return to Step 7.

\end{enumerate}

The same algorithm is then carried out for the experimental constraint functions $g_{p,j}$.

It is important to note the slight discrepancy between Algorithms 1F and 2F, in that the latter uses local Lipschitz constants while the former uses global ones. To have compatability between the two, it would be necessary either for both to use local or for both to use global. How this compatability is established is left up to the user. For example, one may simply choose to work with global constants throughout since local relaxations may not be reliable in many cases, in which case one would use, e.g., $\underline \kappa_{\phi,i}^{\bar k, \tilde k} = \underline \kappa_{\phi,i}$ in Algorithm 2F. Alternatively, one may modify the consistency check of Algorithm 1F to be local in nature, although this may be a bit complex and so we do not treat the modification here. Finally, one may dispense with compatability and simply use the global constants in Algorithm 1F and the local ones in Algorithm 2F -- in this case, no sort of consistency check would be carried out for the local constants, which would most likely be obtained from a model.

\subsection{Choosing the Reference Point}

With consistent Lipschitz constants obtained and the experimental function values bounded, we now proceed to choose the point to use as the reference for optimization. In the case where the cost function is experimental in nature, this is taken as the decision variable vector at the most recent iterate that is robustly guaranteed to satisfy the problem constraints with some back-off to allow for excitation if needed:

\vspace{-2mm}
\begin{equation}\label{eq:kstarLUccvcostlocMNgradBOslackf}
\begin{array}{rl}
k^* := \;\;\;\;\;\;\;\;\;\;\;\;\;\;& \\
{\rm arg} \mathop {\rm maximize}\limits_{\bar k \in [0,k]} & \bar k  \vspace{1mm} \\
{\rm{subject}}\;{\rm{to}} & \overline g_{p,j} ({\bf u}_{\bar k},\tau_{\bar k}) \vspace{1mm} \\
& \displaystyle + \displaystyle \eta_{c,j}^{e,\bar k} \frac{\partial \overline g_{p,j}}{\partial \tau} \Big |_{({\bf u}_{\bar k},\tau_{\bar k})} ( \tau_{k+1} - \tau_{{\bar k}} ) \vspace{1mm} \\
&  + (1-\eta_{c,j}^{e,\bar k}) \overline \kappa_{p,j\tau}^{e,\bar k} \left( \tau_{k+1} - \tau_{\bar k} \right) \vspace{1mm} \\
& + \delta_e \| \kappa_{p,j}^{m,\bar k} \|_2 \leq d_{p,j}^{k+1}, \;\; \forall j = 1,...,n_{g_p} \vspace{1mm} \\
& \mathop {\max} \limits_{{\bf u} \in \mathcal{B}_{e,\bar k}} g_j ({\bf u}) \leq d_j^{k+1}, \;\; \forall j = 1,...,n_{g} \vspace{1mm} \\
& {\bf u}^L + \delta_e {\bf 1}  \preceq {\bf u}_{\bar k} \preceq {\bf u}^U - \delta_e {\bf 1} \vspace{1mm} \\
& \displaystyle \underline \phi_p ({\bf u}_{\bar k},\tau_{\bar k}) +  \eta_{v,\phi}^{\bar k, k} \frac{\partial \underline \phi_{p}}{\partial \tau} \Big |_{({\bf u}_{\bar k},\tau_{\bar k})} ( \tau_{k} - \tau_{{\bar k}} ) \vspace{1mm} \\
&+ (1-  \eta_{v,\phi}^{\bar k, k}) \underline \kappa_{\phi,\tau}^{\bar k, k} \left( \tau_{k} - \tau_{\bar k} \right) \leq \vspace{1mm} \\
& \mathop {\min} \limits_{\tilde k \in {\bf k}_f} \left[ \begin{array}{l} \overline \phi_p ({\bf u}_{\tilde k},\tau_{\tilde k}) + \vspace{1mm} \\
\displaystyle  \eta_{c,\phi}^{\tilde k, k} \frac{\partial \overline \phi_{p}}{\partial \tau} \Big |_{({\bf u}_{\tilde k},\tau_{\tilde k})} ( \tau_{k} - \tau_{{\tilde k}} ) \vspace{1mm} \\
+ (1-  \eta_{c,\phi}^{\tilde k, k}) \overline \kappa_{\phi,\tau}^{\tilde k, k} \left( \tau_{k} - \tau_{\tilde k} \right) \end{array} \right],
\end{array}
\end{equation}

\vspace{-2mm}
\begin{equation}\label{eq:kfeas6f}
{\bf k}_f = \left\{ \bar k : 
\begin{array}{l}
\overline g_{p,j} ({\bf u}_{\bar k},\tau_{\bar k}) \vspace{1mm} \\ 
+ \displaystyle \eta_{c,j}^{e,\bar k} \frac{\partial \overline g_{p,j}}{\partial \tau} \Big |_{({\bf u}_{\bar k},\tau_{\bar k})} ( \tau_{k+1} - \tau_{{\bar k}} ) \vspace{1mm} \\ 
+ (1-\eta_{c,j}^{e,\bar k}) \overline \kappa_{p,j\tau}^{e,\bar k} \left( \tau_{k+1} - \tau_{\bar k} \right) \vspace{1mm} \\ 
+ \delta_e \| \kappa_{p,j}^{m,\bar k} \|_2  \leq d_{p,j}^{k+1}, \;\;\forall j = 1,...,n_{g_p}; \vspace{1mm} \\
 \mathop {\max} \limits_{{\bf u} \in \mathcal{B}_{e,\bar k}} g_j ({\bf u}) \leq d_j^{k+1}, \;\; \forall j = 1,...,n_{g}; \vspace{1mm} \\
{\bf u}^L + \delta_e {\bf 1}  \preceq {\bf u}_{\bar k} \preceq {\bf u}^U - \delta_e {\bf 1}
\end{array} \right\}.
\end{equation}

If no point satisfying these restrictions can be found, then one should either cease running experiments, as feasibility cannot be guaranteed, or choose a point that violates the constraints the least in the worst case:

\vspace{-2mm}
\begin{equation}\label{eq:kstar2LUccvlocMNgradBOf}
k^*  := {\rm arg} \mathop {\rm minimize}\limits_{\bar k \in [0,k]}  \mathop {\max} \left[ g_{p,m}^{\bar k}, g_{m}^{\bar k}, u_{L,m}^{\bar k}, u_{U,m}^{\bar k}  \right],
\end{equation}

\noindent where

\vspace{-2mm}
\begin{equation}\label{eq:violsf}
\begin{array}{l}
g_{p,m}^{\bar k} = \mathop {\max} \limits_{j = 1,...,n_{g_p}} \left[ \begin{array}{l}
\overline g_{p,j} ({\bf u}_{\bar k},\tau_{\bar k}) \vspace{1mm} \\ 
+ \displaystyle \eta_{c,j}^{e,\bar k} \frac{\partial \overline g_{p,j}}{\partial \tau} \Big |_{({\bf u}_{\bar k},\tau_{\bar k})} ( \tau_{k+1} - \tau_{{\bar k}} ) \vspace{1mm} \\ 
+ (1-\eta_{c,j}^{e,\bar k}) \overline \kappa_{p,j\tau}^{e,\bar k} \left( \tau_{k+1} - \tau_{\bar k} \right) \vspace{1mm} \\ 
+ \delta_e \| \kappa_{p,j}^{m,\bar k} \|_2 - d_{p,j}^{k+1} \end{array} \right] \vspace{1mm} \\
g_{m}^{\bar k} = \mathop {\max} \limits_{j = 1,...,n_g} \mathop {\max} \limits_{{\bf u} \in \mathcal{B}_{e,\bar k}} \left( g_j ({\bf u}) - d_{j}^{k+1} \right) \vspace{1mm} \\
u_{L,m}^{\bar k} = \mathop {\max} \limits_{i = 1,...,n_u} \left( u^L_i + \delta_e - u_{\bar k,i} \right) \vspace{1mm} \\
u_{U,m}^{\bar k} = \mathop {\max} \limits_{i = 1,...,n_u} \left( u_{\bar k,i} + \delta_e - u^U_i  \right).
\end{array}
\end{equation}

If the cost function is numerical, then one may choose $k^*$ as

\vspace{-2mm}
\begin{equation}\label{eq:kstarLUccvcostlocMNgradBOslacknumf}
\begin{array}{rl}
k^* := \;\;\;\;\;\;\;\;\;\;\;\;\;\;& \vspace{1mm} \\
{\rm arg} \mathop {\rm minimize}\limits_{\bar k \in [0,k]} & \phi({\bf u}_{\bar k}) \vspace{1mm}  \\
{\rm{subject}}\;{\rm{to}} & \overline g_{p,j} ({\bf u}_{\bar k},\tau_{\bar k}) \vspace{1mm} \\
& \displaystyle + \displaystyle \eta_{c,j}^{e,\bar k} \frac{\partial \overline g_{p,j}}{\partial \tau} \Big |_{({\bf u}_{\bar k},\tau_{\bar k})} ( \tau_{k+1} - \tau_{{\bar k}} ) \vspace{1mm} \\
&  + (1-\eta_{c,j}^{e,\bar k}) \overline \kappa_{p,j\tau}^{e,\bar k} \left( \tau_{k+1} - \tau_{\bar k} \right) \vspace{1mm} \\
& + \delta_e \| \kappa_{p,j}^{m,\bar k} \|_2 \leq d_{p,j}^{k+1} , \;\; \forall j = 1,...,n_{g_p} \vspace{1mm} \\
& \mathop {\max} \limits_{{\bf u} \in \mathcal{B}_{e,\bar k}} g_j ({\bf u}) \leq d_j^{k+1}, \;\; \forall j = 1,...,n_{g} \vspace{1mm} \\
& {\bf u}^L + \delta_e {\bf 1}  \preceq {\bf u}_{\bar k} \preceq {\bf u}^U - \delta_e {\bf 1},
\end{array}
\end{equation}

\noindent and resort to (\ref{eq:kstar2LUccvlocMNgradBOf}) if no point satisfying the restrictions can be found.

\subsection{Projection of Optimal Target}

Having chosen a reference ${\bf u}_{k^*}$ and with some optimal target ${\bf u}_{k+1}^*$ supplied by an external optimization method, we now project this point to satisfy the SCFO in some robust sense. To this end, let us state the following finalized version of Algorithm 3.

\vspace{2mm}
\noindent {\bf{Algorithm 3F -- Projection with Automatic Choice of Projection Parameters and Gradient Robustness (Finalized)}}
\vspace{2mm}

\begin{enumerate}
\item Set ${\boldsymbol \epsilon}_p := \overline {\boldsymbol \epsilon}_p$, ${\boldsymbol \epsilon} := \overline {\boldsymbol \epsilon}$, $\boldsymbol{\delta}_{g_p} := \boldsymbol{\overline \delta}_{g_p}$, $\boldsymbol{\delta}_{g} := \boldsymbol{\overline \delta}_{g}$, and $\delta_\phi := \overline \delta_\phi$, where $\overline \epsilon_{p,j} := \overline \delta_{g_p,j} \approx - \mathop {\min} \limits_{({\bf u},\tau) \in \mathcal{I}_\tau} g_{p,j} ({\bf u},\tau)$, $\overline \epsilon_{j} := \overline \delta_{g,j} \approx - \mathop {\min} \limits_{{\bf u} \in \mathcal{I}} g_{j} ({\bf u})$, and $\overline \delta_\phi \approx \phi_p ({\bf u}_0,\tau_0) - \mathop {\min} \limits_{({\bf u},\tau) \in \mathcal{I}_\tau} \phi_p ({\bf u},\tau)$. Set $\underline P := 0, \overline P := 1$.
\item Check the feasibility of

\vspace{-3mm}
\begin{equation}\label{eq:projdeg2robslackBO2slackf}
\begin{array}{rl}
\mathop {\rm minimize}\limits_{{\bf u}} & \| {\bf u} - {\bf u}_{k+1}^* \|_2^2 \vspace{1mm}  \\
 {\rm{subject}}\;{\rm{to}} &  \nabla \hat g_{p,j} ({\bf u}_{k^*}, \tau_{k+1})^T \left[ \hspace{-1mm} \begin{array}{c} {\bf u} - {\bf u}_{k^*} \\ 0 \end{array} \hspace{-1mm} \right] \leq -\delta_{g_p,j} , \vspace{1mm} \\
 & \hspace{-6mm} \forall j: \begin{array}{l} \overline g_{p,j} ({\bf u}_{k^*},\tau_{k^*}) \vspace{1mm}  \\
 +  \displaystyle \eta_{c,j}^{e,k^*} \frac{\partial \overline g_{p,j}}{\partial \tau} \Big |_{({\bf u}_{k^*},\tau_{k^*})} ( \tau_{k+1} - \tau_{{ k^*}} ) \vspace{1mm} \\ 
 + (1-\eta_{c,j}^{e,k^*}) \overline \kappa_{p,j\tau}^{e,k^*} \left( \tau_{k+1} - \tau_{k^*} \right) \vspace{1mm} \\ 
 + \delta_e \| \kappa_{p,j}^{m,k^*} \|_2 \geq -\epsilon_{p,j}+d_{p,j}^{k+1} \end{array} \vspace{1mm} \\
 & \nabla g_{j} ({\bf u}_{k^*})^T ({\bf u} - {\bf u}_{k^*}) \leq -\delta_{g,j}, \vspace{1mm} \\
& \forall j : \mathop {\max} \limits_{{\bf u} \in \mathcal{B}_{e,k^*}} g_{j}({\bf u}) \geq -\epsilon_{j}+d_{j}^{k+1} \vspace{1mm} \\
& \nabla \hat \phi_{p} ({\bf u}_{k^*}, \tau_{k+1})^T \left[ \hspace{-1mm} \begin{array}{c} {\bf u} - {\bf u}_{k^*} \\ 0 \end{array} \hspace{-1mm} \right] \leq -\delta_{\phi} , \vspace{1mm} \\
 & {\bf u}^L + \delta_e {\bf 1} \preceq {\bf u} \preceq {\bf u}^U - \delta_e {\bf 1}
\end{array}
\end{equation}

\noindent for the given choice of ${\boldsymbol \epsilon}_{p}, {\boldsymbol \epsilon}, {\boldsymbol \delta}_{g_p}, {\boldsymbol \delta}_{g}, \delta_\phi$ by solving a linear programming feasibility problem if the cost function is experimental. If the cost function is numerical, check the feasibility of

\vspace{-3mm}
\begin{equation}\label{eq:projdeg2robslackBO2slackfnum}
\begin{array}{rl}
\mathop {\rm minimize}\limits_{{\bf u}} & \| {\bf u} - {\bf u}_{k+1}^* \|_2^2 \vspace{1mm}  \\
 {\rm{subject}}\;{\rm{to}} & \nabla \hat g_{p,j} ({\bf u}_{k^*}, \tau_{k+1})^T \left[ \hspace{-1mm} \begin{array}{c} {\bf u} - {\bf u}_{k^*} \\ 0 \end{array} \hspace{-1mm} \right] \leq -\delta_{g_p,j} , \vspace{1mm} \\
 & \hspace{-6mm} \forall j: \begin{array}{l} \overline g_{p,j} ({\bf u}_{k^*},\tau_{k^*}) \vspace{1mm}  \\
 +  \displaystyle \eta_{c,j}^{e,k^*} \frac{\partial \overline g_{p,j}}{\partial \tau} \Big |_{({\bf u}_{k^*},\tau_{k^*})} ( \tau_{k+1} - \tau_{{ k^*}} ) \vspace{1mm} \\ 
 + (1-\eta_{c,j}^{e,k^*}) \overline \kappa_{p,j\tau}^{e,k^*} \left( \tau_{k+1} - \tau_{k^*} \right) \vspace{1mm} \\ 
 + \delta_e \| \kappa_{p,j}^{m,k^*} \|_2 \geq -\epsilon_{p,j}+d_{p,j}^{k+1} \end{array} 
\end{array}
\end{equation}

$$
\begin{array}{rl}
 & \nabla g_{j} ({\bf u}_{k^*})^T ({\bf u} - {\bf u}_{k^*}) \leq -\delta_{g,j}, \vspace{1mm} \\
& \forall j : \mathop {\max} \limits_{{\bf u} \in \mathcal{B}_{e,k^*}} g_{j}({\bf u}) \geq -\epsilon_{j}+d_{j}^{k+1} \vspace{1mm} \\
& \nabla \phi ({\bf u}_{k^*})^T ({\bf u} - {\bf u}_{k^*}) \leq -\delta_{\phi} , \vspace{1mm} \\
 & {\bf u}^L + \delta_e {\bf 1} \preceq {\bf u} \preceq {\bf u}^U - \delta_e {\bf 1}
\end{array}
$$

\noindent instead. If no solution exists and $\delta_\phi \geq \overline \delta_\phi / 2^{10}$, set ${\boldsymbol \epsilon}_p := {\boldsymbol \epsilon}_p/2$, ${\boldsymbol \epsilon} := {\boldsymbol \epsilon}/2$, $\boldsymbol{\delta}_{g_p} := \boldsymbol{\delta}_{g_p}/2$, $\boldsymbol{\delta}_{g} := \boldsymbol{\delta}_{g}/2$, $\delta_\phi := \delta_\phi/2$, and repeat this step. Otherwise, proceed to Step 3.

\item If $\delta_\phi < \overline \delta_\phi / 2^{10}$, terminate with $\bar {\bf u}_{k+1}^* := {\bf u}_{k^*}$. Otherwise, set $P := 0.5 \underline P + 0.5 \overline P$ and define the tightened bounds as

\vspace{-2mm}
\begin{equation}\label{eq:boundtightf}
\begin{array}{l}
\nabla \underline \phi_p^t ({\bf u}_{k^*},\tau_{k+1}) := \nabla \hat \phi_p ({\bf u}_{k^*},\tau_{k+1}) \vspace{1mm} \\
\hspace{5mm} + P \left[ \nabla \underline \phi_p ({\bf u}_{k^*},\tau_{k+1}) - \nabla \hat \phi_p ({\bf u}_{k^*},\tau_{k+1}) \right], \vspace{1mm} \\
\nabla \overline \phi_p^t ({\bf u}_{k^*},\tau_{k+1}) := \nabla \hat \phi_p ({\bf u}_{k^*},\tau_{k+1}) \vspace{1mm} \\
\hspace{5mm} + P \left[ \nabla \overline \phi_p ({\bf u}_{k^*},\tau_{k+1}) - \nabla \hat \phi_p ({\bf u}_{k^*},\tau_{k+1}) \right], \vspace{1mm} \\
\nabla \underline g_{p,j}^t ({\bf u}_{k^*},\tau_{k+1}) := \nabla \hat g_{p,j} ({\bf u}_{k^*},\tau_{k+1}) \vspace{1mm} \\
\hspace{5mm} + P \left[ \nabla \underline g_{p,j} ({\bf u}_{k^*},\tau_{k+1}) - \nabla \hat g_{p,j} ({\bf u}_{k^*},\tau_{k+1}) \right],  \vspace{1mm} \\
\nabla \overline g_{p,j}^t ({\bf u}_{k^*},\tau_{k+1}) := \nabla \hat g_{p,j} ({\bf u}_{k^*},\tau_{k+1}) \vspace{1mm} \\
\hspace{5mm} + P \left[ \nabla \overline g_{p,j} ({\bf u}_{k^*},\tau_{k+1}) - \nabla \hat g_{p,j} ({\bf u}_{k^*},\tau_{k+1}) \right].
\end{array}
\end{equation}

\item Check the feasibility of the robust projection with the tightened bounds for the obtained ${\boldsymbol \epsilon}_{p}, {\boldsymbol \epsilon}, {\boldsymbol \delta}_{g_p}, {\boldsymbol \delta}_{g}, \delta_\phi$:

\vspace{-2mm}
\begin{equation}\label{eq:projdeg2robslackBO2slackff}
\begin{array}{rl}
\bar {\bf u}_{k+1}^* := \;\;\;\;\;\;\;\;\; & \vspace{1mm}\\
 {\rm arg} \mathop {\rm minimize}\limits_{{\bf u},{\bf s}_\phi, {\bf S}} & \| {\bf u} - {\bf u}_{k+1}^* \|_2^2 \vspace{1mm}  \\
 {\rm{subject}}\;{\rm{to}} & \displaystyle \sum_{i=1}^{n_u} s_{ji}  \leq -\delta_{g_p,j} \vspace{1mm} \\
& \displaystyle \frac{\partial \underline g_{p,j}^t}{\partial u_i} \Big |_{({\bf u}_{k^*}, \tau_{k+1})} (u_i - u_{k^*,i}) \leq s_{ji} \vspace{1mm} \\
& \displaystyle \frac{\partial \overline g_{p,j}^t}{\partial u_i} \Big |_{({\bf u}_{k^*}, \tau_{k+1})} (u_i - u_{k^*,i}) \leq s_{ji}, \vspace{1mm} \\
& \forall i = 1,...,n_u, \vspace{1mm} \\
&  \hspace{-6mm} \forall j: \begin{array}{l} \overline g_{p,j} ({\bf u}_{k^*},\tau_{k^*}) \vspace{1mm} \\
 +  \displaystyle \eta_{c,j}^{e,k^*} \frac{\partial \overline g_{p,j}}{\partial \tau} \Big |_{({\bf u}_{k^*},\tau_{k^*})} ( \tau_{k+1} - \tau_{{ k^*}} ) \vspace{1mm} \\ 
 + (1-\eta_{c,j}^{e,k^*}) \overline \kappa_{p,j\tau}^{e,k^*} \left( \tau_{k+1} - \tau_{k^*} \right) \vspace{1mm} \\ 
 + \delta_e \| \kappa_{p,j}^{m,k^*} \|_2 \geq -\epsilon_{p,j}+d_{p,j}^{k+1} \end{array} \vspace{1mm} \\
 & \nabla g_{j} ({\bf u}_{k^*})^T ({\bf u} - {\bf u}_{k^*}) \leq -\delta_{g,j}, \vspace{1mm} \\
& \forall j : \mathop {\max} \limits_{{\bf u} \in \mathcal{B}_{e,k^*}} g_{j}({\bf u}) \geq -\epsilon_{j}+d_{j}^{k+1}
\end{array}
\end{equation}

$$
\begin{array}{rl}
 & \displaystyle \sum_{i=1}^{n_u} s_{\phi,i}  \leq -\delta_{\phi} \vspace{1mm} \\
& \displaystyle \frac{\partial \underline \phi_{p}^t}{\partial u_i} \Big |_{({\bf u}_{k^*}, \tau_{k+1})} (u_i - u_{k^*,i}) \leq s_{\phi,i} \vspace{1mm} \\
& \displaystyle \frac{\partial \overline \phi_{p}^t}{\partial u_i} \Big |_{({\bf u}_{k^*}, \tau_{k+1})} (u_i - u_{k^*,i}) \leq s_{\phi,i} \vspace{1mm} \\
 & {\bf u}^L + \delta_e {\bf 1} \preceq {\bf u} \preceq {\bf u}^U - \delta_e {\bf 1}, 
\end{array}
$$

\noindent or, if the cost is numerical, check the feasibility of

\vspace{-2mm}
\begin{equation}\label{eq:projdeg2robslackBO2slackffnum}
\begin{array}{rl}
\bar {\bf u}_{k+1}^* := \;\;\;\;\;\;\;\;\; & \vspace{1mm}\\
 {\rm arg} \mathop {\rm minimize}\limits_{{\bf u},{\bf S}} & \| {\bf u} - {\bf u}_{k+1}^* \|_2^2 \vspace{1mm}  \\
 {\rm{subject}}\;{\rm{to}} & \displaystyle \sum_{i=1}^{n_u} s_{ji}  \leq -\delta_{g_p,j} \vspace{1mm} \\
& \displaystyle \frac{\partial \underline g_{p,j}^t}{\partial u_i} \Big |_{({\bf u}_{k^*}, \tau_{k+1})} (u_i - u_{k^*,i}) \leq s_{ji} \vspace{1mm} \\
& \displaystyle \frac{\partial \overline g_{p,j}^t}{\partial u_i} \Big |_{({\bf u}_{k^*}, \tau_{k+1})} (u_i - u_{k^*,i}) \leq s_{ji}, \vspace{1mm} \\
& \forall i = 1,...,n_u, \vspace{1mm} \\
&  \hspace{-6mm} \forall j: \begin{array}{l} \overline g_{p,j} ({\bf u}_{k^*},\tau_{k^*}) \vspace{1mm} \\
 +  \displaystyle \eta_{c,j}^{e,k^*} \frac{\partial \overline g_{p,j}}{\partial \tau} \Big |_{({\bf u}_{k^*},\tau_{k^*})} ( \tau_{k+1} - \tau_{{ k^*}} ) \vspace{1mm} \\ 
 + (1-\eta_{c,j}^{e,k^*}) \overline \kappa_{p,j\tau}^{e,k^*} \left( \tau_{k+1} - \tau_{k^*} \right) \vspace{1mm} \\ 
 + \delta_e \| \kappa_{p,j}^{m,k^*} \|_2 \geq -\epsilon_{p,j}+d_{p,j}^{k+1} \end{array} \vspace{1mm} \\
 & \nabla g_{j} ({\bf u}_{k^*})^T ({\bf u} - {\bf u}_{k^*}) \leq -\delta_{g,j}, \vspace{1mm} \\
& \forall j : \mathop {\max} \limits_{{\bf u} \in \mathcal{B}_{e,k^*}} g_{j}({\bf u}) \geq -\epsilon_{j}+d_{j}^{k+1} \vspace{1mm} \\
 & \nabla \phi ({\bf u}_{k^*})^T ({\bf u} - {\bf u}_{k^*})  \leq -\delta_{\phi} \vspace{1mm} \\
 & {\bf u}^L + \delta_e {\bf 1} \preceq {\bf u} \preceq {\bf u}^U - \delta_e {\bf 1}. 
\end{array}
\end{equation}

\noindent If (\ref{eq:projdeg2robslackBO2slackff}) -- or, in the case of the numerical cost, (\ref{eq:projdeg2robslackBO2slackffnum}) -- is infeasible, set $\overline P := P$. Otherwise, set $\underline P := P$. If $\overline P - \underline P < 0.01$, proceed to Step 5. Otherwise, return to Step 3. 

\item Set $P := 0.5\underline P$, define the tightened bounds as in (\ref{eq:boundtightf}), and solve (\ref{eq:projdeg2robslackBO2slackff}) -- or, in the case of a numerical cost, (\ref{eq:projdeg2robslackBO2slackffnum}) -- to obtain the projected target $\bar {\bf u}_{k+1}^*$. Terminate.

\end{enumerate}
 
\subsection{The Line Search in $K_k$}

Having computed $\bar {\bf u}_{k+1}^*$, we now define the next experimental decision variable set ${\bf u}_{k+1}$ via the filter law

\vspace{-2mm}
\begin{equation}\label{eq:inputfilterf}
{\bf u}_{k+1} := {\bf u}_{k^*} + K_k (\bar {\bf u}_{k+1}^* - {\bf u}_{k^*}),
\end{equation}

\noindent with $K_k \in [0,1]$ kept sufficiently small so as to ensure the desired feasibility and cost decrease properties.

In the case of an experimental cost function, we may write the appropriate $K_k$ as the solution to the following univariate line search problem:

\vspace{-2mm}
\begin{equation}\label{eq:linesearchf}
\begin{array}{rl}
K_k := {\rm arg} \mathop {\rm maximize}\limits_{K \in [0,1]} & K  \\
{\rm{subject}}\;{\rm{to}} & \\
& \hspace{-30mm} \mathop {\min} \limits_{\bar k = 0,...,k} \left[ \hspace{-1mm} \begin{array}{l} \overline g_{p,j} ({\bf u}_{\bar k},\tau_{\bar k}) \displaystyle +\eta_{c,j}^{\bar k} \frac{\partial \overline g_{p,j}}{\partial \tau} \Big |_{({\bf u}_{\bar k},\tau_{\bar k})} ( \tau_{k+1} - \tau_{{\bar k}} ) \vspace{1mm} \\
 \displaystyle + (1-\eta_{c,j}^{\bar k}) \overline \kappa_{p,j\tau}^{\bar k} \left( \tau_{k+1} - \tau_{\bar k} \right) \vspace{1mm}\\
\displaystyle   + \sum_{i \in I_{c,j}^{\bar k}} \mathop {\max} \left[ \begin{array}{l} \displaystyle \frac{\partial \underline g_{p,j}}{\partial u_i} \Big |_{({\bf u}_{\bar k},\tau_{\bar k})} ( u_{k^*,i} + \\ \hspace{2mm} K (\bar u_{k+1,i}^* - u_{k^*,i} ) - u_{\bar k,i} ), \vspace{1mm}\\ \displaystyle \frac{\partial \overline g_{p,j}}{\partial u_i} \Big |_{({\bf u}_{\bar k},\tau_{\bar k})} ( u_{k^*,i} + \\ \hspace{2mm} K (\bar u_{k+1,i}^* - u_{k^*,i} ) - u_{\bar k,i} ) \end{array} \right] \vspace{1mm} \\
 + \displaystyle \sum_{i \not \in I_{c,j}^{\bar k}} \mathop {\max} \left[ \begin{array}{l} \underline \kappa_{p,ji}^{\bar k} ( u_{k^*,i} + \\ \hspace{2mm} K (\bar u_{k+1,i}^* - u_{k^*,i} ) - u_{\bar k,i} ), \vspace{1mm}\\ \overline \kappa_{p,ji}^{\bar k} ( u_{k^*,i} + \\ \hspace{2mm} K (\bar u_{k+1,i}^* - u_{k^*,i} ) - u_{\bar k,i} ) \end{array} \right] \end{array} \hspace{-1mm} \right] \vspace{1mm} \\
& \hspace{10mm} \leq d_{p,j}^{k+1}, \; \forall j = 1,...,n_{g_p} \vspace{1mm} \\
& \hspace{-30mm} \mathop {\min} \limits_{\bar k = 0,...,k} \left[ \hspace{-1mm} \begin{array}{l} \overline g_{p,j} ({\bf u}_{\bar k},\tau_{\bar k}) \displaystyle +\eta_{c,j}^{\bar k} \frac{\partial \overline g_{p,j}}{\partial \tau} \Big |_{({\bf u}_{\bar k},\tau_{\bar k})} ( \tau_{k+1} - \tau_{{\bar k}} ) \vspace{1mm} \\
 \displaystyle + (1-\eta_{c,j}^{\bar k}) \overline \kappa_{p,j\tau}^{\bar k} \left( \tau_{k+1} - \tau_{\bar k} \right) \vspace{1mm}\\
\displaystyle   + \sum_{i \in I_{c,j}^{\bar k}} \mathop {\max} \left[ \begin{array}{l} \displaystyle \frac{\partial \underline g_{p,j}}{\partial u_i} \Big |_{({\bf u}_{\bar k},\tau_{\bar k})} ( u_{k^*,i} + \\ \hspace{2mm} K (\bar u_{k+1,i}^* - u_{k^*,i} ) - u_{\bar k,i} ), \vspace{1mm}\\ \displaystyle \frac{\partial \overline g_{p,j}}{\partial u_i} \Big |_{({\bf u}_{\bar k},\tau_{\bar k})} ( u_{k^*,i} + \\ \hspace{2mm} K (\bar u_{k+1,i}^* - u_{k^*,i} ) - u_{\bar k,i} ) \end{array} \right] \vspace{1mm} \\
 + \displaystyle \sum_{i \not \in I_{c,j}^{\bar k}} \mathop {\max} \left[ \begin{array}{l} \underline \kappa_{p,ji}^{\bar k} ( u_{k^*,i} + \\ \hspace{2mm} K (\bar u_{k+1,i}^* - u_{k^*,i} ) - u_{\bar k,i} ), \vspace{1mm}\\ \overline \kappa_{p,ji}^{\bar k} ( u_{k^*,i} + \\ \hspace{2mm} K (\bar u_{k+1,i}^* - u_{k^*,i} ) - u_{\bar k,i} ) \end{array} \right] \end{array} \hspace{-1mm} \right] \vspace{1mm} \\
& \hspace{-30mm} + \displaystyle \eta_{c,j}^{K} \frac{\partial \overline g_{p,j}}{\partial \tau} \Big |_{\left({\bf u}_{k^*} + K ( \bar {\bf u}_{k+1}^* - {\bf u}_{k^*} ),\tau_{k+1}\right)} ( \tau_{k+2} - \tau_{{k+1}} ) \vspace{1mm} \\
& \hspace{-30mm} + (1-\eta_{c,j}^{K}) \overline \kappa_{p,j\tau}^{K} \left( \tau_{k+2} - \tau_{k+1} \right) + \delta_e \| \kappa_{p,j}^{m,K} \|_2  \leq d_{p,j}^{k+1}, \vspace{1mm} \\
& \hspace{20mm} \forall j = 1,...,n_{g_p} \vspace{-.5mm} \\
& \hspace{-30mm} \mathop {\max} \limits_{{\bf u} \in \mathcal{B}_{K}} g_{j}({\bf u}) \leq d_j^{k+1}, \;\; \forall j = 1,...,n_g \vspace{0mm} \\
& \hspace{-30mm} \displaystyle \sum_{i=1}^{n_u} \mathop {\max} \left[ \begin{array}{l}\displaystyle \frac{\partial \underline \phi_p^t}{\partial u_i} \Big |_{({\bf u}_{k^*},\tau_{k+1})} (\bar u_{k+1,i}^* - u_{k^*,i}), \vspace{1mm} \\ \displaystyle \frac{\partial \overline \phi_p^t}{\partial u_i} \Big |_{({\bf u}_{k^*},\tau_{k+1})} (\bar u_{k+1,i}^* - u_{k^*,i}) \end{array} \right] \vspace{.5mm} \\
& \hspace{-32mm} \displaystyle + \frac{K}{2} \sum_{i_1=1}^{n_u} \sum_{i_2=1}^{n_u} \mathop {\max} \left[ \begin{array}{l} \underline M_{\phi,i_1 i_2}^{k^*} (\bar u_{k+1,i_1}^* - u_{{k^*},i_1}) \vspace{0mm} \\
\hspace{16mm} (\bar u_{k+1,i_2}^* - u_{{k^*},i_2}), \vspace{1mm} \\ \overline M_{\phi,i_1 i_2}^{k^*} (\bar u_{k+1,i_1}^* - u_{{k^*},i_1}) \vspace{1mm} \\
\hspace{16mm}(\bar u_{k+1,i_2}^* - u_{{k^*},i_2}) \end{array} \right] \leq 0
\end{array}
\end{equation}

$$
\begin{array}{l}
\mathop {\max} \limits_{\bar k = 0,...,k}\left[ \begin{array}{l}
\displaystyle \underline \phi_{p} ({\bf u}_{\bar k},\tau_{\bar k})  +\eta_{v,\phi}^{\bar k} \frac{\partial \underline \phi_{p}}{\partial \tau} \Big |_{({\bf u}_{\bar k},\tau_{\bar k})} ( \tau_{k+1} - \tau_{{\bar k}} ) \vspace{1mm} \\
\displaystyle + (1-\eta_{v,\phi}^{\bar k}) \underline \kappa_{\phi,\tau}^{\bar k} \left( \tau_{k+1} - \tau_{\bar k} \right) \vspace{1mm}\\
\displaystyle   + \sum_{i \in I_{v,\phi}^{\bar k}} \mathop {\min} \left[ \begin{array}{l} \displaystyle \frac{\partial \underline \phi_{p}}{\partial u_i} \Big |_{({\bf u}_{\bar k},\tau_{\bar k})} ( u_{k^*,i} +\\ \hspace{2mm} K(\bar u_{k+1,i}^* - u_{k^*,i}) - u_{{\bar k},i} ), \vspace{1mm}\\ \displaystyle \frac{\partial \overline \phi_{p}}{\partial u_i} \Big |_{({\bf u}_{\bar k},\tau_{\bar k})} ( u_{k^*,i} +\\ \hspace{2mm} K(\bar u_{k+1,i}^* - u_{k^*,i}) - u_{{\bar k},i} ) \end{array} \right] \vspace{1mm} \\
+ \displaystyle \sum_{i \not \in I_{v,\phi}^{\bar k}} \mathop {\min} \left[ \begin{array}{l} \underline \kappa_{\phi,i}^{\bar k} ( u_{k^*,i} +\\ \hspace{3mm} K(\bar u_{k+1,i}^* - u_{k^*,i}) - u_{{\bar k},i} ), \vspace{1mm}\\ \overline \kappa_{\phi,i}^{\bar k} ( u_{k^*,i} +\\ \hspace{3mm} K(\bar u_{k+1,i}^* - u_{k^*,i}) - u_{{\bar k},i} ) \end{array} \right] 
\end{array} \right] \vspace{1mm}\\
\hspace{1mm}\displaystyle \leq \mathop {\min}_{\tilde k = 0,...,k} \left[ \begin{array}{l} \displaystyle \overline \phi_{p} ({\bf u}_{\tilde k},\tau_{\tilde k})  +\eta_{c,\phi}^{\tilde k} \frac{\partial \overline \phi_{p}}{\partial \tau} \Big |_{({\bf u}_{\tilde k},\tau_{\tilde k})} ( \tau_{k+1} - \tau_{{\tilde k}} ) \vspace{1mm} \\
\hspace{0mm} \displaystyle + (1-\eta_{c,\phi}^{\tilde k}) \overline \kappa_{\phi,\tau}^{\tilde k} \left( \tau_{k+1} - \tau_{\tilde k} \right) \vspace{1mm}\\
\hspace{0mm}\displaystyle   + \sum_{i \in I_{c,\phi}^{\tilde k}} \mathop {\max} \left[ \begin{array}{l} \displaystyle \frac{\partial \underline \phi_{p}}{\partial u_i} \Big |_{({\bf u}_{\tilde k},\tau_{\tilde k})} ( u_{k^*,i} - u_{{\tilde k},i} ), \vspace{1mm} \\ \displaystyle  \frac{\partial \overline \phi_{p}}{\partial u_i} \Big |_{({\bf u}_{\tilde k},\tau_{\tilde k})} ( u_{k^*,i} - u_{{\tilde k},i} )   \end{array} \right] \vspace{1mm} \\
\hspace{0mm} + \displaystyle \sum_{i \not \in I_{c,\phi}^{\tilde k}} \mathop {\max} \left[ \begin{array}{l} \underline \kappa_{\phi,i}^{\tilde k} ( u_{k^*,i} - u_{{\tilde k},i} ), \vspace{1mm}\\ \overline \kappa_{\phi,i}^{\tilde k} ( u_{k^*,i} - u_{{\tilde k},i} ) \end{array} \right] \end{array} \right].
\end{array}
$$

However, in the case that this does not yield a feasible solution due to degradation effects, one may try the relaxed alternative

\vspace{-2mm}
\begin{equation}\label{eq:linesearchf2}
\begin{array}{rl}
K_k := {\rm arg} \mathop {\rm maximize}\limits_{K \in [0,1]} & K  \\
{\rm{subject}}\;{\rm{to}} & \\
& \hspace{-30mm} \mathop {\min} \limits_{\bar k = 0,...,k} \left[ \hspace{-1mm} \begin{array}{l} \overline g_{p,j} ({\bf u}_{\bar k},\tau_{\bar k}) \displaystyle +\eta_{c,j}^{\bar k} \frac{\partial \overline g_{p,j}}{\partial \tau} \Big |_{({\bf u}_{\bar k},\tau_{\bar k})} ( \tau_{k+1} - \tau_{{\bar k}} ) \vspace{1mm} \\
 \displaystyle + (1-\eta_{c,j}^{\bar k}) \overline \kappa_{p,j\tau}^{\bar k} \left( \tau_{k+1} - \tau_{\bar k} \right) \vspace{1mm}\\
\displaystyle   + \sum_{i \in I_{c,j}^{\bar k}} \mathop {\max} \left[ \begin{array}{l} \displaystyle \frac{\partial \underline g_{p,j}}{\partial u_i} \Big |_{({\bf u}_{\bar k},\tau_{\bar k})} ( u_{k^*,i} + \\ \hspace{2mm} K (\bar u_{k+1,i}^* - u_{k^*,i} ) - u_{\bar k,i} ), \vspace{1mm}\\ \displaystyle \frac{\partial \overline g_{p,j}}{\partial u_i} \Big |_{({\bf u}_{\bar k},\tau_{\bar k})} ( u_{k^*,i} + \\ \hspace{2mm} K (\bar u_{k+1,i}^* - u_{k^*,i} ) - u_{\bar k,i} ) \end{array} \right] \vspace{1mm} \\
 + \displaystyle \sum_{i \not \in I_{c,j}^{\bar k}} \mathop {\max} \left[ \begin{array}{l} \underline \kappa_{p,ji}^{\bar k} ( u_{k^*,i} + \\ \hspace{2mm} K (\bar u_{k+1,i}^* - u_{k^*,i} ) - u_{\bar k,i} ), \vspace{1mm}\\ \overline \kappa_{p,ji}^{\bar k} ( u_{k^*,i} + \\ \hspace{2mm} K (\bar u_{k+1,i}^* - u_{k^*,i} ) - u_{\bar k,i} ) \end{array} \right] \end{array} \hspace{-1mm} \right] \vspace{1mm} \\
& \hspace{-10mm} + \delta_e \| \kappa_{p,j}^{m,K} \|_2  \leq d_{p,j}^{k+1}, \; \forall j = 1,...,n_{g_p} \\
& \hspace{-30mm} \mathop {\max} \limits_{{\bf u} \in \mathcal{B}_{K}} g_{j}({\bf u}) \leq d_j^{k+1}, \;\; \forall j = 1,...,n_g 
\end{array}
\end{equation}

$$
\begin{array}{l}
  \displaystyle  \sum_{i=1}^{n_u} \mathop {\max} \left[ \begin{array}{l}\displaystyle \frac{\partial \underline \phi_p^t}{\partial u_i} \Big |_{({\bf u}_{k^*},\tau_{k+1})} (\bar u_{k+1,i}^* - u_{k^*,i}), \vspace{1mm} \\ \displaystyle \frac{\partial \overline \phi_p^t}{\partial u_i} \Big |_{({\bf u}_{k^*},\tau_{k+1})} (\bar u_{k+1,i}^* - u_{k^*,i}) \end{array} \right] \vspace{1mm} \\
 \displaystyle + \frac{K}{2} \sum_{i_1=1}^{n_u} \sum_{i_2=1}^{n_u} \mathop {\max} \left[ \begin{array}{l} \underline M_{\phi,i_1 i_2}^{k^*} (\bar u_{k+1,i_1}^* - u_{{k^*},i_1}) \\
\hspace{16mm} (\bar u_{k+1,i_2}^* - u_{{k^*},i_2}), \\ \overline M_{\phi,i_1 i_2}^{k^*} (\bar u_{k+1,i_1}^* - u_{{k^*},i_1}) \\
\hspace{16mm}(\bar u_{k+1,i_2}^* - u_{{k^*},i_2}) \end{array} \right]   \leq 0 \vspace{1mm} \\
\mathop {\max} \limits_{\bar k = 0,...,k}\left[ \begin{array}{l}
\displaystyle \underline \phi_{p} ({\bf u}_{\bar k},\tau_{\bar k})  +\eta_{v,\phi}^{\bar k} \frac{\partial \underline \phi_{p}}{\partial \tau} \Big |_{({\bf u}_{\bar k},\tau_{\bar k})} ( \tau_{k+1} - \tau_{{\bar k}} ) \vspace{1mm} \\
\displaystyle + (1-\eta_{v,\phi}^{\bar k}) \underline \kappa_{\phi,\tau}^{\bar k} \left( \tau_{k+1} - \tau_{\bar k} \right) \vspace{1mm}\\
\displaystyle   + \sum_{i \in I_{v,\phi}^{\bar k}} \mathop {\min} \left[ \begin{array}{l} \displaystyle \frac{\partial \underline \phi_{p}}{\partial u_i} \Big |_{({\bf u}_{\bar k},\tau_{\bar k})} ( u_{k^*,i} +\\ \hspace{2mm} K(\bar u_{k+1,i}^* - u_{k^*,i}) - u_{{\bar k},i} ), \vspace{1mm}\\ \displaystyle \frac{\partial \overline \phi_{p}}{\partial u_i} \Big |_{({\bf u}_{\bar k},\tau_{\bar k})} ( u_{k^*,i} +\\ \hspace{2mm} K(\bar u_{k+1,i}^* - u_{k^*,i}) - u_{{\bar k},i} ) \end{array} \right] \vspace{1mm} \\
+ \displaystyle \sum_{i \not \in I_{v,\phi}^{\bar k}} \mathop {\min} \left[ \begin{array}{l} \underline \kappa_{\phi,i}^{\bar k} ( u_{k^*,i} +\\ \hspace{3mm} K(\bar u_{k+1,i}^* - u_{k^*,i}) - u_{{\bar k},i} ), \vspace{1mm}\\ \overline \kappa_{\phi,i}^{\bar k} ( u_{k^*,i} +\\ \hspace{3mm} K(\bar u_{k+1,i}^* - u_{k^*,i}) - u_{{\bar k},i} ) \end{array} \right] 
\end{array} \right] \vspace{1mm}\\
\hspace{1mm}\displaystyle \leq \mathop {\min}_{\tilde k = 0,...,k} \left[ \begin{array}{l} \displaystyle \overline \phi_{p} ({\bf u}_{\tilde k},\tau_{\tilde k})  +\eta_{c,\phi}^{\tilde k} \frac{\partial \overline \phi_{p}}{\partial \tau} \Big |_{({\bf u}_{\tilde k},\tau_{\tilde k})} ( \tau_{k+1} - \tau_{{\tilde k}} ) \vspace{1mm} \\
\hspace{0mm} \displaystyle + (1-\eta_{c,\phi}^{\tilde k}) \overline \kappa_{\phi,\tau}^{\tilde k} \left( \tau_{k+1} - \tau_{\tilde k} \right) \vspace{1mm}\\
\hspace{0mm}\displaystyle   + \sum_{i \in I_{c,\phi}^{\tilde k}} \mathop {\max} \left[ \begin{array}{l} \displaystyle \frac{\partial \underline \phi_{p}}{\partial u_i} \Big |_{({\bf u}_{\tilde k},\tau_{\tilde k})} ( u_{k^*,i} - u_{{\tilde k},i} ), \vspace{1mm} \\ \displaystyle  \frac{\partial \overline \phi_{p}}{\partial u_i} \Big |_{({\bf u}_{\tilde k},\tau_{\tilde k})} ( u_{k^*,i} - u_{{\tilde k},i} )   \end{array} \right] \vspace{1mm} \\
\hspace{0mm} + \displaystyle \sum_{i \not \in I_{c,\phi}^{\tilde k}} \mathop {\max} \left[ \begin{array}{l} \underline \kappa_{\phi,i}^{\tilde k} ( u_{k^*,i} - u_{{\tilde k},i} ), \vspace{1mm}\\ \overline \kappa_{\phi,i}^{\tilde k} ( u_{k^*,i} - u_{{\tilde k},i} ) \end{array} \right] \end{array} \right].
\end{array}
$$

If a feasible $K_k$ is still not found, one may simply set $K_k := 0$.

In the case that the cost is numerical in nature, the line search of (\ref{eq:linesearchf}) is replaced by

\vspace{-2mm}
\begin{equation}\label{eq:linesearchfnum}
\begin{array}{rl}
K_k := {\rm arg} \mathop {\rm minimize}\limits_{K \in [0,1]} & \phi \left( {\bf u}_{k^*} + K (\bar {\bf u}_{k+1}^* - {\bf u}_{k^*}) \right)  \\
{\rm{subject}}\;{\rm{to}} & \\
& \hspace{-30mm} \mathop {\min} \limits_{\bar k = 0,...,k} \left[ \hspace{-1mm} \begin{array}{l} \overline g_{p,j} ({\bf u}_{\bar k},\tau_{\bar k}) \displaystyle +\eta_{c,j}^{\bar k} \frac{\partial \overline g_{p,j}}{\partial \tau} \Big |_{({\bf u}_{\bar k},\tau_{\bar k})} ( \tau_{k+1} - \tau_{{\bar k}} ) \vspace{1mm} \\
 \displaystyle + (1-\eta_{c,j}^{\bar k}) \overline \kappa_{p,j\tau}^{\bar k} \left( \tau_{k+1} - \tau_{\bar k} \right) \vspace{1mm}\\
\displaystyle   + \sum_{i \in I_{c,j}^{\bar k}} \mathop {\max} \left[ \begin{array}{l} \displaystyle \frac{\partial \underline g_{p,j}}{\partial u_i} \Big |_{({\bf u}_{\bar k},\tau_{\bar k})} ( u_{k^*,i} + \\ \hspace{2mm} K (\bar u_{k+1,i}^* - u_{k^*,i} ) - u_{\bar k,i} ), \vspace{1mm}\\ \displaystyle \frac{\partial \overline g_{p,j}}{\partial u_i} \Big |_{({\bf u}_{\bar k},\tau_{\bar k})} ( u_{k^*,i} + \\ \hspace{2mm} K (\bar u_{k+1,i}^* - u_{k^*,i} ) - u_{\bar k,i} ) \end{array} \right] \vspace{1mm} \\
 + \displaystyle \sum_{i \not \in I_{c,j}^{\bar k}} \mathop {\max} \left[ \begin{array}{l} \underline \kappa_{p,ji}^{\bar k} ( u_{k^*,i} + \\ \hspace{2mm} K (\bar u_{k+1,i}^* - u_{k^*,i} ) - u_{\bar k,i} ), \vspace{1mm}\\ \overline \kappa_{p,ji}^{\bar k} ( u_{k^*,i} + \\ \hspace{2mm} K (\bar u_{k+1,i}^* - u_{k^*,i} ) - u_{\bar k,i} ) \end{array} \right] \end{array} \hspace{-1mm} \right] \vspace{1mm} \\
& \hspace{10mm}   \leq d_{p,j}^{k+1}, \; \forall j = 1,...,n_{g_p} \vspace{1mm}\\
& \hspace{-30mm} \mathop {\min} \limits_{\bar k = 0,...,k} \left[ \hspace{-1mm} \begin{array}{l} \overline g_{p,j} ({\bf u}_{\bar k},\tau_{\bar k}) \displaystyle +\eta_{c,j}^{\bar k} \frac{\partial \overline g_{p,j}}{\partial \tau} \Big |_{({\bf u}_{\bar k},\tau_{\bar k})} ( \tau_{k+1} - \tau_{{\bar k}} ) \vspace{1mm} \\
 \displaystyle + (1-\eta_{c,j}^{\bar k}) \overline \kappa_{p,j\tau}^{\bar k} \left( \tau_{k+1} - \tau_{\bar k} \right) \vspace{1mm}\\
\displaystyle   + \sum_{i \in I_{c,j}^{\bar k}} \mathop {\max} \left[ \begin{array}{l} \displaystyle \frac{\partial \underline g_{p,j}}{\partial u_i} \Big |_{({\bf u}_{\bar k},\tau_{\bar k})} ( u_{k^*,i} + \\ \hspace{2mm} K (\bar u_{k+1,i}^* - u_{k^*,i} ) - u_{\bar k,i} ), \vspace{1mm}\\ \displaystyle \frac{\partial \overline g_{p,j}}{\partial u_i} \Big |_{({\bf u}_{\bar k},\tau_{\bar k})} ( u_{k^*,i} + \\ \hspace{2mm} K (\bar u_{k+1,i}^* - u_{k^*,i} ) - u_{\bar k,i} ) \end{array} \right] \vspace{1mm} \\
 + \displaystyle \sum_{i \not \in I_{c,j}^{\bar k}} \mathop {\max} \left[ \begin{array}{l} \underline \kappa_{p,ji}^{\bar k} ( u_{k^*,i} + \\ \hspace{2mm} K (\bar u_{k+1,i}^* - u_{k^*,i} ) - u_{\bar k,i} ), \vspace{1mm}\\ \overline \kappa_{p,ji}^{\bar k} ( u_{k^*,i} + \\ \hspace{2mm} K (\bar u_{k+1,i}^* - u_{k^*,i} ) - u_{\bar k,i} ) \end{array} \right] \end{array} \hspace{-1mm} \right] \vspace{1mm} \\
& \hspace{-30mm} + \displaystyle \eta_{c,j}^{K} \frac{\partial \overline g_{p,j}}{\partial \tau} \Big |_{\left({\bf u}_{k^*} + K ( \bar {\bf u}_{k+1}^* - {\bf u}_{k^*} ),\tau_{k+1}\right)} ( \tau_{k+2} - \tau_{{k+1}} ) \vspace{1mm} \\
& \hspace{-30mm} + (1-\eta_{c,j}^{K}) \overline \kappa_{p,j\tau}^{K} \left( \tau_{k+2} - \tau_{k+1} \right) + \delta_e \| \kappa_{p,j}^{m,K} \|_2  \leq d_{p,j}^{k+1}, \\
& \hspace{20mm} \forall j = 1,...,n_{g_p} \\
& \hspace{-30mm} \mathop {\max} \limits_{{\bf u} \in \mathcal{B}_{K}} g_{j}({\bf u}) \leq d_j^{k+1}, \;\; \forall j = 1,...,n_g,
\end{array}
\end{equation}

\noindent or, if this is infeasible, by

\begin{equation}\label{eq:linesearchfnum2}
\begin{array}{rl}
K_k := {\rm arg} \mathop {\rm minimize}\limits_{K \in [0,1]} & \phi \left( {\bf u}_{k^*} + K (\bar {\bf u}_{k+1}^* - {\bf u}_{k^*}) \right)  \\
{\rm{subject}}\;{\rm{to}} & 
\end{array}
\end{equation}

$$
\begin{array}{rl}
& \hspace{0mm} \mathop {\min} \limits_{\bar k = 0,...,k} \left[ \hspace{-1mm} \begin{array}{l} \overline g_{p,j} ({\bf u}_{\bar k},\tau_{\bar k}) \displaystyle +\eta_{c,j}^{\bar k} \frac{\partial \overline g_{p,j}}{\partial \tau} \Big |_{({\bf u}_{\bar k},\tau_{\bar k})} ( \tau_{k+1} - \tau_{{\bar k}} ) \vspace{1mm} \\
 \displaystyle + (1-\eta_{c,j}^{\bar k}) \overline \kappa_{p,j\tau}^{\bar k} \left( \tau_{k+1} - \tau_{\bar k} \right) \vspace{1mm}\\
\displaystyle   + \sum_{i \in I_{c,j}^{\bar k}} \mathop {\max} \left[ \begin{array}{l} \displaystyle \frac{\partial \underline g_{p,j}}{\partial u_i} \Big |_{({\bf u}_{\bar k},\tau_{\bar k})} ( u_{k^*,i} + \\ \hspace{2mm} K (\bar u_{k+1,i}^* - u_{k^*,i} ) - u_{\bar k,i} ), \vspace{1mm}\\ \displaystyle \frac{\partial \overline g_{p,j}}{\partial u_i} \Big |_{({\bf u}_{\bar k},\tau_{\bar k})} ( u_{k^*,i} + \\ \hspace{2mm} K (\bar u_{k+1,i}^* - u_{k^*,i} ) - u_{\bar k,i} ) \end{array} \right] \vspace{1mm} \\
 + \displaystyle \sum_{i \not \in I_{c,j}^{\bar k}} \mathop {\max} \left[ \begin{array}{l} \underline \kappa_{p,ji}^{\bar k} ( u_{k^*,i} + \\ \hspace{2mm} K (\bar u_{k+1,i}^* - u_{k^*,i} ) - u_{\bar k,i} ), \vspace{1mm}\\ \overline \kappa_{p,ji}^{\bar k} ( u_{k^*,i} + \\ \hspace{2mm} K (\bar u_{k+1,i}^* - u_{k^*,i} ) - u_{\bar k,i} ) \end{array} \right] \end{array} \hspace{-1mm} \right] \vspace{1mm} \\
& \hspace{20mm}  + \delta_e \| \kappa_{p,j}^{m,K} \|_2  \leq d_{p,j}^{k+1}, \; \forall j = 1,...,n_{g_p} \\
& \hspace{0mm} \mathop {\max} \limits_{{\bf u} \in \mathcal{B}_{K}} g_{j}({\bf u}) \leq d_j^{k+1}, \;\; \forall j = 1,...,n_g,
\end{array}
$$

\noindent with $K_k := 0$ as a last resort.

Following this step, one obtains the next set of decision variables ${\bf u}_{k+1}$. Ideally, one would apply these directly in the next experiment, but it may occur that the ${\bf u}_{k+1}$ generated, even if satisfying the SCFO, is not sufficiently exciting in some sense -- e.g., the step ${\bf u}_{k+1} - {\bf u}_{k^*}$ is not sufficiently large enough to reject the data corruption due to measurement noise, or the local data set that ${\bf u}_{k+1}$ becomes a member of is not well-poised for regression and a better poised set is needed. If the user deems the generated ${\bf u}_{k+1}$ as not sufficiently exciting, they then have the freedom to override it and replace it by any other ${\bf u} \in \mathcal{B}_{e,k^*}$, as any such ${\bf u}$ is guaranteed to be feasible and may be better from the information-gathering point of view.

\section{Theoretical Properties}
\label{sec:gcproof}

We conclude this document by examining the very important theoretical questions that the development in the previous sections has given rise to -- namely, what can we prove about the implementable version of the SCFO? As should be expected given the number of additional complexities that were introduced, we require stronger assumptions than those in \cite{Bunin2013SIAM} in order to prove anything substantial.

As before, we will be able to show that feasibility -- or relaxed feasibility requirements, when soft constraints are used -- may be guaranteed throughout the optimization process. For convergence to an FJ point, the analysis is complicated significantly by the presence of degradation, the errors in both the experimental function values and the experimental function gradients, and the addition of iteration-dependent back-offs in the optimization problem.

While we could work around all of these issues by making degradation negligible, making the function value/gradient errors arbitrarily small as $k \rightarrow \infty$, and making the back-offs constant -- all of which would essentially allow us to recycle the analysis of \cite{Bunin2013SIAM} -- we are not, at the present moment, satisfied with this approach as it requires a number of strong assumptions. As such, global convergence to an optimum will not be examined in the present document, but will instead be addressed in a more adequate manner in a future version.

\subsection{Proof of Feasible-Side Iterates}

Let us first make the following assumption, which is simply the modified version of Assumption A1:

\begin{enumerate}[]
\item {\bf{A1F}}: The functions $\phi_p$ (or $\phi$, if the cost is numerical), $g_p$, and $g$ are $C^2$ on an open set containing $\mathcal{I}_\tau$.
\end{enumerate}

\noindent As before, Assumption A1F is needed to guarantee the existence of the necessary Lipschitz constants over $\mathcal{I}_\tau$.

To ensure that one can always maintain feasibility, one needs an analogue to Assumption A2 that guarantees that a certain input point, taken here to be ${\bf u}_0$, is known to be feasible regardless of the time $\tau$. In doing so, it will always be possible to choose this point as the reference ${\bf u}_{k^*}$ when a feasibility-guaranteeing reference cannot be found by standard means. However, so that sufficient excitation may be accounted for, one needs to expand this assumption to include the ball $\mathcal{B}_{e,0}$.

\begin{enumerate}[]
\item {\bf{A2F}}: The initial experimental iterate, ${\bf{u}}_0$, and the ball of radius $\delta_e$ around it, $\mathcal{B}_{e,0}$, are strictly feasible with respect to the experimental constraints for all $\tau$ ($g_{p,j} ({\bf{u}},\tau) < 0, \; \forall {\bf u} \in \mathcal{B}_{e,0}, \; \forall j = 1,...,n_{g_p}$), feasible with respect to the numerical constraints ($g_{j} ({\bf{u}}) \leq 0, \; \forall {\bf u} \in \mathcal{B}_{e,0}, \; \forall j = 1,...,n_{g}$), and lies in the compressed experimental space (${\bf u}^L + \delta_e {\bf 1} \preceq {\bf u}_0 \preceq {\bf u}^U - \delta_e {\bf 1}$).
\end{enumerate}

\noindent The practical significance of this assumption is the guarantee that setting ${\bf u}_{k^*} := {\bf u}_0$ will always result in a reference point that both is feasible and guarantees the existence of a feasible $\mathcal{B}_{e,k^*}$. In applications, it may be seen as a sort of ``safe point'' -- a set of decision variables that one knows is suboptimal but which one also knows will always meet safety specifications with some tolerance.

Additional assumptions are required on the correctness of the Lipschitz constants and the concavity relaxations, as well as on the probabilistic validity of the gradient-estimate bounds and the upper bounds on the experimental constraint function values.

\begin{enumerate}[]
\item {\bf{A3}}: The supplied Lipschitz constants $\underline \kappa_{p,ji}$, $\overline \kappa_{p,ji}$, $\underline \kappa_{p,j\tau}$, $\overline \kappa_{p,j\tau}$, $\underline \kappa_{p,ji}^{\bar k}$, $\overline \kappa_{p,ji}^{\bar k}$, $\underline \kappa_{p,j\tau}^{\bar k}$, $\overline \kappa_{p,j\tau}^{\bar k}$, $\underline \kappa_{p,j\tau}^{\bar k_1, \bar k_2}$, $\overline \kappa_{p,j\tau}^{\bar k_1, \bar k_2}$, $\underline \kappa_{p,ji}^{e,k^*}$, $\overline \kappa_{p,ji}^{e,k^*}$, $\underline \kappa_{p,j\tau}^{e,k^*}$, $\overline \kappa_{p,j\tau}^{e,k^*}$, $\underline \kappa_{p,ji}^{K}$, $\overline \kappa_{p,ji}^{K}$ are \emph{correct} in the sense of satisfying (\ref{eq:lipcondegLU}), (\ref{eq:lipdegLU}), (\ref{eq:lipcondegLUloc}), (\ref{eq:lipdegLUloc}), (\ref{eq:lipdegLUloc2}), (\ref{eq:lipcondegLUlocball}), (\ref{eq:lipdegLUloc2ball}), and (\ref{eq:lipcondegLUlocK}).
\item {\bf{A4}}: The supplied degradation concavity indicators $\eta_{c,j}$, $\eta_{c,j}^{e,\bar k}$, $\eta_{c,j}^{\bar k}$, $\eta_{c,j}^K$ and the concavity index sets $I_{c,j}$, $I_{c,j}^{\bar k}$ are \emph{correct} in the sense that the concave relationships that they denote exist on the relevant domains.
\item {\bf{A5}}: $g_{p,j} ({\bf u}_{\bar k},\tau_{\bar k}) \leq \overline g_{p,j} ({\bf u}_{\bar k},\tau_{\bar k})$ almost surely for all $j = 1,...,n_{g_p}$ and for all $\bar k \in [0,k]$.
\item {\bf{A6}}: $\nabla g_{p,j} ({\bf u}_{\bar k},\tau_{\bar k}) \in \left[  \nabla \underline g_{p,j} ({\bf u}_{\bar k},\tau_{\bar k}),  \nabla \overline g_{p,j} ({\bf u}_{\bar k},\tau_{\bar k}) \right]$ almost surely for all $j = 1,...,n_{g_p}$ and for all $\bar k \in [0,k]$.
\end{enumerate}

We may now state the main feasibility result.

\begin{theorem}[SCFO feasibility in the implementable case]
\label{thm:feas}
Let Assumptions A1F, A2F, A3, A4, A5, A6 hold and let the reference point be chosen as follows:

\begin{itemize}
\item ${\bf u}_{k^*}$ is set as the solution to (\ref{eq:kstarLUccvcostlocMNgradBOslackf}) -- or as the solution to (\ref{eq:kstarLUccvcostlocMNgradBOslacknumf}) when the cost is numerical -- if such a solution exists.
\item If (\ref{eq:kstarLUccvcostlocMNgradBOslackf}) -- or, for the case of a numerical cost, (\ref{eq:kstarLUccvcostlocMNgradBOslacknumf}) -- is infeasible, ${\bf u}_{k^*}$ is set as ${\bf u}_0$.
\end{itemize}

\noindent Let ${\bf u}_{k+1}$ then be defined by (\ref{eq:inputfilterf}), with $\bar {\bf u}_{k+1}^*$ computed by Algorithm 3F and $K_k$ chosen as follows:

\begin{itemize}
\item $K_k$ is set as the solution to (\ref{eq:linesearchf}) (resp., to (\ref{eq:linesearchfnum}), for a numerical cost) if such a solution exists.
\item If (\ref{eq:linesearchf}) (resp., (\ref{eq:linesearchfnum})) is infeasible, $K_k$ is set as the solution to (\ref{eq:linesearchf2}) (resp., to (\ref{eq:linesearchfnum2})) if such a solution exists.
\item If both (\ref{eq:linesearchf}) and (\ref{eq:linesearchf2}) (resp., (\ref{eq:linesearchfnum}) and (\ref{eq:linesearchfnum2})) are infeasible, $K_k$ is set as 0.
\end{itemize}

\noindent In the case that the resulting ${\bf u}_{k+1}$ is not sufficiently exciting with respect to user specifications, it is replaced with some other ${\bf u}_{k+1} \in \mathcal{B}_{e,k^*}$.

\noindent It follows that if the slack update law (\ref{eq:slackmanage}) is applied, with the slack-reduction constants satisfying (\ref{eq:betamax}) and (\ref{eq:betamaxnum}), the bounds (\ref{eq:violsum}) hold almost surely for all $j$, with ${\bf u}^L \preceq {\bf u}_k \preceq {\bf u}^U$ for all $k$.

\end{theorem}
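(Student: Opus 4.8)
The plan is to reduce the statement to Theorem~\ref{thm:beta} and its numerical analogue. Theorem~\ref{thm:beta} already establishes that if $g_{p,j}({\bf u}_k,\tau_k)\leq d_{p,j}^k$ holds at every iteration and the slack update (\ref{eq:slackmanage}) is run with $\beta_{p,j}$ satisfying (\ref{eq:betamax}), then the first bound in (\ref{eq:violsum}) holds; its proof already absorbs the hypothesis $g_{p,j}\leq\overline g_{p,j}$, which Assumption~A5 supplies almost surely, and (\ref{eq:betamaxnum}) with the analogous update gives the second bound. Hence it suffices to prove that, almost surely, $g_{p,j}({\bf u}_k,\tau_k)\leq d_{p,j}^k$ and $g_j({\bf u}_k)\leq d_j^k$ for all $k\geq 0$ and all $j$, and, separately, that ${\bf u}^L\preceq{\bf u}_k\preceq{\bf u}^U$ for all $k$. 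Because the reference search and line search at step $k$ refer only to the data already collected and fall back on ${\bf u}_0$ (which A2F certifies unconditionally), no induction is needed: a direct argument at each $k$ together with the $k=0$ base case — where A2F and $\overline d\succeq{\bf 0}$ give $g_{p,j}({\bf u}_0,\tau_0)<0\leq d_{p,j}^0$ and $g_j({\bf u}_0)\leq 0\leq d_j^0$ — will do.

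I would first dispose of the box constraints. Whichever rule picks ${\bf u}_{k^*}$ — the solution of (\ref{eq:kstarLUccvcostlocMNgradBOslackf}) (or (\ref{eq:kstarLUccvcostlocMNgradBOslacknumf}) for a numerical cost), or the fallback ${\bf u}_0$ — the chosen point satisfies ${\bf u}^L+\delta_e{\bf 1}\preceq{\bf u}_{k^*}\preceq{\bf u}^U-\delta_e{\bf 1}$, the former because this inclusion is a constraint of the reference search and the latter by A2F; in particular ${\bf u}_{k^*}\in\mathcal{I}$. The point $\bar{\bf u}_{k+1}^*$ returned by Algorithm~3F obeys the same compressed box bounds: either it equals ${\bf u}_{k^*}$ on early termination in Step~3, or it solves the QP (\ref{eq:projdeg2robslackBO2slackff})/(\ref{eq:projdeg2robslackBO2slackffnum}), which enforces them and which is feasible since at $P=0$ the tightened gradient intervals collapse to the point estimates of the Step~2 linear feasibility check (and feasibility is monotone in $P$). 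As ${\bf u}_{k+1}={\bf u}_{k^*}+K_k(\bar{\bf u}_{k+1}^*-{\bf u}_{k^*})$ with $K_k\in[0,1]$ is a convex combination of two points of the convex set $\{{\bf u}:{\bf u}^L+\delta_e{\bf 1}\preceq{\bf u}\preceq{\bf u}^U-\delta_e{\bf 1}\}\subset\mathcal{I}$, we get ${\bf u}_{k+1}\in\mathcal{I}$; and if ${\bf u}_{k+1}$ is overridden by some ${\bf u}\in\mathcal{B}_{e,k^*}$, Corollary~\ref{cor:backoff3}, whose hypothesis $u^L_i+\delta_e\leq u_{k^*,i}\leq u^U_i-\delta_e$ was just verified, gives ${\bf u}\in\mathcal{I}$.

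Next I would establish the slacked feasibility $g_{p,j}({\bf u}_{k+1},\tau_{k+1})\leq d_{p,j}^{k+1}$ and $g_j({\bf u}_{k+1})\leq d_j^{k+1}$ by cases on the construction of ${\bf u}_{k+1}$. Case (i): $K_k$ solves (\ref{eq:linesearchf}) or (\ref{eq:linesearchf2}). By the robust / gradient-uncertain / local Lipschitz bounds accumulated in Sections~\ref{sec:degrade}--\ref{sec:suffexc} (valid almost surely under A3, A4, A5, A6), every bracketed term in the first min-over-$\bar k$ block of either line search is a valid upper bound on $g_{p,j}({\bf u}_{k+1},\tau_{k+1})$ built from reference $\bar k$, hence so is their minimum; thus the constraint value being $\leq d_{p,j}^{k+1}$ (after discarding the nonnegative $\delta_e\|\kappa_{p,j}^{m,K}\|_2$ term in the (\ref{eq:linesearchf2}) variant) transfers to $g_{p,j}$, while $\max_{{\bf u}\in\mathcal{B}_K}g_j({\bf u})\leq d_j^{k+1}$ together with ${\bf u}_{k+1}$ being the centre of $\mathcal{B}_K$ handles the numerical part. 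Case (ii): $K_k:=0$ as a last resort, so ${\bf u}_{k+1}={\bf u}_{k^*}$. If ${\bf u}_{k^*}$ solves (\ref{eq:kstarLUccvcostlocMNgradBOslackf}), its $j$-th experimental constraint there is exactly $\overline g_{p,j}({\bf u}_{k^*},\tau_{k^*})+b_{p,j}^{k^*,k}\leq d_{p,j}^{k+1}$ with $b_{p,j}^{k^*,k}$ given by (\ref{eq:suffback}), so rerunning the proof of Theorem~\ref{thm:backoff} with $d_{p,j}^{k+1}$ replacing $0$ on the right-hand side of the hypothesis and of (\ref{eq:lipregionK6}) yields $g_{p,j}({\bf u},\tau_{k+1})\leq d_{p,j}^{k+1}$ for all ${\bf u}\in\mathcal{B}_{e,k^*}\ni{\bf u}_{k^*}$, while $\max_{{\bf u}\in\mathcal{B}_{e,k^*}}g_j({\bf u})\leq d_j^{k+1}$ is itself a reference-search constraint; if instead ${\bf u}_{k^*}={\bf u}_0$, A2F gives $g_{p,j}({\bf u}_0,\tau_{k+1})<0\leq d_{p,j}^{k+1}$ and $g_j({\bf u}_0)\leq 0\leq d_j^{k+1}$. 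Case (iii): ${\bf u}_{k+1}$ is overridden by some ${\bf u}\in\mathcal{B}_{e,k^*}$ — the two sub-cases of (ii) apply verbatim, now for the general ${\bf u}\in\mathcal{B}_{e,k^*}$.

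The step I expect to be the main obstacle is the bookkeeping in cases (ii)/(iii): making precise the slacked adaptation of Theorem~\ref{thm:backoff} (and of Theorem~\ref{thm:backoff2}/Corollary~\ref{cor:backoff3}), and verifying that the ``$e$''-local quantities $\eta_{c,j}^{e,k^*}$, $\overline\kappa_{p,j\tau}^{e,k^*}$, $\kappa_{p,j}^{m,k^*}$ entering the reference-search constraint are precisely those appearing in the back-off (\ref{eq:suffback}) that Theorem~\ref{thm:backoff} certifies — distinct from the $\mathcal{I}_\tau^{\bar k}$-local quantities used inside the line searches, which need not be reconciled here. Once this is settled, the per-iteration bound $g_{p,j}({\bf u}_k,\tau_k)\leq d_{p,j}^k$ holds almost surely for all $k$ and $j$ (the exceptional $\mathbb{P}$-null sets from A5 and A6, over the finitely many $j$ and countably many $k$, union to a null set), so Theorem~\ref{thm:beta} and its numerical counterpart deliver (\ref{eq:violsum}) almost surely; combined with the box-constraint argument, this proves the theorem.
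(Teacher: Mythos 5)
Your proposal is correct and follows essentially the same route as the paper's proof: reduce to Theorem~\ref{thm:beta} by establishing the slacked per-iteration feasibility (\ref{eq:inviolslackiter}), then argue by cases on how ${\bf u}_{k^*}$ and ${\bf u}_{k+1}$ are produced — line-search constraints giving the Lipschitz upper bound on $g_{p,j}({\bf u}_{k+1},\tau_{k+1})$, the reference-search constraints plus the (slack-shifted) Theorem~\ref{thm:backoff} covering the $K_k=0$ and excitation-override cases, A2F covering the ${\bf u}_0$ fallback, and convexity of the compressed box plus Corollary~\ref{cor:backoff3} handling the bound constraints. The extra details you flag (feasibility of the projection QP at $P=0$, the explicit slacked restatement of Theorem~\ref{thm:backoff}, and the countable union of null sets) are points the paper treats implicitly, and your treatment of them is sound.
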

\begin{proof} Considering the case where the cost function is experimental and enumerating the algorithmic possibilities, one has:

\begin{enumerate}[(i)]
\item ${\bf u}_{k^*}$ is the solution to (\ref{eq:kstarLUccvcostlocMNgradBOslackf}) and ${\bf u}_{k+1}$ is defined by (\ref{eq:inputfilterf}), with $K_k$ the solution to (\ref{eq:linesearchf}),
\item ${\bf u}_{k^*}$ is the solution to (\ref{eq:kstarLUccvcostlocMNgradBOslackf}) and ${\bf u}_{k+1}$ is defined by (\ref{eq:inputfilterf}), with $K_k$ the solution to (\ref{eq:linesearchf2}),
\item ${\bf u}_{k^*}$ is the solution to (\ref{eq:kstarLUccvcostlocMNgradBOslackf}) and ${\bf u}_{k+1}$ is defined by (\ref{eq:inputfilterf}), with $K_k := 0$,
\item ${\bf u}_{k^*}$ is the solution to (\ref{eq:kstarLUccvcostlocMNgradBOslackf}) and ${\bf u}_{k+1}$ is chosen as some sufficiently exciting point belonging to $\mathcal{B}_{e,k^*}$,
\item ${\bf u}_{k^*} := {\bf u}_0$ and ${\bf u}_{k+1}$ is defined by (\ref{eq:inputfilterf}), with $K_k$ the solution to (\ref{eq:linesearchf}),
\item ${\bf u}_{k^*} := {\bf u}_0$ and ${\bf u}_{k+1}$ is defined by (\ref{eq:inputfilterf}), with $K_k$ the solution to (\ref{eq:linesearchf2}),
\item ${\bf u}_{k^*} := {\bf u}_0$ and ${\bf u}_{k+1}$ is defined by (\ref{eq:inputfilterf}), with $K_k := 0$,
\item ${\bf u}_{k^*} := {\bf u}_0$ and ${\bf u}_{k+1}$ is chosen as some sufficiently exciting point belonging to $\mathcal{B}_{e,k^*}$.
\end{enumerate}

We will proceed to prove the desired result by showing that the conditions needed to employ Theorem \ref{thm:beta} -- i.e., those given by (\ref{eq:inviolslackiter}) -- hold for all $k$. That ${\bf u}^L \preceq {\bf u}_k \preceq {\bf u}^U$ holds for all $k$ will be verified in a much simpler manner.

That these conditions are met for $k = 0$ follows directly from Assumption A2F. For the general $k$, it is required that we consider each of the eight algorithmic scenarios listed above.

Considering Scenario (i), where we assume that $K_k$ satisfies the constraints of (\ref{eq:linesearchf}), we have that

\vspace{-2mm}
\begin{equation}\label{eq:feasproof2}
\begin{array}{l}
\mathop {\min} \limits_{\bar k = 0,...,k} \left[ \hspace{-1mm} \begin{array}{l} \overline g_{p,j} ({\bf u}_{\bar k},\tau_{\bar k}) \displaystyle +\eta_{c,j}^{\bar k} \frac{\partial \overline g_{p,j}}{\partial \tau} \Big |_{({\bf u}_{\bar k},\tau_{\bar k})} ( \tau_{k+1} - \tau_{{\bar k}} ) \vspace{1mm} \\
 \displaystyle + (1-\eta_{c,j}^{\bar k}) \overline \kappa_{p,j\tau}^{\bar k} \left( \tau_{k+1} - \tau_{\bar k} \right) \vspace{1mm}\\
\displaystyle   + \sum_{i \in I_{c,j}^{\bar k}} \mathop {\max} \left[ \begin{array}{l} \displaystyle \frac{\partial \underline g_{p,j}}{\partial u_i} \Big |_{({\bf u}_{\bar k},\tau_{\bar k})} ( u_{k^*,i} + \\ \hspace{2mm} K_k (\bar u_{k+1,i}^* - u_{k^*,i} ) - u_{\bar k,i} ), \vspace{1mm}\\ \displaystyle \frac{\partial \overline g_{p,j}}{\partial u_i} \Big |_{({\bf u}_{\bar k},\tau_{\bar k})} ( u_{k^*,i} + \\ \hspace{2mm} K_k (\bar u_{k+1,i}^* - u_{k^*,i} ) - u_{\bar k,i} ) \end{array} \right] \vspace{1mm} \\
 + \displaystyle \sum_{i \not \in I_{c,j}^{\bar k}} \mathop {\max} \left[ \begin{array}{l} \underline \kappa_{p,ji}^{\bar k} ( u_{k^*,i} + \\ \hspace{2mm} K_k (\bar u_{k+1,i}^* - u_{k^*,i} ) - u_{\bar k,i} ), \vspace{1mm}\\ \overline \kappa_{p,ji}^{\bar k} ( u_{k^*,i} + \\ \hspace{2mm} K_k (\bar u_{k+1,i}^* - u_{k^*,i} ) - u_{\bar k,i} ) \end{array} \right] \end{array} \hspace{-1mm} \right] \vspace{1mm} \\
\hspace{35mm}   \leq d_{p,j}^{k+1}, \; \forall j = 1,...,n_{g_p},
\end{array}
\end{equation}

\noindent which, with the backwards substitution of the filter law (\ref{eq:inputfilterf}), becomes

\vspace{-2mm}
\begin{equation}\label{eq:feasproof3}
\begin{array}{l}
\mathop {\min} \limits_{\bar k = 0,...,k} \left[ \hspace{-1mm} \begin{array}{l} \overline g_{p,j} ({\bf u}_{\bar k},\tau_{\bar k}) \displaystyle +\eta_{c,j}^{\bar k} \frac{\partial \overline g_{p,j}}{\partial \tau} \Big |_{({\bf u}_{\bar k},\tau_{\bar k})} ( \tau_{k+1} - \tau_{{\bar k}} ) \vspace{1mm} \\
 \displaystyle + (1-\eta_{c,j}^{\bar k}) \overline \kappa_{p,j\tau}^{\bar k} \left( \tau_{k+1} - \tau_{\bar k} \right) \vspace{1mm}\\
\displaystyle   + \sum_{i \in I_{c,j}^{\bar k}} \mathop {\max} \left[ \begin{array}{l} \displaystyle \frac{\partial \underline g_{p,j}}{\partial u_i} \Big |_{({\bf u}_{\bar k},\tau_{\bar k})} ( u_{k+1,i} - u_{\bar k,i} ), \vspace{1mm}\\ \displaystyle \frac{\partial \overline g_{p,j}}{\partial u_i} \Big |_{({\bf u}_{\bar k},\tau_{\bar k})} ( u_{k+1,i} - u_{\bar k,i} ) \end{array} \right] \vspace{1mm} \\
 + \displaystyle \sum_{i \not \in I_{c,j}^{\bar k}} \mathop {\max} \left[ \begin{array}{l} \underline \kappa_{p,ji}^{\bar k} ( u_{k+1,i} - u_{\bar k,i} ), \vspace{1mm}\\ \overline \kappa_{p,ji}^{\bar k} ( u_{k+1,i} - u_{\bar k,i} ) \end{array} \right] \end{array} \hspace{-1mm} \right] \vspace{1mm} \\
\hspace{35mm}  \leq d_{p,j}^{k+1}, \; \forall j = 1,...,n_{g_p}.
\end{array}
\end{equation}

Since

\vspace{-2mm}
\begin{equation}\label{eq:feasproof4}
\hspace{-1mm}\begin{array}{l}
g_{p,j} ({\bf u}_{k+1},\tau_{k+1}) \leq \vspace{1mm} \\
\mathop {\min} \limits_{\bar k = 0,...,k} \left[ \hspace{-1mm} \begin{array}{l} \overline g_{p,j} ({\bf u}_{\bar k},\tau_{\bar k}) \displaystyle +\eta_{c,j}^{\bar k} \frac{\partial \overline g_{p,j}}{\partial \tau} \Big |_{({\bf u}_{\bar k},\tau_{\bar k})} ( \tau_{k+1} - \tau_{{\bar k}} ) \vspace{1mm} \\
 \displaystyle + (1-\eta_{c,j}^{\bar k}) \overline \kappa_{p,j\tau}^{\bar k} \left( \tau_{k+1} - \tau_{\bar k} \right) \vspace{1mm}\\
\displaystyle   + \sum_{i \in I_{c,j}^{\bar k}} \mathop {\max} \left[ \begin{array}{l} \displaystyle \frac{\partial \underline g_{p,j}}{\partial u_i} \Big |_{({\bf u}_{\bar k},\tau_{\bar k})} ( u_{k+1,i} - u_{\bar k,i} ), \vspace{1mm}\\ \displaystyle \frac{\partial \overline g_{p,j}}{\partial u_i} \Big |_{({\bf u}_{\bar k},\tau_{\bar k})} ( u_{k+1,i} - u_{\bar k,i} ) \end{array} \right] \vspace{1mm} \\
 + \displaystyle \sum_{i \not \in I_{c,j}^{\bar k}} \mathop {\max} \left[ \begin{array}{l} \underline \kappa_{p,ji}^{\bar k} ( u_{k+1,i} - u_{\bar k,i} ), \vspace{1mm}\\ \overline \kappa_{p,ji}^{\bar k} ( u_{k+1,i} - u_{\bar k,i} ) \end{array} \right] \end{array} \hspace{-1mm} \right],
\end{array}
\end{equation}

\noindent it follows that $g_{p,j} ({\bf u}_{k+1},\tau_{k+1}) \leq d_{p,j}^{k+1}, \;\; \forall j = 1,...,n_{g_p}$, which thus meets the first requirement of (\ref{eq:inviolslackiter}) with an index shift. Also from (\ref{eq:linesearchf}), we have that

\vspace{-2mm}
\begin{equation}\label{eq:feasproof5}
\mathop {\max} \limits_{{\bf u} \in \mathcal{B}_{e,k+1}} g_{j}({\bf u}) \leq d_j^{k+1}, \;\; \forall j = 1,...,n_g
\end{equation}

\noindent is satisfied, which in turn implies $g_{j} ({\bf u}_{k+1}) \leq d_j^{k+1}, \; \forall j = 1,...,n_g$ and satisfies the second requirement of (\ref{eq:inviolslackiter}) with an index shift. To show that ${\bf u}^L \preceq {\bf u}_{k+1} \preceq {\bf u}^U$, it is sufficient to note that since both ${\bf u}^L + \delta_e {\bf 1} \preceq {\bf u}_{k^*} \preceq {\bf u}^U - \delta_e {\bf 1}$ and ${\bf u}^L + \delta_e {\bf 1} \preceq \bar {\bf u}_{k+1}^* \preceq {\bf u}^U - \delta_e {\bf 1}$, the latter following from the constraints in the projection of Algorithm 3F, it must be that ${\bf u}^L + \delta_e {\bf 1} \preceq {\bf u}_{k+1} \preceq {\bf u}^U - \delta_e {\bf 1}$ as $K_k \in [0,1]$.

Scenario (ii) follows an identical analysis, save that instead of (\ref{eq:feasproof2}) we have the more restricting condition

\vspace{-2mm}
\begin{equation}\label{eq:feasproof5a}
\begin{array}{l}
\mathop {\min} \limits_{\bar k = 0,...,k} \left[ \hspace{-1mm} \begin{array}{l} \overline g_{p,j} ({\bf u}_{\bar k},\tau_{\bar k}) \displaystyle +\eta_{c,j}^{\bar k} \frac{\partial \overline g_{p,j}}{\partial \tau} \Big |_{({\bf u}_{\bar k},\tau_{\bar k})} ( \tau_{k+1} - \tau_{{\bar k}} ) \vspace{1mm} \\
 \displaystyle + (1-\eta_{c,j}^{\bar k}) \overline \kappa_{p,j\tau}^{\bar k} \left( \tau_{k+1} - \tau_{\bar k} \right) \vspace{1mm}\\
\displaystyle   + \sum_{i \in I_{c,j}^{\bar k}} \mathop {\max} \left[ \begin{array}{l} \displaystyle \frac{\partial \underline g_{p,j}}{\partial u_i} \Big |_{({\bf u}_{\bar k},\tau_{\bar k})} ( u_{k^*,i} + \\ \hspace{2mm} K_k (\bar u_{k+1,i}^* - u_{k^*,i} ) - u_{\bar k,i} ), \vspace{1mm}\\ \displaystyle \frac{\partial \overline g_{p,j}}{\partial u_i} \Big |_{({\bf u}_{\bar k},\tau_{\bar k})} ( u_{k^*,i} + \\ \hspace{2mm} K_k (\bar u_{k+1,i}^* - u_{k^*,i} ) - u_{\bar k,i} ) \end{array} \right] \vspace{1mm} \\
 + \displaystyle \sum_{i \not \in I_{c,j}^{\bar k}} \mathop {\max} \left[ \begin{array}{l} \underline \kappa_{p,ji}^{\bar k} ( u_{k^*,i} + \\ \hspace{2mm} K_k (\bar u_{k+1,i}^* - u_{k^*,i} ) - u_{\bar k,i} ), \vspace{1mm}\\ \overline \kappa_{p,ji}^{\bar k} ( u_{k^*,i} + \\ \hspace{2mm} K_k (\bar u_{k+1,i}^* - u_{k^*,i} ) - u_{\bar k,i} ) \end{array} \right] \end{array} \hspace{-1mm} \right] \vspace{1mm} \\
\hspace{20mm} + \delta_e \| \kappa_{p,j}^{m,K} \|_2   \leq d_{p,j}^{k+1}, \; \forall j = 1,...,n_{g_p},
\end{array}
\end{equation}

\noindent which implies (\ref{eq:feasproof2}) since $\delta_e \| \kappa_{p,j}^{m,K} \|_2 \geq 0$, thus implying that $g_{p,j} ({\bf u}_{k+1},\tau_{k+1}) \leq d_{p,j}^{k+1}$ for this case as well.

Let us consider Scenario (iv) next, since Scenario (iii) is just a special case of (iv). Since ${\bf u}_{k^*}$ is assumed to solve (\ref{eq:kstarLUccvcostlocMNgradBOslackf}), we know that

\vspace{-2mm}
\begin{equation}\label{eq:feasproof6}
\begin{array}{l}
\overline g_{p,j} ({\bf u}_{k^*},\tau_{k^*}) + \displaystyle \eta_{c,j}^{e,k^*} \frac{\partial \overline g_{p,j}}{\partial \tau} \Big |_{({\bf u}_{k^*},\tau_{k^*})} ( \tau_{k+1} - \tau_{{k^*}} ) \vspace{1mm} \\
 + (1-\eta_{c,j}^{e, k^*}) \overline \kappa_{p,j\tau}^{e, k^*} \left( \tau_{k+1} - \tau_{ k^*} \right) \vspace{1mm} \\ 
+ \delta_e \| \kappa_{p,j}^{m, k^*} \|_2 \leq d_{p,j}^{k+1}, \; \forall j = 1,...,n_{g_p}.
\end{array}
\end{equation}

From Theorem \ref{thm:backoff}, we know that (\ref{eq:feasproof6}) implies that $g_{p,j} ({\bf u},\tau_{k+1}) \leq d_{p,j}^{k+1}, \; \forall {\bf u} \in \mathcal{B}_{e,k^*}$. Since it is assumed that ${\bf u}_{k+1} \in \mathcal{B}_{e,k^*}$, it then follows that $g_{p,j} ({\bf u}_{k+1},\tau_{k+1}) \leq d_{p,j}^{k+1}, \; \forall j = 1,...,n_{g_p}$, which satisfies the first requirement of (\ref{eq:inviolslackiter}) with an index shift. Since 

\vspace{-2mm}
\begin{equation}\label{eq:feasproof7}
\mathop {\max} \limits_{{\bf u} \in \mathcal{B}_{e,k^*}} g_j ({\bf u}) \leq d_j^{k+1}, \;\; \forall j = 1,...,n_{g}
\end{equation}

\noindent must also hold, it follows that $g_j ({\bf u}_{k+1}) \leq d_j^{k+1}, \; \forall j = 1,...,n_{g}$ by the same logic, which meets the second requirement of (\ref{eq:inviolslackiter}) with an index shift. Finally, since ${\bf u}^L + \delta_e {\bf 1} \preceq {\bf u}_{k^*} \preceq {\bf u}^U - \delta_e {\bf 1}$, it is clear that ${\bf u}^L \preceq {\bf u}_{k+1} \preceq {\bf u}^U$ by Corollary \ref{cor:backoff3}.

For Scenario (iii), note that $K_k := 0$ results in ${\bf u}_{k+1} := {\bf u}_{k^*} \in \mathcal{B}_{e,k^*}$, and so the results derived just above for Scenario (iv) may be applied.

The analyses for Scenarios (v) and (vi) are identical to those of Scenarios (i) and (ii), as all are independent of how ${\bf u}_{k^*}$ is chosen since both assume the feasibility of (\ref{eq:linesearchf}) (or (\ref{eq:linesearchf2})).

The proof for Scenarios (vii) and (viii) follows directly from Assumption A2F. 

Having proven that (\ref{eq:inviolslackiter}) must hold for all $k$ regardless of the implementation scenario, the desired result follows from Theorem \ref{thm:beta}. We can only say that this result holds \emph{almost surely} since (\ref{eq:feasproof4}) is a probabilistic bound that is assumed to hold almost surely here. For the case where the cost is numerical, an identical analysis may be repeated to yield the same result. \qed 

\end{proof}

\bibliographystyle{spmpsci}      
\bibliography{implement}
%
%



\end{document}